\documentclass[12pt,a4paper]{amsart}

\usepackage{amsmath}
\usepackage{amsfonts}
\usepackage{amssymb}
\usepackage{amsthm}
\usepackage[margin=2cm]{geometry}
\usepackage{amsmath}
\usepackage{amsfonts}
\usepackage{amssymb}
\usepackage{amsthm}
\usepackage{geometry}

\usepackage{mathtools}
\usepackage[dvipsnames]{xcolor}
\usepackage{url}
\usepackage{enumitem}
\usepackage{float}
\usepackage{graphicx}
\usepackage[dvipsnames]{xcolor}
\usepackage{multirow}
\usepackage[hypertexnames=false]{hyperref}

\usepackage{subcaption}

\newcommand{\RR}{\mathbb{R}}

\newcommand{\NN}{\mathbb{N}}

\newcommand{\eps}{\varepsilon}

\newcommand{\dd}{\mathrm{d}}

\newcommand{\x}{\mathbf{x}}

\newcommand{\uj}{\mathbf{u}_j}
\newcommand{\ujh}{\mathbf{u}_{j,h}}
\newcommand{\uz}{\mathbf{u}_0}
\newcommand{\uzh}{\mathbf{u}_{0,h}}
\newcommand{\aaa}{{\boldsymbol{\alpha}}}

\newtheorem{theorem}{Theorem}[section]
\newtheorem{corollary}[theorem]{Corollary}
\newtheorem{proposition}[theorem]{Proposition}
\newtheorem{lemma}[theorem]{Lemma}

\theoremstyle{definition}

\newtheorem{remark}[theorem]{Remark}
\numberwithin{equation}{section}

\usepackage[backend=bibtex,giveninits=true,style=numeric,citestyle=numeric, doi=false,isbn=false,url=false,eprint=false,maxbibnames=99]{biblatex}
\bibliography{numerical-eigenvalues} 

\usepackage{amsaddr}

\title[Mesh-free method for Dirichlet eigenpairs of Laplacian with potential]{Mesh-free numerical method for Dirichlet eigenpairs of the Laplacian with potential}
\author{Dragoș Manea$^*$}
\address{\small ``Simion Stoilow"  Mathematical Institute of the Romanian Academy, 21~Calea Griviței, 010702~Bucharest, Romania}
\thanks{$^*$E-mail addresses: \texttt{dmanea28@gmail.com}, \texttt{dmanea@imar.ro}; \href{https://orcid.org/0000-0003-4085-226X}{ORCID ID: 0000-0003-4085-226X}}

\subjclass[2020]{65N25, 35P15, 65N35}
\keywords{Method of Particular Solutions, Laplace operator with potential, Numerical approximation of eigenpairs}
\begin{document}

\begin{abstract}
This paper is concerned with the numerical approximation of the $L^2$ Dirichlet eigenpairs of the operator $-\Delta + V$ on a simply connected $C^2$ bounded domain $\Omega \subset \mathbb{R}^2$ containing the origin, where $V$ is a radial potential.

We propose a mesh-free method inspired by the Method of Particular Solutions for the Laplacian (i.e. $V=0$). Extending this approach to general $C^1$ radial potentials is challenging due to the lack of explicit basis functions analogous to Bessel functions. To overcome this difficulty, we consider the equation $-\Delta u + V u = \lambda u$ on a ball containing $\Omega$, without imposing boundary conditions, for a collection of values $\lambda$ forming a fine discretisation of the interval in which eigenvalues are sought. By rewriting the problem in polar coordinates and applying a Fourier expansion with respect to the angular variable, we obtain a decoupled system of ordinary differential equations. These equations are solved numerically using a one-dimensional Finite Element Method, yielding a family of basis functions that are solutions of the equation $-\Delta u + V u = \lambda u$ on the ball and are independent of the domain $\Omega$.

Dirichlet eigenvalues of $-\Delta + V$ are then approximated by minimising the boundary values on $\partial \Omega$ among linear combinations of the basis functions and identifying those values of $\lambda$ for which the computed minimum is sufficiently small. The proposed method is highly memory-efficient compared to the standard Finite Element approach.
\end{abstract}
\maketitle
\section{Introduction and main results}
\label{sec:introduction}

\subsection{Presentation of the problem}
\label{sec:presentation-of-problem}
This paper is concerned with the numerical approximation of solutions to the Helmholtz equation with a radial potential on planar domains and homogeneous Dirichlet boundary conditions. More precisely, we focus on the numerical solution of the following eigenvalue problem:
\begin{equation}\label{EVP-radial}\tag{EVP}\begin{cases}
-\Delta u(\x) + V(\|\x\|)u(\x)=\lambda u(\x), & \x\in \Omega;\\
u(\x)=0, & \x\in \partial\Omega,
\end{cases}\end{equation}
where $\Omega \subset \mathbb{R}^2$ is a simply connected $C^2$ bounded domain containing the origin $O$. Since $\Omega$ is bounded, we fix $R>0$ such that $\Omega \subset B_O(R)$, where $B_O(R)$ denotes the ball of radius $R$ centred at the origin. Throughout the paper, the potential $V : [0,R] \to \mathbb{R}$ is assumed to be of class $C^1$ and $\|\cdot\|$ stands for the Euclidean norm of a vector. The above Dirichlet eigenvalue problem is understood in the standard $L^2(\Omega)$ framework; we refer, for instance, to~\cite{Henrot2006} for the precise functional-analytic setting.

Classical methods for approximating the eigenpairs of~\eqref{EVP-radial} are mainly based on triangulating the domain and then solving the resulting matrix eigenvalue problem obtained via a Galerkin discretisation; see, for example,~\cite{BabuskaOsborn1991,boffi2010} and the references therein. Although effective, this approach suffers from a major drawback: the resulting matrices are very large -- of order $h^{-4}$, where $h$ denotes the mesh size -- leading to significant memory and precision limitations in practice. Moreover, any change in the domain requires restarting the procedure, including remeshing and recomputing the associated matrices.

In this context, we propose a method that avoids meshing the domain $\Omega$ altogether. Instead, the computational effort is spread across multiple trials of the real number $\lambda$, verifying whether it provides a suitable approximation of an eigenvalue. The matrices involved in testing these candidate values are substantially smaller than those arising in the Finite Element approach; a detailed comparison between the two methods is provided in Section~\ref{sec:numerical-eigenvalues}.

Our numerical approach to solving~\eqref{EVP-radial} is based on the Method of Particular Solutions (MPS), originally introduced in the 1960s by Fox, Henrici, Moler, and Payne~\cite{FoxHenriciMoler1967,MolerPayne1968}, and further developed in the 2000s by Betcke and Trefethen~\cite{BetckeTrefethen2005,Betcke2008}. In brief, for each test value $\lambda$, the method constructs a family of functions satisfying the differential equation~\eqref{EVP-radial}$_1$ and then searches for linear combinations that approximately satisfy the Dirichlet boundary condition. The existence of such a combination indicates the presence of an eigenfunction $u^*$ associated with an eigenvalue $\lambda^*$, which is close to the test value $\lambda$.

Beyond the relatively small size of the matrices involved, a further advantage of MPS is that the same set of basis functions can be used for multiple domains, with only the boundary conditions requiring adaptation. The method has proved to be highly accurate for a variety of polygonal domains; see, for instance,~\cite{BetckeTrefethen2005,GuidottiLambers2008,JonesUltraPrecise}. However, a major limitation of the existing literature is that MPS has been developed almost exclusively for the Laplace operator. This is partly due to the convergence proof of Fox, Henrici, and Moler~\cite{FoxHenriciMoler1967}, which relies on complex analysis and therefore applies only to operators with analytic coefficients. Moreover, explicit constructions of the basis functions are only available in the case of the Laplacian, where they reduce to classical Bessel functions. As a consequence, applications of MPS have been largely restricted to the Laplacian or to operators that can be reduced to it via linear transformations, such as the heat operator with constant anisotropy~\cite{Kleefeld2018}, or to very specific analytic radial potentials, such as the inverse-square potential~\cite{LiZhang2017}.

In contrast to the existing literature, we propose a version of the Method of Particular Solutions that applies to the Laplace operator with an arbitrary $C^1$ radial potential $V$. Our approach relies exclusively on PDE and ODE techniques to construct and efficiently approximate the analogue of Bessel-type basis functions in this setting. These basis functions depend only on the enclosing ball $B_O(R)$ and not on the domain $\Omega$, allowing them to be reused when the domain is modified. Furthermore, since no complex analysis is involved, the method does not present structural obstacles to extensions in three dimensions. For clarity of exposition, however, we restrict our analysis in this paper to two-dimensional domains.

\subsection{Description of the method}
\label{sec:description-of-method}
Since the potential $V$ is bounded, a translation argument \cite[Remark 1.1.3]{Henrot2006} allows us to assume without loss of generality that $V \geq 1$. It then follows from \cite[Theorem 1.2.2]{Henrot2006} that all eigenvalues of \eqref{EVP-radial} are greater than one, form a discrete sequence diverging to infinity, and have finite multiplicity. Our goal is to approximate all eigenpairs $(\lambda_n^*,u_n^*)$ with $\lambda_n^* \in [1,K]$, where $K>1$ is a prescribed parameter.

More precisely, by an eigenpair of \eqref{EVP-radial} we mean a pair $(\lambda^*,u^*)\in \RR \times H_0^1(\Omega)$ such that, for every $\varphi\in H_0^1(\Omega)$,

\begin{equation}\label{EVP-radial-w}
\int_{\Omega} \nabla u^*\nabla\varphi\,\dd \x + \int_{\Omega} V(\|\x\|) u^*(\x)\varphi(\x)\, \dd \x = \lambda \int_{\Omega} u^*\,\varphi \,\dd \x.
\end{equation}
We note that, by elliptic regularity \cite[Theorem 8.12]{GilbargTrudinger2001}, eigenfunctions belong to $H^2(\Omega)$ and satisfy
\[
\|u^*\|_{H^2(\Omega)} \leq C(\Omega)\left(\|V\|_{L^\infty([0,R])}+|\lambda^*|\right)\|u^*\|_{L^2(\Omega)} .
\]

We now outline the proposed algorithm:
\begin{enumerate}[label={\textbf{Step \Roman*}}]
   \item \textbf{Discretisation of the spectral parameter.}
We consider a fine equidistant division of the interval $[1,K]$ with step size $\mu>0$. For each point $\lambda$ in the division, the subsequent steps determine whether $\lambda$ approximates an eigenvalue of \eqref{EVP-radial} up to a prescribed accuracy.
    \item \textbf{Construction of the basis functions.}
For a fixed $\lambda$, we search for approximate solutions of \eqref{EVP-radial} defined on the ball $B_O(R)$ whose Fourier expansion with respect to the angular variable $\theta$ contains only the first $2J+1$ terms. Specifically, we seek functions of the form
    \begin{equation}\label{eq:def-UjLambda-1-intro}
    u^{J,\lambda}(\x)\coloneqq u_0^c(r)+\sum_{j=1}^J \left[u_j^c(r) \cos(j\theta)+ u_j^s (r) \sin(j\theta)\right], \quad \forall \x=(r\cos(\theta),r\sin(\theta))\in B_O(R),
    \end{equation}
where the functions $u_j^c$ and $u_j^s$ are defined on $[0,R]$. Moreover, they are chosen such that the function $u^{J,\lambda}$ belongs to $H^2(B_O(R))$ and satisfies the following equation: 
\begin{equation}\label{eq:PDE-uJlambda}
    -\Delta u^{J,\lambda} + V(\|\x\|)u^{J,\lambda}=\lambda u^{J,\lambda},\quad \x \in B_O(R),\end{equation}
without boundary conditions on $\partial B_O(R)$. More precisely, if we formally project equation \eqref{eq:PDE-uJlambda} with respect to the spherical variable $\theta$ onto the span of $\cos(j\theta)$, we get that $u_j^c$ satisfies the following Bessel-type ordinary differential equation with potential:
\begin{equation} \label{eq:uj-ODE}
-(u_j^c)''(r) -\frac{1}{r} (u_j^c)'(r)+\frac{j^2}{r^2} u_j^c(r) + V(r) u_j^c(r) - \lambda u_j^c(r)=0,\quad r\in (0,R].
\end{equation}
Moreover, projecting onto the span of $\sin(j\theta)$, one obtains that $u_j^s$ satisfies the same equation. Although this second-order ODE admits a two-dimensional solution space, some information is lost while deducing \eqref{eq:uj-ODE} from \eqref{eq:PDE-uJlambda}, since the behaviour of $u^{J,\lambda}$ near the origin $O$ was not considered. This issue is solved in Section \ref{sec:function-spaces} by considering a weak formulation of \eqref{eq:uj-ODE} which stems from \eqref{eq:PDE-uJlambda} and whose solution space is only one-dimensional. 

Denoting by $\uj^{\lambda}$ one of the two generators of this one-dimensional solution space normalised such that $\|\uj^{\lambda}(r)\cos(j\theta)\|_{L^2(B_O(R))}=1$, we obtain that any function of the form \eqref{eq:def-UjLambda-1-intro} satisfies \eqref{eq:PDE-uJlambda} if and only if there exists a vector $\aaa=(a_0^c,a_1^c,\ldots a_J^c,a_1^s,a_2^s,\ldots a_J^s)\in \RR^{2J+1}$ such that:
\begin{equation}\label{eq:def-UJLambdaAlpha}
u^{J,\lambda}(\x)=u^{J,\lambda}_\aaa(\x)\coloneqq \alpha_0^c \uz^{\lambda}(r)+\sum_{j=1}^J \uj^{\lambda}(r)\left[\alpha_j^c \cos(j\theta)+ \alpha_j^s (r) \sin(j\theta)\right],
\end{equation}
for every  $\x=
(r\cos(\theta),r\sin(\theta))\in B_O(R)$.
The next step of the algorithm will determine, if possible, a suitable vector $\aaa\in \RR^{2J+1}$ such that $u^{J,\lambda}_\aaa$ is an approximate eigenfunction of \eqref{EVP-radial}. 
\item \textbf{(Find $\aaa$ by minimisation).} We determine whether a test value $\lambda$ in the division of $[1,K]$ approximates an eigenvalue of \eqref{EVP-radial} up to some precision, by minimising over $\aaa\neq 0_{\RR^{2J+1}}$ the following quotient:
\begin{equation}\label{eq:def-F}
\mathcal{F}^{\lambda}(\aaa)\coloneqq \frac{\|u^{J,\lambda}_\aaa\|_{L^2(\partial\Omega)}}{\|u^{J,\lambda}_\aaa\|_{L^2(\Omega)}}.
\end{equation}
If the minimum value is below a prescribed threshold $\eps$, then $(\lambda,u^{J,\lambda}_\aaa)$ approximates an eigenpair of \eqref{EVP-radial}; see Theorem~\ref{thm:main}. The procedure also detects eigenvalues of higher multiplicity, as discussed in Section~\ref{sec:numerical-eigenfunctions}.\\

\noindent\textbf{Details about the numerical approximation of the quotient $\mathcal{F}^\lambda(\aaa)$ and its minimisation:}
\begin{enumerate}[label={\textbf{III.\arabic*}}]
    \item \textbf{(Approximating the basis functions via FEM).} The basis functions $\uj^\lambda$ are approximated using a one-dimensional Finite Element Method (FEM) on the interval $[0,R]$ applied to the weak formulation of \eqref{eq:uj-ODE}; see Section \ref{sec:FEM}.
    \item \label{item:alg:integration} \textbf{(Numerical integration on $\partial\Omega$  and $\Omega)$.} The numerator of $\mathcal{F}^\lambda(\aaa)$ is approximated via a Riemann sum on $\partial\Omega$, whereas the denominator is computed using a Monte Carlo method; see Section \ref{sec:numerical-integration}.
    \item \textbf{}The two steps above lead to a discrete counterpart of the quotient $\mathcal{F}^\lambda(\aaa)$, which has the form:
    \[\mathcal{F}^\lambda_h(\aaa)\coloneqq \frac{\|\mathcal{M}_{\partial\Omega}\alpha\|}{\|\mathcal{M}_{\Omega}\alpha\|},\]
    where the matrices $\mathcal{M}_{\partial\Omega}$ and $\mathcal{M}_{\Omega}$ are constructed in Section \ref{sec:quotient-approximation}. Theorem \ref{thm:approx-ratio} provides bounds for the error between $\mathcal{F}^\lambda$ and $\mathcal{F}^\lambda_h$.
    \item \textbf{(Linear optimisation).} The minimisation of the quotient $\mathcal{F}^\lambda_h$ is performed as in \cite[Section 5]{BetckeTrefethen2005}, using a QR decomposition of the joint matrix $$\left[\begin{array}{c}\mathcal{M}_{\partial\Omega}\\ \mathcal{M}_{\Omega}\end{array}\right]=\left[\begin{array}{c}\mathcal{Q}_{\partial\Omega}\\ \mathcal{Q}_{\Omega}\end{array}\right]\mathcal{R}$$ and then a Singular Value Decomposition of the matrix $\mathcal{Q}_{\partial\Omega}$. See Section \ref{sec:numerical} for more details.
\end{enumerate}

\end{enumerate}

The main result of this paper provides quantitative error bounds between the approximate eigenpairs produced by the algorithm and the true eigenpairs of \eqref{EVP-radial}, and guarantees that no eigenpair in $[1,K]$ is missed.

\begin{theorem}\label{thm:main}
Let $\Omega\subset B_O(R)$ be a simply connected $C^2$ bounded domain in $\RR^2$ containing the origin $O$. Let $V\in C^1([0,R])$ be a potential taking values in $[1,\infty)$,  and let $K>1$. Then, the following statements hold:
\begin{enumerate}[label={\Roman*)}]
\item \label{item:main-I} There exists a threshold $\eps_0=\eps_0(\Omega,K)$ and a constant $C(\Omega,V,K)>0$ such that, if $\lambda\in [1,K]$, $\eps\in (0,\eps_0)$ and $\aaa\in \RR^{2J+1}$ satisfy:
\[\mathcal{F}^\lambda(\aaa)\leq \eps,\]
then there exists an eigenpair $(\lambda^*,u^*)$ of \eqref{EVP-radial} with $\|u^*\|_{L^2(\Omega)}=1$ such that:
\[|\lambda-\lambda^*|\leq \eps\, C(\Omega,V,K)\quad \text{and}\quad \left\|\frac{u^{J,\lambda}_\aaa}{\| u^{J,\lambda}_\aaa\|_{L^2(\Omega)}}-u^*\right\|_{L^2(\Omega)}< \eps\,C(\Omega,V,K). \]
\item \label{item:main-II} For every $\eps>0$, there exists a positive integer $J_0(\eps, \Omega, V,K)$ such that for any $J>J_0$, there exists a threshold $\mu(\eps,\Omega,V,K,J)>0$ with the following property:

If $(\lambda^*,u^*)$ is an eigenpair of \eqref{EVP-radial} with $\lambda^*\in [1,K]$ and $\|u^*\|_{L^2(\Omega)}=1$, for each test value $\lambda\in [1,K]$ that satisfies $|\lambda-\lambda^*|<\mu(\eps,\Omega,V,K,J)$, there exists a vector $\aaa\in \RR^{2J+1}$ such that:
\[\mathcal{F}^\lambda(\aaa)\leq \eps\]
and 
\[\left\|\frac{u^{J,\lambda}_\aaa}{\| u^{J,\lambda}_\aaa\|_{L^2(\Omega)}}-u^*\right\|_{L^2(\Omega)}<\eps.\]
\end{enumerate}
\emph{Interpretation of statement \ref{item:main-II}:} if the truncation parameter $J$ is chosen sufficiently large and the discretisation of the interval $[1,K]$ is fine enough -- namely, with step size smaller than $\mu(\eps,\Omega,V,K,J)$ -- then the test value $\lambda$ which is closest to the eigenvalue $\lambda^*$ satisfies that $\mathcal{F}^\lambda(\aaa)$ is small for a particular $\aaa$  whose corresponding function $u^{J,\lambda}_\aaa$ is an accurate approximation of the true eigenfunction $u^*$. Consequently, the algorithm is guaranteed to approximate all eigenpairs of \eqref{EVP-radial} in the interval $[1,K]$.
\end{theorem}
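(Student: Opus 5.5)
For Part~\ref{item:main-I} I would follow the classical Fox--Henrici--Moler/Moler--Payne argument, replacing harmonic extension by the elliptic theory for $-\Delta+V-\lambda$. Set $w\coloneqq u^{J,\lambda}_\aaa/\|u^{J,\lambda}_\aaa\|_{L^2(\Omega)}$. Then $w\in H^2(\Omega)$ solves $-\Delta w+Vw=\lambda w$ in $\Omega$, has unit $L^2(\Omega)$ norm, and satisfies $\|w\|_{L^2(\partial\Omega)}\le\eps$. I would split $w=v+z$, where $z$ solves $-\Delta z+Vz=0$ in $\Omega$ with $z=w$ on $\partial\Omega$. Because $V\ge 1$, this problem is coercive.

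The key estimate is $\|z\|_{L^2(\Omega)}\le C(\Omega,V)\|w\|_{L^2(\partial\Omega)}$, with only the $L^2$ norm of the trace on the right. I would prove it by duality (transposition). For $g\in L^2(\Omega)$, let $\phi\in H^2\cap H^1_0(\Omega)$ solve $-\Delta\phi+V\phi=g$. This uses $C^2$ regularity of $\partial\Omega$, which gives $\|\phi\|_{H^2}\le C\|g\|_{L^2}$. Green's formula then gives $\int_\Omega zg=-\int_{\partial\Omega} w\,\partial_\nu\phi$. The trace bound $\|\partial_\nu\phi\|_{L^2(\partial\Omega)}\le C\|\phi\|_{H^2}$ finishes the estimate. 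It needs a little care, since $z$ is a priori only in $H^{1/2}$; but here $w\in H^2$, so $z\in H^{3/2}$ or better and Green's formula is legitimate.

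With the bound on $z$ in hand, $v\in H^1_0(\Omega)$ satisfies $(-\Delta+V-\lambda)v=\lambda z$, hence $\|(A-\lambda)v\|\le K C\eps$, where $A$ is the Dirichlet operator. Also $\|v\|\ge 1-C\eps$. Expanding $v$ in the orthonormal eigenbasis of $A$ then gives $\operatorname{dist}(\lambda,\sigma(A))\le C\eps/(1-C\eps)$. Next I would let $\delta(\Omega,K)$ be half the minimal gap between distinct eigenvalues in $[1,K+1]$, and choose $\eps_0$ so that $C\eps_0/(1-C\eps_0)<\delta$. Then exactly one eigenvalue $\lambda^*$ lies within $C\eps$ of $\lambda$. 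The projection of $v$ onto its eigenspace $E$ leaves a remainder of norm at most $\|(A-\lambda)v\|/\delta\le C\eps$. Normalising that projection gives an eigenfunction $u^*\in E$, and combining with $\|z\|\le C\eps$ yields $\|w-u^*\|\le C\eps$.

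For Part~\ref{item:main-II}, I would first handle $\lambda=\lambda^*$. Extend $u^*$ from $\Omega$ to $B_O(R)$ in some way. A cleaner route is to approximate $u^*$ on $\Omega$ by solutions of $-\Delta u+Vu=\lambda^*u$ on the whole ball; this is a Runge-type density statement. I would prove it by Hahn--Banach: if $\mu\in (L^2(\Omega)\oplus L^2(\partial\Omega))^*$ annihilates all such global solutions, then a potential built from the Green function of $-\Delta+V-\lambda^*$ vanishes outside $\overline\Omega$. Simple connectivity of $\Omega$ and unique continuation then force it to annihilate local solutions as well.

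This density argument is the main obstacle, because no analytic structure is available. A possible shortcut, if $\lambda^*$ is not a Dirichlet eigenvalue of the ball, is to use the Cauchy data of $u^*$ together with a layer potential. Once a ball solution $U$ approximates $u^*$ in $H^1(\Omega)$ (hence on $\partial\Omega$), I would truncate its Fourier series to $J$ modes. Each mode is a multiple of $\uj^{\lambda^*}$, since the weak formulation of Section~\ref{sec:function-spaces} has a one-dimensional solution space, and the tail is small in $H^1(B_O(R))$ for $J>J_0$. This gives $\aaa$ with $\mathcal F^{\lambda^*}(\aaa)\le\eps/2$ and the normalised function within $\eps/2$ of $u^*$.

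Finally, for $|\lambda-\lambda^*|<\mu$, I would use the same $\aaa$ with $\lambda$ in place of $\lambda^*$. By continuous dependence of the ODE solutions $\uj^\lambda$ on $\lambda$, uniformly over the finitely many $j\le J$ and in $H^1$ so that traces are controlled, $\mathcal F^\lambda(\aaa)$ and the normalised function move by less than $\eps/2$ once $\mu$ is small. This explains why $\mu$ depends on $J$.
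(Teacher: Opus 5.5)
Part~I is the paper's argument. Your $z$ is the paper's $\tilde u$, your duality bound is \eqref{eq:v-leq-eps}, and the identity $(A-\lambda)v=\lambda z$ is a repackaging of the eigen-expansion in the proof of Theorem~\ref{thm:approximate-eigenvalues}. Part~II also has the paper's architecture: Runge approximation, angular truncation, identification of each mode with a multiple of $\uj^{\lambda^*}$, then perturbation in $\lambda$. Your Hahn--Banach proof of the Runge step is a legitimate substitute for the citation in Proposition~\ref{prop:runge}, with two adjustments. Simple connectivity enters through the connectedness of $B'\setminus\overline\Omega$ (with $B'$ the ball carrying the Green function), which is what unique continuation needs. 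Testing with $H^1(\Omega)^*$ instead of $L^2(\Omega)\oplus L^2(\partial\Omega)$ spares you the single-layer jump relations. However, two steps of Part~II are missing.

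\textbf{Uniformity over eigenpairs.} The statement requires $J_0(\eps,\Omega,V,K)$ and $\mu(\eps,\Omega,V,K,J)$ to work for \emph{all} normalised eigenpairs with $\lambda^*\in[1,K]$. Your $J_0$ is read off from the Fourier tail of the Runge approximant $U$ of one particular $u^*$. Your $\mu$ has to beat
\[
\|u^{J,\lambda}_{\aaa}-u^{J,\lambda^*}_{\aaa}\|_{H^1(B_O(R))}\le C\sqrt{2J+1}\,|\aaa|\max_{0\le j\le J}\|\uj^{\lambda}\mp\uj^{\lambda^*}\|_{j,r},
\]
so it depends on $|\aaa|$, which is comparable to $\|U^J\|_{L^2(B_O(R))}/\|U^J\|_{L^2(\Omega)}$. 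Nothing in your plan controls this quantity: Runge approximants are typically very large on $B_O(R)\setminus\Omega$, and $U$ changes with $u^*$.

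The paper handles both dependences. For $J_0$, it approximates once and for all an orthonormal basis $\xi^*_1,\dots,\xi^*_{Q_K}$ of the eigenfunctions with eigenvalues in $[1,K]$ and uses linearity (Proposition~\ref{prop:direct-implication-exact-lambda}). For $\mu$, it bounds $|\aaa|\le C(R,\tilde R,V,K,J)$ using only $\|u^{J,\lambda^*}_\aaa\|_{L^2(\Omega)}=1$, via the interior observability estimate of Proposition~\ref{prop:H2-stability-J}. There each mode is controlled on $B_O(\tilde R)\subset\Omega$ by interior regularity and propagated out to $r=R$ by Gr\"onwall. The finite-basis linearity would also bound $|\aaa|$ uniformly, but your plan contains neither device.

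\textbf{Continuity of $\lambda\mapsto\uj^\lambda$.} You assert this without proof, and it does not follow from standard continuous dependence. The ODE \eqref{eq:uj-ODE} is singular at $r=0$, and $\uj^\lambda$ is singled out by the weak formulation \eqref{EVP-j-W} and fixed only up to sign by $\|\uj^\lambda\|_r=1$. Applying Cauchy--Lipschitz from $r=R$ would presuppose that $(\uj^\lambda(R),(\uj^\lambda)'(R))$ depends continuously on $\lambda$, which is exactly what must be shown.

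The paper proves the Lipschitz bound $\|\uj^{\tilde\lambda}\pm\uj^{\lambda}\|_{j,r}\le C(R,V,K,J)|\tilde\lambda-\lambda|$ in Proposition~\ref{prop:uj-stable-in-lambda}. It rescales $\uj^\lambda$ to match either the value or the derivative of $\uj^{\tilde\lambda}$ at $R$. It then applies the resolvent bound of Proposition~\ref{prop:operator-outside-of-spectrum} to the Dirichlet or the Neumann realisation of $a_{j,V}$. One of the two has no spectrum near $\lambda$, because the two spectra are disjoint and hence uniformly separated on $[1,K+1]$ (Proposition~\ref{prop:gap-dirichlet-neumann}). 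The argument also uses $\|\uj^\lambda\|_{j,r}\le C(R,V,K,J)$, again from Proposition~\ref{prop:H2-stability-J}.

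An alternative is to build the regular solution from a Volterra equation normalised by $v(r)\sim r^j$ at $0$, which is continuous in $\lambda$, and identify it with the weak solution via Theorem~\ref{thm:solution-space-1dim}. Either way, you must fix the signs of the $\uj^\lambda$ consistently, since ``the same $\aaa$'' is meaningless otherwise; the paper flips the components of $\aaa$ accordingly.
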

The proof of Theorem \ref{thm:main} follows directly from Theorems~$\ref{thm:direct-implication-approx-lambda}$ and~$\ref{thm:approximate-eigenvalues}$ and relies crucially on the interior observability inequality established in Proposition~$\ref{prop:H2-stability-J}$. This inequality allows one to derive various estimates for the functions $u^{J,\lambda}_\aaa$ defined on $B_O(R)$ using only the information that they are normalised in $L^2(\Omega)$.
In general, if a function is only known to satisfy the equation~$\eqref{eq:PDE-uJlambda}$ and to have $L^2(\Omega)$ norm equal to one, it is not possible to deduce bounds on its $L^2(B_O(R))$ norm for a ball $B_O(R)$ strictly containing $\Omega$, without additional a priori information, for instance on its behaviour at the boundary $\partial B_O(R)$ (see, for example, \cite{Ervedoza2023}). However, if we restrict attention to functions of the form~$\eqref{eq:def-UjLambda-1-intro}$, that is, functions whose Fourier expansion with respect to the angular variable $\theta$ contains only the first $2J+1$ terms, then an $L^2(\Omega) \to H^2(B_O(R))$ observability estimate can be established.
This estimate is obtained by applying ordinary differential equation techniques, in particular Grönwall’s inequality, to the coefficient functions $u_j^c$ and $u_j^s$, each of which satisfies the Bessel-type equation with potential~$\eqref{eq:uj-ODE}$.\\

The paper is organised as follows. Section~$\ref{sec:convergence}$ is devoted to the proof of convergence of the proposed method, corresponding to statement~$\ref{item:main-I}$ of Theorem~$\ref{thm:main}$. Section~$\ref{sec:no-eigenpair-missed}$ addresses the construction of the basis functions $\uj^\lambda$ and establishes that the method does not miss any eigenpair, corresponding to statement~$\ref{item:main-II}$ of Theorem~$\ref{thm:main}$. In Section~$\ref{sec:FEM}$, we derive error bounds for the Finite Element approximation of the basis functions $\uj^\lambda$. Section~$\ref{sec:numerical-integration}$ discusses the numerical approximation of the quotient $\mathcal{F}^\lambda(\aaa)$, using Riemann sums on $\partial\Omega$ and Monte Carlo sampling in the interior of $\Omega$. Finally, Section~$\ref{sec:numerical}$ describes the minimisation procedure for $\mathcal{F}^\lambda$ and presents two numerical examples illustrating the computation of eigenpairs. Further research directions are outlined in Section~$\ref{sec:conclusion}$.\\

\textbf{Note:} 
Throughout the paper, constants appearing in the estimates may change from one occurrence to another. Unless a constant is independent of all parameters (in which case it is said to be universal), its dependence on the relevant quantities will be indicated explicitly. For example, $C(R,V,K)$ denotes a constant depending on $R$, $V$ and $K$. If a constant depends on the domain $\Omega$, it may also depend on two radii $\tilde{R}=\tilde{R}(\Omega)$ and $R=R(\Omega)$ such that
\[
 \overline{B_O(\tilde R)} \subset \Omega \subset \overline{\Omega} \subset B_O(R),
 \]
and such that the interval $[0,R(\Omega)]$ is contained in the domain of the potential $V$.

\section{Our method converges: if the quotient $\mathcal{F}^\lambda(\aaa)$ is small, then we are near an eigenpair}
\label{sec:convergence}

The purpose of this section is to prove the first statement of Theorem~\ref{thm:main}. We temporarily defer the construction of the basis functions $\uj^\lambda$ to Section~\ref{sec:weaks-solutions-basis-functions} and assume that, for every positive integer $J$ and every vector $\aaa \in \RR^{2J+1}$, the function $u^{J,\lambda}_{\aaa}$ satisfies the partial differential equation~\eqref{eq:PDE-uJlambda}. Under this assumption, if for a particular vector $ \aaa_{\min}^\lambda$ the quotient $\mathcal{F}^\lambda(\aaa_{\min}^\lambda)$ is sufficiently small, then the following theorem, adapted from the work of Moler and Payne~\cite[Theorems~1 and~2]{MolerPayne1968} and applied to the function  $u \coloneqq u^{J,\lambda}_{\aaa_{min}^\lambda} $, yields assertion~\ref{item:main-I} of Theorem~\ref{thm:main}.

\begin{theorem}\label{thm:approximate-eigenvalues}
    Let $\Omega\subset B_O(R)$ be a bounded $C^2$ domain in $\RR^2$ containing the origin $O$, let $V\in C^1([0,R])$ taking values in $[1,\infty)$ and $K>1$. There exists $C(\Omega)>0$ and $\eps_0>0$ depending only on the domain $\Omega$ such that if $\eps\in (0,\eps_0)$, $\lambda\in [1,K]$ and $u\in H^2(\Omega)$ satisfy the following properties:
    \begin{enumerate}[label={(\alph*)}]
        \item $-\Delta  u (\x) + V(\|\x\|)  u(\x)=\lambda u(\x)$ for a.e. $\x\in\Omega$;
        \item \label{hyp:u-on-boundary-leq-eps}$\|u\vert_{\partial\Omega}\|_{L^2(\partial\Omega)}\leq \eps$;
        \item \label{hyp:u-normalised} $\|u\|_{L^2(\Omega)}=1$,
    \end{enumerate}
    then there exists an eigenpair $(\lambda^*,u^*)$ of \eqref{EVP-radial} such that:
    \begin{enumerate}[label={\roman*)}]
        \item \label{item:approx-eigenvalues} $\displaystyle \left|\lambda-\lambda^*\right|\leq \eps \, K\, C(\Omega)$.
    \end{enumerate}
    Moreover, there exists $C(\Omega,V,K)>0$ and $\tilde \eps_0$ depending only on $\Omega$, $V$, $K$ such that, if $\eps\in (0,\tilde\eps_0)$,
    \begin{enumerate}[label={\roman*)}]
    \setcounter{enumi}{1}
        \item $\displaystyle \|u- u^*\|_{L^2(\Omega)}\leq \eps \, C(\Omega,V,K)$.
    \end{enumerate}
\end{theorem}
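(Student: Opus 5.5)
Following Moler--Payne, I would decompose $u = w + z$, where $z$ absorbs the boundary data and $w \in H^1_0(\Omega)$ carries the spectral information. Concretely, let $z\in H^1(\Omega)$ be the solution of
\[
-\Delta z + V(\|\x\|) z = 0 \ \text{in } \Omega,\qquad z = u \ \text{on } \partial\Omega,
\]
which is well posed since $V\ge 1$. Then $w \coloneqq u - z \in H_0^1(\Omega)$ satisfies $-\Delta w + Vw - \lambda w = \lambda z$ in $\Omega$.

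\textbf{Step 1: bound $z$ by the boundary data.} The key estimate is $\|z\|_{L^2(\Omega)}\le C(\Omega)\|u\|_{L^2(\partial\Omega)}\le C(\Omega)\eps$, with $C(\Omega)$ independent of $V\ge1$. I would prove it by the duality (transposition) argument. For $g\in L^2(\Omega)$, let $\phi\in H^2\cap H^1_0(\Omega)$ solve $-\Delta\phi + V\phi = g$; this uses that $\Omega$ is $C^2$. Green's formula gives
\[
\int_\Omega z g = -\int_{\partial\Omega} u\,\partial_\nu\phi ,
\]
so it suffices to control $\|\partial_\nu \phi\|_{L^2(\partial\Omega)}$ by $C(\Omega)\|g\|_{L^2(\Omega)}$ uniformly in $V\geq 1$. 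This uniformity is the main technical obstacle, because a naive $H^2$ bound involves $\|V\|_\infty$.

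I would get around it by exploiting positivity and a Rellich-type identity. Split $g=g^+-g^-$; the maximum principle then gives $0\le\phi^\pm\le\psi^\pm$, where $-\Delta\psi^\pm=g^\pm$ with zero Dirichlet data, since $V\ge0$. Because $\phi^\pm$ and $\psi^\pm$ both vanish on the boundary, this ordering yields $0\le -\partial_\nu\phi^\pm\le -\partial_\nu\psi^\pm$ on $\partial\Omega$. The Laplacian alone bounds $\|\partial_\nu\psi^\pm\|_{L^2(\partial\Omega)}\le C(\Omega)\|g\|_{L^2}$. This Laplacian-only argument is also why the constant in i) depends on $\Omega$ only.

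\textbf{Step 2: eigenvalue estimate.} Expand $w=\sum_n c_n u_n^*$ in an orthonormal eigenbasis $(\lambda_n^*,u_n^*)$. The equation for $w$ gives $(\lambda_n^*-\lambda)c_n=\lambda\langle z,u_n^*\rangle$. Setting $\delta=\min_n|\lambda_n^*-\lambda|$, this yields
\[
\|w\|_{L^2}\le \frac{\lambda\|z\|_{L^2}}{\delta}\le \frac{K C(\Omega)\eps}{\delta}.
\]
On the other hand $\|w\|\ge 1-\|z\|\ge 1-C(\Omega)\eps\ge 1/2$ once $\eps<\eps_0=1/(2C(\Omega))$. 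Hence $\delta\le 2KC(\Omega)\eps$, which is i) after renaming the constant.

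\textbf{Step 3: eigenfunction estimate.} Let $P$ be the orthogonal projection onto the eigenspace of $\lambda^*$; it is finite dimensional. The component of $w$ orthogonal to that eigenspace involves only eigenvalues $\lambda_n^*\ne\lambda^*$. Among those in $[1,K+1]$ there is a positive spectral gap $g(\Omega,V,K)$ around each such eigenvalue. So if $\eps<\tilde\eps_0$ with $2KC\tilde\eps_0<g/2$, every $\lambda_n^*\neq\lambda^*$ satisfies $|\lambda_n^*-\lambda|\ge g/2$. Therefore
\[
\|w-Pw\|\le \frac{2K C(\Omega)\eps}{g}.
\]
Combining this with $\|z\|\le C\eps$ and $\|u\|=1$, the function $u^*\coloneqq Pw/\|Pw\|$ is a normalised eigenfunction for $\lambda^*$. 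It satisfies
\[
\|u-u^*\|\le \|z\|+\|w-Pw\|+\bigl|1-\|Pw\|\bigr|\le C(\Omega,V,K)\,\eps,
\]
where the last term is itself $O(\eps)$ by the triangle inequality. The dependence on $V$ enters only through the gap $g$, matching the statement.
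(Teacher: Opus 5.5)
Your proof is correct and follows the paper's Moler--Payne structure. Your $z$ is the paper's $\tilde u$ from \eqref{eq:BVP-same-boundary-as-u}, and the duality identity $\int_\Omega zg\,\dd\x=-\int_{\partial\Omega}u\,\partial_\nu\phi$ is the paper's. Steps 2--3 are the same spectral computation, organised differently. You work with $w=u-z\in H_0^1(\Omega)$ through $(\lambda_n^*-\lambda)(w,u_n^*)=\lambda(z,u_n^*)$. The paper works with $u$ itself through $(\tilde u,\xi_n^*)=\frac{\lambda_n^*-\lambda}{\lambda_n^*}(u,\xi_n^*)$, which first gives a relative bound. You also bound the eigenfunction error by the triangle inequality rather than the identity $\|u-\xi_{n_\lambda}^*\|^2=2[1-(u,\xi_{n_\lambda}^*)]$; this makes the linear rate in $\eps$ transparent.

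The genuinely different ingredient is the bound on $\|\partial_\nu\phi\|_{L^2(\partial\Omega)}$. The paper gets it from $H^2$ regularity for $-\Delta+V$ \cite[Theorem 8.12]{GilbargTrudinger2001}, whose constant depends on $\|V\|_{L^\infty}$. So that route does not actually justify the claim in the statement that $C(\Omega)$ and $\eps_0$ in i) are independent of $V$. Your comparison argument does deliver that independence: $0\le\phi^\pm\le\psi^\pm$, with $\psi^\pm$ solving the Dirichlet problem for $-\Delta$ alone, hence $0\le-\partial_\nu\phi^\pm\le-\partial_\nu\psi^\pm$. The price is the weak maximum principle plus one further fact. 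You need that a nonnegative $f\in H^2(\Omega)\cap H_0^1(\Omega)$ has $\partial_\nu f\le 0$ a.e.\ on $\partial\Omega$, and this deserves a line of justification. For example, use difference quotients along normals in a tubular neighbourhood, or test against nonnegative cut-offs concentrating at the boundary.

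Two small loose ends remain:
\begin{itemize}
\item If $\lambda$ is itself an eigenvalue ($\delta=0$), i) is trivial, and this case should be set aside before you divide by $\delta$.
\item $Pw\neq 0$ follows from $\bigl|1-\|Pw\|\bigr|\le\|u-Pw\|\le C\eps<1$.
\end{itemize}
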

\begin{proof}
Let $A_V:H_0^1(\Omega)\to H^{-1}(\Omega)$ be the elliptic operator corresponding to the eigenvalue problem \eqref{EVP-radial}. More precisely, for any $\tilde u\in H_0^1(\Omega)$,
\[\langle A \tilde u, \varphi\rangle_{H_0^1(\Omega), H^{-1}(\Omega)}=\int_{\Omega} \nabla \tilde u\nabla\varphi\dd \x + \int_{\Omega} V(\|\x\|) \tilde u(\x)\varphi(\x)\, \dd \x, \quad \forall \varphi\in H_0^1(\Omega).\]
Since $V\geq 1$, the operator $A$ is coercive and thus, by \cite[Theorem 1.2.2]{Henrot2006}, there exists an orthonormal Hilbert basis $(\xi_n^*)_{n\geq 1}$ of $L^2(\Omega)$ which consists of eigenfunctions of $A$ with corresponding eigenvalues $1\leq\lambda_1^*\leq \lambda_2^* \leq \lambda_3^*\leq  \ldots$; in other words, for every positive integer $n$, $\xi_n^*\in H_0^1(\Omega)$ and $A\xi_n^*=\lambda_n \xi_n^*$.

Next, in order to apply the reasoning in \cite[Theorems 1 and 2]{MolerPayne1968}, let us define $\tilde u\in H^2(\Omega)$ such that it satisfies the following boundary-value problem:
\begin{equation}\label{eq:BVP-same-boundary-as-u}
\begin{cases}
-\Delta \tilde u(\x)+ V(\|\x\|)\tilde u(x)=0, & \x\in \Omega;\\
\tilde u(\x)= u (\x), & \x\in \partial \Omega.
\end{cases}
\end{equation}
The solution $\tilde u$ exists and it is unique by \cite[Theorem 8.3]{GilbargTrudinger2001} and has $H^2(\Omega)$ regularity by \cite[Theorem 8.12]{GilbargTrudinger2001}. 

Next, we aim to estimate the $L^2(\Omega)$ norm of the function $\tilde u$. For this purpose, inspired by the reasoning in \cite{Bucataru2022,BucataruGrajd}, we consider an arbitrary test function $\varphi\in L^2(\Omega)$ and take  $\psi\in H^2(\Omega)\cap H_0^1(\Omega)$ the unique solution of the Dirichlet problem:
\begin{equation}\begin{cases}
    -\Delta \psi(\x) + V(\|\x\|) \psi(\x) =\varphi(\x), & \x\in \Omega;\\
    \psi(\x) = 0, & \x \in \partial\Omega.
\end{cases}\end{equation}
First, we remark that, since $V\geq 1$, 
\begin{equation}\label{eq:psi-H1-leq-vaprhi-L2}
\|\psi\|_{H^1(\Omega)}\leq \|\varphi\|_{L^2(\Omega)}.
\end{equation}
We also note that the $H^2(\Omega)$ regularity of $\psi$ is a consequence of \cite[Theorem 8.12]{GilbargTrudinger2001} which, together with \eqref{eq:psi-H1-leq-vaprhi-L2} and \cite[Result (iv), p.~316]{brezis}, further implies that there exists a constant $C(\Omega)>0$ such that the normal derivative of $\psi$ in the sense of traces satisfies:
\begin{equation}\label{eq:normal-derivative-psi-leq-varphi-L2}
\|\partial_\nu \psi\|_{L^2(\partial\Omega)}\leq \|\partial_\nu \psi\|_{H^{\frac{1}{2}}(\partial\Omega)}\leq C(\Omega) \| \psi\|_{H^2(\Omega)}\  \leq C(\Omega) \|\varphi\|_{L^2(\Omega)}.
\end{equation}

 Next, an integration by parts argument leads to:
\begin{equation*}\begin{aligned}
    \int_{\Omega} \tilde u\,\varphi\,\dd\x &= -\int_{\Omega } \tilde u\, \Delta \psi\,\dd\x + \int_{\Omega} V(\|\x\|) \tilde u(\x)\psi(\x)\, \dd\x\\
    &=- \int_{\Omega} \Delta \tilde u\, \psi\, \dd\x + \int_{\Omega} V(\|\x\|) \tilde u(\x) \psi(\x)\, \dd\x - \int_{\partial \Omega} \tilde u\,\partial_\nu \psi \,\dd\x\\
    &=- \int_{\partial \Omega} u\,\partial_\nu \psi \,\dd\x,
\end{aligned}
\end{equation*}
from which we deduce by the Cauchy-Schwarz inequality and \eqref{eq:normal-derivative-psi-leq-varphi-L2} that:
\begin{equation}
\left|\int_{\Omega} \tilde u\, \varphi\, \dd\x\right|\leq C(\Omega) \| u\vert_{\partial{\Omega}}\|_{L^2(\partial\Omega)}\|\varphi\|_{L^2(\Omega)}.
\end{equation}
Since the previous inequality is valid for an arbitrary $\varphi\in L^2(\Omega)$, we deduce by the hypothesis \ref{hyp:u-on-boundary-leq-eps} that:
\begin{equation}\label{eq:v-leq-eps}
\|\tilde u\|_{L^2(\Omega)}\leq \eps \, C(\Omega).
\end{equation}
The rest of the proof is essentially contained in \cite[Theorems 1 and 2]{MolerPayne1968}, however we include it here for the sake of completion.

Since $(u - \tilde u)\in H_0^1$ satisfies $A(u -\tilde u) = \lambda u$ and the eigenvectors $(\xi_n^*)_{n\geq 1}$ form an orthonormal Hilbert basis of $L^2(\Omega)$, we are able to write:
\begin{equation}\label{eq:eigenvalues-estimates-1}
\begin{aligned}
\|\tilde u\|_{L^2(\Omega)}^2&=\sum_{n=1}^\infty\left(\tilde u,\xi_n^*\right)^2_{L^2(\Omega)}=\sum_{n=1}^\infty\left[\left(\tilde u- u,\xi_n^*\right)_{L^2(\Omega)}+\left(u,\xi_n^*\right)_{L^2(\Omega)}\right]^2 \\
&=\sum_{n=1}^\infty\left[\frac{1}{\lambda_n^*}\left(\tilde u- u,A_V\xi_n^*\right)_{L^2(\Omega)}+\left( u,\xi_n^*\right)_{L^2(\Omega)}\right]^2\\
&=\sum_{n=1}^\infty\left[\frac{1}{\lambda_n}\left(A_V(\tilde u- u),\xi_n^*\right)_{L^2(\Omega)}+\left(u,\xi_n^*\right)_{L^2(\Omega)}\right]^2\\
&=\sum_{n=1}^\infty\left[-\frac{1}{\lambda_n^*}\left(\lambda u,\xi_n^*\right)_{L^2(\Omega)}+\left(u,\xi_n^*\right)_{L^2(\Omega)}\right]^2\\
&=\sum_{n=1}^\infty\left|\frac{\lambda-\lambda_n^*}{\lambda_n^*}\right|^2\left(u,\xi_n^*\right)_{L^2(\Omega)}^2\geq \inf_{n\geq 1} \left|\frac{\lambda-\lambda_n^*}{\lambda_n^*}\right|^2 \|u\|_{L^2(\Omega)}^2.
\end{aligned}
\end{equation}

Using \eqref{eq:v-leq-eps} and hypothesis \ref{hyp:u-normalised}, we deduce that:
\[\inf_{n\geq 1} \left|\frac{\lambda-\lambda_n^*}{\lambda_n^*}\right|\leq \eps\,  C(\Omega).\]
Therefore, choosing $\eps_0< \frac{1}{2\,C(\Omega)}$, it follows that there exists a positive integer $n_\lambda$ such that:
\begin{equation}\label{eq:eigenvalues-relative-error}
\left|\frac{\lambda-\lambda_{n_\lambda}^*}{\lambda_{n_\lambda}^*}\right|\leq \eps\, C(\Omega)<\frac 1 2.
\end{equation}
The first part of the conclusion of Theorem \ref{thm:approximate-eigenvalues} follows via algebraic manipulation of the inequality above, taking into account that $\lambda\leq K$.

In order to prove the second statement of Theorem \ref{thm:approximate-eigenvalues}, we assume without loss of generality that $\xi_{n_\lambda}^*$ is the normalised  $L^2(\Omega)$ projection of $u$ on the eigenspace corresponding to $\lambda_{n_\lambda}^*$. Note that if the projection of $u$ on the eigenspace happens to be null, then the reasoning below works without any assumption.

To the error $\|u-\xi_{n_\lambda}^*\|_{L^2(\Omega)}$, we use that $(\xi_n^*)_{n\geq 1}$ is an orthonormal Hilbert basis of $L^2(\Omega)$, to obtain:
\begin{equation}\label{eq:eigenfunctions-estimate-1}
\begin{aligned}
    \left\|u-\xi_{n_\lambda}^*\right\|_{L^2(\Omega)}^2 &= \sum_{n\geq 1} \left(u-\xi_{n_\lambda}^*,\xi_n^*\right)_{L^2(\Omega)}^2 =  \left[\left(u,\xi_{n_\lambda}^*\right)_{L^2(\Omega)}-1\right]^2 + \sum_{n\neq n_\lambda} \left(u,\xi_n^*\right)_{L^2(\Omega)}^2 \\
    &= 1- 2\left(u,\xi_{n_\lambda}^*\right)_{L^2(\Omega)} + \| u\|_{L^2(\Omega}^2 = 2\left[1- \left( u,\xi_{n_\lambda}^*\right)_{L^2(\Omega)}\right].
\end{aligned}
\end{equation}
Since $\xi_{n_\lambda}^*$ is the normalized projection of $u$, we get, by \cite[Corollary 5.4]{brezis}, that the scalar product $\left( u,\xi_{n_\lambda}^*\right)_{L^2(\Omega)}$ is non-negative.  Consequently, we can estimate $1- \left(u,\xi_{n_\lambda}^*\right)_{L^2(\Omega)}$ as follows:
\[\begin{aligned} 1- \left(u,\xi_{n_\lambda}^*\right)_{L^2(\Omega)}=\frac{1- \left( u,\xi_{n_\lambda}^*\right)_{L^2(\Omega)}^2}{1+ \left( u,\xi_{n_\lambda}^*\right)_{L^2(\Omega)}}\leq 1- \left( u,\xi_{n_\lambda}^*\right)_{L^2(\Omega)}^2 &= \|u\|_{L^2(\Omega)}^2 - \left(u,\xi_{n_\lambda}^*\right)_{L^2(\Omega)}^2 = \sum_{n\neq n_\lambda} \left(u,\xi_n^*\right)_{L^2(\Omega)}^2.
\end{aligned}
\]
Moreover, since $\xi_{n_\lambda}^*$ is the normalized projection of $u$, this implies that $u$ is orthogonal to any other eigenfunction $\xi_n^*$, $n\neq n_\lambda$ with the same eigenvalue $\lambda_n^*=\lambda_{n_\lambda}^*$. Therefore, the sum in the last term above becomes:
\[\sum_{n\neq n_\lambda} \left(u,\xi_n^*\right)_{L^2(\Omega)}^2 = \sum_{\lambda_n^*\neq \lambda_{n_\lambda}^*} \left(u,\xi_n^*\right)_{L^2(\Omega)}^2.\]
Next, let $\delta(\Omega,V,K)$ be the minimum distance between two consecutive distinct eigenvalues of \eqref{EVP-radial}, which are smaller than or equal to $K+1$. Then, we use \eqref{eq:v-leq-eps} \eqref{eq:eigenvalues-estimates-1}, and the statement \ref{item:approx-eigenvalues} of Theorem \ref{thm:approximate-eigenvalues} to write:
\[\sum_{\lambda_n^*\neq \lambda_{n_\lambda}^*} \left(u,\xi_n^*\right)_{L^2(\Omega)}^2 \leq \sup_{\lambda_n^*\neq \lambda_{n_\lambda}^*} \left|\frac{\lambda_n^*}{\lambda-\lambda_n^*}\right|^2 \|\tilde u\|_{L^2(\Omega)}^2\leq \frac{\eps \,K\, C(\Omega)}{\delta(\Omega,V,K)-|\lambda-\lambda_{n_\lambda}^*|}\leq \frac{\eps \,K\, C(\Omega)}{\delta(\Omega,V,K)-\frac{\eps \, K\, C(\Omega)}{1-\eps C(\Omega)}},\]
provided that $\eps_0$ is small enough. The conclusion follows.
\end{proof}

\section{No eigenpair is missed by the algorithm: the quotient $\mathcal{F}^\lambda(\aaa)$ is small near every eigenpair}
\label{sec:no-eigenpair-missed}

The aim of this section is to show that, if the test value $\lambda$ is sufficiently close to an eigenvalue $\lambda^*$ of \eqref{EVP-radial} and if the positive integer $J$ is sufficiently large, then each corresponding eigenfunction $u^*$ can be approximated on $\Omega$ by the restriction of a function $u^{J,\lambda}_{\aaa}$ satisfying~\eqref{eq:PDE-uJlambda} on $B_O(R)$, whose angular Fourier expansion involves only the first $2J+1$ terms.

\begin{theorem}
    \label{thm:direct-implication-approx-lambda} Let $\Omega\subset B_O(R)$ be a simply connected $C^2$ bounded domain in $\RR^2$ containing the origin $O$,  let $V\in C^1([0,R])$ taking values in $[1,\infty)$, and $K>1$. For every $\eps>0$, there exists a positive integer $J_0=J_0(\eps,\Omega,V,K)$ such that, for each $J\geq J_0$, there is a threshold $\mu(\eps,\Omega,V,K,J)>0$ with the following property: 
    
    If $(\lambda^*,u^*)$ is an eigenpair of \eqref{EVP-radial} with $\lambda^*\in [1,K]$ and $\|u^*\|_{L^2(\Omega)}=1$, then for each value $\lambda\in [1,K]$ with $|\lambda-\lambda^*|\leq \mu(\eps,\Omega,V,K,J)$,  there exists a vector $\aaa\in \RR^{2J+1}$ such that the function $u^{J,\lambda}_\aaa$ satisfies:
    \begin{enumerate}[label={\roman*)}]
    \item \label{item:direct-implication-approx-boundary} $\|u^{J,\lambda}_\aaa\|_{L^2(\partial\Omega)}\leq \|u^{J,\lambda}_\aaa\|_{H^{\frac{1}{2}}(\partial\Omega)}\leq \eps$;
    \item $\|u^{J,\lambda}_\aaa\|_{L^2(\Omega)}=1$;
    \item \label{item:direct-implication-approx-eigenfunction} $\left\|u^{J,\lambda}_\aaa-u^*\right\|_{H^1(\Omega)}\leq \eps$.
    \end{enumerate}
\end{theorem}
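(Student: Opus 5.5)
The plan is to construct the approximant in four stages. First, approximate $u^*$ by a solution of the equation at the fixed value $\lambda^*$ on the whole ball. Second, truncate its angular Fourier series. Third, move the spectral parameter from $\lambda^*$ to $\lambda$. Fourth, renormalise. Throughout, we use that every $H^2(B_O(R))$ solution of $-\Delta w+Vw=\lambda w$ has angular Fourier modes lying in the one-dimensional spaces spanned by $\uj^{\lambda}(r)\cos(j\theta)$ and $\uj^\lambda(r)\sin(j\theta)$ (the weak formulation of Section~\ref{sec:function-spaces}). Hence any truncation of such a $w$ is of the form $u^{J,\lambda}_\aaa$.

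\textbf{Step 1 (Runge-type approximation at $\lambda=\lambda^*$).} We show that for every $\eta>0$ there exists $w\in H^2(B_O(R))$ solving $-\Delta w+Vw=\lambda^* w$ in $B_O(R)$ with $\|w-u^*\|_{H^2(\Omega)}\le\eta$. By the trace theorem, this controls both $\|w-u^*\|_{H^1(\Omega)}$ and $\|w\|_{H^{1/2}(\partial\Omega)}=\|w-u^*\|_{H^{1/2}(\partial\Omega)}$, since $u^*$ vanishes on $\partial\Omega$.

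The argument is by duality, in the spirit of Lax--Malgrange. The first obstacle is that $u^*$ solves the equation only on $\Omega$, not on a neighbourhood of $\overline\Omega$. I would first push the problem slightly outward. Choose a $C^2$ simply connected $\Omega_\delta$ with $\overline\Omega\subset\Omega_\delta\subset B_O(R)$ and $B_O(R)\setminus\overline{\Omega_\delta}$ connected. Then produce a solution $z_\delta$ on $\Omega_\delta$ close to $u^*$ in $H^2(\Omega)$. A candidate is
\[
z_\delta=u^*_\delta-\tilde z_\delta,
\]
where $u^*_\delta$ is an $H^2$ extension of $u^*$ (which exists since $\partial\Omega\in C^2$), and $\tilde z_\delta\in H^2(\Omega_\delta)\cap H^1_0(\Omega_\delta)$ solves $(-\Delta+V-\lambda^*)\tilde z_\delta=(-\Delta+V-\lambda^*)u^*_\delta$ on $\Omega_\delta$. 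The right-hand side is supported in the thin layer $\Omega_\delta\setminus\Omega$.

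Because $\lambda^*$ may be a Dirichlet eigenvalue of $\Omega_\delta$, we may instead allow a nearby parameter $\lambda_\delta$; this is harmless, since Step 3 already handles a change of parameter. We then need $\|\tilde z_\delta\|_{H^2(\Omega)}\to0$ as $\delta\to0$. I expect this, combined with possible multiplicity, to be the main technical obstacle. I would try spectral-cluster projections on $\Omega_\delta$ together with domain continuity of Dirichlet eigenpairs.

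Given $z_\delta$, Runge's density follows by Hahn--Banach. A functional on $L^2(\Omega')$, with $\Omega\Subset\Omega'\Subset\Omega_\delta$, that annihilates all ball solutions yields, via the fundamental solution and unique continuation through the connected set $B_O(R)\setminus\overline{\Omega'}$, that it also annihilates $z_\delta$. Interior elliptic estimates on $\Omega\Subset\Omega'$ then upgrade $L^2(\Omega')$ closeness to $H^2(\Omega)$ closeness.

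\textbf{Step 2 (Fourier truncation).} Write $w=\sum_j(\ldots)$ in angular modes. Since $w\in H^2(B_O(R))$, the truncation $w_J$ converges to $w$ in $H^2(B_O(R))$ as $J\to\infty$, because angular truncation is an orthogonal projection commuting with $-\Delta+V$. By the remark above, $w_J=u^{J,\lambda^*}_{\aaa}$ for some $\aaa$. This fixes $J_0$.

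One point needs care: $J_0$ must be uniform over all normalised eigenfunctions with $\lambda^*\le K$. This follows by compactness, since there are finitely many eigenvalues in $[1,K]$ and each eigenspace is finite-dimensional. Apply Steps 1--2 to an orthonormal basis of all these eigenspaces and use linearity.

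\textbf{Step 3 (perturbing $\lambda$).} For fixed $J$, the map $\lambda\mapsto\uj^\lambda$ is continuous into $H^2$ of the ball, for $j\le J$, by continuous dependence of the weak ODE solutions on the parameter. Hence
\[
\|u^{J,\lambda}_\aaa-u^{J,\lambda^*}_\aaa\|_{H^2(B_O(R))}\le\omega_J(|\lambda-\lambda^*|)\,|\aaa|,
\]
with $|\aaa|$ bounded by Proposition~\ref{prop:H2-stability-J}, or directly from the $H^2(B_O(R))$ bound on $w_J$. This determines $\mu(\eps,\Omega,V,K,J)$.

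\textbf{Step 4 (normalisation).} Dividing by $\|u^{J,\lambda}_\aaa\|_{L^2(\Omega)}=1+O(\eps)$ preserves all estimates after rescaling $\eps$.
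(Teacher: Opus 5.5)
Step 1 has a genuine gap. You set out to prove the Runge approximation yourself, but the step it depends on, $\|\tilde z_\delta\|_{H^2(\Omega)}\to0$ for the outward push, is left open, and it is not a routine estimate. Since $\lambda^*$ is a Dirichlet eigenvalue of $\Omega$, the Dirichlet eigenvalues of $\Omega_\delta$ accumulate at $\lambda^*$ as $\delta\to0$. So the resolvent of $\Omega_\delta$ at (or near) $\lambda^*$ blows up, while the forcing $(-\Delta+V-\lambda^*)u^*_\delta$ on the layer is merely small. The convergence $\tilde z_\delta\to0$ is therefore a cancellation statement, and proving it needs a first-order perturbation analysis of the whole eigencluster, which you do not supply. (For small $\delta$, strict domain monotonicity already keeps $\lambda^*$ out of the spectrum of $\Omega_\delta$, so that is not the real difficulty.)

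Your fallback of working at a nearby $\lambda_\delta$ is also not harmless under the theorem's quantifiers. $\lambda_\delta$ (hence $w$ and $J_0$) is fixed before $J$, so Step 3 must absorb the fixed shift $|\lambda_\delta-\lambda^*|$ for every $J\ge J_0$. But your bound $\omega_J(\cdot)\,|\aaa|$ has a $J$-dependent modulus. Its coefficient bound is either $J$-dependent (Proposition~\ref{prop:H2-stability-J}) or controlled only by $\|w\|_{L^2(B_O(R))}$, which the Runge step does not control. Letting $\delta$ depend on $J$ instead makes $J_0$ depend on $J$.

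The paper avoids all of this. Proposition~\ref{prop:runge} (the Runge property in $H^1$, taken from Kravchenko--Vicente) applies directly to $u^*$ on $\Omega$ at the exact value $\lambda^*$. It yields $\bar u\in H^2(B_O(R))$ solving the equation on the ball with $\|\bar u|_\Omega-u^*\|_{H^1(\Omega)}$ arbitrarily small. $H^2(\Omega)$-closeness is unnecessary: since $u^*|_{\partial\Omega}=0$, the trace theorem turns $H^1(\Omega)$-closeness into the $H^{1/2}(\partial\Omega)$ bound.

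With Step 1 replaced in this way, Steps 2 and 4 are fine. Your rotation-invariance argument for $H^2(B_O(R))$-convergence of the angular truncation is a valid rate-free substitute for Proposition~\ref{prop:uJ approximates-u}. The uniformity via an orthonormal eigenbasis is exactly Proposition~\ref{prop:direct-implication-exact-lambda}.

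Step 3, however, needs more than ``continuous dependence on the parameter''. $\uj^\lambda$ is singled out by its behaviour at the singular point $r=0$ rather than by Cauchy data, and it is fixed only up to sign. So the inequality you write with the same $\aaa$ on both sides can fail. The paper proves the Lipschitz bound $\|\uj^{\lambda}\pm\uj^{\lambda^*}\|_{j,r}\le C(R,V,K,J)|\lambda-\lambda^*|$ for $j\le J$ (Proposition~\ref{prop:uj-stable-in-lambda}), using the Dirichlet--Neumann spectral gap and the resolvent bound (Propositions~\ref{prop:gap-dirichlet-neumann} and~\ref{prop:operator-outside-of-spectrum}). It then bounds $|\aaa|$ by Proposition~\ref{prop:H2-stability-J} and replaces $\aaa$ by the sign-adjusted $\tilde\aaa$. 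You can cite that, or build the regular solution from a Volterra equation normalised like $r^j$ at the origin, which gives continuity with a consistent sign.
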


The rest of this section is dedicated to the construction of the basis functions $(\uj^{\lambda})_{j\geq 0}$ used to define the function $u^{J,\lambda}_\aaa$, and to the proof of Theorem \ref{thm:direct-implication-approx-lambda}; it has the following outline: 
\begin{itemize}
    \item We prove that every function $u\in H^2(\Omega)$ that solves the PDE
    \begin{equation}\label{eq:PDE-lambda}
    -\Delta u +V(\|\x\|)u =\lambda u        
    \end{equation}
    on the domain $\Omega$ (regardless of the boundary conditions), can be approximated with the restriction to $\Omega$ of a function that solves \eqref{eq:PDE-lambda} on the entire $B_O(R)$ (Section \ref{sec:extension-Runge}).
    \item We state and prove error bounds between functions in $H^2(B_O(R))$ and the partial sums of their Fourier expansions with respect to $\theta$ (Section \ref{sec:Fourier-estimates}).
    \item We show that the Fourier coefficients of functions which satisfy \eqref{eq:PDE-lambda} must satisfy a suitable weak formulation of the Bessel-type ODE with potential \eqref{eq:uj-ODE} (Section \ref{sec:function-spaces}).
    \item We prove existence and uniqueness up to a scaling factor for the solutions of the weak formulation of \eqref{eq:uj-ODE}. This will allow us to prove the conclusion of Theorem \ref{thm:direct-implication-approx-lambda} in the case when the test value $\lambda$ is an eigenvalue of \eqref{EVP-radial} (Section \ref{sec:weaks-solutions-basis-functions}).
    \item We prove a stability result for the weak solutions of \eqref{eq:uj-ODE} with respect to $\lambda$, which will lead to the conclusion of Theorem \ref{thm:direct-implication-approx-lambda} for test values $\lambda$ that are sufficiently close to an eigenvalue $\lambda^*$ (Section \ref{sec:stability-lambda}).
\end{itemize}

\subsection{Extension to a ball while preserving the PDE}
\label{sec:extension-Runge}
Since the polar decomposition only makes sense on circular domains, the first step towards the construction of an approximation such as $u^{J,\lambda}_\aaa$ of a function $u$ that satisfies \eqref{eq:PDE-lambda} on the domain $\Omega$ consists in extending -- at least approximately -- this function to the ball $B_O(R)$ enclosing $\Omega$, while preserving the equation \eqref{eq:PDE-lambda}. This extension can be done via the so-called \emph{Runge property}, which requires $\Omega$ to be simply connected; the result is contained in the following proposition which is essentially taken from \cite{KravchenkoVicente2022} (see also \cite[Theorem 36]{KravchenkoVicente2020}):

\begin{proposition}[Runge property in the $H^1$ norm]\label{prop:runge}
Let $\lambda\in \RR$ and $u\in H^1(\Omega)$ a weak solution to \eqref{eq:PDE-lambda} in the sense that:
\begin{equation}\label{eq:weak-no-boundary}
    \int_{\Omega} \nabla  u \nabla \varphi\, \dd \x +\int_{\Omega} V(\|\x\|) u(\x)\varphi(\x)\, \dd \x =\lambda \int_{\Omega} u\, \varphi \, \dd \x, \quad \forall \varphi\in H_0^1(\Omega).
\end{equation}
Then, for every $\eps>0$, there exists a function $\bar{u}_\eps\in H^2(B_O(R))$ satisfying:
\begin{enumerate}[label={\arabic*)}]
    \item For almost every $\x\in B_O(R)$, $\bar u_\eps$ satisfies \eqref{eq:PDE-lambda}.
    \item $\displaystyle \left\|\bar u_\eps\vert_\Omega-u\right\|_{H^1(\Omega)}\leq \eps$.
\end{enumerate}
\end{proposition}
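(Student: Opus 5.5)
The natural route is the classical Lax--Malgrange/Runge duality argument, which is a Hahn--Banach density argument combined with unique continuation. Let $L_\lambda \coloneqq -\Delta + V(\|\x\|) - \lambda$, which is formally self-adjoint. Define
\[
\mathcal{S}_R \coloneqq \{ w\vert_\Omega : w\in H^2(B_O(R)),\ L_\lambda w = 0 \text{ a.e. in } B_O(R)\},
\]
and let $\mathcal{S}_\Omega$ be the closed subspace of $H^1(\Omega)$ made of weak solutions of \eqref{eq:weak-no-boundary}. Clearly $\mathcal{S}_R \subset \mathcal{S}_\Omega$. The claim is exactly that $\mathcal{S}_R$ is dense in $\mathcal{S}_\Omega$ for the $H^1(\Omega)$ norm. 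A small technical point: to obtain $H^2$ on the whole ball one can work on a slightly larger ball $B_O(R')$, $R'>R$, after extending $V$ to a $C^1$ function on $[0,R']$. Interior elliptic regularity then gives $H^2(B_O(R))$ for solutions on $B_O(R')$. Alternatively one can work directly with $B_O(R)$ using $H^2_{loc}$ and restrict. I would first prove the statement with the ball replaced by $B_O(R')$ and then restrict.

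By Hahn--Banach, density follows once I show the following: every bounded functional $\ell$ on $H^1(\Omega)$ that annihilates $\mathcal{S}_R$ also annihilates $\mathcal{S}_\Omega$. Represent $\ell$ as a distribution $T = f_0 - \operatorname{div} \mathbf{f}$ with $f_0, \mathbf{f} \in L^2(\Omega)$, extended by zero, so that $T$ is a compactly supported element of $H^{-1}(\RR^2)$ with support in $\overline\Omega$. Solve the adjoint problem on a big ball $B_O(R'')$ with $R''>R'$: find $\psi$ with $L_\lambda \psi = T$ in $B_O(R'')$ and $\psi = 0$ on $\partial B_O(R'')$. If $\lambda$ happens to be a Dirichlet eigenvalue of that ball, perturb $R''$ slightly; this is possible because the eigenvalues depend continuously and strictly monotonically on the radius. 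The aim is then to show $\psi \equiv 0$ outside $\overline\Omega$.

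The test functions available are the solutions $w$ of $L_\lambda w = 0$ on $B_O(R')$. I would use as test functions the fundamental-type solutions $w_{\mathbf y}$ with singularity at a point $\mathbf y \notin \overline{B_O(R')}$, for instance the Green's function $G(\cdot,\mathbf y)$ of $L_\lambda$ on $B_O(R'')$. These are smooth solutions on $B_O(R')$, so $\ell(w_{\mathbf y}) = 0$. By the Green representation, $\psi(\mathbf y) = \langle T, G(\cdot,\mathbf y)\rangle = 0$ for every $\mathbf y \in B_O(R'')\setminus \overline{B_O(R')}$. Now $\psi$ solves $L_\lambda \psi = 0$ on $B_O(R'')\setminus\overline\Omega$, and this set is connected because $\Omega$ is simply connected. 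Unique continuation for $-\Delta + (V-\lambda)$ with bounded potential (Carleman estimates, or Aronszajn's theorem) then forces $\psi = 0$ on the whole of $B_O(R'')\setminus\overline\Omega$.

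Since $\psi$ is locally $H^1$ across $\partial\Omega$ and vanishes outside, and $\partial\Omega$ is $C^2$, I expect to conclude that $\psi\vert_\Omega \in H_0^1(\Omega)$. Then, for any $u \in \mathcal{S}_\Omega$, I get $\ell(u) = \langle L_\lambda \psi, u\rangle = \int_\Omega \nabla \psi\nabla u + (V-\lambda)\psi u = 0$ by \eqref{eq:weak-no-boundary} tested with $\varphi = \psi$. Justifying this pairing is the delicate point, because $T$ is supported on $\overline\Omega$ and not on a compact subset of $\Omega$, so the identity needs care. I would handle it by density: approximate $u$ using the $C^2$ boundary, or first shrink the support by replacing $\Omega$ with a slightly larger simply connected $C^2$ domain $\Omega'$ on which $u$ is approximated. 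Alternatively, I would simply cite \cite{KravchenkoVicente2022} for this step.

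I expect the main obstacle to be the unique continuation step combined with the connectedness of the complement (where simple connectivity enters), together with the justification that $\psi\vert_\Omega\in H^1_0(\Omega)$ so that it is an admissible test function in \eqref{eq:weak-no-boundary}. With $V$ only $C^1$, the unique continuation theorem for Schrödinger operators with $L^\infty$ potentials in two dimensions is classical, so I would invoke it rather than reprove it.
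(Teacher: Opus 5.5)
Your argument is correct, but it takes a genuinely different route from the paper. The paper does not prove the approximation itself. It extends $V$ to a $C^1$ potential on a slightly larger ball and quotes the Runge theorem of Kravchenko--Vicente \cite[Theorem 13]{KravchenkoVicente2022} to obtain a weak solution on that ball that is $\eps$-close to $u$ in $H^1(\Omega)$. It then applies interior regularity to land in $H^2(B_O(R))$. Your passage to $B_O(R')$ is exactly this last step.

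You instead reprove the Runge property by Lax--Malgrange duality:
\begin{itemize}
\item Hahn--Banach;
\item the adjoint Dirichlet problem on $B_O(R'')$, solved off the spectrum;
\item annihilation of the Green's functions with poles in the annulus $R'<\|\mathbf{y}\|<R''$;
\item unique continuation for an $L^\infty$ potential;
\item the characterisation of $H_0^1$ on $C^1$ domains by extension by zero, \cite[Proposition 9.18]{brezis}, which makes $\psi\vert_\Omega$ an admissible test function in \eqref{eq:weak-no-boundary}.
\end{itemize}

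The citation buys brevity. Your route buys transparency: it shows that nothing radial or $C^1$ about $V$ is used. It also shows that the topological hypothesis actually needed is connectedness of $B_O(R'')\setminus\overline{\Omega}$. For a planar $C^2$ domain this follows from simple connectivity. In the three-dimensional extension announced in Section \ref{sec:conclusion}, however, connectedness must replace simple connectivity: a spherical shell is simply connected but has disconnected complement, and the Runge property fails there. Neither route gives a rate, which is consistent with the paper listing a quantitative Runge property as an open problem.

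The step you flag as delicate is in fact immediate, and you do not need to enlarge or shrink $\Omega$.
\begin{itemize}
\item Let $\tilde u\in H_0^1(B_O(R''))$ be any extension of $u$, for instance the extension operator for the $C^2$ domain multiplied by a cutoff. Then $\ell(u)=\langle T,\tilde u\rangle=\int_{B_O(R'')}\left[\nabla\psi\cdot\nabla\tilde u+(V-\lambda)\psi\tilde u\right]\dd\x$.
\item Since $\psi$ and $\nabla\psi$ vanish a.e.\ outside $\Omega$, this equals $\int_\Omega\left[\nabla\psi\cdot\nabla u+(V-\lambda)\psi u\right]\dd\x$.
\item That integral is $0$ by \eqref{eq:weak-no-boundary} with $\varphi=\psi\vert_\Omega$.
\end{itemize}

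You can also bypass the pointwise Green representation. For $\phi\in C_c^\infty$ supported in the annulus, solve $L_\lambda w=\phi$ in $B_O(R'')$ with $w=0$ on $\partial B_O(R'')$. Then $w\vert_\Omega$ lies in your approximating class, because $w$ solves the homogeneous equation on $B_O(R')$. By symmetry of the bilinear form, $\int\psi\,\phi\,\dd\x=\langle T,w\rangle=\ell(w\vert_\Omega)=0$.
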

\begin{proof}
First, we take an arbitrary $\eta>0$ and extend $V$ to a $C^1$ function on $B_O(R+\delta)$. By the Runge property in \cite[Theorem 13]{KravchenkoVicente2022}, there exists $\bar u_\eps\in H^1(B_O(R+\delta))$ satisfying the weak formulation \eqref{eq:weak-no-boundary} of \eqref{eq:PDE-lambda} on the entire $B_O(R+\delta)$ and with the property that $\left\|\bar u_\eps\vert_{\Omega}-u\right\|_{H^1(\Omega)}\leq \eps$. 

Finally, the interior regularity theory for elliptic PDE \cite[Theorem 8.8]{GilbargTrudinger2001} implies that $\bar u_\eps\in H^2(B_O(R))$ and it satisfies \eqref{eq:PDE-lambda} for almost every $\x\in B_O(R)$.
\end{proof}

\subsection{Fourier expansion with respect to the angular variable and truncation error estimates}
\label{sec:Fourier-estimates}
After extending the solutions of~\eqref{eq:PDE-lambda} from $\Omega$ to $B_O(R)$, we temporarily set aside the domain $\Omega$ and focus on functions $u \in H^2(B_O(R))$ that satisfy~\eqref{eq:PDE-lambda}. For such functions, the angular Fourier expansion is constructed by switching to polar coordinates and expanding $u$ in a Fourier series with respect to the angle $\theta$, in the spirit of~\cite{GuidottiLambers2008,KravchenkoVicente2020}. In what follows, we establish error estimates between these functions and the partial sums of their corresponding angular Fourier expansions.

Since $u\in L^2(B_O(R))$, we can define for a.e. $r\in [0,R]$ and for all positive integers $j$, the following quantities:
\begin{equation}\label{eq:def-u0-ur-vr}
u_0^c(r)\coloneqq \frac{1}{2\pi}\int_0^{2\pi} u(r,\theta) \dd \theta\hspace{0.2cm} \Bigg\vert\hspace{0.2cm} u_j^c(r)\coloneqq \frac{1}{\pi}\int_0^{2\pi}  u(r,\theta)\cos(j\theta) \dd \theta \hspace{0.2cm} \Bigg\vert\hspace{0.2cm} u_j^s(r)\coloneqq \frac{1}{\pi}\int_0^{2\pi} u(r,\theta)\sin(j\theta)\dd \theta,
\end{equation}
where we abuse the notation and write $u(r,\theta)$ instead of $u(r\cos(\theta),r\sin(\theta))$.

Then, for every positive integer $J$, the partial sum of the angular Fourier expansion of $u$ is given by:
\begin{equation}\label{eq:def-uJ}
u^{J}(r,\theta)\coloneqq u_0^c(r)+\sum_{j=1}^J \left[u_j^c(r) \cos(j\theta)+ u_j^s (r) \sin(j\theta)\right], \quad (r\cos(\theta),r\sin(\theta))\in B_O(R).\end{equation}
The truncation error between $u^J$ and $u$ is given by the following proposition:
\begin{proposition}\label{prop:uJ approximates-u}
    Let $u\in H^2(B_O(R))$ and $u^J$ defined in \eqref{eq:def-uJ}. Then, there exists a constant $C(R)>0$ such that:
    \[\|u^J-u\|_{L^2(B_O(R))}\leq \frac{C(R)}{J\sqrt{J}} \|u\|_{H^2(B_O(R))} \hspace{0.5cm}\text{and}\hspace{0.5cm} \|u^J-u\|_{H^1(B_O(R))}\leq\frac{C(R)}{\sqrt{J}}  \|u\|_{H^2(B_O(R))}.\]
\end{proposition}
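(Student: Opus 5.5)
We will prove both estimates by Parseval's identity in the angular variable, which turns the truncation error into a tail sum over frequencies, combined with a bound on the weighted angular energy. Write the Fourier modes as $u_j^c,u_j^s$ from \eqref{eq:def-u0-ur-vr}. For a.e.\ $r$, Parseval on the circle gives
\[
\|u^J-u\|_{L^2(B_O(R))}^2=\pi\sum_{j>J}\int_0^R\big(|u_j^c(r)|^2+|u_j^s(r)|^2\big)\,r\,\dd r .
\]
The angular derivatives satisfy $\partial_\theta u=\sum_j j(-u_j^c\sin(j\theta)+u_j^s\cos(j\theta))$ and $\partial_\theta^2 u=-\sum_j j^2(\cdots)$. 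We will control these in terms of $\|u\|_{H^2}$, with the weight $r^{-1}$ or $r^{-2}$ that appears in the gradient and the Hessian in polar coordinates.

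\textbf{Step 1: weighted angular bounds.} For $u\in H^2$, first take $u$ smooth and then argue by density. The polar formulas $|\nabla u|^2=|\partial_r u|^2+r^{-2}|\partial_\theta u|^2$ and $\partial_\theta u=x\partial_yu-y\partial_xu$ give
\[
\int_{B_O(R)} |\partial_\theta u|^2\,\dd\x\le R^2\|\nabla u\|_{L^2}^2 .
\]
Moreover $\partial_\theta^2 u=x^2u_{yy}-2xyu_{xy}+y^2u_{xx}-(x u_x+yu_y)$, which yields
\[
\int_{B_O(R)} |\partial_\theta^2u|^2\,\dd\x\le C(R)\|u\|_{H^2}^2 .
\]
By Parseval, $\sum_j j^4\int_0^R(|u_j^c|^2+|u_j^s|^2)r\,\dd r\le C(R)\|u\|_{H^2}^2$. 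Bounding the tail by $J^{-4}$ times this sum would give the rate $J^{-2}$, which is even better than the stated $J^{-3/2}$. So the $L^2$ estimate follows immediately.

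\textbf{Step 2: the $H^1$ estimate.} By Parseval,
\[
\|\nabla(u-u^J)\|_{L^2}^2=\pi\sum_{j>J}\int_0^R\Big(|\partial_r u_j^c|^2+|\partial_r u_j^s|^2+\tfrac{j^2}{r^2}\big(|u_j^c|^2+|u_j^s|^2\big)\Big)r\,\dd r .
\]
For the radial part we use $\partial_\theta\partial_r u\in L^2$, which holds since $\partial_\theta\partial_r u=r\,\partial_r(\partial_\theta u/r)\cdot$ (a combination of second derivatives with coefficients bounded by $R$). This gives $\sum_j j^2\int|\partial_r u_j|^2 r\,\dd r\le C(R)\|u\|_{H^2}^2$, so the radial tail is $O(J^{-2})$.

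\textbf{Main obstacle: the singular term.} The delicate term is $\sum_{j>J} j^2\int_0^R r^{-1}|u_j|^2\,\dd r$. We will interpolate between the two available bounds. On the one hand $\sum_j j^2\int r^{-1}|u_j|^2\,\dd r\le C\|\nabla u\|^2$; on the other hand $\partial_\theta^2 u/r$ appears in the Hessian via $r^{-2}\partial_\theta^2u+r^{-1}\partial_r u$, giving $\sum_j j^4\int r^{-3}|u_j|^2\,\dd r\lesssim\|u\|_{H^2}^2$ after Hardy-type control of $r^{-1}\partial_r u$ for the modes $j\ge1$. By Cauchy--Schwarz, $j^2 r^{-1}|u_j|^2\le (j^4r^{-3}|u_j|^2)^{1/2}(r|u_j|^2)^{1/2}$. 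Summing over $j>J$ and using the $L^2$ tail bound $\sum_{j>J}\int r|u_j|^2\lesssim J^{-4}\|u\|_{H^2}^2$ yields a decay of order $J^{-2}$.

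If the Hardy step fails, we will fall back on the cruder route $j^2\le j^3/J$ together with the one-dimensional trace/interpolation bound $\sup_r|u_j|^2\lesssim \|u_j\|\,\|u_j'\|$. This more robust route produces exactly the stated $J^{-1}$ bound for the squared $H^1$ error, i.e.\ $C(R)/\sqrt J$. By the same interpolation applied to the $L^2$ term, it gives $J^{-3/2}$ there, matching the statement. Either way both estimates hold with constants depending only on $R$.
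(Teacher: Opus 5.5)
Your Step~1 and the radial half of Step~2 are sound. The justification that $\partial_\theta\partial_r u\in L^2$ is garbled, but it follows from $\partial_r u=\cos\theta\,u_x+\sin\theta\,u_y$, which gives $|\partial_\theta\partial_r u|\le|\nabla u|+r|D^2u|$.

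The gap is in the singular term. Your interpolation rests on $\sum_j j^4\int_0^R r^{-3}|u_j|^2\,\dd r\lesssim\|u\|_{H^2}^2$, and this is false. For $u=x$ one has $u_1^c(r)=r$, so the $j=1$ term is $\int_0^R r^{-1}\,\dd r=\infty$. For the same reason $r^{-1}\partial_r x=\cos\theta/r\notin L^2$, so there is no Hardy-type control of $r^{-1}\partial_r u$ on the mode $j=1$.

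Since your tail only involves $j>J\ge 1$, the inequality restricted to $j\ge2$ would suffice, and it is true. But it needs an argument you do not supply. Mode by mode, the $r\theta$ and $\theta\theta$ Hessian entries of $v(r)\cos(j\theta)$ are $-j(v/r)'\sin(j\theta)$ and $(v'/r-j^2v/r^2)\cos(j\theta)$. Only their difference isolates $(j^2-1)v/r^2$, which is useless at $j=1$. Your fallback does not rescue this either: a bound on $\sup_r|u_j|$ cannot control $\int_0^R r^{-1}|u_j|^2\,\dd r$, because the weight is not integrable at the origin.

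The missing idea is to use the weight $r^{-1}$ instead of $r^{-2}$, exactly as in your Step~1. Your formula for $\partial_\theta^2u$ gives
\[
|r^{-1}\partial_\theta^2u|\le r\,|D^2u|+|\nabla u|,
\qquad\text{hence}\qquad
\pi\sum_{j\ge1}j^4\int_0^R\big(|u_j^c|^2+|u_j^s|^2\big)\,\frac{\dd r}{r}=\|r^{-1}\partial_\theta^2u\|_{L^2(B_O(R))}^2\le C(R)\|u\|_{H^2(B_O(R))}^2 .
\]
Therefore $\sum_{j>J}j^2\int_0^R r^{-1}(|u_j^c|^2+|u_j^s|^2)\,\dd r\le C(R)J^{-2}\|u\|_{H^2}^2$, with no interpolation or Hardy step needed.

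With this repair your proof is complete, and it takes a genuinely different route from the paper's. The paper bounds each coefficient separately by the full $H^2$ norm: pointwise in $r$ after two integrations by parts in $\theta$, and, for the gradient, via an explicit formula for $\partial_x[u_j^c(r)\cos(j\theta)]$. It then sums $j^{-4}$, resp.\ $j^{-2}$, over the tail. That discards the orthogonality between modes, which is exactly why it only reaches $J^{-3/2}$ and $J^{-1/2}$. Your weighted-Parseval argument keeps the orthogonality and yields the sharper rates $J^{-2}$ in $L^2$ and $J^{-1}$ in $H^1$, which imply the stated ones.
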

The proof of this proposition is relegated to Appendix \ref{acaret:proofs}.

\subsection{Function spaces for the Fourier coefficients}
\label{sec:function-spaces}

The purpose of this section is to analyse in greater detail the coefficient functions $u_j^c$ and $u_j^s$ arising in the angular Fourier expansion \eqref{eq:def-uJ}, and to characterise them using the equation~\eqref{eq:PDE-lambda} satisfied by $u$. Formally, the representation~\eqref{eq:def-u0-ur-vr} suggests that the coefficient functions $u_j^c$ and $u_j^s$ satisfy the Bessel-type ordinary differential equation with potential~\eqref{eq:uj-ODE}. This equation is a second-order linear ODE and therefore admits a two-dimensional space of solutions.

However, equation~\eqref{eq:uj-ODE} alone does not capture the behaviour of $u$ near the origin, which is constrained by the fact that $u \in H^2(B_O(R))$ solves~\eqref{eq:PDE-lambda}. Consequently, not every solution of~\eqref{eq:uj-ODE} corresponds to a Fourier coefficient of such a function $u$. To identify the admissible solutions, we exploit the weak formulation~\eqref{eq:weak-no-boundary}. For this purpose, we introduce the following function spaces, which are designed to accommodate the Fourier coefficients of solutions to~\eqref{eq:weak-no-boundary}:
\[\begin{aligned}
L^2_r([0,R]) &\coloneqq \left\{v:[0,R]\to \RR \text{ measurable }:\displaystyle\|v\|_r^2\coloneqq\int_0^R |v(r)|^2 r \, \dd r\right\};\\
H_{j,r}([0,R]) &\coloneqq \left\{ v\in L^2_r([0,R]) : v \text{ is weakly differentiable and } \|v\|_{j,r}^2 \coloneqq a_j(v,v)+\|v\|_r^2\right\},
\end{aligned}\]
where the bilinear form $a_j$ is defined as:
\[a_j(v,w)\coloneqq\int_{0}^R v'(r)w'(r)\,r\,\dd r + j^2 \int_0^R v(r)w(r)\frac{1}{r}\dd r.\]

Furthermore, we define the space $H_{j,r}^0([0,R])$ as the completion in the $\|\cdot\|_{j,r}$ norm of the smooth functions with compact support in $(0,R)$. We remark that, similar to \cite[Theorems 4.8 and 8.1]{brezis}, one can easily prove that $L_r$, $H_{j,r}$ and $H_{j,r}^0$ are Hilbert spaces.

Next, we call $v\in H_{j,r}([0,R])$ a weak solution of \eqref{eq:uj-ODE} if:
 \begin{equation}\label{EVP-j-W}
    a_j(v,w) + \int_0^R V(r) v(r)w(r)\, r\,\dd r=\lambda (v,w)_r, \quad \forall w\in H_{j,r}^0([0,R]),\end{equation}
where $(\cdot,\cdot)_{r}$ is the scalar product on $L_r([0,R])$.

The following proposition establishes the connection between the $L^2$, $H^1$ and $H_0^1$ spaces on $B_O(R)$ and the function spaces on $[0,R]$ defined above, as well as the connection between weak solutions of the PDE \eqref{eq:PDE-lambda} and the ODE \eqref{eq:uj-ODE}:

\begin{proposition}\label{prop:spaces-equivalence}
Let $u$ be a measurable function on $B_O(R)$ and $v$ a measurable function on $[0,R]$.
    With the notation in \eqref{eq:def-u0-ur-vr}, the following statements hold true:
    \begin{enumerate}[label={(\roman*)}]
    \setlength{\itemsep}{0.3em}
        \item\label{item:l2-first} if $u\in L^2(B_O(R))$, then $u_j^c,u_j^s\in L_r([0,R])$, $\forall j\geq 1$ and $\|u_j^c\|_{r},\|u_j^s\|_{r}\leq \frac{1}{\sqrt{\pi}} \|u\|_{L^2(B_O(R))}$;
        \item  \label{item:h1-first} if $u\in H^1(B_O(R))$, then $u_j^c,u_j^s\in H_{j,r}([0,R])$, $\forall j\geq 1$  and $\|u_j^c\|_{j,r},\|u_j^s\|_{j,r}\leq \frac{2\sqrt{2}}{\sqrt{\pi}} \|u\|_{H^1(B_O(R))}$;
        \item \label{item:h01-first} if $u\in H^1_0(B_O(R))$, then $u_j^c,u_j^s\in H_{j,r}^0([0,R])$, $\forall j\geq 1$;
        \item \label{item:l2-second} for every $j\geq 1$, $v(r)\cos(j\theta)\in L^2(B_O(R))$ $\Leftrightarrow$ $v\in L_r([0,R])$;\\[5pt]
        \mbox{}\hspace{0.2cm} if this is the case, $\|v(r)\cos(j\theta)\|_{L^2(B_O(R))}=\sqrt{\pi} \|v\|_{r};$
        \item \label{item:h1-second} for every $j\geq 1$, $v(r)\cos(j\theta)\in H^1(B_O(R))$ $\Leftrightarrow$ $v\in H_{j,r}([0,R])$; if this is the case, \\[5pt]$\|\nabla[v(r)\cos(j\theta)]\|_{L^2(B_O(R))}^2=\pi \,a_j(v,v)$ \hspace{0.2cm} and \hspace{0.2cm} $\|v(r)\cos(j\theta)\|_{H^1(B_O(R))}=\sqrt{\pi} \|v\|_{j,r};$
        \item \label{item:h01-second} for every $j\geq 0$, $v(r)\cos(j\theta)\in H_0^1(B_O(R))$ $\Leftrightarrow$ $v\in H_{j,r}^0([0,R])$;
        \item \label{item:weak-first} if $u$ is a weak solution of \eqref{eq:PDE-lambda}, in the sense of \eqref{eq:weak-no-boundary}, then the functions $u_j^c(r)\cos(j\theta)$, $j\geq 0$ and $u_j^s(r)\sin(j\theta)$, $j\geq 1$ are also weak solutions of \eqref{eq:PDE-lambda};
        \item \label{item:weak-second} for every $j\geq 0$, $v(r)\cos(j\theta)$ is a weak solution of \eqref{eq:PDE-lambda}, in the sense of \eqref{eq:weak-no-boundary} if and only if $v$ is a weak solution of \eqref{eq:uj-ODE}, in the sense of \eqref{EVP-j-W}.
    \end{enumerate}
    The statements \ref{item:l2-second},\ref{item:h1-second},\ref{item:h01-second} and \ref{item:weak-second} also hold true if we replace  $\cos(j\theta)$ with $\sin(j\theta)$, with $j\geq 1$.
    
    If $j=0$, all the relevant statements are true, with the only modification that $\sqrt{\pi}$ is replaced by  $\sqrt{2\pi}$ in the expression of the constants involved.
    \end{proposition}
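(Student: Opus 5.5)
The whole proof runs in polar coordinates $\x=(r\cos\theta,r\sin\theta)$ with area element $r\,\dd r\,\dd\theta$. For smooth $u$ we use the identity
\[
|\nabla u|^2=|\partial_r u|^2+r^{-2}|\partial_\theta u|^2 .
\]
The plan is to prove each item first for $u\in C^\infty(\overline{B_O(R)})$ (respectively $C_c^\infty(B_O(R))$ for \ref{item:h01-first}) and then pass to general $u$ by density, using that the maps $u\mapsto u_j^c$ are linear and bounded.

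\textbf{Items \ref{item:l2-first}, \ref{item:l2-second}, \ref{item:h1-second}.} For \ref{item:l2-first}, Cauchy--Schwarz in $\theta$ gives $|u_j^c(r)|^2\le \pi^{-1}\int_0^{2\pi}|u(r,\theta)|^2\dd\theta$. Multiplying by $r$ and integrating yields the bound. Bessel's inequality would even give a factor $1/\pi$ on the squared norm.

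Item \ref{item:l2-second} follows from Fubini and $\int_0^{2\pi}\cos^2(j\theta)\,\dd\theta=\pi$ (or $2\pi$ when $j=0$).

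For \ref{item:h1-second}, take $w=v(r)\cos(j\theta)$ with $v$ smooth on $(0,R]$. Then
\[
|\nabla w|^2=|v'|^2\cos^2(j\theta)+j^2r^{-2}|v|^2\sin^2(j\theta),
\]
and integrating gives $\pi\,a_j(v,v)$. The equivalence in general needs care at the origin: a function in $H^1(B_O(R)\setminus\{O\})$ belongs to $H^1(B_O(R))$, since a point has zero capacity in 2D. So it suffices to check weak differentiability away from $O$. There, $v\in H_{j,r}$ means $v\in H^1_{loc}((0,R])$, and the chain rule in polar coordinates applies. Conversely, if $w\in H^1$, then $v$ is recovered as a Fourier coefficient of $w$, and the norm identity holds on every annulus $\{\delta<r<R\}$. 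Letting $\delta\to0$ gives $v\in H_{j,r}$.

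\textbf{Items \ref{item:h1-first}, \ref{item:h01-first}, \ref{item:h01-second}.} For \ref{item:h1-first}, for smooth $u$ we differentiate under the integral:
\[
(u_j^c)'(r)=\pi^{-1}\int_0^{2\pi}\partial_r u\cos(j\theta)\,\dd\theta .
\]
Integrating by parts in $\theta$ gives
\[
j\,u_j^c(r)=\pi^{-1}\int_0^{2\pi}\partial_\theta u\,\sin(j\theta)\,\dd\theta .
\]
Cauchy--Schwarz then bounds $\int|(u_j^c)'|^2r\,\dd r$ and $j^2\int|u_j^c|^2r^{-1}\dd r$ each by $\pi^{-1}\|\nabla u\|^2_{L^2}$, which gives the constant $2\sqrt2/\sqrt\pi$ with room to spare. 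For general $u\in H^1$ we conclude by density: the bound shows that $u\mapsto u_j^c$ is continuous from $H^1$ into $H_{j,r}$, and $H_{j,r}$ is complete.

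For \ref{item:h01-first}, if $u\in C_c^\infty(B_O(R))$, then $u_j^c$ vanishes near $R$ but not necessarily near $0$. So I must show that functions in $H_{j,r}$ that vanish near $R$ lie in $H^0_{j,r}$.

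\begin{itemize}
\item For $j\ge1$, truncate near $0$ with the log cutoff $\chi_\delta(r)=\min\bigl(1,\max(0,\log(r/\delta^2)/\log(1/\delta))\bigr)$. The error $\int|\chi_\delta'|^2|v|^2r\,\dd r$ is controlled by $\sup|v|^2/\log(1/\delta)$ near $0$. Here $v$ is bounded near $0$, since $u$ is smooth, so this error tends to zero.
\item For $j=0$, nothing needs to vanish at $0$; I read $H^0_{0,r}$ as allowing support up to $r=0$, consistent with \ref{item:h01-second}. The same cutoff works anyway, because the capacity of a point is zero.
\end{itemize}

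Item \ref{item:h01-second} then combines \ref{item:h1-second} with this approximation. In one direction, $C_c^\infty(0,R)$ functions produce $C_c^\infty(B_O(R))$ functions, and the isometry of \ref{item:h1-second} passes to the closure. The converse follows from \ref{item:h01-first} applied to $w$, together with the fact that $w_j^c=v$.

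\textbf{Items \ref{item:weak-first}, \ref{item:weak-second}.} For \ref{item:weak-second}, I test \eqref{eq:weak-no-boundary} with $\varphi$. Decompose $\varphi\in H_0^1$ into its Fourier modes $\varphi_k^c\cos(k\theta)$, $\varphi_k^s\sin(k\theta)$, each in $H_0^1$ by \ref{item:h01-first} and \ref{item:h01-second}. The partial sums converge to $\varphi$ in $H^1$; this needs a small $H^1$ analogue of Proposition~\ref{prop:uJ approximates-u}, or simply density of trigonometric polynomials in $\theta$ with smooth radial coefficients. By orthogonality in $\theta$, which holds for both gradient terms since $V$ is radial, only the mode $k=j$, $\cos$ survives. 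The resulting identity is exactly $\pi$ times \eqref{EVP-j-W} with $w=\varphi_j^c$. Conversely, every $w\in H^0_{j,r}$ arises in this way via \ref{item:h01-second}, which proves the equivalence.

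For \ref{item:weak-first}, take $\varphi=w(r)\cos(j\theta)$ with $w\in H^0_{j,r}$ in \eqref{eq:weak-no-boundary} for $u$. Fubini and orthogonality reduce the left side to the terms involving $u_j^c$ only, so $u_j^c$ satisfies \eqref{EVP-j-W}. By \ref{item:weak-second}, $u_j^c\cos(j\theta)$ is then a weak solution.

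\textbf{Main obstacle.} The only delicate point is the behaviour at the origin in \ref{item:h1-second}, \ref{item:h01-first} and \ref{item:h01-second}, where the weight $j^2/r$ and the removable-singularity argument must be handled. I would handle it by the capacity/log-cutoff argument above.
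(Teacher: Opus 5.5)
Your proposal is correct and follows essentially the same route as the paper's sketch: Fubini and Cauchy--Schwarz in polar coordinates, removability of the origin because a point has zero capacity in two dimensions, a logarithmic cutoff near $r=0$ for the $H_0^1$ statements, and $\theta$-orthogonality identities for the weak formulations. The differences are only in packaging. You apply the log cutoff to the one-dimensional coefficients rather than in 2D (Lemma~\ref{lem:dense-in-h01}). You prove (ii) for smooth $u$ and extend by density, instead of identifying the weak derivative of $u_j^c$ by testing. For (vii)--(viii) you expand the test function in angular modes, whereas the paper's direct pairing identities, valid for every $v\in H_{j,r}([0,R])$ and $u\in H^1(B_O(R))$, avoid needing $H^1$-convergence of the angular partial sums.
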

    A sketch of proof for this proposition can be found in Appendix \ref{acaret:proofs}.\\
    
The statement \ref{item:weak-second} of Proposition \ref{prop:spaces-equivalence} implies via the regularity theory for elliptic equations \cite[Theorem 8.12]{GilbargTrudinger2001} that, if $\lambda\in [1,K]$, then for every $j\geq 0$ and for every weak solution $v$ of \eqref{eq:uj-ODE}, in the sense of \eqref{EVP-j-W}, $v(r)\cos(j\theta)$ and $v(r)\sin(j\theta)$ belong to $H^2(B_O(R))$ and 
\begin{equation}\label{eq:regularity-wcosjtheta}
\|v(r)\cos(j\theta)\|_{H^2(B_O(R))}\leq C(R,V,K) \|v(r)\cos(j\theta)\|_{L^2(B_O(R))}=C(R,V,K)\|v\|_r,\end{equation}
 This second-order regularity can be transferred to the function $v$ via the following proposition, which is proved in Appendix \ref{acaret:proofs}:
 \begin{proposition}\label{prop:wcos-inH2}
 Let $v$ be a measurable function on $[0,R]$ such that $v(r)\cos(j\theta)\in H^2(B_O(R))$ for some positive integer $j$. Then, for every compact subinterval $\mathcal{I}$ of $(0,R]$, $v$ belongs to $H^2(\mathcal{I})$. Moreover, the weak second derivative of $v$ has the following form:
 \begin{equation}\label{eq:w-second-derivative}
     v''(r)=\frac{1}{\pi}\int_0^{2\pi} \Delta [v(r)\cos(j\theta)] \cos(j\theta)\,\dd \theta - \frac{1}{r}v'(r) +  \frac{j^2}{r^2} v(r).
 \end{equation}
 If $j=0$, the Proposition holds true, but the factor $\frac{1}{\pi}$ in front of the integral changes to $\frac{1}{2\pi}$.
 \end{proposition}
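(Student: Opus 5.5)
The plan is to work on an annulus $A=\{\x: \|\x\|\in\mathcal{I}\}$ with $\mathcal{I}=[a,b]\subset(0,R]$, $a>0$, where polar coordinates $(r,\theta)\mapsto(r\cos\theta,r\sin\theta)$ form a smooth diffeomorphism from $\mathcal{I}\times(0,2\pi)$ (or the periodic strip) with Jacobian $r\in[a,b]$ bounded above and below. Set $U(\x)=v(r)\cos(j\theta)$. Since $U\in H^2(A)$, the pull-back $\tilde U(r,\theta)=U(r\cos\theta,r\sin\theta)$ belongs to $H^2(\mathcal{I}\times(0,2\pi))$. This follows from the chain rule for Sobolev functions under $C^\infty$ diffeomorphisms with bounded derivatives, which is legitimate away from the origin. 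The norms are equivalent, with constants depending on $a,b$.

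Next, recover $v$ from $\tilde U$ by projection: $v(r)=\frac1\pi\int_0^{2\pi}\tilde U(r,\theta)\cos(j\theta)\,\dd\theta$, or $\frac1{2\pi}\int_0^{2\pi}\tilde U\,\dd\theta$ when $j=0$. The key claim is that integrating out $\theta$ against a smooth bounded function maps $H^2(\mathcal{I}\times(0,2\pi))$ into $H^2(\mathcal{I})$, with $v^{(k)}(r)=\frac1\pi\int_0^{2\pi}\partial_r^k\tilde U(r,\theta)\cos(j\theta)\,\dd\theta$ for $k=1,2$. To check this, take a test function $\phi\in C_c^\infty(\mathrm{int}\,\mathcal{I})$ and use Fubini:
\[
\int v\phi^{(k)}\,\dd r=\frac1\pi\iint \tilde U\,\phi^{(k)}(r)\cos(j\theta)\,\dd r\,\dd\theta=\frac{(-1)^k}{\pi}\iint\partial_r^k\tilde U\,\phi(r)\cos(j\theta)\,\dd r\,\dd\theta .
\]
The last equality is the definition of the weak derivative on the rectangle, tested against $\phi(r)\cos(j\theta)$. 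That test function does not vanish at $\theta=0,2\pi$, so I will either work on the torus in $\theta$ (since $U$ is defined on the whole annulus, $\tilde U$ is $2\pi$-periodic in an $H^2_{\mathrm{loc}}$ sense), or cut off and translate in $\theta$. Cauchy--Schwarz in $\theta$ then gives $\|v^{(k)}\|_{L^2(\mathcal{I})}\le C\|\partial_r^k\tilde U\|_{L^2}$, so $v\in H^2$ on the interior of $\mathcal{I}$. Up to $r=b=R$, the same argument works one-sidedly, because $\mathcal{I}$ closed at $R$ is just a boundary interval of the rectangle and $H^2$ of the rectangle already controls this.

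For formula \eqref{eq:w-second-derivative}, I use the polar form of the Laplacian, $\Delta U=\partial_r^2\tilde U+\frac1r\partial_r\tilde U+\frac1{r^2}\partial_\theta^2\tilde U$. This holds a.e. for $H^2$ functions by density of smooth functions, since the coefficients are smooth and bounded on the annulus. Multiply by $\cos(j\theta)/\pi$ and integrate in $\theta$. The first two terms give $v''+\frac1r v'$ by the previous step. For the third term, integrate by parts twice in $\theta$ using periodicity, which gives $\frac1\pi\int\partial_\theta^2\tilde U\cos(j\theta)\,\dd\theta=-\frac{j^2}{\pi}\int\tilde U\cos(j\theta)\,\dd\theta=-j^2v$. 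Alternatively, since $\tilde U=v(r)\cos(j\theta)$ explicitly, $\partial_\theta^2\tilde U=-j^2\tilde U$ holds directly in the distributional sense. Rearranging yields \eqref{eq:w-second-derivative}. The case $j=0$ is identical with the normalisation $\frac1{2\pi}$.

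The main obstacle is the justification of the interchange of weak derivatives with the $\theta$-integral and the periodic integration by parts. In particular, it has to be verified that $\tilde U$ is genuinely periodic in $H^2$, so that no boundary terms appear at $\theta=0,2\pi$. I will handle this by covering the annulus with two overlapping angular sectors on which the polar map is a diffeomorphism and using a partition of unity in $\theta$. Alternatively, I can approximate $U$ in $H^2(A)$ by smooth functions, for which all identities are classical, and pass to the limit using the continuity of the projection maps established above.
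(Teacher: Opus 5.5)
Your proof is correct, but it takes a genuinely different route from the paper.

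The paper stays on $B_O(R)$ in Cartesian coordinates:
\begin{enumerate}
\item It first uses Proposition~\ref{prop:spaces-equivalence}~\ref{item:h1-second} to get $v\in H_{j,r}([0,R])$, so that $v'$ is already available.
\item It tests $v$ against $w''$ with $w\in C_c^\infty(0,R)$ and rewrites $\frac1r\partial_{rr}[w(r)\cos(j\theta)]$ as $\Delta[\frac1r w(r)\cos(j\theta)]$ plus first- and zeroth-order terms.
\item It moves the Laplacian onto $v(r)\cos(j\theta)$ by Green's formula, using the test function $\frac1r w(r)\cos(j\theta)\in C_c^\infty(B_O(R)\setminus\{O\})$.
\item It concludes after one integration by parts in $r$. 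The right-hand side of \eqref{eq:w-second-derivative} is then visibly in $L^2(\mathcal{I})$.
\end{enumerate}
You instead transport $v(r)\cos(j\theta)$ to a rectangle via the polar diffeomorphism on an annulus. You then reduce everything to a one-dimensional Fubini argument.

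The two routes trade off as follows. The paper uses nothing beyond integration by parts for $H^2$ functions against smooth compactly supported functions. The price is an ad hoc algebraic identity and reliance on the earlier proposition. Your route relies on heavier standard facts: invariance of $H^2$ under $C^2$ diffeomorphisms with bounded derivatives, and validity a.e.\ of the polar form of $\Delta$ for $H^2$ functions. In exchange it is more transparent and does not need $v\in H_{j,r}([0,R])$ in advance. It also yields $v^{(k)}(r)=\frac1\pi\int_0^{2\pi}\partial_r^k\tilde U(r,\theta)\cos(j\theta)\,\dd\theta$ with explicit local bounds.

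The obstacle you single out is not a real one. Only $r$-derivatives fall on the test function $\phi(r)\cos(j\theta)$. So you can approximate $\cos(j\theta)$ in $L^2(0,2\pi)$ by functions in $C_c^\infty(0,2\pi)$ and pass to the limit. Likewise, $\partial_\theta^2\tilde U=-j^2\tilde U$ follows directly from the product structure of $\tilde U$. No periodicity argument, torus or partition of unity is needed.
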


The following corollary allows us to use Ordinary Differential Equations methods (e.g. Gr\"onwall's Lemma) for the weak solutions of \eqref{eq:uj-ODE}, in the sense of \eqref{EVP-j-W}. This will be particularly useful in establishing the interior observability estimate for functions with at most $2J+1$ leading terms in the Fourier expansion with respect to $\theta$ (Proposition \ref{prop:H2-stability-J}). It will also play a role in the proof of Theorem \ref{thm:solution-space-1dim} concerning the dimension of the solution space of \eqref{EVP-j-W}.
\begin{corollary}\label{cor:weak-to-ODE}
    If $v$ is a weak solution of \eqref{eq:uj-ODE}, in the sense of \eqref{EVP-j-W}, then $v$ satisfies \eqref{eq:uj-ODE} on $(0,R]$ in the sense of weak derivatives. Equivalently, the functions $v$ and $v'$ admit continuous representatives on (0,R] which satisfy:
    \[v'(r_1)-v'(r_2) = \int_{r_2}^{r_1} \left[-\frac{1}{r} v'(r)+\frac{j^2}{r^2} v(r)+V(r) v(r)-\lambda\, v(r)\right]\dd r,\]
    for every $r_1,r_2\in (0,R]$. 
    
    Furthermore, the pair $U\coloneqq \left(\begin{array}{c} v\\v'\end{array}\right)$ satisfies:
    \begin{equation}\label{eq:U-integral-equation}
    U(r_2)-U(r_1)=\int_{r_1}^{r_2} \mathcal{A}_{j,r} U(r)\dd r,
    \end{equation}
    for every $r_1,r_2\in (0,R]$, where the matrix $\mathcal{A}_{j,r}$ has the following form:
    \[\mathcal{A}_{j,r}\coloneqq\left(\begin{array}{c|c}
    0 & 1 \\
     \hline\\[-0.4cm]
    \frac{j^2}{r^2}+V(r)-\lambda & -\frac{1}{r}
    \end{array}\right).\]
\end{corollary}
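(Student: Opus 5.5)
The plan is to go from the weak formulation \eqref{EVP-j-W} to the classical ODE on $(0,R]$ in three moves: lift to $H^2(B_O(R))$, use Proposition~\ref{prop:wcos-inH2} to get $H^2_{loc}$ regularity and an explicit formula for $v''$, and then compute that formula pointwise in $r$.

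\textbf{Step 1 (lift and regularity).} Let $v$ be a weak solution of \eqref{EVP-j-W}. By statement~\ref{item:weak-second} of Proposition~\ref{prop:spaces-equivalence}, the function $u(\x)\coloneqq v(r)\cos(j\theta)$ is a weak solution of \eqref{eq:PDE-lambda} in the sense of \eqref{eq:weak-no-boundary} on $B_O(R)$. By the regularity estimate \eqref{eq:regularity-wcosjtheta}, $u\in H^2(B_O(R))$. Interior regularity only gives $H^2_{loc}$ directly; up to $\partial B_O(R)$ I will simply use \eqref{eq:regularity-wcosjtheta} as stated. Hence $-\Delta u+Vu=\lambda u$ holds almost everywhere in $B_O(R)$. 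Proposition~\ref{prop:wcos-inH2} then yields $v\in H^2(\mathcal I)$ for every compact $\mathcal I\subset(0,R]$, together with formula \eqref{eq:w-second-derivative}.

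\textbf{Step 2 (compute the angular integral).} Substitute $\Delta u=(V-\lambda)u$ a.e. into \eqref{eq:w-second-derivative}. For a.e.\ $r$ (Fubini in polar coordinates),
\[
\frac1\pi\int_0^{2\pi}(V(r)-\lambda)v(r)\cos^2(j\theta)\,\dd\theta=(V(r)-\lambda)v(r).
\]
For $j=0$ the factor is $\frac1{2\pi}$, which again gives $(V(r)-\lambda)v(r)$. Therefore
\[
v''=-\tfrac1r v'+\tfrac{j^2}{r^2}v+(V-\lambda)v
\]
a.e.\ on $(0,R]$, which is \eqref{eq:uj-ODE} in the sense of weak derivatives.

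\textbf{Step 3 (continuous representatives and the integral form).} Since $H^2(\mathcal I)\hookrightarrow C^1(\mathcal I)$ in one dimension, $v$ and $v'$ have continuous representatives on every $[\delta,R]$, and hence on $(0,R]$. The right-hand side above lies in $L^1(\mathcal I)$: $v,v'$ are continuous, $V$ is continuous, and $1/r$, $1/r^2$ are bounded on $\mathcal I$. The fundamental theorem of calculus for $W^{1,1}$ functions applied to $v'$ gives the stated identity for $v'(r_1)-v'(r_2)$. Applying the same theorem to $v$, i.e.\ $v(r_2)-v(r_1)=\int_{r_1}^{r_2}v'$, and combining the two scalar identities gives exactly \eqref{eq:U-integral-equation} with the matrix $\mathcal A_{j,r}$.

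\textbf{Main obstacle.} The delicate point is justifying the a.e.-in-$r$ interchange in Step 2, i.e.\ that the angular integral of $\Delta u\cos(j\theta)$ may be evaluated using the pointwise a.e.\ PDE. This follows from Fubini, since $\Delta u\in L^2(B_O(R))$ means $\int_0^R\int_0^{2\pi}|\Delta u|^2\,\dd\theta\,r\,\dd r<\infty$, and a null set in $B_O(R)$ gives a null set in $\theta$ for a.e.\ $r$. The rest is routine.
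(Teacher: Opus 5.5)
Your proposal is correct and follows the paper's proof. You lift $v$ to $v(r)\cos(j\theta)$ via Proposition~\ref{prop:spaces-equivalence}~\ref{item:weak-second}, use elliptic regularity (the paper cites \cite[Theorem 8.12]{GilbargTrudinger2001}, the same source as \eqref{eq:regularity-wcosjtheta}) to get $H^2(B_O(R))$ and the a.e.\ equation, and substitute into \eqref{eq:w-second-derivative}. Your Steps 2--3 only make explicit the Fubini and fundamental-theorem-of-calculus details the paper leaves implicit.

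The boundary-regularity point you flagged is harmless, for two independent reasons. First, Theorem 8.12 does apply, because the trace $v(R)\cos(j\theta)$ extends to the polynomial $v(R)(r/R)^j\cos(j\theta)\in H^2(B_O(R))$. Second, testing \eqref{EVP-j-W} with $w\in C_c^\infty((0,R))$ already gives $(rv')'=(j^2/r+rV-\lambda r)v\in L^2(\delta,R)$, hence $v\in H^2(\delta,R)$ up to $r=R$ without any two-dimensional theory.
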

\begin{proof}
    Proposition \ref{prop:spaces-equivalence} \ref{item:weak-second} implies that $v(r)\cos(j\theta)$ is a weak solution of \eqref{eq:PDE-lambda} in the sense of \eqref{eq:weak-no-boundary}. The elliptic regularity theory \cite[Theorem 8.12]{GilbargTrudinger2001} implies that $v(r)\cos(j\theta)\in H^2(B_O(R))$ and it satisfies equation \eqref{eq:PDE-lambda} for almost every $\x \in B_O(R)$. More precisely, the weak Laplacian of $v(r)\cos(j\theta)$ satisfies:
    \[\Delta [v(r)\cos(j\theta)] = V(r)v(r)\cos(j\theta)-\lambda v(r)\cos(j\theta),\]  for almost every $\x=(r\cos(\theta),r\sin(\theta))\in B_O(R)$. Then, the conclusion follows by the equality \eqref{eq:w-second-derivative} from Proposition \ref{prop:wcos-inH2}.
\end{proof}

\subsection{Solving the weak problem to determine the Fourier coefficients}
\label{sec:weaks-solutions-basis-functions}
This section is dedicated to the construction of the basis functions $(\uj^\lambda)_{j\geq 0}$ as the unique (up to a change of sign) $L_r([0,R])$ normalised solutions of the weak equation \eqref{EVP-j-W}. This uniqueness property is the subject of the following theorem, which, in other words, asserts that only a one-dimensional subspace of the two-dimensional solution space of \eqref{eq:uj-ODE} consists of solutions that come from a solution of \eqref{eq:PDE-lambda}:

\begin{theorem} \label{thm:solution-space-1dim} Let the potential $V\in C^1([0,R])$ satisfy $V\geq 1$. For every $\lambda\geq 1$ and non-negative integer $j$, the solution space of \eqref{EVP-j-W} is one-dimensional.
\end{theorem}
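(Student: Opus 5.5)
We need to show two things: the solution space of \eqref{EVP-j-W} has dimension at most one, and it contains a nonzero element. Both parts use Corollary~\ref{cor:weak-to-ODE}, which turns any weak solution into a classical solution of the second-order linear ODE \eqref{eq:uj-ODE} on $(0,R]$.

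\textbf{Upper bound.} I would argue by contradiction.
\begin{enumerate}[label={\arabic*)}]
\item Suppose $v_1,v_2$ are linearly independent weak solutions. By Corollary~\ref{cor:weak-to-ODE}, $U_k=(v_k,v_k')^T$ satisfies the linear system \eqref{eq:U-integral-equation} on $(0,R]$. Since that system has a two-dimensional solution space, $v_1,v_2$ span all solutions of \eqref{eq:uj-ODE} on $(0,R]$.
\item Consider the Wronskian $W=v_1v_2'-v_1'v_2$. Since $\operatorname{tr}\mathcal{A}_{j,r}=-1/r$, Liouville's formula (or a direct computation) gives $(rW)'=0$. Hence $W(r)=c/r$, and linear independence forces $c\neq 0$.
\item Both $v_k$ lie in $H_{j,r}([0,R])$, so $v_k,\,v_k'\in L^2_r$, and for $j\geq1$ also $v_k/r\in L^2_r$. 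By Cauchy–Schwarz, $r|W|\le r|v_1||v_2'|+r|v_1'||v_2|$ is integrable on $(0,R]$. On the other hand $r|W|=|c|$ is a nonzero constant, which is integrable too. So this crude bound gives no contradiction, and I need a sharper one.
\item Instead I would show that $rW(r)\to0$ along a sequence $r_n\to 0$. The key fact is that $\int_0^R r|v_1v_2'|\,\dd r<\infty$ implies $\liminf_{r\to0}\, r\cdot r|v_1v_2'|=0$; otherwise $r|v_1v_2'|\gtrsim 1/r$, which is not integrable. The same holds for the other term $r|v_1'v_2|$.
\item The difficulty is that the two $\liminf$ statements may be realised along different sequences. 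I would handle this by showing $\int_0^R |W|\,r\,\dd r/r=\int_0^R |W|\,\dd r<\infty$. That contradicts $|W|=|c|/r$, which is not integrable near $0$.
\end{enumerate}

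\textbf{Case $j\geq1$.} Write $|W|\le |v_1/r|\,|v_2'|\,r+|v_1'|\,|v_2/r|\,r$. Each term is a product of two $L^2_r$ functions, since $v_k/r\in L^2_r$ from the $j^2\int v^2/r$ part of $a_j$. So $\int_0^R|W|\,\dd r<\infty$, which gives the contradiction.

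\textbf{Case $j=0$.} We only know $v,v'\in L^2_r$, so this case is the main obstacle. The plan is to return to the PDE: $v_k(r)$ extends to a radial $H^2(B_O(R))$ function by the regularity discussion after Proposition~\ref{prop:spaces-equivalence}. In two dimensions $H^2\subset C^0$, so $v_k$ is bounded near $0$. Also $\nabla v_k\in H^1\subset L^p$ for every $p<\infty$, so $v_k'\in L^p_r$. Then $|W|\le C|v_2'|+C|v_1'|$, and $v_k'\in L^p_r$ with $p>2$ gives $\int_0^R |v'|\,\dd r\le \|v'\|_{L^p_r}\bigl(\int_0^R r^{-p'/p}\,\dd r\bigr)^{1/p'}<\infty$, since $p'/p<1$. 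This again contradicts $|W|=|c|/r$. Alternatively, for $j\geq 1$ the same $H^2$ argument applies, so I may use it uniformly for all $j$.

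\textbf{Existence.} I would solve a Dirichlet problem on the ball with nonzero boundary data. Take $g(\x)=\cos(j\theta)$ on $\partial B_O(R)$, and let $u\in H^2(B_O(R))$ solve $-\Delta u+Vu=0$ with $u=g$ on the boundary. This is uniquely solvable by \cite[Theorem 8.3]{GilbargTrudinger2001} since $V\ge1$, and $u\neq0$ since $g\ne0$. The $j$-th cosine Fourier coefficient $u_j^c$ is then nonzero, and by Proposition~\ref{prop:spaces-equivalence}\ref{item:weak-first},\ref{item:weak-second} it is a weak solution of \eqref{EVP-j-W}.

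This only produces the equation with $\lambda=0$, whereas we need the given $\lambda$, and the Dirichlet problem for $-\Delta+V-\lambda$ is not always solvable. The fix is to choose a radius $\rho\le R$ such that $\lambda$ is not a Dirichlet eigenvalue on $B_O(\rho')$ for some $\rho'>R$; such radii exist because eigenvalues on balls vary strictly monotonically with the radius. Then:
\begin{enumerate}[label={\arabic*)}]
\item Extend $V$ in $C^1$ beyond $R$, as in the proof of Proposition~\ref{prop:runge}.
\item Solve the Dirichlet problem for $-\Delta+V-\lambda$ on $B_O(\rho')$ with boundary data $\cos(j\theta)$.
\item Restrict the solution to $B_O(R)$ and take its $j$-th cosine coefficient.
\end{enumerate}
This coefficient is nonzero by ODE uniqueness: if it vanished on $(0,R]$, then by Corollary~\ref{cor:weak-to-ODE} it would vanish on all of $(0,\rho']$, contradicting its boundary value $1$ at $\rho'$.
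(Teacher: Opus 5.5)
Your proof is correct, but it takes a different route from the paper.

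The paper stays inside the Fredholm framework for the Dirichlet operator $A_{j,V}$ and splits according to whether $\lambda$ is a Dirichlet eigenvalue of $A_{j,V}$:
\begin{itemize}
\item If it is not, each solution of \eqref{EVP-j-W} is the unique solution of the inhomogeneous Dirichlet problem with its own trace $\beta=v(R)$. Solutions are therefore parametrised by $\beta$, and $\beta=1$ gives existence.
\item If it is, the Fredholm orthogonality condition plus an integration by parts shows every solution has $v(R)=0$. Cauchy--Lipschitz uniqueness at $r=R$ then parametrises solutions by $v'(R)$, and the eigenfunction gives existence.
\end{itemize}

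You instead get the upper bound uniformly in $\lambda$ from Abel's identity $(rW)'=0$ at the singular endpoint. Membership in $H_{j,r}([0,R])$ forces $W\in L^1(0,R)$, which is incompatible with $W=c/r$, $c\neq 0$. This makes explicit what the paper encodes only through the spaces $H_{j,r}$ and $H^0_{j,r}$: the second solution of \eqref{eq:uj-ODE}, singular at the origin like $Y_j$, is excluded. You need neither a case distinction nor an orthogonality computation.

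The price is that $j=0$ must be treated separately with extra regularity. Your $H^2\subset C^0$, $H^1\subset L^p$ argument works. A purely ODE alternative: integrate $(rv')'=r(V-\lambda)v$, note that $rv'(r)$ has a limit as $r\to 0$ which must vanish because $v'\in L^2_r$, and deduce that $v$ and $v'$ are bounded near $0$.

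Your existence argument via an enlarged ball is also valid. It does require extending $V$ and using strict monotonicity of Dirichlet eigenvalues in the radius, which is a unique-continuation fact. Your sentence mixing $\rho\le R$ and $\rho'>R$ should simply read: pick $\rho'>R$ such that $\lambda$ is not a Dirichlet eigenvalue on $B_O(\rho')$.

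The paper's two-case argument on $[0,R]$ gives existence more cheaply. It also builds the operator framework ($A_{j,V}$, self-adjointness, Fredholm alternative) that is reused later for the Dirichlet--Neumann gap and the stability estimates in $\lambda$.
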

The key elements of the proof of this theorem are the Fredholm Alternative and the Cauchy-Lipschitz uniqueness theorem for ODE. In order to apply the Fredholm Alternative, we need to provide an appropriate functional framework for posing the problem \eqref{EVP-j-W}. Thanks to the equivalence results in Propositions \ref{prop:spaces-equivalence}, this framework is constructed similar to the usual study of weak solutions for elliptic equations.

First, we prove two lemmata: the first one characterises the compactness of the embedding $H_{j,r}([0,R])\subset L_r([0,R])$ and the second one provides a Poincar\' e-type inequality for functions in $H_{j,r}^0([0,R])$. They will be used to construct in \eqref{eq:def-AjV} the Dirichlet operator $A_{j,V}$ corresponding to the bilinear form
\begin{equation}\label{eq:def-ajV}
a_{j,V}(\cdot,\cdot)\coloneqq a_j(\cdot,\cdot)+(V\cdot,\cdot)_r.
\end{equation}
\begin{lemma}[{Compactness of the embedding $H_{j,r}([0,R])\subset L_r([0,R])$}]\label{lem:Hjr0-comapct-in-Lr}
For every $R>0$ and $j\geq 0$, the space $H_{j,r}([0,R])$ is compactly embedded in $L_r([0,R])$.
\end{lemma}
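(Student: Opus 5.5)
The natural route is to transfer the problem to the ball through Proposition~\ref{prop:spaces-equivalence} and then invoke Rellich--Kondrachov on $B_O(R)$. Let $(v_n)$ be bounded in $H_{j,r}([0,R])$. By statement~\ref{item:h1-second} of Proposition~\ref{prop:spaces-equivalence} (with $\sqrt{2\pi}$ in place of $\sqrt{\pi}$ when $j=0$), the functions $w_n(\x):=v_n(r)\cos(j\theta)$ are bounded in $H^1(B_O(R))$, with $\|w_n\|_{H^1(B_O(R))}=\sqrt{\pi}\|v_n\|_{j,r}$. The ball is a bounded Lipschitz domain, so the embedding $H^1(B_O(R))\subset L^2(B_O(R))$ is compact. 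Hence a subsequence $w_{n_k}$ converges in $L^2(B_O(R))$, in particular it is Cauchy there.

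Next, I transfer the Cauchy property back to the interval. By linearity and statement~\ref{item:l2-second},
\[
\|v_{n_k}-v_{n_l}\|_r=\tfrac{1}{\sqrt{\pi}}\,\|w_{n_k}-w_{n_l}\|_{L^2(B_O(R))}.
\]
So $(v_{n_k})$ is Cauchy in $L_r([0,R])$, and it converges because $L_r$ is complete. Alternatively, one can identify the limit directly. Let $w$ be the $L^2$ limit and set $v:=w_j^c$ as in \eqref{eq:def-u0-ur-vr}. Statement~\ref{item:l2-first}, applied to $w_{n_k}-w$, whose $j$-th cosine coefficient is $v_{n_k}-v$, gives $\|v_{n_k}-v\|_r\le \pi^{-1/2}\|w_{n_k}-w\|_{L^2}\to 0$.

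This shows that every bounded sequence in $H_{j,r}$ has an $L_r$-convergent subsequence. The embedding is also continuous, since $\|v\|_r\le\|v\|_{j,r}$ by the definition of the norm, so it is compact.

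The only step I expect to need care is the case $j=0$. There the weight $j^2/r$ is absent, and one must check that $v\in H_{0,r}$ really yields $v(r)\in H^1(B_O(R))$, i.e. that no singular contribution at the origin appears. This is exactly what statement~\ref{item:h1-second} asserts, with the modified constant, so I would simply rely on it. If one preferred a self-contained one-dimensional argument, the alternative would be to split $[0,R]$ into $[0,\delta]$ and $[\delta,R]$. On $[\delta,R]$ the weight is bounded above and below, so $H^1(\delta,R)\subset L^2$ is compact. On $[0,\delta]$ one would need a uniform smallness bound for $\int_0^\delta |v|^2 r\,\dd r$, obtained from a weighted Hardy or Sobolev-type estimate. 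That estimate is the delicate point, and the ball argument avoids it.
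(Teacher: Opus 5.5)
Your proposal is correct and follows the paper's proof essentially verbatim. You lift the bounded sequence to $v_n(r)\cos(j\theta)$, which is bounded in $H^1(B_O(R))$ by Proposition~\ref{prop:spaces-equivalence}~\ref{item:h1-second}. You extract an $L^2(B_O(R))$-convergent subsequence by Rellich--Kondrachov, and then pull the Cauchy property back to $L_r([0,R])$ via Proposition~\ref{prop:spaces-equivalence}~\ref{item:l2-second}. Your extra remarks (identifying the limit as the $j$-th cosine coefficient, and handling $j=0$ through the modified constant) are consistent with this argument but not needed beyond it.
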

\begin{proof}
If $(v^n)_{n\geq 1}$ is a bounded sequence in $H_{j,r}([0,R])$, it means that the sequence $(v^n(r)\cos(j\theta))_{n\geq 1}$ is bounded in $H^1(B_O(R))$. Since this space is compactly embedded in $L^2(B_O(R))$, the Rellich-Kondrakov Theorem \cite[Theorem 9.16]{brezis} implies the existence of a subsequence $(v^{n_k}(r)\cos(j\theta))_{k\geq 1}$ converging strongly in $L^2(B_O(R))$. By Proposition \ref{prop:spaces-equivalence} \ref{item:l2-second}, $(v^{n_k})_{k\geq 1}$ is a Cauchy sequence in $L_r([0,R])$, so it is strongly convergent.
\end{proof}

\begin{lemma}[{Poincar\'e-type inequality for $H_{j,r}^0([0,R])$ functions}]\label{lem:poincare-H_jr0}
For every $R>0$, there exists a constant $C(R)>0$ such that, for every $j\geq 0$ and every $v\in H_{j,r}^0([0,R])$,
\begin{equation}\label{eq:poincare-H_jr0}
a_j(v,v)\geq C(R)\|v\|_{r}^2.
\end{equation}
\end{lemma}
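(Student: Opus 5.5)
The inequality can be transferred from the classical Poincaré inequality on the ball $B_O(R)$ using the equivalences of Proposition~\ref{prop:spaces-equivalence}. Let $v\in H_{j,r}^0([0,R])$ and set $w(\x)\coloneqq v(r)\cos(j\theta)$. By statement~\ref{item:h01-second} of Proposition~\ref{prop:spaces-equivalence}, $w\in H_0^1(B_O(R))$. The Poincaré inequality on $B_O(R)$ gives a constant $C_P(R)>0$ with
\[
\|\nabla w\|_{L^2(B_O(R))}^2\geq C_P(R)\,\|w\|_{L^2(B_O(R))}^2 .
\]
We may take $C_P(R)=\lambda_1(B_O(R))=j_{0,1}^2/R^2$, the first Dirichlet eigenvalue of the Laplacian on the ball. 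This constant is independent of $j$, which is the uniformity the lemma asks for.

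Next rewrite both sides in radial terms. Statement~\ref{item:h1-second} gives $\|\nabla w\|_{L^2(B_O(R))}^2=\pi\, a_j(v,v)$, and statement~\ref{item:l2-second} gives $\|w\|_{L^2(B_O(R))}^2=\pi\|v\|_r^2$. For $j=0$ the factor $\pi$ becomes $2\pi$ in both identities. The factors cancel, leaving $a_j(v,v)\geq C_P(R)\|v\|_r^2$, which is~\eqref{eq:poincare-H_jr0} with $C(R)=C_P(R)$.

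The only delicate point is that statement~\ref{item:h01-second} really places $w$ in $H_0^1(B_O(R))$, and not merely in $H^1$, for every element of the completion $H_{j,r}^0([0,R])$. The direct route is a density argument. If $v\in C_c^\infty((0,R))$, then $w$ is smooth and compactly supported in the annulus $\{0<r<R\}$, because the support stays away from the origin. By statement~\ref{item:h1-second}, the map $v\mapsto v(r)\cos(j\theta)$ is an isometry up to the factor $\sqrt{\pi}$ from $\|\cdot\|_{j,r}$ to $H^1(B_O(R))$. Hence limits in $\|\cdot\|_{j,r}$ are sent to $H^1$-limits of $C_c^\infty$ functions, which lie in $H_0^1(B_O(R))$.

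Alternatively, one can prove the inequality on the dense class $C_c^\infty((0,R))$ and extend it by continuity. Both sides are continuous in the $\|\cdot\|_{j,r}$ norm, so this second route avoids needing the converse direction of~\ref{item:h01-second}.
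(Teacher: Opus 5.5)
Your proof is correct and follows the paper's argument. The paper likewise transfers the inequality to $v(r)\cos(j\theta)$ on $B_O(R)$, applies the classical Poincar\'e inequality there, and converts the norms back via Proposition~\ref{prop:spaces-equivalence}. Your density argument (or, alternatively, proving the estimate on $C_c^\infty((0,R))$ and extending by continuity) spells out the membership $v(r)\cos(j\theta)\in H_0^1(B_O(R))$, which the paper's one-line proof leaves implicit.
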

\begin{proof}
The lemma is a direct consequence of Proposition \ref{prop:spaces-equivalence} \ref{item:h1-second} and the Poincar\' e inequality on $B_O(R)$, \cite[Corollary 9.19]{brezis}.
\end{proof}

Next, we consider the operator $A_{j,V}$ which maps $H_{j,r}^0([0,R])$  into its dual space $H_{j,r}^0([0,R])^*$, corresponding to the bilinear form $a_{j,V}$. More precisely,
\begin{equation}\label{eq:def-AjV}
\langle A_{j,V} v,w\rangle_{H_{j,r}^0([0,R])^*,H_{j,r}^0([0,R])}\coloneqq a_{j,V}(v,w)\coloneqq a_j(v,w)+(V\,v,w)_r, \quad \forall v,w\in H_{j,r}^0.
\end{equation}

The two lemmata above, together with the Lax-Milgram theorem and the fact that the potential $V(r)$ is greater than $1$ for every $r\in [0,R]$, allow us to construct the solution operator $A_{j,V}^{-1}:L_r([0,R])\to L_r([0,R])$, which is a compact operator and its range is contained in $H_{j,r}^0([0,R])$. We also note that the operator $A_{j,V}^{-1}$ is self-adjoint.

With this notation, we remark that a function $v\in H_{j,r}^0([0,R])$ is a solution of \eqref{EVP-j-W} if and only if $v-\lambda A_{j,V}^{-1}v=0$, which is equivalent to $v\in \mathrm{ker}(I-\lambda A_{j,V}^{-1})$, where by $I$ we understand the identity operator on $L_r([0,R])$.\\

With this preparation in mind, we are able to provide a proof of Theorem \ref{thm:solution-space-1dim}.
\begin{proof}[Proof of Theorem \ref{thm:solution-space-1dim}]
    Let $\psi\in C_c^\infty((0,R])$ satisfy $\psi(R)=1$. Then, for every real number $\beta$, the function $v\in H_{j,r}([0,R])$ satisfies \eqref{EVP-j-W} with the boundary condition $v(R)=\beta\in \RR$ if and only if the function $(v-\beta\psi)$ belongs to $H_{j,r}^0([0,R])$ and, for every $w\in H_{j,r}^0([0,R])$,
    \begin{equation}\label{eq:w-apsi-weak}
    a_j(v-\beta\psi,w)-\lambda(v-\beta\psi,w)=-a\left(-\psi''(r)-\frac{1}{r}\psi'(r)+\frac{j^2}{r^2}\psi(r)+V(r)\psi(r)-\lambda\psi(r),w(r)\right)_r\,.\end{equation}
To simplify the expression above, we introduce the notation:
\begin{equation}\label{eq-def-AjTilde}
(\tilde{A}_{j,V}\,w)(r)\coloneqq -w''(r)-\frac{1}{r}w(r)+\frac{j^2}{r^2}w(r)+V(r)w(r),
\end{equation}
for $r\in (0,R]$, whenever the function $w$ is regular enough such that the right-hand side above makes sense. We note that $\tilde{A}_{j,V} w= A_{j,V}w$ for every $w\in C_c^\infty((0,R))$, but one might apply $\tilde A_{j,V}$ to functions that do not vanish in the point $R$. In this way, we can rewrite \eqref{eq:w-apsi-weak} as follows:
\[A_{j,V}(v-\beta\psi)-\lambda (v-\beta\psi)=-\beta \left(\tilde A_{j,V} \psi-\lambda \psi\right),\]
which, in turn, can be rewritten as:
\begin{equation}\label{eq:inhomogeneous-EVP-j-operator}
\left(I-\lambda A_{j,V}^{-1}\right)\left[v-\beta\lambda^{-1}\tilde A_{j,V}\,\psi\right]=\beta(I-\lambda^{-1}\tilde A_{j,V}) \psi.
\end{equation}

In the next step, we apply the Fredholm alternative \cite[Theorem 6.6]{brezis} to the compact operator $A_{j,V}$ and distinguish two cases:\\[1pt]

\emph{Case I:} $\lambda$ is not an eigenvalue of $A_{j,V}$, which means that $\mathrm{ker}(I-\lambda A_{j,V}^{-1})$ is trivial. In this case, for each $\beta\in \RR$, the equation \eqref{eq:inhomogeneous-EVP-j-operator} has exactly one solution $v_\beta\in H_{j,r}([0,R])$. Therefore, the $H_{j,r}([0,R])$ solutions of \eqref{EVP-j-W} are fully characterised by the value they attain in the point $R$ (in the sense of trace), thus they form a one-dimnesional space.\\[1pt]

\emph{Case II:} $\lambda$ is an eigenvalue of $A_{j,V}$, which means that $\mathrm{ker}
(I-\lambda A_{j,V}^{-1})$ is non-trivial. In this case, we prove that every $H_{j,r}([0,R])$ solution of \eqref{EVP-j-W} belongs in fact to $H_{j,r}^0([0,R])$. Assuming by contradiction the opposite, we obtain that there exists a number $\beta\neq 0$ such that \eqref{eq:inhomogeneous-EVP-j-operator} has a solution. Since $A_{j,V}^{-1}$ is self-adjoint, the Fredholm alternative implies that the right-hand side of \eqref{eq:inhomogeneous-EVP-j-operator} is orthogonal to $\mathrm{ker}(I-\lambda A_{j,V}^{-1})$. Taking a non-zero element $w\in \ker(I-\lambda A_{j,V}^{-1})$, we use the orthogonality and integrate by parts to obtain:
\begin{equation}\label{eq:fredholm-orthogonality}
\begin{aligned}0=\int_0^R \left(\psi(r)-\lambda^{-1} (\tilde A_{j,V}\, \psi)(r)\right)w(r)r\,\dd r &= (\psi, w)_r -\lambda^{-1} \left[a_j(\psi,w)+(V\, \psi,w)_r\right]\\
& =\lambda^{-1}\psi(R)w'(R)R +\left(\psi, (I-\lambda^{-1}\tilde A_{j,V}) w\right)_r,
\end{aligned}\end{equation}
where the last line makes sense by Corollary \ref{cor:weak-to-ODE}. Since $w\in \ker(I-\lambda A_{j,V}^{-1})$, the same corollary implies that $w$ satisfies \eqref{eq:uj-ODE}, i.e. $\tilde A_{j,V}\,w= \lambda\, w$ almost everywhere. Therefore, \eqref{eq:fredholm-orthogonality} implies that $\psi(R)w'(R)R=0$, but, since $\psi(R)=1$, this implies $w'(R)=0$. Since $w$ satisfies \eqref{eq:uj-ODE} and $w(R)=w'(R)=0$, Corollary \ref{cor:weak-to-ODE} and Gr\"onwall's inequality imply that $w$ is constant zero, which represents a contradiction.

Therefore, in this case, all solutions of \eqref{EVP-j-W} vanish in the point $R$. A consequence of this fact, together with the ODE formulation in Corollary \ref{cor:weak-to-ODE} and Gr\"onwall's inequality, is that the solutions of \eqref{EVP-j-W} are fully characterised by their derivative in the point $R$ (in the sense of trace), thus they form a one-dimensional space. This concludes the proof.
\end{proof}

\begin{corollary}\label{cor:Fourier-Bessel-Series-ujLambda}
For every $j\in \NN$ and $\lambda\geq 1$, Theorem \ref{thm:solution-space-1dim} allows us to denote by $\uj^\lambda$ one of the two solutions of \eqref{EVP-j-W} with $\|\uj^\lambda\|_{r}=1$. Therefore, Proposition \ref{prop:spaces-equivalence} \ref{item:weak-first}-\ref{item:weak-second} implies that for every $H^2(B_O(R))$ solution $u$ of \eqref{eq:PDE-lambda} on $B_O(R)$, there exist some coefficients $(\alpha_j^c)_{j\geq 0}$ and $(\alpha_j^s)_{j\geq 1}$ such that the following equality holds in $H^1(B_O(R))$:
\[u (\x)=\alpha_0 \uz^\lambda(r)+\sum_{j=1}^\infty \uj^\lambda(r) \left[\alpha_j^c \cos(j\theta)+\alpha_j^s\sin(j\theta)\right], \quad \x=(r\cos(\theta),r\sin(\theta)).\]

Moreover, Proposition \ref{prop:spaces-equivalence} \ref{item:weak-second} and the regularity theory for elliptic equation \cite[Theorem 8.12]{GilbargTrudinger2001} imply that, for every vector $\aaa\in \RR^{2J+1}$, the function $u^{J,\lambda}_\aaa$ defined in \eqref{eq:def-UJLambdaAlpha} belongs to $H^2(B_O(R))$ and satisfies \eqref{eq:PDE-lambda} on $B_O(R)$.
\end{corollary}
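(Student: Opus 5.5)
The plan is to treat the two assertions separately. Both reduce to Proposition~\ref{prop:spaces-equivalence}, Theorem~\ref{thm:solution-space-1dim} and Proposition~\ref{prop:uJ approximates-u}.

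\emph{Series representation.} Let $u\in H^2(B_O(R))$ solve \eqref{eq:PDE-lambda} a.e. in $B_O(R)$. Integrating by parts against $\varphi\in H_0^1(B_O(R))$ shows that $u$ is a weak solution in the sense of \eqref{eq:weak-no-boundary}, with $\Omega$ replaced by $B_O(R)$. By Proposition~\ref{prop:spaces-equivalence}~\ref{item:weak-first}, each $u_j^c(r)\cos(j\theta)$ and $u_j^s(r)\sin(j\theta)$ is again a weak solution on the ball. By \ref{item:h1-first}, the coefficients $u_j^c,u_j^s$ lie in $H_{j,r}([0,R])$, and by \ref{item:weak-second} (and its sine version) they are weak solutions of \eqref{eq:uj-ODE} in the sense of \eqref{EVP-j-W}.

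Theorem~\ref{thm:solution-space-1dim} says this solution space is one-dimensional and spanned by $\uj^\lambda$. Hence $u_j^c=\alpha_j^c\uj^\lambda$ and $u_j^s=\alpha_j^s\uj^\lambda$, where $\alpha_j^c\coloneqq(u_j^c,\uj^\lambda)_r$, $\alpha_j^s\coloneqq(u_j^s,\uj^\lambda)_r$, and similarly for $j=0$. It follows that the truncation $u^J$ from \eqref{eq:def-uJ} equals the $J$-th partial sum of the claimed series. Proposition~\ref{prop:uJ approximates-u} then gives $\|u^J-u\|_{H^1(B_O(R))}\le C(R)J^{-1/2}\|u\|_{H^2(B_O(R))}\to 0$, so the series converges to $u$ in $H^1(B_O(R))$.

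\emph{Regularity of $u^{J,\lambda}_\aaa$.} By \ref{item:weak-second}, every term $\uj^\lambda(r)\cos(j\theta)$ and $\uj^\lambda(r)\sin(j\theta)$ is an $H^1$ weak solution of \eqref{eq:PDE-lambda} on $B_O(R)$. Since the equation is linear, the finite combination $u^{J,\lambda}_\aaa$ is one as well. It therefore suffices to show that each single term lies in $H^2(B_O(R))$; once that holds, integrating \eqref{eq:weak-no-boundary} by parts against $C_c^\infty$ test functions yields \eqref{eq:PDE-lambda} almost everywhere.

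\emph{Main obstacle.} The delicate point is $H^2$ regularity up to $\partial B_O(R)$, since no boundary condition is imposed there; this is what \eqref{eq:regularity-wcosjtheta} asserts. To make it airtight I would first extend $V$ to a $C^1$ function on $[0,R+\delta]$. Corollary~\ref{cor:weak-to-ODE} gives continuous $v=\uj^\lambda$ and $v'$ on $(0,R]$ satisfying the integral system \eqref{eq:U-integral-equation}, whose coefficients are continuous away from $r=0$. By Cauchy--Lipschitz, $U=(v,v')$ extends uniquely as a $C^1$ solution to $(0,R+\delta]$.

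I would then check that the extended function $v(r)\cos(j\theta)$ is a weak solution on $B_O(R+\delta)$. Near the origin this is the weak formulation already known. On the annulus $R/2<r<R+\delta$ it holds because $v$ is a classical solution there, and the two pieces are glued by a partition of unity on the test function. Interior elliptic regularity \cite[Theorem 8.8]{GilbargTrudinger2001} on $B_O(R+\delta)$ then gives $v(r)\cos(j\theta)\in H^2(B_O(R))$, which completes the argument.
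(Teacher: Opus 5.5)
Your treatment of the series representation is the same as the paper's: the coefficients $u_j^c,u_j^s$ are weak solutions of \eqref{EVP-j-W} by Proposition~\ref{prop:spaces-equivalence}~\ref{item:weak-first}--\ref{item:weak-second}, hence multiples of $\uj^\lambda$ by Theorem~\ref{thm:solution-space-1dim}. Your use of Proposition~\ref{prop:uJ approximates-u} supplies the $H^1$ convergence that the paper leaves implicit.

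For the regularity of $u^{J,\lambda}_\aaa$ you take a genuinely different route. The paper applies the global Dirichlet regularity theorem \cite[Theorem 8.12]{GilbargTrudinger2001} directly on $B_O(R)$. The boundary is not actually delicate there: the trace of $\uj^\lambda(r)\cos(j\theta)$ on $\partial B_O(R)$ is $\uj^\lambda(R)\cos(j\theta)$. This is also the trace of the polynomial $\uj^\lambda(R)\,(r/R)^j\cos(j\theta)$, so the theorem's hypothesis (solution minus an $H^2$ function lies in $H_0^1(B_O(R))$) is met.

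You instead continue the radial ODE solution past $r=R$ by Cauchy--Lipschitz, glue, and use only interior regularity \cite[Theorem 8.8]{GilbargTrudinger2001} on $B_O(R+\delta)$. This is the same extend-and-go-interior device the paper uses for Proposition~\ref{prop:runge}. Your argument is correct. The gluing across $r=R$ is harmless because the continued $v$ is $C^2$ on $[R/2,R+\delta]$, so $-\Delta+V-\lambda$ annihilates $v(r)\cos(j\theta)$ classically on the annulus.

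What each route buys:
\begin{itemize}
\item Yours avoids boundary regularity altogether, at the price of an ODE continuation and a partition-of-unity check.
\item The paper's is a one-line citation, and it is the template for the trace-lifting argument behind the quantitative bound in Proposition~\ref{prop:H2-stability-J}.
\item Both give only qualitative $H^2$ membership, which is all the corollary claims.
\end{itemize}

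One caveat concerns your citation of Corollary~\ref{cor:weak-to-ODE}. In the paper, that corollary is proved by first invoking \cite[Theorem 8.12]{GilbargTrudinger2001} to get $v(r)\cos(j\theta)\in H^2(B_O(R))$. Citing it therefore already hands you the conclusion and makes your continuation redundant. To make your argument genuinely independent of boundary regularity, do two things:
\begin{itemize}
\item derive the ODE on $(0,R)$ from interior regularity on $B_O(R')$, $R'<R$, together with Proposition~\ref{prop:wcos-inH2};
\item start the Cauchy--Lipschitz continuation from an interior point $r_0\in(0,R)$.
\end{itemize}
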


An immediate consequence of Propositions \ref{prop:runge}, \ref{prop:uJ approximates-u} and Corollary \ref{cor:Fourier-Bessel-Series-ujLambda} is the following weaker version of Theorem \ref{thm:direct-implication-approx-lambda}, valid when $\lambda$ is an eigenvalue of \eqref{EVP-radial}:

\begin{proposition}\label{prop:direct-implication-exact-lambda}
Let $\Omega\subset B_O(R)$ be a simply connected $C^2$ bounded domain in $\RR^2$ containing the origin $O$, let $V\in C^1([0,R])$ taking values in $[1,\infty)$, and $K>1$. For every $\eps>0$, there exists a positive integer $J_0=J_0(\eps,\Omega,V,K)$ such that, for each $J\geq J_0$ and every eigenpair  $(\lambda^*,u^*)$ of \eqref{EVP-radial} with $\lambda^*\in [1,K]$ and $\|u^*\|_{L^2(\Omega)}=1$, one can find a vector $\aaa\in \RR^{2J+1}$for which the function $u^{J,\lambda^*}_\aaa$ satisfies:
    \begin{enumerate}[label={\roman*)}]
    \item \label{item:direct-implication-boundary} $\|u^{J,\lambda^*}_\aaa\|_{H^{\frac{1}{2}}(\partial\Omega)}\leq \eps$;
    \item \label{item:direct-implication-normalised} $\|u^{J,\lambda^*}_\aaa\|_{L^2(\Omega)}=1$;
    \item \label{item:direct-implication-exact-eigenfunction} $\left\|u^{J,\lambda^*}_\aaa-u^*\right\|_{H^1(\Omega)}\leq \eps$.
    \end{enumerate}

\end{proposition}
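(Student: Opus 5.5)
The plan is to combine the Runge extension (Proposition~\ref{prop:runge}), the Fourier truncation estimate (Proposition~\ref{prop:uJ approximates-u}) and the characterisation of the Fourier coefficients (Corollary~\ref{cor:Fourier-Bessel-Series-ujLambda}). Each step first gives an approximation for a single fixed eigenfunction. I then make $J_0$ uniform over all eigenpairs with $\lambda^*\in[1,K]$ by a finite-dimensionality argument.

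\textbf{Step 1 (one fixed eigenfunction).} Fix an eigenpair $(\lambda^*,\xi)$ with $\|\xi\|_{L^2(\Omega)}=1$ and a tolerance $\eta>0$. Since $\xi\in H_0^1(\Omega)$ satisfies \eqref{eq:weak-no-boundary} with $\lambda=\lambda^*$, Proposition~\ref{prop:runge} gives $\bar u_\eta\in H^2(B_O(R))$ solving \eqref{eq:PDE-lambda} on $B_O(R)$ with $\|\bar u_\eta|_\Omega-\xi\|_{H^1(\Omega)}\le\eta$. Let $\bar u_\eta^J$ be its truncated angular Fourier sum \eqref{eq:def-uJ}.

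By Proposition~\ref{prop:spaces-equivalence}~\ref{item:weak-first}--\ref{item:weak-second}, every coefficient $(\bar u_\eta)_j^c$ and $(\bar u_\eta)_j^s$ is a weak solution of \eqref{EVP-j-W}. Theorem~\ref{thm:solution-space-1dim} then shows each coefficient is a scalar multiple of $\uj^{\lambda^*}$. Hence $\bar u_\eta^J=u^{J,\lambda^*}_{\aaa}$ for some $\aaa\in\RR^{2J+1}$.

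Proposition~\ref{prop:uJ approximates-u} gives $\|\bar u_\eta^J-\bar u_\eta\|_{H^1(B_O(R))}\le C(R)J^{-1/2}\|\bar u_\eta\|_{H^2(B_O(R))}$. Choosing $J_\xi$ large, we get $\|u^{J,\lambda^*}_\aaa-\xi\|_{H^1(\Omega)}\le 2\eta$ for all $J\ge J_\xi$. The $H^2$ norm of $\bar u_\eta$ is finite but not controlled a priori, which is why $J_\xi$ depends on $\xi$.

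\textbf{Step 2 (uniformity, the main obstacle).} The difficulty is that $J_0$ must not depend on the eigenfunction, and the Runge extension carries no quantitative $H^2$ bound. My remedy is to use that only finitely many eigenvalues lie in $[1,K]$, each of finite multiplicity. Fix an $L^2(\Omega)$-orthonormal basis $\xi_1,\dots,\xi_N$ of all these eigenspaces, apply Step~1 to each $\xi_n$, and set $J_0=\max_n J_{\xi_n}$.

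An arbitrary normalised eigenfunction $u^*$ for $\lambda^*$ is $\sum c_n\xi_n$, where the sum runs over the basis elements with eigenvalue $\lambda^*$ and $\sum c_n^2=1$. Since all these $\xi_n$ share the same $\lambda^*$, the corresponding combination of the $u^{J,\lambda^*}_{\aaa_n}$ is again of the form $u^{J,\lambda^*}_{\aaa}$, by linearity in $\aaa$. By Cauchy--Schwarz its $H^1(\Omega)$ distance to $u^*$ is at most $2\sqrt N\,\eta$.

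\textbf{Step 3 (boundary and normalisation).} Because $u^*$ has zero trace on $\partial\Omega$, the trace theorem on the $C^2$ domain gives $\|u^{J,\lambda^*}_\aaa\|_{H^{1/2}(\partial\Omega)}\le C(\Omega)\|u^{J,\lambda^*}_\aaa-u^*\|_{H^1(\Omega)}$. Finally divide by $\|u^{J,\lambda^*}_\aaa\|_{L^2(\Omega)}\in[1-2\sqrt N\eta,\,1+2\sqrt N\eta]$, which keeps the form $u^{J,\lambda^*}_{\aaa}$ after rescaling $\aaa$. A triangle inequality shows the normalised function is still within $C\sqrt N\eta$ of $u^*$ in $H^1(\Omega)$.

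Choosing $\eta=\eps/(C(\Omega)\sqrt N)$, with $N=N(\Omega,V,K)$, yields items~\ref{item:direct-implication-boundary}--\ref{item:direct-implication-exact-eigenfunction} with $J_0=J_0(\eps,\Omega,V,K)$.
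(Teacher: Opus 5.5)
Your proposal is correct and follows essentially the same route as the paper. Both arguments take a finite $L^2(\Omega)$-orthonormal eigenbasis for the eigenvalues up to $K$, apply the Runge extension to each element, and obtain a uniform truncation level as a maximum over this finite set. They then form the linear combination within the eigenspace of $\lambda^*$, identify it as some $u^{J,\lambda^*}_\aaa$ via Corollary~\ref{cor:Fourier-Bessel-Series-ujLambda}, and finish with normalisation and the trace theorem. Your treatment of the normalisation step and of the trace constant is more explicit than the paper's. Note only that the normalisation constant also depends on $K$, through $\|u^*\|_{H^1(\Omega)}\leq\sqrt{1+K}$; this is harmless, since $J_0$ is allowed to depend on $K$.
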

\begin{proof}
Let $(\lambda_n^*)_{n=1}^{Q_K}$ the eigenvalues of \eqref{EVP-radial} that are at most equal to $K$ (repeated according to their multiplicity) and $(\xi_n^*)_{n=1}^{Q_K}$ an $L^2(\Omega)$-orthonormal set of corresponding eigenfunctions. By the Runge property (Proposition \ref{prop:runge}), for each of those eigenfunctions, there exists a solution $\bar\xi_n^*$ of \eqref{eq:PDE-lambda} on $B_O(R)$ with $\lambda=\lambda_n^*$ such that:
\begin{equation}\label{eq:runge-for-orthonormal-eigenvectors}\left\|\bar \xi_n^*\vert_\Omega-\xi_n^*\right\|_{H^1(\Omega)}\leq \frac{\eps}{2\sqrt{Q_K}}.\end{equation}
Next, since the eigenfunctions $(\xi_n^*)_{n=1}^{Q_K}$ belong to $H^2(B_O(R))$, Proposition \ref{prop:uJ approximates-u} implies the existence of a positive integer $J_0(\eps,\Omega,V,K)$ such that, for every $n\in \overline{1,Q_K}$ and $J\geq J_0$,
\begin{equation}\label{eq:truncation-orthonormal-eigenvectors}
\left\|\bar \xi_n^{*,J}-\bar\xi_n^*\right\|_{H^1(B_O(R))}\leq \frac{\eps}{2\sqrt{Q_K}},    
\end{equation}
where by $\bar \xi_n^{*,J}$ we denote the truncation of the angular Fourier expansion of $\xi_n^*$, as in \eqref{eq:def-uJ}. 

Since the eigenfunction $u^*$ is a linear combination of the functions $\xi_n^*$ in the eigenspace of $\lambda^*$, we can define the function $\bar u^*$ as the linear combination of the extended functions $\bar \xi_n^*$ with the same coefficients. Therefore, the estimates \eqref{eq:runge-for-orthonormal-eigenvectors}-\eqref{eq:truncation-orthonormal-eigenvectors}, together with the inequality between the arithmetic mean and the quadratic mean, imply that the angular Fourier truncation $\bar u^{*,J}$ of $\bar u^*$ satisfies:
\begin{equation}\label{eq:truncation-near-eigenfunction}
\|\bar u^{*,J}\vert_\Omega -u^*\|_{H^1(\Omega)}\leq \eps \|u^*\|_{L^2(\Omega)}=\eps.
\end{equation}
Moreover, $\bar u^{*,J}$ satisfies \eqref{eq:PDE-lambda} on $B_O(R)$ with $\lambda=\lambda^*$, therefore, by Corollary \ref{cor:Fourier-Bessel-Series-ujLambda}, there exists a vector $\aaa\in \RR^{2J+1}$ such that 
\[u^{J,\lambda^*}_\aaa=\frac{\bar u^{*,J}}{\|\bar u^{*,J}\|_{L^2(\Omega)}}.\]
Eventually, the estimate \eqref{eq:truncation-near-eigenfunction} leads to the statements \ref{item:direct-implication-normalised} and \ref{item:direct-implication-exact-eigenfunction} of the conclusion, which in turn imply, via the theory of traces \cite[p.~315]{brezis}, the estimate on the boundary of $\Omega$ in statement \ref{item:direct-implication-boundary}. 
\end{proof}

\subsection{Continuity in $\lambda$ for the Fourier coefficients}
\label{sec:stability-lambda}
Proposition~$\ref{prop:direct-implication-exact-lambda}$ shows that, if in the first step of the proposed algorithm (Section~\ref{sec:description-of-method}) the exact eigenvalue $\lambda^*$ coincides with one of the grid points, then the minimum value of the quotient $\mathcal{F}^{\lambda^*}$ is indeed small. However, in order to establish that the algorithm is capable of detecting all eigenpairs of~$\eqref{EVP-radial}$, it is necessary to prove Theorem~$\ref{thm:direct-implication-approx-lambda}$, which asserts that the conclusion of Proposition~$\ref{prop:direct-implication-exact-lambda}$ remains valid when the basis functions $(\uj^{\lambda})_{j=0}^J$ are computed using a parameter $\lambda$ sufficiently close to an eigenvalue $\lambda^*$ of~$\eqref{EVP-radial}$.

The derivation of Theorem~$\ref{thm:direct-implication-approx-lambda}$ from Proposition~$\ref{prop:direct-implication-exact-lambda}$ relies on two stability results. The first one, an interior observability estimate stated in Proposition~$\ref{prop:H2-stability-J}$, provides an upper bound depending on $J$ and $\Omega$ for the $H^2(B_O(R))$ norm of functions of the form~$\eqref{eq:def-UJLambdaAlpha}$ that are normalized in $L^2(\Omega)$. The second, Proposition~$\ref{prop:uj-stable-in-lambda}$, establishes the stability of the basis functions $\uj^{\lambda}$ in the space $H_{j,r}([0,R])$ under small perturbations of the parameter $\lambda$.

\begin{proposition}\label{prop:H2-stability-J}
Let $R>0$,  $\tilde R\in (0,R]$, $V\in C^1([0,R])$ taking values in $[1,\infty)$; also let $K>1$ and $J$ a positive integer. There exists a constant $C(R,\tilde{R},V,K,J)>0$ such that for every $\lambda\in [1,K]$ and any vector $\aaa\in\RR^{2J+1}$, the function $u^{J,\lambda}_\aaa$ defined in \eqref{eq:def-UJLambdaAlpha} satisfies
    \[\|u^{J,\lambda}_\aaa\|_{H^2(B_O(R))}\leq C(R,\tilde R, V,K,J)\|u^{J,\lambda}_\aaa\|_{L^2(B_O(\tilde{R}))}.\]
\end{proposition}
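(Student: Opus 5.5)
Because the angular modes are $L^2$-orthogonal both on $B_O(R)$ and on $B_O(\tilde R)$, the estimate decouples into one estimate per mode. By Proposition~\ref{prop:spaces-equivalence}~\ref{item:l2-second},
\[
\|u^{J,\lambda}_\aaa\|_{L^2(B_O(\tilde R))}^2 = 2\pi (\alpha_0^c)^2 \int_0^{\tilde R} |\uz^\lambda|^2 r\,\dd r + \pi\sum_{j=1}^J \bigl[(\alpha_j^c)^2+(\alpha_j^s)^2\bigr]\int_0^{\tilde R}|\uj^\lambda|^2 r\,\dd r .
\]
On the other side, the $H^2(B_O(R))$ norm of $u^{J,\lambda}_\aaa$ is bounded by the sum over the $2J+1$ modes. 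By \eqref{eq:regularity-wcosjtheta} and the normalisation $\|\uj^\lambda\|_r=1$, each mode contributes at most $C(R,V,K)\,|\alpha_j^{c/s}|$. Cauchy--Schwarz then gives $\|u^{J,\lambda}_\aaa\|_{H^2(B_O(R))}\le C(R,V,K)\sqrt{2J+1}\,|\aaa|$.

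It therefore suffices to prove a uniform lower bound
\[
m_j \coloneqq \inf_{\lambda\in[1,K]} \int_0^{\tilde R} |\uj^\lambda(r)|^2\, r\,\dd r >0
\]
for each $j\le J$. Once this holds, we get $\|u^{J,\lambda}_\aaa\|_{L^2(B_O(\tilde R))}^2\ge \pi \min_{j\le J} m_j\, |\aaa|^2$, and the proposition follows with $C=C(R,V,K)\sqrt{2J+1}/\sqrt{\pi\min_j m_j}$. This is where the dependence on $J$ enters.

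\textbf{Key step: the lower bound, by Gr\"onwall.} Take $U=(\uj^\lambda,(\uj^\lambda)')$, which satisfies \eqref{eq:U-integral-equation} on $(0,R]$ by Corollary~\ref{cor:weak-to-ODE}. On $[\tilde R/2,R]$ the matrix $\mathcal A_{j,r}$ is bounded by a constant $M(\tilde R,V,K,j)$ uniformly in $\lambda\in[1,K]$. Gr\"onwall's inequality, applied in both directions, then gives
\[
|U(r)|\le e^{M R}|U(s)| \qquad \text{for all } r,s\in[\tilde R/2,R].
\]

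To turn this into an $L^2$ bound, I first bound $|U(s)|$ for some $s\in[\tilde R/2,\tilde R]$ by $\int_{\tilde R/2}^{\tilde R}(|v|^2+|v'|^2)\,r\,\dd r$, using the mean value theorem for integrals. Next I control $\int|v'|^2$ on $[\tilde R/2,\tilde R]$ by $\int|v|^2$ on a slightly larger interval. This is a Caccioppoli-type estimate, obtained by multiplying the ODE by $v\chi^2$ with a cutoff $\chi$. To make room for the cutoff I shrink the inner interval to $[\tilde R/4,\tilde R/2]$ and use $[\tilde R/8,\tilde R]$ as the outer one, where $\mathcal A_{j,r}$ is still bounded.

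Combining these bounds yields
\[
1=\|\uj^\lambda\|_r^2\le \int_0^{\tilde R/2}|v|^2 r\,\dd r + C\sup_{[\tilde R/2,R]}|U|^2 \le C(R,\tilde R,V,K,j)\int_0^{\tilde R}|\uj^\lambda|^2 r\,\dd r .
\]
In the middle expression the integral over $[0,\tilde R/2]$ is already part of $\int_0^{\tilde R}$, and the supremum is bounded by the Caccioppoli step. This gives $m_j\ge 1/C>0$ uniformly in $\lambda$.

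\textbf{Main obstacle.} The delicate point is uniformity in $\lambda$ and avoiding the singularity of the ODE at $r=0$. Restricting the Gr\"onwall argument to $r\ge\tilde R/8$ handles the singularity, since the coefficients there are bounded independently of $\lambda\in[1,K]$. The interior derivative estimate is needed to pass from $L^2$ control of $v$ to pointwise control of $v'$.

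If the Caccioppoli step turns out to be awkward, a fallback is a compactness and contradiction argument. Suppose $\lambda_n\to\lambda$ with $\int_0^{\tilde R}|\uj^{\lambda_n}|^2 r\,\dd r\to0$. The $H^2$ bound \eqref{eq:regularity-wcosjtheta} gives convergence of a subsequence to a normalised weak solution that vanishes on $(0,\tilde R)$. Its $U$ then vanishes at an interior point, so Cauchy--Lipschitz uniqueness forces it to be identically zero, contradicting the normalisation.
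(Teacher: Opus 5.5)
Your argument is correct. Its backbone is the same as the paper's: apply Gr\"onwall to the radial system \eqref{eq:U-integral-equation} on an interval away from the origin, mode by mode, then sum over the $2J+1$ orthogonal modes. The two proofs differ at both ends.

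\emph{Extracting pointwise Cauchy data.} To bound $(\uj^\lambda,(\uj^\lambda)')$ at some radius by $\|\uj^\lambda\|_{L_r([0,\tilde R])}$, the paper combines three ingredients:
\begin{enumerate}
\item 2D interior regularity on $B_O(\tilde R/2)$;
\item the $H^{3/2}\times H^{1/2}$ trace theorem on $\partial B_O(\tilde R/2)$;
\item the explicit fractional norms of $\cos(j\theta)$ from Lemma~\ref{lem:fractional-norms}.
\end{enumerate}
Your 1D Caccioppoli estimate plus the mean value theorem is considerably more elementary. Concretely, you test \eqref{EVP-j-W} with $\chi^2\uj^\lambda\in H^0_{j,r}([0,R])$; the $j^2/r$ term has a favourable sign, and $V-\lambda\ge 1-K$.

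\emph{The global $H^2(B_O(R))$ bound.} The paper controls the trace at $r=R$ by Gr\"onwall, lifts it to some $\psi\in H^2(B_O(R))$, and applies global regularity. You instead cite \eqref{eq:regularity-wcosjtheta}. This reduces everything to the uniform lower bound $m_j>0$, which is exactly the paper's intermediate estimate \eqref{eq:L2-observability-J}. Your packaging as an upper bound in $|\aaa|$ plus a lower bound $m_j$ also makes transparent where $J$ enters.

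Be careful with that citation, though, for two reasons.
\begin{itemize}
\item \eqref{eq:regularity-wcosjtheta} is essentially the single-mode, $\tilde R=R$ case of the statement you are proving. The global regularity theorem it invokes needs control of the boundary values at $r=R$ uniformly in $\lambda$, and that is precisely what the Gr\"onwall step supplies.
\item Its constant cannot be independent of $j$: already $\|\nabla[v(r)\cos(j\theta)]\|_{L^2(B_O(R))}\ge (j/R)\,\|v(r)\cos(j\theta)\|_{L^2(B_O(R))}$. For $j\le J$ this is harmless.
\end{itemize}

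You can avoid relying on \eqref{eq:regularity-wcosjtheta} altogether by using bounds you already have:
\begin{enumerate}
\item Your Gr\"onwall step controls $\sup_{[\tilde R/4,R]}\bigl(|\uj^\lambda|+|(\uj^\lambda)'|\bigr)$ by $\|\uj^\lambda\|_{L_r([0,\tilde R])}$.
\item The ODE \eqref{eq:uj-ODE} then bounds $(\uj^\lambda)''$ on the same interval.
\item Together these give the $H^2$ norm on the annulus $\tilde R/4<\|\x\|<R$ directly, since only $r\ge\tilde R/4$ appears in the polar formulas.
\item Interior regularity handles $B_O(\tilde R/2)$.
\end{enumerate}
This is simpler than the paper's trace-and-lifting step, and it makes your proof self-contained.

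Two bookkeeping fixes are needed:
\begin{itemize}
\item Once the inner interval is moved to $[\tilde R/4,\tilde R/2]$, the Gr\"onwall interval and the supremum in your final chain must be over $[\tilde R/4,R]$, with $\int_0^{\tilde R/4}$ in front.
\item The mean value step bounds $|U(s)|^2$, not $|U(s)|$.
\end{itemize}
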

\begin{proof}
Proposition \ref{prop:spaces-equivalence} implies that, for every $j\geq 0$, $\uj^{\lambda}(r)\cos(j\theta)$ is a weak solution of the equation \eqref{eq:PDE-lambda} on $B_O(R)$.
The interior regularity theory for elliptic equations \cite[Theorem 8.8 and Problem 8.2]{GilbargTrudinger2001} implies that:
 \[\|\uj^{\lambda}(r)\cos(j\theta)\|_{H^2(B_O(\tilde R/2))}\leq C(\tilde R,V,K)\|\uj^{\lambda}(r)\cos(j\theta)\|_{L^2(B_O(\tilde{R}))}=C(\tilde R,V,K)\|\uj^{\lambda}\|_{L_r([0,\tilde R])}.\]
 The theory of traces for $H^2$ functions \cite[p.~316]{brezis} implies that:
 \[\|\uj^{\lambda}(r)\cos(j\theta)\|_{H^\frac 3 2 (\partial B_O(\tilde{R}/2))}+\|\partial_\nu [\uj^{\lambda}(r)\cos(j\theta)]\|_{H^\frac 1 2 (\partial B_O(\tilde{R}/2))}\leq C(\tilde R,V,K)\|\uj^{\lambda}\|_{L_r([0,\tilde R])},\]
 where $\partial_\nu$ stands for the normal derivative at the boundary of the ball in question. Then, Lemma \ref{lem:fractional-norms} implies that:
 \[\left|\uj^{\lambda}\left(\textstyle \frac{\tilde R}{2}\right)\right|+\left|\left(\uj^{\lambda}\right)'\left(\textstyle\frac{\tilde R}{2}\right)\right|\leq C(\tilde R,V,K,J)\|\uj^{\lambda}\|_{L_r([0,\tilde R])}.\]
 Next, Corollary \ref{cor:weak-to-ODE} implies that the pair $U\coloneqq \left(\begin{array}{c}\uj^{\lambda}\\ \left(\uj^{\lambda}\right)'\end{array}\right)$ satisfies \eqref{eq:U-integral-equation}. Since the supremum norm of the matrix $\mathcal {A}_{j,r}$ in \eqref{eq:U-integral-equation} is bounded by a constant $C(\tilde{R},V,K,J)$, Gr\"onwall's inequality implies that:
  \begin{equation}\label{eq:estimates-from-gronwall}
  \left|\uj^{\lambda}(r)\right|+\left|\left(\uj^{\lambda}\right)'(r)\right|\leq C(R,\tilde R,V,K,J)\|\uj^{\lambda}\|_{L_r([0,\tilde R])},\quad \text{for every } r\in \left[\textstyle\frac{\tilde{R}}{2},R\right].\end{equation}
  This inequality has two consequences: first, \[\|\uj^{\lambda}\|_{L_r([\tilde R,R])} \leq C(R,\tilde R,V,K,J)\|\uj^{\lambda}\|_{L_r([0,\tilde R])},\]
  from which we deduce that:
  \begin{equation}\label{eq:L2-observability-J}
  \|\uj^{\lambda}(r)\cos(j\theta)\|_{L^2(B_O(R))}=\sqrt{\pi} \|\uj^{\lambda}\|_{L_r([0,R])}\leq C(R,\tilde R,V,K,J)\|\uj^{\lambda}\|_{L_r([0,\tilde R])}.
  \end{equation}
The second consequence of \eqref{eq:estimates-from-gronwall} is deduced via Lemma \ref{lem:fractional-norms}:
 \[\|\uj^{\lambda}(r)\cos(j\theta)\|_{H^\frac 3 2 (\partial B_O(R))}+\|\partial_\nu [\uj^{\lambda}(r)\cos(j\theta)]\|_{H^\frac 1 2 (\partial B_O(R))}\leq C(R,\tilde R,V,K,J)\|\uj^{\lambda}\|_{L_r([0,\tilde R])}.\]
 The theory of traces \cite[p.~316]{brezis} implies the surjectivity of the pair of trace and normal derivative operators: 
 \[(\rm{Tr},\partial_\nu):H^2(B_O(R))\to H^\frac 3 2 (\partial B_O(R))\times H^\frac 1 2 (\partial B_O(R)).\]
 Therefore the Open Mapping Theorem \cite[Theorem 2.6]{brezis} implies that there exists $\psi\in H^2(B_O(R))$ such that:
 \[\|\psi\|_{H^2(B_O(R))}\leq C(R,\tilde R,V,K,J)\|\uj^{\lambda}\|_{L_r([0,\tilde R])} \]
 and, in the sense of traces,
\begin{equation}\label{eq:existence-psi-with-suitable-boundary-values}
\left[\psi-\uj^{\lambda}(r)\cos(j\theta)\right]\Big\vert_{\partial B_O(R)}\equiv 0 \quad \text{ and }\quad \partial_\nu\left[\psi-\uj^{\lambda}(r)\cos(j\theta)\right]\Big\vert_{\partial B_O(R)}\equiv 0.
\end{equation}

Using \eqref{eq:L2-observability-J} and \eqref{eq:existence-psi-with-suitable-boundary-values}, the regularity theory up to the boundary for elliptic equations \cite[Theorem 8.12]{GilbargTrudinger2001} implies that:
 \[\|\uj^{\lambda}(r)\cos(j\theta)\|_{H^2(B_O(R))}\leq C(R,\tilde R,V,K,J)\|\uj^{\lambda}\|_{L_r([0,\tilde R])}.\]

 In order to deduce the conclusion of the proposition, we note that the inequality above remains valid if we replace $\cos(j\theta)$ with $\sin(j\theta)$, with $j>0$. Therefore, the triangle inequality and the inequality between the arithmetic mean and the quadratic mean imply:
\[\begin{aligned}
\|u^{J,\lambda}_\aaa\|_{H^2(B_O(R))}^2&\leq (2J+1)\left[(\alpha_0^c)^2 \|\uz^{\lambda}(r)\|_{H^2(B_O(R))}^2+\sum_{j=1}^J \left((\alpha_j^c)^2+(\alpha_j^s)^2\right)  \|\uj^{\lambda}(r)\cos(j\theta)\|_{H^2(B_O(R))}^2\right]\\
&\leq C(R,\tilde{R},V,K,J)\left[(\alpha_0^c)^2 \|\uz^{\lambda}\|_{L_r([0,\tilde R])}^2+\sum_{j=1}^J \left((\alpha_j^c)^2+(\alpha_j^s)^2\right)  \|\uj^{\lambda}\|_{L_r([0,\tilde{R}])}^2\right]\\
&= C(R,\tilde{R},V,K,J) \| u^{J,\lambda}_\aaa\|_{L^2(B_O(\tilde{R}))}^2.\hspace{7.5cm} \qedhere
\end{aligned}\]
\end{proof}

Next, we state the $H_{j,r}([0,R])$ stability result for the basis functions $\uj^{\lambda}$ with respect to the value $\lambda\in [1,K]$. 

\begin{proposition}\label{prop:uj-stable-in-lambda} In the hypotheses of Proposition \ref{prop:H2-stability-J}, let $\lambda,\tilde \lambda\in [1,K]$ and the basis functions $\uj^{\lambda}$ and $\uj^{\tilde \lambda}$ given by Theorem \ref{thm:solution-space-1dim}. Then, there exists a threshold $\mu(R,V,K,J)>0$, a constant $C(R,V,K,K)>0$, and a choice of sign $\pm$ such that:
\[\left\|\uj^{\tilde \lambda}\pm\uj^{\lambda}\right\|_{j,r}\leq C(R,V,K,J)|\tilde \lambda-\lambda|,\]
provided that $|\tilde \lambda-\lambda|\leq \mu(R,V,K,J)$ and $j\leq J$.
\end{proposition}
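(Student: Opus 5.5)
We prove the stability estimate by comparing normalised solutions through a perturbation argument on the ODE, using the one-dimensionality from Theorem \ref{thm:solution-space-1dim} and the compactness/Gr\"onwall machinery already developed. Since $j\le J$ ranges over finitely many indices, it suffices to fix $j$ and obtain constants uniform in $\lambda\in[1,K]$.

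The first step is a local, explicit parametrisation of the solution line. Near the origin we cannot impose initial conditions, so we work on a fixed interval $[\tilde R/2,R]$ together with the behaviour at $0$ encoded in the weak formulation. I would characterise $\uj^\lambda$ by its Cauchy data at a fixed point, say $r_0=R$: by the proof of Theorem \ref{thm:solution-space-1dim}, every solution is determined by the pair $(v(R),v'(R))$, and this pair lies on a line $\ell_\lambda\subset\RR^2$. The key claim is that $\ell_\lambda$ depends continuously, indeed Lipschitz, on $\lambda$. To see this, compare $v=\uj^{\lambda}$ with the solution $\tilde v$ of \eqref{EVP-j-W} at parameter $\tilde\lambda$ having the same data as $v$ at a nearby point. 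Rather than guess Cauchy data, I would define the candidate directly:
\[
\tilde v=v+(\tilde\lambda-\lambda)\,z,
\]
where $z$ solves the inhomogeneous weak problem
\[
a_{j,V}(z,w)-\tilde\lambda(z,w)_r=(v,w)_r\quad\text{for all } w\in H^0_{j,r},
\]
with $z\in H^0_{j,r}$. Then $\tilde v$ is an $H_{j,r}$ solution of \eqref{EVP-j-W} with parameter $\tilde\lambda$. By Theorem \ref{thm:solution-space-1dim} it is then a multiple of $\uj^{\tilde\lambda}$, and normalising gives
\[
\bigl\|\uj^{\tilde\lambda}\mp \uj^\lambda\bigr\|_{j,r}\le C\,|\tilde\lambda-\lambda|\,\|z\|_{j,r},
\]
since $\|v\|_r=1$ and $\|\tilde v\|_r\ge 1-|\tilde\lambda-\lambda|\,\|z\|_r$.

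The main obstacle is solvability of the problem for $z$ with a uniform bound on $\|z\|_{j,r}$. This is the Fredholm issue of Case II: if $\tilde\lambda$ is an eigenvalue of $A_{j,V}$, the operator $I-\tilde\lambda A_{j,V}^{-1}$ is not invertible. I would handle this in two ways.
\begin{enumerate}
\item Since the Dirichlet eigenvalues of $A_{j,V}$ in $[1,K+1]$ are finitely many and simple (by Cauchy--Lipschitz, as in the theorem's proof), if $\lambda$ stays a distance $\ge\delta$ from them, the resolvent bound gives $\|z\|_{j,r}\le C(\delta)\|v\|_r$.
\item Near an eigenvalue, I would instead replace the Dirichlet condition for $z$ by a Neumann-type or Robin condition at $R$ chosen so that the modified operator is invertible uniformly for $\tilde\lambda$ in a neighbourhood. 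This uses the bilinear form $a_{j,V}-\tilde\lambda(\cdot,\cdot)_r$ on $H_{j,r}$ with a boundary term $\kappa v(R)w(R)R$, and $\kappa$ is picked so that no Robin eigenvalue is near $\lambda$. The correction $\tilde v=v+(\tilde\lambda-\lambda)z$ is still a solution of \eqref{EVP-j-W}, because the test functions lie in $H^0_{j,r}$ and the boundary term does not matter.
\end{enumerate}
Compactness of $[1,K]$ then yields uniform $\mu$ and $C$. The coercivity and compactness come from Lemma \ref{lem:Hjr0-comapct-in-Lr} and Lemma \ref{lem:poincare-H_jr0} (with a trace term controlled through Corollary \ref{cor:weak-to-ODE}).

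Finally, the choice of sign follows from taking the normalisation factor of $\tilde v$ to be positive or negative, and the threshold $\mu$ ensures $|\tilde\lambda-\lambda|\,\|z\|_r\le 1/2$ so that the normalisation is harmless.
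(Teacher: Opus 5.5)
Your argument is correct in structure and takes a different, though closely related, route from the paper's. You obtain $\uj^{\tilde\lambda}$ by normalising $\tilde v=\uj^\lambda+(\tilde\lambda-\lambda)z$, where $z$ is the Dirichlet or Neumann/Robin resolvent at $\tilde\lambda$ applied to $\uj^\lambda$. You then use Theorem~\ref{thm:solution-space-1dim} to identify $\tilde v$ with a multiple of $\uj^{\tilde\lambda}$.

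The paper works in the opposite direction. It subtracts from $\uj^{\tilde\lambda}$ the multiple of $\uj^{\lambda}$ with the same value (resp.\ derivative) at $R$. The difference then lies in $H^0_{j,r}$ (resp.\ satisfies the Neumann condition) and solves $A_{j,V}v-\lambda v=(\tilde\lambda-\lambda)\uj^{\tilde\lambda}$ (resp.\ the $B_{j,V}$ analogue). Proposition~\ref{prop:operator-outside-of-spectrum} is then applied at $\lambda$.

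Each version has its own advantages:
\begin{itemize}
\item Yours never divides by $\uj^\lambda(R)$ or $(\uj^\lambda)'(R)$. It only needs $\tilde\lambda$ to be far from whichever spectrum you choose, so the Dirichlet/Neumann dichotomy can be decided pointwise in $\tilde\lambda$, and $\mu$ is needed only to keep $\|\tilde v\|_r\geq\frac12$.
\item The paper's version needs no second appeal to uniqueness and gives the explicit threshold $\mu=\delta_{R,V,K+1,J}/3$.
\end{itemize}

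Two steps you assert still need justification.

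First, the heart of your item 2 is why some $\kappa$ makes a Dirichlet eigenvalue $\lambda^*$ a non-eigenvalue of the Robin problem. Taking $\kappa=0$ (the operator $B_{j,V}$) suffices. If $\lambda^*$ were also a Neumann eigenvalue, the Dirichlet and Neumann eigenfunctions would both solve \eqref{EVP-j-W}. By Theorem~\ref{thm:solution-space-1dim} they would be proportional, so a single nonzero $w$ would satisfy $w(R)=0$ and, by Proposition~\ref{prop:neumann-as-ode}, $w'(R)=0$. Corollary~\ref{cor:weak-to-ODE} and Gr\"onwall would then force $w\equiv0$. This is Proposition~\ref{prop:gap-dirichlet-neumann}. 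Together with discreteness of the Dirichlet and Neumann spectra $\sigma_D,\sigma_N$ of $a_{j,V}$, it is what makes $\tilde\lambda\mapsto\max\{\mathrm{dist}(\tilde\lambda,\sigma_D),\mathrm{dist}(\tilde\lambda,\sigma_N)\}$ bounded below on $[1,K]$. That lower bound is the real content of your ``compactness'' step.

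Second, your final estimate hides $\|\uj^\lambda\|_{j,r}$ inside $C$, since it multiplies $1/\|\tilde v\|_r-1$. A uniform bound $\|\uj^\lambda\|_{j,r}\leq C(R,V,K,J)$ is not immediate from \eqref{EVP-j-W}, because in general $\uj^\lambda\notin H^0_{j,r}$. The paper obtains it from Proposition~\ref{prop:H2-stability-J}, and you should cite or reprove it.

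Your appeal to Corollary~\ref{cor:weak-to-ODE} to control the trace term is unnecessary. For $\kappa\geq0$ that term only helps coercivity, and its continuity on $H_{j,r}$ is just the one-dimensional trace inequality on $[R/2,R]$.
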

In order to prove this proposition, we need to construct the Neumann counterpart of the Dirichlet operator $A_{j,V}$ defined in Section \ref{sec:weaks-solutions-basis-functions}. Namely, let $B_{j,V}:H_{j,r}([0,R])\to H_{j,r}([0,R])^*$ be the operator corresponding to the bilinear form $a_{j,V}$ in \eqref{eq:def-ajV}, that is: 
\[\langle B_{j,V} v,w\rangle_{H_{j,r}([0,R])^*,H_{j,r}([0,R])}\coloneqq a_{j,V}(v,w)=a_j(v,w)+(V\,v,w)_r, \quad \forall v,w\in H_{j,r}.\]
Since $V\geq 1$, the operator $B_{j,V}$ is coercive and by Lemma \ref{lem:Hjr0-comapct-in-Lr} we can construct the compact self-adjoint solution operator $B_{j,V}^{-1}$ on $L_r([0,R])$.
The following Proposition characterises $B_{j,V}$ as the Neumann operator associated to the weak form \eqref{EVP-j-W} of the ODE \eqref{eq:uj-ODE}:
\begin{proposition}\label{prop:neumann-as-ode}
    Let $R>0$ and the potential $V\in C^1([0,R])$ taking values in $[1,\infty)$. For every $\lambda\geq 1$, non-negative integer $j$, and $v\in H_{j,r}([0,R])$, the following statements are equivalent:
    \begin{enumerate}[label={(\roman*)}]
        \item \label{item:neumann-op-1} $B_{j,V} \,v= \lambda v$;
        \item \label{item:neumann-op-2} $v\in H^2(\mathcal{I})$ for every compact interval $\mathcal{I}\subset (0,R]$, satisfies \eqref{eq:uj-ODE} in the sense of weak derivatives and $v'(R)=0$ in the sense of traces. 
    \end{enumerate}
\end{proposition}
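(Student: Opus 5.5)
The plan is to prove the two implications separately. Throughout, $B_{j,V}v=\lambda v$ is read in the weak sense:
\[
a_{j,V}(v,w)=\lambda (v,w)_r \quad \text{for all } w\in H_{j,r}([0,R]).
\]

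For \ref{item:neumann-op-1}$\Rightarrow$\ref{item:neumann-op-2}, restrict the test functions to $w\in H^0_{j,r}([0,R])\subset H_{j,r}([0,R])$. Then $v$ is a weak solution of \eqref{EVP-j-W}. Corollary~\ref{cor:weak-to-ODE}, together with Proposition~\ref{prop:wcos-inH2} applied through the $H^2(B_O(R))$ regularity of $v(r)\cos(j\theta)$, gives $v\in H^2(\mathcal I)$ on every compact $\mathcal I\subset(0,R]$ and shows that \eqref{eq:uj-ODE} holds in the sense of weak derivatives. To recover the boundary condition, choose $w=\psi\in C^\infty_c((0,R])$ with $\psi(R)=1$; this is the same cut-off as in the proof of Theorem~\ref{thm:solution-space-1dim}, and it lies in $H_{j,r}$. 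Integrate by parts on $[\rho,R]$, where $\psi$ vanishes near $\rho$, using $(r v')'=r v''+v'$. This gives
\[
a_j(v,\psi)+(Vv,\psi)_r=R\,v'(R)\psi(R)+\int_0^R\Big(-v''-\tfrac1r v'+\tfrac{j^2}{r^2}v+Vv\Big)\psi\, r\,\dd r .
\]
The ODE turns the integral into $\lambda(v,\psi)_r$. Comparing with the hypothesis leaves $R\,v'(R)=0$, hence $v'(R)=0$.

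For \ref{item:neumann-op-2}$\Rightarrow$\ref{item:neumann-op-1}, fix an arbitrary $w\in H_{j,r}([0,R])$. The computation above shows the weak identity for test functions vanishing near $0$, but now $w$ need not vanish there, so I would split $w=\chi w+(1-\chi)w$. Here $\chi$ is a smooth cut-off equal to $1$ near $0$ and supported in $[0,R/2)$, so that $\chi w\in H^0_{j,r}([0,R])$. The key claim is that any $v$ as in \ref{item:neumann-op-2} that also lies in $H_{j,r}$ satisfies \eqref{EVP-j-W}. For the piece $(1-\chi)w$, which vanishes near $0$ and lies in $H^1$ near $R$, the integration by parts above is legitimate because $v\in H^2([R/4,R])$. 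The boundary term is $R\,v'(R)w(R)=0$, so the identity holds for this piece.

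The main obstacle is the piece $\chi w$, near the origin. There one must justify the identity against test functions of $H^0_{j,r}$ without any information at $r=0$ beyond $v\in H_{j,r}$. I would handle it by approximation: approximate $\chi w$ in the $\|\cdot\|_{j,r}$ norm by $C^\infty_c((0,R))$ functions, which is allowed by the definition of $H^0_{j,r}$ as a completion. For each such smooth function $\phi$, integrate by parts on $[\rho,R]$ with $\rho$ below the support of $\phi$; the ODE then gives the identity exactly, with no boundary terms. Passing to the limit uses only the continuity of $a_{j,V}(v,\cdot)$ and $(v,\cdot)_r$ in the $\|\cdot\|_{j,r}$ norm. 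Adding the two pieces gives $a_{j,V}(v,w)=\lambda(v,w)_r$ for all $w\in H_{j,r}$, that is, $B_{j,V}v=\lambda v$.
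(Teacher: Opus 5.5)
Your proof is correct and follows essentially the paper's route. Corollary~\ref{cor:weak-to-ODE} supplies the $H^2_{\mathrm{loc}}$ regularity and the ODE, and integrating by parts against test functions vanishing near the origin leaves only the boundary term $R\,v'(R)w(R)$. The one difference is that for (ii)$\Rightarrow$(i) you split $w=\chi w+(1-\chi)w$ instead of invoking the density of $C_c^\infty((0,R])$ in $H_{j,r}([0,R])$ (Lemma~\ref{lem:density-in Hjr}). Note, however, that your claim $\chi w\in H^0_{j,r}([0,R])$ does not follow from the definition (for $j=0$, $w$ may even be unbounded at $0$) and needs exactly that kind of input, e.g.\ Proposition~\ref{prop:spaces-equivalence}~\ref{item:h01-second} applied to $\chi w(r)\cos(j\theta)\in H_0^1(B_O(R))$, which rests on the logarithmic cut-off of Lemma~\ref{lem:dense-in-h01}.
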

\begin{proof}
If the function $v$ satisfies $B_{j,V}\, v=\lambda v$, then by Corollary \ref{cor:weak-to-ODE}, $v$ belongs to $H^2(\mathcal{I})$ and satisfies the ODE \eqref{eq:uj-ODE}, which in turn is the same as $\tilde{A}_{j,V} v=\lambda v$ in the sense of weak derivatives, where $\tilde A_{j,V}$ is defined in \eqref{eq-def-AjTilde}. Therefore, it is enough to prove the equivalence \ref{item:neumann-op-1}-\ref{item:neumann-op-2} for functions $v\in H^2(\mathcal{I})$ that satisfy $\tilde{A}_{j,V} v=\lambda v$.

Fur such a function $v$, Lemma \ref{lem:density-in Hjr} and an integration by parts argument implies that the equality $B_{j,V} \, v=\lambda v$ is equivalent to:
\begin{equation}\label{eq:integration-parts-neumann}
\int_0^R (\tilde{A}_{j,V}\, v)(r) w(r) \, r\,\dd r + v'(R) w(R)R =\lambda \int_0^R v(r)w(r)r\,\dd r,\quad  \forall w\in C_c^\infty((0,R]),\end{equation} and, since $\tilde{A}_{j,V} v=\lambda v$, it follows that $B_{j,V} \, v=\lambda v$ is equivalent to $v'(R)=0$.
\end{proof}
So far, in Theorem~$\ref{thm:solution-space-1dim}$ and Proposition~$\ref{prop:neumann-as-ode}$, we have characterised the homogeneous Dirichlet and Neumann problems associated with the bilinear form $a_{j,V}(\cdot,\cdot)-\lambda(\cdot,\cdot)_r$, for an arbitrary $\lambda \geq 1$, as solutions of the ordinary differential equation~$\eqref{eq:uj-ODE}$ subject to appropriate boundary conditions at $r=R$. Consequently, the Cauchy–Lipschitz theorem for ordinary differential equations, together with the uniqueness up to a multiplicative constant of weak solutions to~$\eqref{eq:uj-ODE}$ in the sense of~$\eqref{EVP-j-W}$, implies that the Dirichlet and Neumann spectra associated with~$\eqref{eq:uj-ODE}$ are disjoint. Since both spectra are discrete, it follows that their intersections with the interval $[1,K]$ are separated by a positive constant, which is quantified in the following proposition:

\begin{proposition}\label{prop:gap-dirichlet-neumann} Let $R>0$ and $V\in C^1([0,R])$ taking values in $[1,\infty)$. For every $K>1$ and each positive integer $J$, there exists a value $\delta_{R,V,K,J}\in (0,1)$ such that if $\lambda^j_n$ and $\zeta^j_m$ belong to $[1,K]$ and are, respectively, Dirichlet and Neumann eigenvalues associated to the bilinear form $a_{j,V}(\cdot,\cdot)$ on $L_r([0,R])$ for some $j\in \overline{0,J}$, then
\[|\lambda^j_n-\zeta^j_m|>\delta_{R,V,K,J}.\]
\end{proposition}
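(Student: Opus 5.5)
Fix $j\in\overline{0,J}$; since there are finitely many $j$, it suffices to find $\delta_j>0$ for each and take $\delta_{R,V,K,J}=\min(\tfrac12,\min_j\delta_j)$.

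\textbf{Step 1: the Dirichlet and Neumann spectra are disjoint.} Suppose $\lambda\ge 1$ is both a Dirichlet eigenvalue of $A_{j,V}$ and a Neumann eigenvalue of $B_{j,V}$. Let $w$ be a nonzero Dirichlet eigenfunction and $v$ a nonzero Neumann eigenfunction. Both $v$ and $w$ belong to $H_{j,r}([0,R])$ and satisfy \eqref{EVP-j-W}; for $v$ this holds because $H^0_{j,r}\subset H_{j,r}$. By Theorem \ref{thm:solution-space-1dim} the solution space is one-dimensional, so $v=cw$ with $c\neq 0$. Then $w(R)=0$ because $w\in H^0_{j,r}$ (the trace at $R$ of functions in $H^0_{j,r}$ vanishes, since they are limits of functions compactly supported in $(0,R)$ and the trace is continuous on $H^1$ of compact subintervals near $R$). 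Also $w'(R)=c^{-1}v'(R)=0$ by Proposition \ref{prop:neumann-as-ode}. Corollary \ref{cor:weak-to-ODE} together with Gr\"onwall's inequality on $[r_0,R]$ (where $\mathcal A_{j,r}$ is bounded) gives $w\equiv 0$ on every $[r_0,R]$, hence on $(0,R]$. This is a contradiction.

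\textbf{Step 2: discreteness gives a positive gap.} $A_{j,V}^{-1}$ and $B_{j,V}^{-1}$ are compact self-adjoint operators on $L_r([0,R])$ (Lemma \ref{lem:Hjr0-comapct-in-Lr} and Lemma \ref{lem:poincare-H_jr0} with Lax--Milgram). Hence their eigenvalues $\lambda^j_n$ and $\zeta^j_m$ are discrete sequences tending to infinity, and each set meets $[1,K]$ in finitely many points. By Step 1 these two finite sets are disjoint, so the minimum of $|\lambda^j_n-\zeta^j_m|$ over pairs in $[1,K]$ is a positive number. Call it $2\delta_j$; if either set is empty, take $\delta_j=1/2$. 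Then $|\lambda^j_n-\zeta^j_m|\ge 2\delta_j>\delta_j$, which is the required strict inequality.

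\textbf{Main obstacle.} The delicate point is Step 1. It requires checking that the Neumann eigenfunction is indeed a solution in the sense of \eqref{EVP-j-W}, which is immediate from the weak formulations. It also requires making sense of the pointwise values $w(R)$ and $w'(R)$, which are supplied by the $H^2$ regularity near $R$ from Corollary \ref{cor:weak-to-ODE}. The singularity of $\mathcal A_{j,r}$ at $r=0$ is harmless, since Gr\"onwall is applied only on intervals bounded away from $0$. The dependence of $\delta$ on $(R,V,K,J)$ is only qualitative, which is all the statement asserts.
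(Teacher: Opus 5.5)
Your proposal is correct and follows essentially the same route as the paper. First, the Dirichlet and Neumann spectra are disjoint: by the one-dimensionality of the solution space of \eqref{EVP-j-W}, a common eigenvalue would give a nonzero solution with vanishing value and derivative at $R$, which Gr\"onwall's inequality forces to be zero. Second, discreteness of both spectra yields a positive gap on $[1,K]$. You also make explicit several points the paper leaves implicit: the trace argument for $w(R)=0$, taking the minimum over the finitely many $j\in\overline{0,J}$, and halving and capping $\delta$ so that the inequality is strict and $\delta<1$.
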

\begin{proof}
    Since the spectra of both operators $A_{j,V}$ and $B_{j,V}$ are discrete, then it is sufficient to prove that there is no value $\lambda\in [1,K]$ that is both a Dirichlet and Neumann eigenvalue of \eqref{eq:uj-ODE}. Indeed, if this would be the case, than the normalised solution $\uj^\lambda$ of \eqref{EVP-j-W} given by Theorem \eqref{thm:solution-space-1dim} would satisfy $\uj^\lambda(R)=(\uj^\lambda)'(R)=0$. However, Corollary \ref{cor:weak-to-ODE} and Gr\"onwall's inequality would imply that $\uj^\lambda$ is identically zero, which is a contradiction.
\end{proof}

The last tool needed for the proof of Proposition \ref{prop:uj-stable-in-lambda} is the following result that loosely speaking states that if $\lambda$ is far from an eigenvalue of a bilinear form $b(\cdot,\cdot)$ defined on a subspace of $L_r([0,R])$, then the form $b(\cdot,\cdot)-\lambda(\cdot,\cdot)_r$ admits a bounded solution operator.

\begin{proposition}\label{prop:operator-outside-of-spectrum}
Let $H$ be a Hilbert space that is compactly embedded in $L_r([0,R])$ such that the embedding operator is contractive. Let $b:H\times H\to \RR$ be a symmetric, continuous and coercive bilinear form, the latter meaning that there exists a constant $M>0$ such that:
\begin{equation}\label{eq:b-coercive}
b(w,w)\geq \frac{1}{M}\|w\|^2_H, \quad \forall v\in H.
\end{equation}
If $v\in H$ and $f\in L_r([0,R])$ are such that:
\begin{equation}
\label{eq:generic-elliptic-lambda*}
b(v,w)-\lambda (v,w)_r=(f,w)_r, \quad\forall w\in H\end{equation}
and $\lambda\geq 1 $ is not an eigenvalue of $b$, 
then:
\[\|v\|_{r}\leq \frac{\|f\|_r}{\displaystyle \min_{\lambda^* \text{\emph{ eigenval. of }}b} |\lambda-\lambda^*| }\]
and 
\[\|v\|_{H}\leq M \|f\|_r\left[1+\frac{|\lambda|}{\displaystyle \min_{\lambda^*\text{\emph{ eigenval. of }}b} |\lambda-\lambda^*| }\right].\]

\emph{Note:} We say that $\lambda^*$ is an eigenvalue of $b$ if there exists a function $v\in H\setminus\{0\}$ such that $b(v,w)-\lambda (v,w)_r=0, \,\forall w\in H$.
\end{proposition}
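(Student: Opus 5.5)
The plan is to use the spectral theorem for the compact self-adjoint solution operator of $b$. By Lax--Milgram and coercivity \eqref{eq:b-coercive}, each $g\in L_r([0,R])$ gives a unique $T g\in H$ with $b(Tg,w)=(g,w)_r$ for all $w\in H$. Since $H$ embeds compactly and contractively in $L_r([0,R])$, $T$ is compact as an operator on $L_r([0,R])$, and it is self-adjoint because $b$ is symmetric. It is also positive and injective.

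From the spectral theorem for compact self-adjoint operators we obtain an orthonormal family $(e_n)$ in $L_r([0,R])$ with $Te_n=\lambda_n^{-1}e_n$. The numbers $\lambda_n>0$ are exactly the eigenvalues of $b$ in the sense of the Note. This family is complete in $\overline{H}^{L_r}$, the $L_r$-closure of $H$. Both $v$ and $Tf$ lie in this closure (and the kernel of $T$ is trivial there), so we can expand in the $e_n$.

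For the $L_r$ bound, test \eqref{eq:generic-elliptic-lambda*} with $w=e_n\in H$. Using $b(v,e_n)=b(e_n,v)=\lambda_n(e_n,v)_r$, we get
\[
(\lambda_n-\lambda)(v,e_n)_r=(f,e_n)_r .
\]
Squaring, summing over $n$ and applying Parseval in $\overline{H}^{L_r}$ gives
\[
\|v\|_r^2=\sum_n\frac{(f,e_n)_r^2}{(\lambda_n-\lambda)^2}\le \frac{\|f\|_r^2}{\min_n|\lambda-\lambda_n|^2}.
\]
The minimum is attained and is positive because the $\lambda_n$ accumulate only at infinity and $\lambda$ is not an eigenvalue. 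One checks that $v$ lies in the closed span of the $e_n$ as follows. The identity $v=\lambda Tv+Tf$ follows from \eqref{eq:generic-elliptic-lambda*}, and both terms on the right are in the range of $T$, which is contained in that span.

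For the $H$ bound, take $w=v$ in \eqref{eq:generic-elliptic-lambda*} and use coercivity:
\[
\frac1M\|v\|_H^2\le b(v,v)=\lambda\|v\|_r^2+(f,v)_r\le \bigl(|\lambda|\,\|v\|_r+\|f\|_r\bigr)\|v\|_r .
\]
Since the embedding is contractive, $\|v\|_r\le\|v\|_H$, so dividing by $\|v\|_H$ yields
\[
\|v\|_H\le M\bigl(\|f\|_r+|\lambda|\,\|v\|_r\bigr).
\]
Substituting the $L_r$ bound gives the stated estimate.

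The main obstacle is justifying the spectral expansion: identifying eigenvalues of $b$ with reciprocals of eigenvalues of $T$, and checking that $v$ and $f$ can be handled even though $H$ need not be dense in $L_r$. The fix is to work throughout in $\overline{H}^{L_r}$ and to project $f$ onto it, which only decreases $\|f\|_r$. The remaining steps are routine.
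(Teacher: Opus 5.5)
Your proof is correct and follows essentially the same route as the paper: a compact self-adjoint solution operator on $L_r([0,R])$, an eigenbasis of the $L_r$-closure of $H$, and testing with the eigenfunctions to get $(\lambda_n-\lambda)(v,e_n)_r=(f,e_n)_r$. Your $H$-bound, obtained by testing with $w=v$, is the paper's estimate $\|B^{-1}\|_{L_r\to H}\le M$ applied to $v=B^{-1}(f+\lambda v)$, just written out directly. One small correction: in general $T$ is not injective on all of $L_r([0,R])$, since its kernel is $H^\perp$ (as the paper notes); it is injective only on $\overline{H}^{L_r}$, which is exactly where you end up using it.
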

A proof of this proposition can be found in Appendix \ref{acaret:proofs}.\\

We are now able to prove Proposition \ref{prop:uj-stable-in-lambda} by applying Proposition \ref{prop:operator-outside-of-spectrum} to either the operator $A_{j,V}$ or $B_{j,V}$ -- more precisely, to the bilinear form $a_{j,V}$ on either $H_{j,r}^0([0,R])$ or $H_{j,r}([0,R])$ -- depending on whether the values $\lambda$ and $\tilde \lambda$ are close to either the Dirichlet or the Neumann spectrum of \eqref{eq:uj-ODE}. 
\begin{proof}[Proof of Proposition \ref{prop:uj-stable-in-lambda}]
With the notation in Proposition \ref{prop:gap-dirichlet-neumann}, we consider the threshold $\mu(R,V,K,J)\coloneqq \frac{1}{3}\delta_{R,V,K+1,J}$. Then, since $\lambda,\tilde \lambda\in [1,K]$ satisfy $|\tilde \lambda-\lambda|\leq\mu(R,V,K,J)$ and the distance between the Dirichlet and Neumann spectra of \eqref{eq:uj-ODE} intersected with the interval $[1,K+1]$ is smaller than  $3\mu(R,J,V,K)$, at least one of the following statements holds true:
\begin{enumerate}[label={\Roman*.}]
\setlength{\itemsep}{0.3cm}
    \item $\displaystyle \inf_{\lambda^j_n \text { eigenval. of } A_{j,V}} |\lambda-\lambda^j_n|\geq\mu(R,J,V,K)\quad \text{and}\quad \inf_{\lambda^j_n \text { eigenval. of } A_{j,V}} |\tilde\lambda-\lambda^j_n|\geq\mu(R,J,V,K)$;
    \item $\displaystyle \inf_{\zeta^j_m \text { eigenval. of } B_{j,V}} |\lambda-\zeta^j_m|\geq\mu(R,J,V,K)\quad \text{and}\quad \inf_{\zeta^j_m \text { eigenval. of } B_{j,V}} |\tilde \lambda-\zeta^j_m|\geq\mu(R,J,V,K)$.
\end{enumerate}
We will study these cases separately. If Statement I holds true, then, in particular, $\uj^{\lambda}$ is not a Dirichlet eigenfunction of $a_{j,V}$. Then, it follows that, in the sense of traces $\uj^{\lambda}(R)\neq 0$, so we can consider the following function $v\in H_{j,r}^0([0,R])$,
\[v(r)\coloneqq \uj^{\tilde \lambda}(r)-\frac{\uj^{\tilde \lambda}(R)}{\uj^{\lambda}(R)} \uj^{\lambda}(r).\]
Since $\uj^{\lambda}$ and $\uj^{\tilde \lambda}$ satisfy \eqref{EVP-j-W} for $\lambda$ and $\tilde \lambda$, respectively, this means that:
\[A_{j,V} v - \lambda v = (\tilde \lambda-\lambda) \uj^{\tilde \lambda}. \]

Since, by definition, $\|\uj^{\tilde \lambda}\|_r=1$, we apply Proposition \ref{prop:operator-outside-of-spectrum} and obtain that:
\begin{equation}\label{eq:dirichlet-error-u*-u**-not-normalised}
\|v\|_r\leq \frac{|\tilde \lambda-\lambda|}{\mu(R,V,K,J)} \quad \text{ and } \quad \|v\|_{j,r}\leq |\tilde \lambda-\lambda|\left[1+\frac{K}{\mu(R,V,K,J)}\right].
\end{equation}
First, this means that, since $\|\uj^{\lambda}\|_r=\|\uj^{\tilde \lambda}\|_r=1$, the triangle inequality implies:
\[\left|1-\left|\frac{\uj^{\tilde \lambda}(R)}{\uj^{\lambda}(R)}\right|\right|\leq \frac{|\tilde \lambda-\lambda|}{\mu(R,V,K,J)},\]
which implies that there exists a choice of sign $\pm$ such that:
\begin{equation}\label{eq:dirichlet-ratio-near-1}
\left|1\pm\frac{\uj^{\tilde \lambda}(R)}{\uj^{\lambda}(R)}\right|\leq \frac{|\tilde \lambda-\lambda|}{\mu(R,V,K,J)}.\end{equation}
With the same choice of the sign, we write the triangle inequality:
\begin{equation}\label{eq:lambda-stability-final-inequality}
\begin{aligned}
\left\|\uj^{\tilde\lambda}\pm \uj^{\lambda}\right\|_{j,r}&\leq \left\|\uj^{\tilde \lambda}-  \frac{\uj^{\tilde \lambda}(R)}{\uj^{\lambda}(R)}\uj^{\lambda}\right\|_{j,r}+ \left\|\left(1\pm\frac{\uj^{\tilde \lambda}(R)}{\uj^{\lambda}(R)}\right)\uj^{\lambda}\right\|_{j,r}\\
&\leq |\tilde \lambda-\lambda|\left[1+\frac{K+\|\uj^{\lambda}\|_{j,r}}{\mu(R,V,K,J)}\right],
\end{aligned}\end{equation}
where the last inequality follows by \eqref{eq:dirichlet-error-u*-u**-not-normalised} and \eqref{eq:dirichlet-ratio-near-1}. Since $\|\uj^{\lambda}\|_r=1$, Propositions \ref{prop:spaces-equivalence} and \ref{prop:H2-stability-J} Imply that:
\[\|\uj^{\lambda}\|_{j,r}= \frac{1}{\sqrt{\pi}}\|\uj^{\lambda}\|_{H^1(B_O(R))}\leq C(R,V,K,J) \|\uj^{\lambda}\|_{r}= C(R,V,K,J).\]
This finishes the proof of Case I.

In the second case, we proceed similarly by noticing that $\uj^{\lambda}$ is not a Neumann eigenfunction of $a_{j,V}$, so $(\uj^{\lambda})'(R)\neq 0$. Therefore, we use Proposition \ref{prop:operator-outside-of-spectrum} for the operator $B_{j,V}$ and prove that the difference 
\[v(r)\coloneqq \uj^{\tilde \lambda}(r)-\frac{(\uj^{\tilde \lambda})'(R)}{(\uj^{\lambda})'(R)} \uj^{\lambda}(r)\]
satisfies \eqref{eq:dirichlet-error-u*-u**-not-normalised}. The conclusion of Case II follows as above.
\end{proof}

Eventually, we combine Propositions \ref{prop:direct-implication-exact-lambda}, \ref{prop:uj-stable-in-lambda} and \ref{prop:H2-stability-J} to write the:
\begin{proof}[Proof of Theorem \ref{thm:direct-implication-approx-lambda}]
Let the positive integer $J_0$ and the vector $\aaa\in \RR^{2J+1}$ provided by Proposition \ref{prop:direct-implication-exact-lambda} such that $\|u^{J,\lambda^*}_\aaa\|_{L^2(\Omega)}=1$ and $\|u^{J,\lambda^*}_\aaa-u^*\|_{H^1(\Omega)}\leq \eps$.

Roughly speaking, we will show that, for $|\lambda-\lambda^*|$ small enough, the quantity $\| u^{J,\lambda}_\aaa-u^{J,\lambda^*}_\aaa\|_{H^1(\Omega)}$ can be made arbitrarily small. Then, the conclusion is obtained by normalising the vector $\aaa$ such that the norm $\|u^{J,\lambda}_\aaa\|_{L^2(\Omega)}$ becomes equal to one.

Indeed, let $\tilde{R}=\tilde{R}(\Omega)>0$ such that $B_O(\tilde{R})\subseteq \Omega$. Proposition \ref{prop:H2-stability-J} and the fact that $\|u^{J,\lambda^*}_\aaa\|_{L^2(\Omega)}=1$ imply that:
\begin{equation}\label{eq:UJ-bounded-L2-1}
\|u^{J,\lambda^*}_\aaa\|_{L^2(B_O(R))}\leq C(R,\tilde{R},V,K,J) \|u^{J,\lambda^*}_\aaa\|_{L^2(B_O(\tilde R))}\leq C(R,\tilde{R},V,K,J).\end{equation}
Since $\|\uj^{\lambda^*}\|_r=1$ for every $j\geq 0$, this implies that:
\begin{equation}\label{eq:UJ-bounded-L2-2}
\|\aaa\|^2=(\alpha_0^c)^2+\sum_{j=1}^J \left((\alpha_j^c)^2+(\alpha_j^s)^2\right)\leq  C(R,\tilde{R},V,K,J).\end{equation}

Next, we construct a new vector $\tilde \aaa$ such that, for every $j\in \overline{0,J}$, $\tilde \alpha_j^c\coloneqq\pm \alpha_j^c$ and, if $j\geq 1$, $\tilde \alpha_j^s\coloneqq \pm \alpha_j^s$, with the choices of signs given by Proposition \ref{prop:uj-stable-in-lambda}. As a result, the triangle inequality, together with Proposition \ref{prop:spaces-equivalence}, and the inequality between the arithmetic mean and quadratic mean, imply that:
\begin{equation}\label{eq:UJ-mean-inequality}
\begin{aligned}
\|u^{J,\lambda}_{\tilde \aaa}-u^{J,\lambda^*}_\aaa\|_{H^1(B_O(R))}^2&\leq (2J+1)\left[(\alpha_0^c)^2 \|\uz^{\lambda}\pm\uz^{\lambda^*}\|_{0,r}^2+\sum_{j=1}^J \left((\alpha_j^c)^2+(\alpha_j^s)^2\right)  \|\uj^{\lambda}\pm\uj^{\lambda^*}\|_{j,r}^2\right]\\
&\leq C(R,\tilde R,V,K,J)\, \max_{j=\overline{0,J}}\|\uj^{\lambda}\pm\uj^{\lambda^*}\|_{j,r}^2.
\end{aligned}\end{equation}
We recall that the signs were chosen such that Proposition \ref{prop:uj-stable-in-lambda} holds true. This allows us to consider a threshold $\mu(R,\tilde{R},V,K,J)>0$ such that, if  $|\lambda-\lambda^*|< \mu(R,\tilde{R},V,K,J)$, then
\[\| u^{J,\lambda}_{\tilde \aaa}-u^{J,\lambda^*}_\aaa\|_{H^1(\Omega)}\leq |\lambda-\lambda^*| C(R,\tilde R,V,K,J).\]
The conclusion follows by also taking into account that the $H^\frac{1}{2}(\partial \Omega)$-norm is controlled by the $H^1(\Omega)$-norm.
\end{proof}

\section{Numerical approximation of the basis functions $\uj^\lambda$ via one-dimensional Finite Element Method}
\label{sec:FEM}
The aim of this section is to provide a method for approximating the basis functions $\uj^{\lambda}$. We recall that they are the unique (up to a change of sign) $H_{j,r}([0,R])$ solutions to the problem \eqref{EVP-j-W} with $\|\uj^{\lambda}\|_r=1$. Namely, they satisfy:
\begin{equation}\label{EVP-j-W-short}
a_j(\uj^{\lambda},w)+(V \uj^{\lambda},w)_r=\lambda(\uj^{\lambda},w)_r, \quad \forall w \in H_{j,r}^0([0,R]).
\end{equation}
Inspired by the fact that this is a variational problem, we propose a Finite Element approximation of its solution $\uj^\lambda$, based on a Galerkin discretisation of the space $H_{j,r}([0,R])$.

\subsection{One-dimensional Finite Element spaces. Error estimates for the linear interpolation}
\label{sec:interpolation-estimates}

In what follows, we will introduce a family of finite-dimensional subspaces of $H_{j,r}([0,R])$ that will allow us to approximate the solutions of \eqref{EVP-j-W-short}. Let us consider an equidistant grid of width $h>0$ on the interval $[0,R]$:\[0\eqqcolon r_0^h<r_1^h\coloneqq h<r_2^h\coloneqq 2h<\ldots<r_{N_h}^h\coloneqq R,\]
where $N_h\coloneqq \frac{R}{h}$ is assumed to be an integer.

Associated to this division, we define the piecewise affine functions $\phi_i:[0,R]\to \RR$ such that $\phi_i(r^h_i)=1$ and $\phi_i(r^h_m)=0$, $\forall m\neq i$. More precisely, if $i\in\overline{1,N_h-1}$ then
\[\phi_i(r)\coloneqq\begin{cases}
    \frac{r-r_{i-1}^h}{h}, & r\in \left[r_{i-1}^h,r_i^h\right];\\
    \frac{r_{i+1}^h-r}{h}, & r\in \left[r_i^h,r_{i+1}^h\right];\\
    0, & \text{otherwise}.
\end{cases}\]
If $i=0$, then
\[\phi_0(r)\coloneqq\begin{cases}
\frac{r_1^h-r}{h}, & r\in \left[0,r_1^h\right];\\
    0, & \text{otherwise},
\end{cases}\]
and, if $i=N_h$, then
\[\phi_N(r)\coloneqq\begin{cases}
\frac{r-r_{N_h}^h}{h}, & r\in \left[r_{N_h-1}^h,r_{N_h}^h\right];\\
    0, & \text{otherwise}.
\end{cases}\]

We aim to use the functions $(\phi_i)_{i=1}^{N_h}$ as Finite Element basis functions for solving the problem \eqref{EVP-j-W-short}. For this purpose, we note that Proposition \ref{prop:Hjr-is-zero-in-zero} implies that, for $j\geq 1$, every element $v\in H_{j,r}([0,R])$ satisfies $v(0)=0$. We also take into account Proposition \ref{prop:spaces-equivalence} \ref{item:h01-second} in order to define the following finite dimensional subspaces of $H_{j,r}([0,R])$ and $H_{j,r}^0([0,R])$:
\begin{itemize}
    \item For a positive integer $j$,
\[\begin{aligned}
W_{j,h}& \coloneqq {\rm span}\{\phi_1,\phi_2,\ldots, \phi_{N_h}\}\subset H_{j,r}([0,R]);\\
W_{j,h}^0 &\coloneqq {\rm span}\{\phi_1,\phi_2,\ldots, \phi_{N_h-1}\}\subset H_{j,r}^0([0,R]).
\end{aligned}\]
\item For $j=0$, since Proposition \ref{prop:Hjr-is-zero-in-zero} does not apply, we define:
\[\begin{aligned}
W_{0,h}& \coloneqq{\rm span}\{\phi_0,\phi_1,\phi_2,\ldots, \phi_{N_h}\}\subset H_{0,r}([0,R]);\\
W_{0,h}^0 &\coloneqq {\rm span}\{\phi_0,\phi_1,\phi_2,\ldots, \phi_{N_h-1}\}\subset H_{0,r}^0([0,R]).
\end{aligned}\]
\end{itemize}

The first step in constructing a Galerkin numerical scheme for the problem~$\eqref{EVP-j-W-short}$ is to show that the spaces $W_{j,h}$ and $W_{j,h}^0$ indeed approximate the spaces $H_{j,r}([0,R])$ and $H_{j,r}^0([0,R])$, respectively, as $h$ approaches zero. This result is stated in the following proposition and proved in Appendix \ref{acaret:proofs-2}.

\begin{proposition}\label{prop:fem-interpolator}
Let $j$ be a non-negative integer and $v\in H_{j,r}([0,R])$ that satisfies $v(r)\cos(j\theta)\in H^2(B_O(R))$. Then there exists a function $\tilde v_h\in W_{j,h}$ such that:
\begin{enumerate}[label={\roman*)}]
\setlength{\itemsep}{0.4em}
    \item $\|v-\tilde v _h\|_r\leq \frac{2\sqrt{7}}{\sqrt{\pi}}\, h^2\, \|v(r)\cos(j\theta)\|_{ H^2(B_O(R))};$
    \item $\|v-\tilde v _h\|_{j,r}\leq \frac{3\sqrt{14}}{\sqrt{\pi}}\, h\, \|v(r)\cos(j\theta)\|_{ H^2(B_O(R))}.$
\end{enumerate}
Moreover, if $v\in H_{j,r}^0([0,R])$, then we can find such a function $\tilde v_h$ in $W_{j,h}^0$.\\ We note that $\tilde v_h$ is constructed as the linear interpolator of $v$ in the nodes $(r^h_i)_{i=1}^{N_h}$.
\end{proposition}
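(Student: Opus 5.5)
We take $\tilde v_h$ to be the nodal linear interpolant of $v$: $\tilde v_h\coloneqq \sum_{i} v(r_i^h)\phi_i$, summing over $i=1,\dots,N_h$ for $j\geq1$ and over $i=0,\dots,N_h$ for $j=0$. Nodal values make sense because $v$ has a continuous representative on $(0,R]$ (Proposition~\ref{prop:wcos-inH2}). For $j\geq1$ we also have $v(0)=0$ (Proposition~\ref{prop:Hjr-is-zero-in-zero}), so the omitted $\phi_0$ term vanishes and the interpolant agrees with $v$ at every node, including $r=0$. If $v\in H_{j,r}^0([0,R])$ then $v(R)=0$, so the coefficient of $\phi_{N_h}$ is zero and $\tilde v_h\in W^0_{j,h}$; this settles the final claim.

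The estimates are proved cell by cell on $I_i=[r_{i-1}^h,r_i^h]$ and then summed. First we record which one-dimensional quantities are controlled by $\|v(r)\cos(j\theta)\|_{H^2(B_O(R))}$. Differentiating in polar coordinates and integrating out $\theta$ with the orthogonality of $\cos(j\theta)$, the $L^2$ norms of the second derivatives $\partial_{rr}$, $\partial_r(r^{-1}\partial_\theta)$ and $r^{-2}\partial_{\theta\theta}+r^{-1}\partial_r$ give a bound $C\,\|v\cos(j\theta)\|_{H^2}^2$ for each of
\[\int_0^R |v''|^2 r\,\dd r,\qquad j^2\int_0^R \Big|\Big(\tfrac{v}{r}\Big)'\Big|^2 r\,\dd r,\qquad \int_0^R\Big|\tfrac{v'}{r}-\tfrac{j^2 v}{r^2}\Big|^2 r\,\dd r,\]
with explicit universal constants (the $\sqrt{7}$ and $\sqrt{14}$ come from counting such terms). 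The first-order quantities $\int|v'|^2r$ and $j^2\int v^2/r$ are bounded by the $H^1$ norm, as in Proposition~\ref{prop:spaces-equivalence}.

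\textbf{Cells away from the origin} ($i\geq2$). On these cells the weight satisfies $r\simeq r_i$ up to a factor $2$. The classical one-dimensional interpolation bounds $\|e\|_{L^2(I_i)}\leq h^2\|v''\|_{L^2(I_i)}$ and $\|e'\|_{L^2(I_i)}\leq h\|v''\|_{L^2(I_i)}$, with $e=v-\tilde v_h$, therefore carry over to the weighted norms at the cost of a factor $2$. For the term $j^2\int e^2/r$ we use $j^2/r\leq j^2/r^2\cdot r$ and need $j\,|e|/r$. The plan is to write $e=r\cdot(e/r)$, or to interpolate $v/r$ instead, and use the control of $j(v/r)'$ from the list above; this gives a factor $h$ in the $\|\cdot\|_{j,r}$ bound. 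Alternatively we bound $j^2\int e^2 r^{-1}$ by $h^2\, j^2\int |e'|^2 r^{-1}$-type Poincaré estimates on each cell. This is the place to be careful so that no extra power of $j$ appears.

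\textbf{First cell} $[0,h]$, which I expect to be the main obstacle. Here the weights $r$ and $1/r$ degenerate, so the 1D Taylor argument cannot be used directly. For $j=0$ the weight $r$ only helps, and the usual Peano-kernel bound with $r\leq h$ suffices, using that $v'(0)=0$ is implied by the $H^2$ regularity of the radial function. For $j\geq1$ we have $v(0)=0$, so $\tilde v_h=v(h)\,r/h$ and $e(r)=v(r)-r\,v(h)/h=r\,[\,v(r)/r-v(h)/h\,]$. Hence $e/r$ is the oscillation of $g=v/r$ on $(0,h)$, and we bound it by Cauchy--Schwarz with the weighted integral of $|g'|^2r$, which is controlled (times $j^2$, and $j\geq1$) by the $H^2$ norm. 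This gives $j^2\int_0^h e^2/r\lesssim h^2\|\cdot\|^2$ and $\int_0^h e^2r\lesssim h^4\|\cdot\|^2$. The derivative term $e'=v'-v(h)/h$ is handled the same way, using the third quantity in the list to control $v'$ near the origin. A logarithmic loss may appear from $\int \dd r/r$; if it does, I would fall back on the sharper bound $|g(r)-g(h)|^2\leq \int_r^h|g'|^2s\,\dd s\cdot\log(h/r)$ and integrate against $r\,\dd r$, which is integrable.

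Summing the cell contributions then gives i) and ii).
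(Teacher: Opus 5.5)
Most of your plan is the paper's proof. You use the same nodal interpolant and the same split into interior cells, the first cell with $j\geq1$, and the first cell with $j=0$. You also use the same one-dimensional quantities from Proposition~\ref{prop:h2-estimates}: $\int_0^R|v''|^2r\,\dd r$, $j^2\int_0^R|(v/r)'|^2r\,\dd r$, and $\int_0^R|v'|^2r^{-1}\,\dd r$ for $j=0$. Your first-cell argument for $j\geq1$, writing $e=r\,[g(r)-g(h)]$ with $g=v/r$, is also the paper's. The paper avoids the logarithm via $r\int_r^h|g'|^2\,\dd\tau\leq\int_r^h|g'|^2\tau\,\dd\tau$. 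It treats $e'$ through the identity $e'=e/r+r\,g'$, so your third quantity is not needed there.

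The step that fails is the first cell for $j=0$. Near the origin your only second-order control is $\int_0^h|v''|^2r\,\dd r$, and the ``usual'' Peano-kernel estimate cannot absorb that weight. Bounding $\int_0^h|e'|^2r\,\dd r\leq h\int_0^h|e'|^2\dd r$ leaves you needing the unweighted $\int_0^h|v''|^2\dd r$. That quantity can be infinite for a radial $H^2$ function: take $v(r)=r^{3/2}$ near $0$, so that $\int_0^h|v''|^2r\,\dd r<\infty$ but $\int_0^h|v''|^2\dd r=\infty$. Using $v'(0)=0$ as a base point does not help either, since the available bound $|v'(r)|\leq(\int_0^r|v''|^2s\,\dd s)^{1/2}(\int_0^r s^{-1}\dd s)^{1/2}$ is infinite.

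What closes this case is the $j=0$ instance of your own third quantity, $I\coloneqq\int_0^h|v'|^2r^{-1}\,\dd r$.
First, $|e(r)|\leq\int_0^h|v'|\,\dd\tau\leq h\,(I/2)^{1/2}$, which gives $\int_0^h e^2r\,\dd r\leq h^4I/4$.
Second, $e'=v'-\frac1h\int_0^hv'\,\dd\tau$ and $\int_0^h|v'|^2r\,\dd r\leq h^2I$, which give $\int_0^h|e'|^2r\,\dd r\leq\frac52h^2I$.
Finiteness of $I$ is also what makes $v(0)$, the coefficient of $\phi_0$, well defined. Continuity on $(0,R]$, which is all you invoke, does not give it. The paper's Case III is a variant of this argument that also uses the weighted $v''$.

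Two smaller points. First, consider $j^2\int e^2r^{-1}$ on the interior cells. The Poincaré option would cost an uncontrolled factor $j^2$. ``Interpolating $v/r$'' changes the interpolant: $r$ times a piecewise linear function is piecewise quadratic, so it is not in $W_{j,h}$. Writing $e=r\,(e/r)$ can be made to work, but only after you bound $(\tilde v_h/r)'$, i.e.\ the intercept of the chord, by increments of $g$. The direct route, as in the paper, is
$e(r)=\frac{r_{i+1}^h-r}{h}\,r_i^h\,[g(r)-g(r_i^h)]-\frac{r-r_i^h}{h}\,r_{i+1}^h\,[g(r_{i+1}^h)-g(r)]$.
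With $r_{i+1}^h\leq2r_i^h$ this gives $\int_{r_i^h}^{r_{i+1}^h}e^2r^{-1}\dd r\leq8h^2\int_{r_i^h}^{r_{i+1}^h}|g'|^2\tau\,\dd\tau$. Second, your sketch asserts the explicit constants in i)--ii) without tracking them.
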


\subsection{Quadrature for the term involving the potential $V$}
\label{sec:gauss-quadrature}
Given the spaces $W_{j,h}$ and $W_{j,h}^0$, to achieve a complete discretisation of \eqref{EVP-j-W-short} for a general potential $V \in C^1([0,R])$, it remains to introduce an appropriate quadrature scheme for the discrete counterpart of the second term in \eqref{EVP-j-W-short}, which has the form:
\[(V\, v^h,w^h)_r =\int_0^R V(r) v^h(r)w^h(r)\dd r, \quad v^h,w^h\in W_{j,h}. \]
We choose to apply a three-point Gauss quadrature on each of the intervals $[r^h_i,r^h_{i+1}]$, $i\in \overline{1,N_h-1}$:
\[\begin{aligned}(V\,v^h,w^h)_r &=\int_0^R V(r) v^h(r)w^h(r)r\,\dd r =\sum_{i=0}^{N_h-1} \int_{r^h_i}^{r^h_{i+1}} V(r) v^h(r)w^h(r)r\,\dd r \\
&\simeq \sum_{i=0}^{N_h-1}\underbrace{\textstyle\left[\frac{5}{9} V(r)v^h(r)w^h(r)r\vert_{r=r^h_{i+\omega_1}}+\frac{8}{9} V(r)v^h(r)w^h(r)r\vert_{r=r^h_{i+\omega_2}}+\frac{5}{9} V(r)v^h(r)w^h(r)r\vert_{r=r^h_{i+\omega_3}}\right]}_{\text{denoted by }\,\mathcal{G}(Vv^hw^h,r^h_i,r^h_{i+1})},\end{aligned}
\]
where $\omega_1\coloneqq \frac{1}{2}-\frac{\sqrt{3}}{\sqrt{5}}$, $\omega_2\coloneqq \frac{1}{2}$, $\omega_3\coloneqq \frac{1}{2}+\frac{\sqrt{3}}{\sqrt{5}}$ and $r^h_{i+\omega}\coloneqq (1-\omega) r^h_i+\omega\, r^h_{i+1}$.

This choice is justified by the following reasoning: consider $V_h\in L^\infty([0,R])$ such that, for every  $i\in \overline{1,N_h-1}$, $V_h\vert_{[r^h_i,r^h_{i+1}]}$ is the Lagrange interpolation polynomial of $V\vert_{[r^h_i,r^h_{i+1}]}$ in the points of the Gauss quadrature. More precisely, for $r\in (r^h_i,r^h_{i+1})$,
\[\begin{aligned}V_h(r)\coloneqq V(r^h_{i+\omega_1}) \frac{(r^h_{i+\omega_2}-r)(r^h_{i+\omega_3}-r)}{(r^h_{i+\omega_2}-r^h_{i+\omega_1})(r^h_{i+\omega_3}-r^h_{i+\omega_1})}&+V(r^h_{i+\omega_2}) \frac{(r^h_{i+\omega_1}-r)(r^h_{i+\omega_3}-r)}{(r^h_{i+\omega_1}-r^h_{i+\omega_2})(r^h_{i+\omega_3}-r^h_{i+\omega_2})}\\
&+V(r^h_{i+\omega_3}) \frac{(r^h_{i+\omega_1}-r)(r^h_{i+\omega_2}-r)}{(r^h_{i+\omega_1}-r^h_{i+\omega_3})(r^h_{i+\omega_2}-r^h_{i+\omega_3})}.
\end{aligned}\]
Since the three-point Gauss quadrature is exact for polynomials of degree at most five and $V_h$ is quadratic on every interval $[r^h_i,r^h_{i+1}]$, whereas $v^h$ and $w^h$ are affine, it holds that:
\[\mathcal{G}(V_hv^hw^h,r^h_i,r^h_{i+1})=\int_{r^h_i}^{r^h_{i+1}}V_h(r)v^h(r)w^h(r)r\dd r.\]
Moreover, since $V$ and $V_h$ coincide at the points $r^h_{i+\omega_k}$, $k=\overline{1,3}$, the Gauss approximation for $(V\, v^h,w^h)_r$ can be written as:
\[(V v^h,w^h)_r\simeq \sum_{i=0}^{N_h-1}\mathcal{G}(Vv^hw^h,r^h_i,r^h_{i+1}) = (V_h v^h,w^h)_r.
\]
Next, elementary properties of the Lagrange interpolation polynomial imply that, if $V\in C^1([0,R])$, then there exists a universal constant $C>0$ such that:
\begin{equation}\label{eq:error-V-Vh-C1}
\|V_h-V\|_{L^\infty([0,R])}\leq h\, C \|V'\|_{L^\infty([0,R])}.
\end{equation}
Moreover, if the potential $V$ belongs to $C^2([0,R])$, we can improve the error rate to: 
\begin{equation}\label{eq:error-V-Vh-C2}
\|V_h-V\|_{L^\infty([0,R])}\leq h^2\, C \|V''\|_{L^\infty([0,R])}.\end{equation}

Therefore, for every $v^h,w^h\in W_{j,h}$, the error between the actual value of the integral $(V v^h,w^h)_r$ and its three-point Gauss approximation on all intervals $[r^h_i,r^h_{i+1}]$, $i\in \overline{1,N_h-1}$ is bounded as follows:
\begin{equation}\label{eq:quadrature-Lr-error}
    \left|(V_h\, v^h,w^h)_r-(V\, v^h,w^h)_r\right|\leq\begin{cases} h\, C(V)\|v^h\|_r\|w^h\|_r, & \text{if }V\in C^1([0,R]); \\
    h^2 C(V)\|v^h\|_r\|w^h\|_r, & \text{if } V\in C^2([0,R]).
    \end{cases}
\end{equation}

\subsection{Approximating the Fourier coefficients using one-dimensional Finite Element Method}
\label{sec:FEM-converges}
We can now write the equation that will be solved on the computer in order to approximate $\uj^{\lambda}$:

Find $\ujh^{\lambda}\in W_{j,h}$ satisfying $\|\ujh^{\lambda}\|_r=1$ and, for every $w^h \in W_{j,h}^0$,
\begin{equation}\label{EVP-j-W-h}
a_j(\ujh^{\lambda},w^h)+(V_h\, \ujh^{\lambda},w^h)_r=\lambda(\ujh^{\lambda},w^h)_r.
\end{equation}
We remark that, for $j\geq 1$, this is a homogeneous linear system with $N-1$ equations and $N$ unknowns, whereas, for $j=0$, the equation \eqref{EVP-j-W-h} is a homogeneous $N\times (N+1)$ linear system.
The following theorem provides the existence and uniqueness (up to a change of sign) of the solution $\ujh^{\lambda}$, together with error estimates with respect to $\uj^{\lambda}$.
\begin{theorem}\label{thm:ujh-uj-convergence} Let $R>0$ and the potential $V\in C^1([0,R])$ taking values in $[1,\infty)$. Then, for every $K>1$ and positive integer $J$  there exists a threshold $h_0=h_0(R,V,K,J)>0$ and a positive constant $C(R,V,K,J)$ such that, for each $\lambda\in [1,K]$, $j\in \overline{0,J}$ and $h\in (0,h_0)$, the solution space of the linear system \eqref{EVP-j-W-h} is one-dimensional.

Moreover, if $\ujh^{\lambda}$ and $\uj^{\lambda}$ are $L_r([0,R])$-norm normalised solutions of \eqref{EVP-j-W-h} and \eqref{EVP-j-W-short}, respectively, then there exists a choice of sign $\pm$ such that:
\[\|\ujh^{\lambda}\pm\uj^{\lambda}\|_{j,r}\leq h\, C(R,V,K,J).\]
Furthermore, if $V\in C^2([0,R])$, the following improved estimate holds true:
\[\|\ujh^{\lambda}\pm\uj^{\lambda}\|_{r}\leq h^2\, C(R,V,K,J).\]
\end{theorem}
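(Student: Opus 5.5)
The plan is to mimic the continuous proof of Proposition \ref{prop:uj-stable-in-lambda} at the discrete level, with the same Dirichlet/Neumann case split. By Proposition \ref{prop:gap-dirichlet-neumann}, every $\lambda\in[1,K]$ lies at distance at least $\delta/2$ from either the Dirichlet spectrum of $a_{j,V}$ on $H_{j,r}^0$ or the Neumann spectrum on $H_{j,r}$, where $\delta=\delta_{R,V,K+1,J}$. The first ingredient is a discrete analogue of Proposition \ref{prop:operator-outside-of-spectrum}. The perturbed forms $a_{j,V_h}=a_j+(V_h\cdot,\cdot)_r$ restricted to $W_{j,h}^0$ (respectively $W_{j,h}$) have Galerkin eigenvalues. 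By standard min--max/Babu\v{s}ka--Osborn theory, these converge to the continuous Dirichlet (resp. Neumann) eigenvalues in $[1,K+1]$ at rate $O(h)$. The two ingredients behind this rate are the $O(h)$ perturbation $\|V_h-V\|_\infty\le Ch$ from \eqref{eq:error-V-Vh-C1}, and the interpolation estimates of Proposition \ref{prop:fem-interpolator}, which apply to eigenfunctions by \eqref{eq:regularity-wcosjtheta}. Hence for $h<h_0$ the discrete spectra keep a gap of size $\delta/4$ from $\lambda$ in the relevant case. Coercivity is uniform because $V_h\ge 1/2$ for small $h$.

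\textbf{Case I: $\lambda$ far from the Dirichlet spectrum.} First, $\uj^\lambda(R)\ne 0$. Let $\tilde v_h$ be the interpolant of $\uj^\lambda$ from Proposition \ref{prop:fem-interpolator}, so $\tilde v_h(R)=\uj^\lambda(R)$. I seek $v_h=\tilde v_h+z_h$ with $z_h\in W^0_{j,h}$ satisfying \eqref{EVP-j-W-h}. This is a square system
\[
a_{j,V_h}(z_h,w^h)-\lambda(z_h,w^h)_r=-a_{j,V_h}(\tilde v_h,w^h)+\lambda(\tilde v_h,w^h)_r\quad\text{for } w^h\in W^0_{j,h}.
\]
It is uniquely solvable by the discrete gap, and the discrete analogue of Proposition \ref{prop:operator-outside-of-spectrum} gives a stability bound. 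Since $\uj^\lambda$ satisfies \eqref{EVP-j-W-short} against $w^h\in W^0_{j,h}\subset H^0_{j,r}$, the right-hand side equals
\[
a_j(\uj^\lambda-\tilde v_h,w^h)+((V-V_h)\uj^\lambda,w^h)_r+\ldots,
\]
which as a functional is $O(h)$ in the dual norm. Here I use Proposition \ref{prop:fem-interpolator}, \eqref{eq:quadrature-Lr-error}, and the bound $\|\uj^\lambda(r)\cos(j\theta)\|_{H^2}\le C$ from \eqref{eq:regularity-wcosjtheta}. Thus $\|v_h-\uj^\lambda\|_{j,r}\le Ch$.

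\textbf{Uniqueness in Case I.} The discrete solution space is one-dimensional. If two solutions existed, a suitable combination would vanish at $R$, i.e. lie in $W^0_{j,h}$, and would be a discrete Dirichlet eigenfunction at $\lambda$, which contradicts the gap. Existence comes from the construction above.

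\textbf{Case II: $\lambda$ far from the Neumann spectrum.} Here I would use a Galerkin test space $W_{j,h}$ instead. I compare with the discrete Neumann problem: let $v_h\in W_{j,h}$ solve
\[
a_{j,V_h}(v_h,w)-\lambda(v_h,w)_r=-\uj^{\lambda\prime}(R)\,R\,w(R)\quad\text{for all } w\in W_{j,h}.
\]
The continuous counterpart of this problem is solved by $\uj^\lambda$, via the integration by parts \eqref{eq:integration-parts-neumann}. Céa-type arguments with the discrete gap then give the $O(h)$ error, and $v_h$ satisfies \eqref{EVP-j-W-h} because the boundary term vanishes for $w\in W^0_{j,h}$. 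For uniqueness, if the discrete solution space had dimension at least two, some combination would have zero discrete ``normal flux'' at $R$. That would make it a discrete Neumann eigenfunction, again contradicting the gap.

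\textbf{Normalisation and the $C^2$ improvement.} In both cases, normalising in $\|\cdot\|_r$ and choosing the sign as in \eqref{eq:dirichlet-ratio-near-1} gives $\|\ujh^\lambda\pm\uj^\lambda\|_{j,r}\le Ch$. For $V\in C^2$, I would use an Aubin--Nitsche duality argument to upgrade to the $L_r$ estimate. The dual problem is posed with the same gap, its $H^2$-regularity comes from Proposition \ref{prop:wcos-inH2}, and I would use the $h^2$ quadrature bound from \eqref{eq:quadrature-Lr-error}.

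\textbf{Main obstacle.} The hard step is the uniform discrete spectral gap, i.e. controlling the Galerkin Dirichlet and Neumann eigenvalues of the perturbed form uniformly in $j\le J$ near the singular point $r=0$. I would handle this through the equivalence with the $H^1(B_O(R))$ setting in Proposition \ref{prop:spaces-equivalence}, so that standard eigenvalue approximation results for conforming Galerkin spaces apply.
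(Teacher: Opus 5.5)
Your proposal is correct, and its skeleton matches the paper's: the continuous Dirichlet--Neumann gap (Proposition \ref{prop:gap-dirichlet-neumann}), its transfer to the Galerkin spectra of $a_{j,V_h}$ (Proposition \ref{prop:approximation-of-spectra}), a Dirichlet/Neumann case split, and a sign-fixing normalisation. Where you differ is the error analysis.

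The paper starts from the discrete solution itself. A nontrivial one always exists, since the homogeneous system \eqref{EVP-j-W-h} has one more unknown than equations. It rescales this solution to match $\uj^\lambda(R)$ (resp.\ the flux $(\uj^\lambda)'(R)$), and splits the error $e_h^\lambda$ with the Ritz projection $\Pi_h^0$ (resp.\ $\Pi_h$) of the \emph{coercive} form $a_{j,V_h}$. This turns the error equation into one whose right-hand side is the $L_r$ function $(V_h-V)\uj^\lambda+\lambda(e_h^\lambda-\Pi_h^0e_h^\lambda)$. Proposition \ref{prop:operator-outside-of-spectrum} then applies verbatim on $W_{j,h}^0$, using only the discrete gap. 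Lemma \ref{lem:projection-error}, which is Nitsche's trick for a coercive problem, then gives both the $O(h)$ energy bound and the $O(h^2+\|V_h-V\|_{L^\infty})$ $L_r$ bound in one pass.

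You instead build the solution as the interpolant plus a discrete correction. This makes existence explicit and needs only Proposition \ref{prop:fem-interpolator} rather than the projection lemma, but it has two costs.
\begin{itemize}
\item Your right-hand side is only a functional on $H_{j,r}$, so you need a dual-norm variant of Proposition \ref{prop:operator-outside-of-spectrum}. It is easy: expanding in the discrete eigenbasis gives $a_{j,V_h}(v,v)^{1/2}\le\max_n\frac{\lambda^j_{n,h}}{|\lambda^j_{n,h}-\lambda|}\,\|F\|_{*}$, with $\|\cdot\|_*$ the norm dual to $a_{j,V_h}^{1/2}$. It should still be stated.
\item Your $h^2$ bound comes from an Aubin--Nitsche argument for the \emph{indefinite} operator. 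That needs the continuous gap as well, which you have because you split on the continuous spectra. It also needs $H^2$ regularity of the dual solution $\psi(r)\cos(j\theta)$ on $B_O(R)$. That regularity comes from Dirichlet (resp.\ Neumann) elliptic regularity on the ball, not from Proposition \ref{prop:wcos-inH2}.
\end{itemize}

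Two minor points.
\begin{itemize}
\item By \eqref{eq:integration-parts-neumann}, the Neumann datum is $+(\uj^\lambda)'(R)R\,w(R)$. The sign slip is harmless after the final choice of sign.
\item Take the uniform bound on $\|\uj^\lambda(r)\cos(j\theta)\|_{H^2(B_O(R))}$ from Proposition \ref{prop:H2-stability-J} with $\tilde R=R$, not from \eqref{eq:regularity-wcosjtheta}. The constant must grow with $j$: for $V\equiv 1$ and $\lambda=1$, the function $r^j\cos(j\theta)$ has $H^2$-to-$L^2$ ratio of order $j^2$. This is acceptable here only because $j\le J$.
\end{itemize}
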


In order to provide a proof of Theorem \ref{thm:ujh-uj-convergence}, we consider the bilinear form on $H_{j,r}([0,R])$ associated to the discrete equation \eqref{EVP-j-W-h}:
\begin{equation}\label{eq:def-ajh}
 a_{j,V_h}(v,w)=a_j(v,w)+(V_h\, v,w)_r, \quad \forall v,w \in H_{j,r}([0,R]).
 \end{equation}
 The fact that $V\geq 1$, together with the inequality \eqref{eq:quadrature-Lr-error}, implies that if $h<\frac{1}{2C\|V'\|_{L^\infty([0,R])}}$, the bilinear form $a_{j,V_h}$ is coercive on $H_{j,r}([0,R])$ with:
 \begin{equation}\label{eq:ajVh-coercive}
 a_{j,V_h}(w,w)\geq \frac{1}{2} \|w\|^2_{j,r}, \quad \forall w\in H_{j,r}([0,R]).
 \end{equation}
 Since this bilinear form is also symmetric and continuous, we can define two associated projection operators: the Dirichlet projection $\Pi_h^0:H_{j,r}^0([0,R])\to W_{j,h}^0$ and the Neumann projection $\Pi_h:H_{j,r}([0,R])\to W_{j,h}$ characterised by the following equalities:
\[\begin{aligned}
    a_{j,V_h}(v-\Pi_h^0 v,w^h)&=0, \quad \forall v\in H_{j,r}^0([0,R]), \forall w^h \in W_{j,h}^0;\\
    a_{j,V_h}(v-\Pi_h v,w^h)&=0, \quad \forall v\in H_{j,r}([0,R]), \forall w^h \in W_{j,h}.
\end{aligned}\]
The following lemma establishes error estimates between functions $v$ belonging to $H^0_{j,r}([0,R])$ or $H_{j,r}([0,R])$ and their Dirichlet and Neumann projections, respectively, provided that the function $v(r)\cos(j\theta)$ belongs to $H^2(B_O(R))$. Furthermore, the lemma contains error estimates for the Dirichlet projection of $v-v(R)\phi_{N_h}$, where $\phi_{N_h}$ is the last Finite Element basis function introduced in Section \ref{sec:interpolation-estimates}. These error estimates are crucial for the proof of Theorem \ref{thm:ujh-uj-convergence}.
\begin{lemma}\label{lem:projection-error} Let $R>0$ and $V\in C^1([0,R])$ taking values in $[1,\infty)$. Let also $0<h_0\leq \frac{1}{2\|V'\|_{L^\infty([0,R])}}$. Then there exists a constant $C(R,V)>0$ such that for every $h\in (0,h_0)$ and every non-negative integer $j$, the following estimates hold true:
\begin{enumerate}[label={\roman*)}]
\item \label{item:projection-D} For every $v\in H_{j,r}^0([0,R])$ such that $v(r)\cos(j\theta)\in H^2(B_O(R))$, 
\[\left\|v-\Pi_h^0 v\right\|_{r}\leq h^2 C(R,V)\,\|v(r)\cos(j\theta)\|_{H^2(B_O(R))}.\]
\begin{center}
    and
\end{center} 
\[\left\|v-\Pi_h^0 v\right\|_{j,r}\leq h\, C(R,V)\,\|v(r)\cos(j\theta)\|_{H^2(B_O(R))}\]
\item \label{item:projection-N} For every $v\in H_{j,r}([0,R])$ such that $v(r)\cos(j\theta)\in H^2(B_O(R))$, 
\[\left\|v-\Pi_h v\right\|_{r}\leq h^2 C(R,V)\,\|v(r)\cos(j\theta)\|_{H^2(B_O(R))}\]
\begin{center}
    and
\end{center} 
\[\left\|v-\Pi_h v\right\|_{j,r}\leq h\, C(R,V)\,\|v(r)\cos(j\theta)\|_{H^2(B_O(R))}.\]
\item \label{item:projection-D-modified} For every $v\in H_{j,r}([0,R])$ such that $v(r)\cos(j\theta)\in H^2(B_O(R))$, 
\[\left\|[v-v(R)\phi_{N_h}]-\Pi_h^0 [v-v(R)\phi_{N_h}]\right\|_{r}\leq h^2 C(R,V)\,\|v(r)\cos(j\theta)\|_{H^2(B_O(R))}\]
\begin{center}
    and
\end{center} 
\[\left\|[v-v(R)\phi_{N_h}]-\Pi_h^0 [v-v(R)\phi_{N_h}]\right\|_{j,r}\leq h\, C(R,V)\,\|v(r)\cos(j\theta)\|_{H^2(B_O(R))}.\]
\end{enumerate}
\end{lemma}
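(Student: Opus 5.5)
The proof follows the standard Galerkin route: Céa's lemma for the energy norm, then the Aubin--Nitsche duality argument for the $L_r$ norm. The interpolation bounds come from Proposition \ref{prop:fem-interpolator}. By \eqref{eq:ajVh-coercive}, $a_{j,V_h}$ is coercive on $H_{j,r}([0,R])$ with constant $\frac12$, uniformly in $h<h_0$ and $j$. It is also continuous with a constant $C(V)$ independent of $h$, since $\|V_h\|_{L^\infty}\le \|V\|_{L^\infty}+Ch_0\|V'\|_{L^\infty}$.

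\textbf{Energy estimates in \ref{item:projection-D} and \ref{item:projection-N}.} Céa's lemma gives
\[\|v-\Pi_h^0 v\|_{j,r}\le 2C(V)\inf_{w^h\in W^0_{j,h}}\|v-w^h\|_{j,r},\]
and the analogous bound for $\Pi_h$ over $W_{j,h}$. The interpolant $\tilde v_h$ of Proposition \ref{prop:fem-interpolator} lies in $W_{j,h}^0$ when $v\in H^0_{j,r}$, and in $W_{j,h}$ in general. It therefore yields the $h$-bound in the $\|\cdot\|_{j,r}$ norm.

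\textbf{$L_r$ estimates by duality.} Set $e=v-\Pi_h^0v$. I take $z\in H^0_{j,r}$ solving $a_{j,V_h}(w,z)=(e,w)_r$ for all $w\in H^0_{j,r}$. Galerkin orthogonality gives
\[\|e\|_r^2=a_{j,V_h}(e,z-\tilde z_h)\le C\|e\|_{j,r}\,h\,\|z(r)\cos(j\theta)\|_{H^2(B_O(R))}.\]
It then remains to show $\|z(r)\cos(j\theta)\|_{H^2(B_O(R))}\le C(R,V)\|e\|_r$. The function $z(r)\cos(j\theta)$ is the $H_0^1(B_O(R))$ solution of $-\Delta Z+V_h Z=e\cos(j\theta)$. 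This is justified by the equivalences in Proposition \ref{prop:spaces-equivalence}, together with a density/orthogonality argument showing that testing only against functions of the form $w(r)\cos(j\theta)$ suffices, because the other Fourier modes decouple.

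\textbf{Main obstacle.} $V_h$ is only piecewise smooth, so I will use the $H^2$ regularity of \cite[Theorem 8.12]{GilbargTrudinger2001}. That result only needs bounded coefficients for the zero-order term, with constant depending on $\|V_h\|_{L^\infty}\le C(V)$. This gives $\|Z\|_{H^2}\le C(R,V)\|e\cos(j\theta)\|_{L^2}=C\sqrt\pi\|e\|_r$, uniformly in $j$ and $h$, which closes the argument.

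For the Neumann projection I argue identically, using the Neumann problem $-\Delta Z+V_hZ=e\cos(j\theta)$ with $\partial_\nu Z=0$ on $\partial B_O(R)$. Here $H^2$ regularity holds on the smooth ball, and the weak formulation over $H_{j,r}$ corresponds to the $H^1(B_O(R))$ one by Proposition \ref{prop:spaces-equivalence}\ref{item:h1-second}.

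\textbf{Part \ref{item:projection-D-modified}.} I write $g=v-v(R)\phi_{N_h}\in H^0_{j,r}$, which does not itself satisfy the $H^2$ hypothesis, so I avoid applying \ref{item:projection-D} to it. Instead I repeat the argument directly.

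For the energy estimate, Céa's lemma on $H^0_{j,r}$ gives $\|g-\Pi_h^0g\|_{j,r}\le C\inf_{w^h\in W^0_{j,h}}\|g-w^h\|_{j,r}$. I choose $w^h=\tilde v_h-v(R)\phi_{N_h}$. This lies in $W^0_{j,h}$ because the nodal interpolant $\tilde v_h$ has value $v(R)$ at $r_{N_h}^h=R$. Then $g-w^h=v-\tilde v_h$, whose norm is bounded by Proposition \ref{prop:fem-interpolator}.

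For the $L_r$ estimate, the duality step above only used $e\in H^0_{j,r}$, Galerkin orthogonality and the $H^2$ regularity of the dual solution $z$. It therefore applies verbatim to $e=g-\Pi_h^0 g$, giving the $h^2$ bound.
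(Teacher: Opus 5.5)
Your proposal is correct and follows the paper's proof essentially step for step. The energy bounds come from the best-approximation property of the $a_{j,V_h}$-projection (your C\'ea's lemma), tested with the nodal interpolant of Proposition \ref{prop:fem-interpolator}. The $L_r$ bounds come from Nitsche's duality trick, using the $H^2$ regularity of the lifted dual problem $-\Delta Z+V_hZ=e_h\cos(j\theta)$ on $B_O(R)$ (Dirichlet via \cite[Theorem 8.12]{GilbargTrudinger2001}, Neumann via standard regularity on the ball). Part iii) uses the same test function $\tilde v_h-v(R)\phi_{N_h}=\sum_{i=0}^{N_h-1}v(r^h_i)\phi_i\in W_{j,h}^0$ followed by the same duality argument, and, like you, the paper does not apply part i) directly to $v-v(R)\phi_{N_h}$, whose lift fails to be in $H^2(B_O(R))$ because of the kink of $\phi_{N_h}$.
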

A proof of this lemma can be found in Appendix \ref{acaret:proofs-2}.\\

The next step in the proof of Theorem \ref{thm:ujh-uj-convergence} is to establish that the discrete Dirichlet eigenvalues below a fixed threshold $K>1$, associated with $a_{j,V_h}$ on $W_{j,r}^0$, are close to the continuous Dirichlet eigenvalues of $a_{j,V}$ on $H_{j,r}^0([0,R])$ as the Finite Element grid parameter $h$ approaches zero.

An analogous result holds for the Neumann case: the discrete Neumann eigenvalues associated with $a_{j,V_h}$ on $W_{j,r}$ are close to the continuous Neumann eigenvalues of $a_{j,V}$ on $H_{j,r}([0,R])$.

This fundamental observation enables the application of Propositions \ref{prop:gap-dirichlet-neumann} and \ref{prop:operator-outside-of-spectrum}, thereby allowing the proof of Theorem \ref{thm:ujh-uj-convergence} to be decomposed according to whether $\lambda$ lies away from the Dirichlet spectrum or from the Neumann spectrum of $a_{j,V}$.

\begin{proposition}\label{prop:approximation-of-spectra} Let $R>1$, $K>1$ and the potential $V\in C^1([0,R])$ taking values in $[1,\infty)$. Also consider, for every non-negative integer $j$, the Dirichlet eigenvalues of the bilinear form $a_{j,V}$ on $H_{j,r}^0([0,R])$ which are less than or equal to K (repeated according to their multiplicities):
\[1\leq \lambda^j_1\leq \lambda^j_2\leq \ldots \leq \lambda^j_{Q_{j,K}}\]

Then, there exists a threshold $h_0(R,V,K)>0$  such that, for every $h\in (0,h_0)$, the bilinear form $a_{j,V_h}$ on $W_{j,h}^0$ has at least $Q_{j,K}$ eigenvalues (counted with multiplicities). Moreover, there exists a constant $C(R,V,K)>0$ such that, for each $n\in \overline{1,Q_{j,K}}$,
\[|\lambda^j_{n,h}-\lambda^j_n|\leq h\,C(R,V,K),\]
where $\lambda^j_{n,h}$ is the $n$-th eigenvalue of $a_{j,V_h}$ on $W_{j,h}^0$.

If $V\in C^2([0,R])$, the above inequality can be improved to: 
\[|\lambda^j_{n,h}-\lambda^j_n|\leq h^2\,C(R,V,K).\]

Furthermore, the statement of this proposition is also valid for the Neumann framework outlined above.
\end{proposition}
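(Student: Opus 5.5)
The plan is the classical Rayleigh–Ritz / min-max argument for conforming Galerkin approximations, adapted to the perturbed form $a_{j,V_h}$. Denote by $\lambda^j_{n,h}$ the discrete eigenvalues of $a_{j,V_h}$ on $W_{j,h}^0$, and by $\hat\lambda^j_{n,h}$ the discrete eigenvalues of the unperturbed form $a_{j,V}$ on the same space. I would split
\[|\lambda^j_{n,h}-\lambda^j_n|\leq |\lambda^j_{n,h}-\hat\lambda^j_{n,h}|+|\hat\lambda^j_{n,h}-\lambda^j_n|\]
and treat the two terms separately. Both eigenvalue families are characterised by the Courant–Fischer min-max principle over subspaces of $W_{j,h}^0$ (respectively $H_{j,r}^0([0,R])$), with Rayleigh quotients taken against $(\cdot,\cdot)_r$.

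\textbf{Quadrature term.} By \eqref{eq:quadrature-Lr-error},
\[|a_{j,V_h}(w,w)-a_{j,V}(w,w)|\leq h\, C(V)\|w\|_r^2,\]
so the two Rayleigh quotients differ by at most $hC(V)$ uniformly on $W_{j,h}^0$. The min-max principle then gives $|\lambda^j_{n,h}-\hat\lambda^j_{n,h}|\leq hC(V)$ for every $n$. Under $C^2$ regularity the same argument gives $h^2C(V)$.

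\textbf{Galerkin term.} Since $W_{j,h}^0\subset H_{j,r}^0$, min-max yields $\hat\lambda^j_{n,h}\geq\lambda^j_n$ for every $n\leq \dim W_{j,h}^0$, and the discrete problem has $N_h-1\geq Q_{j,K}$ eigenvalues once $h$ is small. For the upper bound:
\begin{itemize}
\item Take $L^2_r$-orthonormal eigenfunctions $w_1,\dots,w_n$ for $\lambda^j_1,\dots,\lambda^j_n$ with $n\leq Q_{j,K}$.
\item Each satisfies $\|w_k(r)\cos(j\theta)\|_{H^2}\leq C(R,V,K)$ by \eqref{eq:regularity-wcosjtheta}.
\item Let $E_h=\mathrm{span}\{\Pi_h^0 w_k\}$. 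Lemma~\ref{lem:projection-error}\ref{item:projection-D} gives $\|w_k-\Pi_h^0w_k\|_r\leq Ch^2$, so the Gram matrix of the $\Pi_h^0w_k$ is $I+O(h^2)$. Hence $\dim E_h=n$ for $h<h_0$.
\item For a unit $w=\sum c_kw_k$, the standard estimate
\[\frac{a_{j,V}(\Pi_h^0w,\Pi_h^0w)}{\|\Pi_h^0w\|_r^2}\leq \lambda^j_n+C\big(\|w-\Pi_h^0w\|_{j,r}^2+\|w-\Pi_h^0w\|_r\big)\]
holds. One way to see it is to write $\Pi_h^0w=w-e$ and expand, using that $a_{j,V}(w,e)$ equals an $L_r$-pairing of the form $\sum c_k\lambda^j_k(w_k,e)_r$ by the eigen-equation.
\end{itemize}

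The ingredient I expect to need most care is the use of $\Pi_h^0$, which is defined through $a_{j,V_h}$ rather than $a_{j,V}$. Its error bounds still hold by Lemma~\ref{lem:projection-error}, but it is cleaner to use the orthogonality $a_{j,V_h}(e,\Pi_h^0w)=0$ and to absorb the quadrature mismatch into $O(h)$ (or $O(h^2)$). The naive bound $\|e\|_r=O(h^2)$ already gives the required order $h$ even for $C^1$ potentials, and it gives $h^2$ when combined with the refined quadrature bound. The $Q_{j,K}$ eigenvalues are finitely many, so the constant depends only on $R,V,K$. If uniformity in $j$ is required, one notes that large $j$ have no eigenvalues $\leq K$ at all, since $a_j(v,v)\geq j^2R^{-2}\|v\|_r^2$.

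\textbf{Neumann case.} The argument is verbatim the same, using $W_{j,h}\subset H_{j,r}$, the Neumann projection $\Pi_h$, and Lemma~\ref{lem:projection-error}\ref{item:projection-N}. Coercivity is provided by \eqref{eq:ajVh-coercive}, and the compactness of Lemma~\ref{lem:Hjr0-comapct-in-Lr} ensures that the Neumann spectrum is discrete, so that min-max applies.
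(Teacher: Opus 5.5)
Your proof is correct, but it is organised differently from the paper's.

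\textbf{The paper's route.} The paper compares $a_{j,V_h}$ on $W_{j,h}^0$ directly with $a_{j,V}$ on $H_{j,r}^0([0,R])$.
\begin{itemize}
\item For the upper bound, it tests the discrete min-max with $\Pi_h^0E^j_n$. It then uses the energy-minimising property $a_{j,V_h}(\Pi_h^0w,\Pi_h^0w)\le a_{j,V_h}(w,w)=a_{j,V}(w,w)+([V_h-V]w,w)_r$. Only the $L_r$ bound $\|w-\Pi_h^0w\|_r=O(h^2)$ is needed, for injectivity and for the denominator. This gives $\lambda^j_{n,h}\le(1-Ch^2)^{-2}\lambda^j_n+4\|V-V_h\|_{L^\infty([0,R])}$.
\item For the lower bound, it simply asserts $\lambda^j_n\le\lambda^j_{n,h}$ ``by min-max''.
\end{itemize}

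\textbf{What your route buys.} Your intermediate eigenvalues $\hat\lambda^j_{n,h}$ separate two errors. The quadrature error is a uniform Rayleigh-quotient perturbation, and it alone dictates the rate $h$ or $h^2$. The remaining error is a genuinely conforming Galerkin error, which is $O(h^2)$ whatever the regularity of $V$. This gives you a rigorous lower bound. The paper's inequality $\lambda^j_n\le\lambda^j_{n,h}$ is not literally a consequence of min-max, because the discrete form contains $V_h$ rather than $V$. What min-max actually gives is $\lambda^j_{n,h}\ge\lambda^j_n-\|V-V_h\|_{L^\infty([0,R])}$, which is exactly your two-step bound and is harmless for the claimed rates.

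\textbf{What it costs.} Your upper bound is slightly heavier. Expanding $a_{j,V}(w-e,w-e)$ needs the eigen-equation to control the cross term, and it needs both the $L_r$ and the $H_{j,r}$ projection bounds. The paper's energy-reduction trick needs neither.

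\textbf{The concern you flag about $\Pi_h^0$ is moot.} Your expansion never uses orthogonality of $\Pi_h^0$. Any element of $W_{j,h}^0$ with $O(h^2)$ $L_r$-error and $O(h)$ energy error would do, for instance the interpolant of Proposition~\ref{prop:fem-interpolator}. So it does not matter that $\Pi_h^0$ is defined through $a_{j,V_h}$.

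\textbf{Uniformity in $j$.} The paper bounds $Q_{j,K}$ by the number of Dirichlet eigenvalues below $K$ of $-\Delta+V$ on $B_O(R)$. You instead observe that $a_j(v,v)\ge j^2R^{-2}\|v\|_r^2$ leaves only finitely many relevant $j$. Both arguments work.
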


The proof of this proposition is deferred to Appendix \ref{acaret:proofs-2}. We are now in a position to prove the error estimates between the basis functions $\uj^{\lambda}$ and their finite element approximations $\ujh^{\lambda}$.
\begin{proof}[Proof of Theorem \ref{thm:ujh-uj-convergence}]
First, Proposition \ref{prop:gap-dirichlet-neumann} implies that there exists a gap $\delta_{R,V,K,J}\in\left (0,\frac{1}{2}\right)$ between the Dirichlet and Neumann eigenvalues associated to $a_{j,V}$ that are smaller than $K+1$. 

Next, we choose $h_0(R,V,K,J)$ small enough such that, by Proposition \ref{prop:approximation-of-spectra}, the aforementioned Dirichlet and Neumann eigenvalues are approximated by their discrete counterparts (associated with $a_{j,V_h}$ on $W_{j,h}^0$ and $W_{j,h}$, respectively) with an error of at most $\frac{\delta_{R,V,K,J}}{4}$. 

Consequently, there is a gap of at least $\frac{\delta_{K,V,K,J}}{2}$ between the discrete Dirichlet and Neumann eigenvalues associated with $a_{j,V_h}$ that are smaller than $K+\frac{1}{2}$. As a result, any $\lambda\in [1,K]$ falls into at least one of the following cases:
\begin{enumerate}[label={\Roman*.}]
\setlength{\itemsep}{0.3cm}
\item \label{case:discrete-not-D} $\min \left\{|\lambda-\lambda^j_{n,h}| : \lambda^j_{n,h} \text{ is a Dirichlet eigenvalue of }\\ a_{j,V_h}\text { on } W_{j,h}^0 \right\}>\frac{\delta_{R,V,K,J}}{6}$;
\item \label{case:discrete-not-N} $\min \left\{|\lambda-\zeta^j_{m,h}| : \zeta^j_{m,h} \text{ is a Neumann eigenvalue of }\\ a_{j,V_h}\text { on } W_{j,h} \right\}>\frac{\delta_{R,V,K,J}}{6}$.
\end{enumerate}
In Case I, we note that $\ujh^{\lambda}(R)$ is different from zero. As a result, we are able to consider the error $e^{\lambda}_h$  between $\uj^{\lambda}$ and the multiple of $\ujh^{\lambda}$ which has the same value as $\uj^{\lambda}$ in the point $R$: 
\[e^{\lambda}_h\coloneqq \uj^{\lambda}-\frac{\uj^{\lambda}(R)}{\ujh^{\lambda}(R)}\ujh^{\lambda}\in H_{j,r}^0([0,R]).\]
We aim to prove that  $\|e_h^\lambda\|_{j,r}$ is small as $h$ approaches zero. Indeed, since $\uj^{\lambda}$ satisfies \eqref{EVP-j-W-short} and $\ujh^{\lambda}$ satisfies \eqref{EVP-j-W-h}, we obtain that:
\[a_{j,V_h}(e_h^{\lambda}, w^h) - \lambda (e_h^{\lambda}, w^h)_r = \left([V_h-V]\, \uj^{\lambda},w^h\right)_r, \quad \forall w^h\in W_{j,r}^0.\]
Next, the definition of the projection operator $\Pi_h^0$ implies that:
\[a_{j,V_h}(\Pi_h^0e_h^{\lambda}, w^h) - \lambda (\Pi_h^0e_h^{\lambda}, w^h)_r = \left([V_h-V]\, \uj^{\lambda}+\lambda\left[e_h^{\lambda}-\Pi_h^0e_h^{\lambda}\right],w^h\right)_r, \quad \forall w^h\in W_{j,r}^0.\]
Then, since $\|\uj^{\lambda}\|_r=1$, Proposition \ref{prop:operator-outside-of-spectrum} implies that:
\begin{equation}\label{eq:discrete-not-D-projection-bound}
\|\Pi_h^0 e_h^{\lambda}\|_{j,r}\leq \left(1+\frac{6K}{\delta_{R,V,K,J}}\right) \left(\|V_h-V\|_{L^\infty([0,R])}+K\left\|e_h^{\lambda}-\Pi_h^0e_h^{\lambda}\right\|_r\right).\end{equation}
Furthermore, the definition of the projection operator $\Pi_h^0$ implies that:
\[\begin{aligned}e_h^{\lambda}-\Pi_h^0e_h^{\lambda} &= [\uj^{\lambda} -\uj^{\lambda}(R)\phi_{N_h}]-\left[\frac{\uj^{\lambda}(R)}{\ujh^{\lambda}(R)}\ujh^{\lambda}-\uj^{\lambda}(R)\phi_{N_h}\right]\\
&\quad-\Pi_h^0[\uj^{\lambda}-\uj^{\lambda}(R)\phi_{N_h}] + \Pi_h^0\left[\frac{\uj^{\lambda}(R)}{\ujh^{\lambda}(R)}\ujh^{\lambda}-\uj^{\lambda}(R)\phi_{N_h}\right]
\end{aligned}\]
and, since $\frac{\uj^{\lambda}(R)}{\ujh^{\lambda}(R)}\ujh^{\lambda}-\uj^{\lambda}(R)\phi_{N_h}\in W_{j,h}^0$, we arrive at:
\[e_h^{\lambda}-\Pi_h^0e_h^{\lambda} = \left[\uj^{\lambda} -\uj^{\lambda}(R)\phi_{N_h}\right]-\Pi_h^0\left[\uj^{\lambda} -\uj^{\lambda}(R)\phi_{N_h}\right].\]
Therefore, Propositions \ref{lem:projection-error} and \ref{prop:H2-stability-J} imply that:
\[\left\|e_h^{\lambda}-\Pi_h^0e_h^{\lambda}\right\|_r\leq h^2\, C(R,V) \|\uj^{\lambda}(r)\cos(j\theta)\|_{H^2(B_O(R))}\leq h^2\, C(R,V,K,J)\|\uj^{\lambda}\|_r=h^2\, C(R,V,K,J)\]
and, similarly, 
\[\left\|e_h^{\lambda}-\Pi_h^0e_h^{\lambda}\right\|_{j,r}\leq h\, C(R,V,K,J).\]
Inserting the last two inequalities into \eqref{eq:discrete-not-D-projection-bound}, we obtain the following estimates:
\[\begin{aligned}\|e_h^{\lambda}\|_r &\leq \left(h^2+\|V_h-V\|_{L^\infty([0,R])}\right) C(R,V,K,J);\\
\|e_h^{\lambda}\|_{j,r}&\leq \left(h\,\,+\|V_h-V\|_{L^\infty([0,R])}\right)C(R,V,K,J).
\end{aligned}
\]

The conclusion of Case I follows by \eqref{eq:error-V-Vh-C1}-\eqref{eq:error-V-Vh-C2}, using a reasoning similar to the final part of the proof of Proposition \ref{prop:uj-stable-in-lambda} (i.e., inequalities \eqref{eq:dirichlet-ratio-near-1}-\eqref{eq:lambda-stability-final-inequality}).

In Case II, we claim that there exists a constant $\beta\in \RR\setminus\{0\}$, such that, for every $w^h\in W_{j,h}$,
\begin{equation}
\label{eq:discrete-normal-derivative}
a_{j,V_h}(\ujh^{\lambda},w^h)-\lambda(\ujh^{\lambda},w^h)_r=\beta\,w^h(R)R.\end{equation}
Indeed, for $w^h\in W_{j,h}^0$, the equality above is true because $\ujh^{\lambda}$ satisfies \eqref{EVP-j-W-h}. Moreover, if we denote:
\[\beta\coloneqq \frac{1}{R}\left( a_{j,V_h}(\ujh^{\lambda},\phi_{N_h})-\lambda(\ujh^{\lambda},\phi_{N_h})_r\right),\]
the equality \eqref{eq:discrete-normal-derivative} follows since $W_{j,h}={\rm span}\left\{\phi_{N_h},W_{j,h}^0\right\}$. Furthermore, the constant $\beta$ is not null, since, in this case, $\lambda$ is not a Neumann eigenvalue of $a_{j,V_h}$ on $W_{j,h}$.

On the other hand, since $\uj^{\lambda}$ satisfies \eqref{EVP-j-W-short}, an integration by parts in the spirit of Proposition \ref{prop:neumann-as-ode} (equation \eqref{eq:integration-parts-neumann}) implies that:
\[a_{j,V}(\uj^{\lambda},w)-\lambda(\uj^{\lambda},w)_r=(\uj^{\lambda})'(R)w(R)R, \quad \forall w\in H_{j,r}([0,R]).\]
Therefore, if we denote the error term in this case by:
\[e_h^{\lambda}\coloneqq \uj^{\lambda}-\frac{(\uj^{\lambda})'(R)}{\beta}\ujh^{\lambda
},\]
we arrive at:
\[
a_{j,V_h}(e_h^{\lambda},w^h)-\lambda(e_h^{\lambda},w^h)_r=0, \quad\forall w^h\in W_{j,h}.
\]
Using Proposition \ref{prop:operator-outside-of-spectrum} and Lemma \ref{lem:projection-error} \ref{item:projection-N}, we obtain the conclusion similarly to the proof of Case I.
\end{proof}

 Theorem \ref{thm:ujh-uj-convergence} implies error bounds between the function $u^{J,\lambda}_\aaa$ defined in \eqref{eq:def-UJLambdaAlpha} and its discrete counterpart $u^{J,\lambda}_{\aaa,h}$, which can be actually obtained using the computer. This discrete counterpart is defined as:
 \begin{equation}\label{eq:defUJh}
 u^{J,\lambda}_{\boldsymbol{\alpha},h} (\x)\coloneqq\alpha_0^c \uzh^{\lambda}(r)+\sum_{j=1}^J \ujh^{\lambda}(r) \left[\alpha_j^c \cos(j\theta)+\alpha_j^s\sin(j\theta)\right], \quad \x=(r\cos(\theta),r\sin(\theta))\in B_O(R),
    \end{equation}
    where $\aaa\coloneqq(\alpha_0^c,\alpha_1^c,\ldots, \alpha_J^c,\alpha_1^s,\ldots,\alpha_J^s)\in \RR^{2J+1}$
\begin{corollary}\label{cor:error-UJ-UJh}
In the context of Theorem \ref{thm:ujh-uj-convergence}, for every $\tilde R\in (0,R]$ there exists a constant $C(R,\tilde R,V,K,J)>0$ such that for every $\aaa\in \RR^{2J+1}$,
\[\|u^{J,\lambda}_{\boldsymbol{\alpha},h}-u^{J,\lambda}_{\boldsymbol{\alpha}}\|_{H^1(B_O(R))}\leq h\, C(R,\tilde R,V,K,J)\|u^{J,\lambda}_{\boldsymbol{\alpha}}\|_{L^2(B_O(\tilde R))}\]
and there exist continuous representatives of $u^{J,\lambda}_{\boldsymbol{\alpha},h}$ and $u^{J,\lambda}_{\boldsymbol{\alpha}}$ on $B_O(R)\setminus B_O(\tilde R)$ which satisfy:
\[\begin{aligned}\|u^{J,\lambda}_{\boldsymbol{\alpha},h}-u^{J,\lambda}_{\boldsymbol{\alpha}}\|_{C\left(B_O(R)\setminus B_O(\tilde R)\right)}&\leq h\, C(R,\tilde R,V,K,J)\|u^{J,\lambda}_{\boldsymbol{\alpha}}\|_{L^2(B_O(\tilde R))};\\
\|u^{J,\lambda}_{\boldsymbol{\alpha},h}\|_{C\left(B_O(R)\setminus B_O(\tilde R)\right)}, \|u^{J,\lambda}_{\boldsymbol{\alpha}}\|_{C\left(B_O(R)\setminus B_O(\tilde R)\right)} &\leq C(R,\tilde R,V,K,J)\|u^{J,\lambda}_{\boldsymbol{\alpha}}\|_{L^2(B_O(\tilde R))}.
\end{aligned}\]
Moreover, if $V\in C^2([0,R])$, then we get a second-order $L^2(B_O(R))$ error estimate as $h$ approaches zero:
\[\|u^{J,\lambda}_{\boldsymbol{\alpha},h}-u^{J,\lambda}_{\boldsymbol{\alpha}}\|_{L^2(B_O(R))}\leq h^2\, C(R,\tilde R,V,K,J)\|u^{J,\lambda}_{\boldsymbol{\alpha}}\|_{L^2(B_O(\tilde R))}.\]
\end{corollary}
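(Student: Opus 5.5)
The plan is to decompose $u^{J,\lambda}_{\aaa,h}-u^{J,\lambda}_{\aaa}$ mode by mode and reduce everything to Theorem~\ref{thm:ujh-uj-convergence} and Proposition~\ref{prop:H2-stability-J}. The sign choice in Theorem~\ref{thm:ujh-uj-convergence} is a normalisation issue. We fix the representatives $\ujh^\lambda$ so that the sign in that theorem is $-$, i.e. $\|\ujh^\lambda-\uj^\lambda\|_{j,r}\leq h\,C(R,V,K,J)$; this is legitimate since $\ujh^\lambda$ is only defined up to sign. Then
\[
u^{J,\lambda}_{\aaa,h}-u^{J,\lambda}_{\aaa}=\alpha_0^c(\uzh^\lambda-\uz^\lambda)+\sum_{j=1}^J(\ujh^\lambda-\uj^\lambda)\left[\alpha_j^c\cos(j\theta)+\alpha_j^s\sin(j\theta)\right].
\]
By Proposition~\ref{prop:spaces-equivalence}\,\ref{item:h1-second}, each term has $H^1(B_O(R))$ norm at most $\sqrt{2\pi}\,|\alpha_j|\,\|\ujh^\lambda-\uj^\lambda\|_{j,r}$. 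The triangle inequality and the arithmetic–quadratic mean inequality then give the bound $h\,C(R,V,K,J)\sqrt{2J+1}\,\|\aaa\|$.

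Next we bound $\|\aaa\|$ by $\|u^{J,\lambda}_\aaa\|_{L^2(B_O(\tilde R))}$, exactly as in \eqref{eq:UJ-bounded-L2-1}–\eqref{eq:UJ-bounded-L2-2}. Orthogonality of the angular modes together with $\|\uj^\lambda\|_r=1$ gives $\|u^{J,\lambda}_\aaa\|_{L^2(B_O(R))}^2\geq \pi\|\aaa\|^2$. Proposition~\ref{prop:H2-stability-J} bounds the left side by $C(R,\tilde R,V,K,J)\|u^{J,\lambda}_\aaa\|_{L^2(B_O(\tilde R))}^2$. This proves the $H^1$ estimate. In the $C^2$ case the identical argument, using the $\|\cdot\|_r$ estimate of Theorem~\ref{thm:ujh-uj-convergence} and Proposition~\ref{prop:spaces-equivalence}\,\ref{item:l2-second}, yields the $h^2$ estimate in $L^2(B_O(R))$.

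For the uniform estimates on the annulus $B_O(R)\setminus B_O(\tilde R)$, the main step is a one-dimensional Sobolev embedding on $[\tilde R,R]$. Since $r\geq\tilde R$ there, we have
\[
\|w\|_{H^1([\tilde R,R])}^2\leq \tilde R^{-1}\int_{\tilde R}^R(|w'|^2+|w|^2)\,r\,\dd r\leq \tilde R^{-1}\|w\|_{j,r}^2 .
\]
Combined with $H^1([\tilde R,R])\hookrightarrow C([\tilde R,R])$, this gives $\sup_{[\tilde R,R]}|w|\leq C(R,\tilde R)\|w\|_{j,r}$ for any $w\in H_{j,r}([0,R])$, and in particular continuous representatives. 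We apply this to $w=\ujh^\lambda-\uj^\lambda$, bound $|\cos|,|\sin|\leq1$, and sum over modes. Bounding $\|\aaa\|$ as above gives the first $C$-norm estimate.

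For the second $C$-norm estimate, we apply the same embedding to $\uj^\lambda$ itself. Proposition~\ref{prop:H2-stability-J} and Proposition~\ref{prop:spaces-equivalence} give $\|\uj^\lambda\|_{j,r}\leq C(R,V,K,J)$, which yields the bound on $\|u^{J,\lambda}_\aaa\|_{C}$. Alternatively, one can use \eqref{eq:estimates-from-gronwall} directly with $\tilde R$ replaced by $2\tilde R$ or an appropriate radius. The bound on $u^{J,\lambda}_{\aaa,h}$ then follows from the triangle inequality with the difference estimate, for $h<h_0\leq 1$.

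The only delicate point is the sign bookkeeping. The constant in the $\|\aaa\|$ bound must be uniform in $\lambda\in[1,K]$, which is guaranteed since Proposition~\ref{prop:H2-stability-J} is uniform in $\lambda$. Everything else is routine.
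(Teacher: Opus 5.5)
Your proposal is correct and follows essentially the same route as the paper. The paper also combines the one-dimensional embedding $\|w\|_{C([\tilde R,R])}\le C(R,\tilde R)\|w\|_{j,r}$ applied to $\ujh^\lambda-\uj^\lambda$, Theorem~\ref{thm:ujh-uj-convergence} used mode by mode, and the bound on $\|\aaa\|$ from Proposition~\ref{prop:H2-stability-J} together with the arithmetic--quadratic mean inequality as in \eqref{eq:UJ-bounded-L2-1}--\eqref{eq:UJ-mean-inequality}; you are in fact more explicit than the paper about the sign normalisation of $\ujh^\lambda$, which the paper leaves implicit.
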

\begin{proof} First, we recall that every function $w\in H_{j,r}([0,R])$ has a continuous representative in $C([\tilde R,R])$. Moreover, there exists a constant $C(R,\tilde R)>0$ such that
\[\|w\|_{C([\tilde R,R])}\leq C(R,\tilde R) \|w\|_{j,r}.\]
Therefore, Theorem \ref{thm:ujh-uj-convergence} implies that:
\[\|\ujh^{\lambda}-\uj^{\lambda}\|_{C([\tilde R,R])}\leq h \, C(R,\tilde R,V,K,J).\]
Then, taking into account that $\|\uj^{\lambda}\|_r=1$, the conclusion of Corollary \ref{cor:error-UJ-UJh} follows from Theorem \ref{thm:ujh-uj-convergence} and Proposition \ref{prop:H2-stability-J}, via a reasoning involving the inequality between the arithmetic mean and the quadratic mean, similar to \eqref{eq:UJ-bounded-L2-1}-\eqref{eq:UJ-mean-inequality}.
\end{proof}

\section{Numerical approximation of the quotient $\mathcal{F}^\lambda(\aaa)$}
\label{sec:numerical-integration}
In the previous section, we have successfully estimated the function $ u^{J,\lambda}_{\boldsymbol{\alpha}}$ using a one-dimensional Finite Element scheme for every basis function $\uj^{\lambda}$. Therefore, in order to approximately compute the quotient $\mathcal{F}^{\lambda}(\boldsymbol{\alpha})$ in \eqref{eq:def-F}, we need to integrate numerically both its numerator and denominator and prove the order of accuracy of this process.

\subsection{Numerical approximations for $L^2$ norms on the boundary of the domain $\Omega$}
\label{sec:numerical-integration-boundary}
In order to estimate the numerator $\|u^{J,\lambda}_{\boldsymbol{\alpha}}\|_{L^2(\partial\Omega)}$ of $\mathcal{F}^\lambda(\aaa)$, we consider $L>0$ and $\gamma:[0,L]\to \partial \Omega$ a $C^2$ non-degenerate (i.e. $\gamma'\neq (0,0)$ in every point) periodic parametrisation curve of $\partial \Omega$. We also consider a large integer $N_{\partial\Omega}$ and a division of $[0,L]$:
\[0\leq t_1<t_2< \ldots < t_{N_{\partial\Omega}}< L,\]
that satisfies:
\begin{equation}\label{boundary-division-width}\frac{L}{2 N_{\partial \Omega}}<t_i-t_{i-1}<\frac{3L}{2 N_{\Omega}},\quad  \forall i\in \overline{1,N_{\partial\Omega}},\end{equation}
with the convention that $t_{0}=t_{N_{\partial\Omega}}-L$.
The points $(t_i)_{i=1}^{N_{\partial\Omega}}$ can be constructed by randomly displacing every node of the uniform $N_{\partial\Omega}$-sized grid on $[0,L]$ by a tiny amount. 

\emph{Note:} One situation in which we might prefer such a randomly displaced grid over the uniform one is when the parametrisation $\gamma$ of $\partial\Omega$ depends on the angular value $\theta$. In this case, if the nodes $t_i$ are equally spaced in the interval $[0,2\pi]$, the points $\gamma(t_i)$ may interfere with the zeros of the angular basis functions $\cos(j\theta)$ and $\sin(j\theta)$, which  are periodic in $\theta$.\\

With these considerations, we define the approximation of the $L^2(\partial\Omega)$ norm of $u^{J,\lambda}_{\boldsymbol{\alpha}}$:
\[S_{N_{\partial\Omega}}(u^{J,\lambda}_{\boldsymbol{\alpha},h})\coloneqq \sqrt{\sum_{i=1}^{N_{\partial\Omega}} |u^{J,\lambda}_{\boldsymbol{\alpha},h}(\gamma(t_i))|^2\, \|\gamma'(t_i)\|(t_i-t_{i-1})}.\]
 The convergence of this quadrature rule is outlined in the following proposition, whose proof can be found in Appendix \ref{acaret:proofs-3}.

\begin{proposition}\label{prop:boundary-int}
Let $\Omega$ be a simply connected $C^2$ bounded domain and $R,\tilde R>0$ such that $B_O(\tilde R)\subseteq\Omega\subseteq B_O(R)$. Let also the potential $V\in C^1([0,R])$ taking values in $[1,\infty)$. For every $K>1$ and positive integer $J$, there exists a threshold $h_0=h_0(R,\tilde R,V,K,J)>0$ and a constant $C(\Omega,V,K,J)>0$ such that, for every $\lambda\in [1,K]$, $h\in(0,h_0)$ and $\boldsymbol{\alpha}\in \RR^{2J+1}$, 
\[\left|S_{N_{\partial\Omega}}(u^{J,\lambda}_{\boldsymbol{\alpha},h})-\| u^{J,\lambda}_{\boldsymbol{\alpha}}\|_{L^2(\partial\Omega)}\right|\leq C(\Omega,V,K,J)\left[\frac{1}{N_{\partial\Omega}}+h\right]\|u^{J,\lambda}_{\boldsymbol{\alpha}}\|_{L^2(\Omega)}.\]
\end{proposition}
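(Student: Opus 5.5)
The plan is to split the error with the triangle inequality into a quadrature part and a finite element part:
\[
\left|S_{N_{\partial\Omega}}(u^{J,\lambda}_{\aaa,h})-\|u^{J,\lambda}_{\aaa}\|_{L^2(\partial\Omega)}\right|\leq \left|S_{N_{\partial\Omega}}(u^{J,\lambda}_{\aaa,h})-S_{N_{\partial\Omega}}(u^{J,\lambda}_{\aaa})\right|+\left|S_{N_{\partial\Omega}}(u^{J,\lambda}_{\aaa})-\|u^{J,\lambda}_{\aaa}\|_{L^2(\partial\Omega)}\right|.
\]
Throughout, both pieces will be bounded in terms of $\|u^{J,\lambda}_\aaa\|_{L^2(B_O(\tilde R))}\leq\|u^{J,\lambda}_\aaa\|_{L^2(\Omega)}$, which is legitimate since $B_O(\tilde R)\subseteq\Omega$. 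This reduction relies on the observability estimate of Proposition~\ref{prop:H2-stability-J} and on Corollary~\ref{cor:error-UJ-UJh}.

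\textbf{FEM term.} $S_{N_{\partial\Omega}}$ is a weighted discrete $\ell^2$ norm, so the reverse triangle inequality gives
\[
\bigl|S(u_h)-S(u)\bigr|\leq S(u_h-u)\leq \|u_h-u\|_{C(\partial\Omega)}\Bigl(\sum_i\|\gamma'(t_i)\|(t_i-t_{i-1})\Bigr)^{1/2}.
\]
The sum is bounded by $\tfrac32 L\|\gamma'\|_\infty$, using the mesh condition \eqref{boundary-division-width}. If $\partial\Omega$ meets $B_O(\tilde R)$, I first shrink $\tilde R$ to some $\tilde R'<\tilde R$ with $\partial\Omega\subset B_O(R)\setminus B_O(\tilde R')$, possible since $\overline{B_O(\tilde R)}\subset\Omega$. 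The uniform bound of Corollary~\ref{cor:error-UJ-UJh} on $B_O(R)\setminus B_O(\tilde R')$ then yields $C h\|u^{J,\lambda}_\aaa\|_{L^2(B_O(\tilde R'))}\leq Ch\|u^{J,\lambda}_\aaa\|_{L^2(\Omega)}$.

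\textbf{Quadrature term.} Set $g(t)\coloneqq|u^{J,\lambda}_\aaa(\gamma(t))|^2\|\gamma'(t)\|$, so that $\|u\|^2_{L^2(\partial\Omega)}=\int_0^L g$ and $S(u)^2$ is a left/right-endpoint Riemann sum of $g$. The standard estimate gives
\[
\Bigl|S(u)^2-\int_0^L g\Bigr|\leq \frac{3L}{2N_{\partial\Omega}}\int_0^L|g'|\,\dd t.
\]
This requires $g$ to be absolutely continuous with controlled $W^{1,1}$ norm; it suffices to have $u\in C^1$ near $\partial\Omega$. I would obtain this from the structure $u=\sum_j \uj^\lambda(r)\{\cos,\sin\}(j\theta)$: the Gr\"onwall estimate \eqref{eq:estimates-from-gronwall} bounds $|\uj^\lambda|+|(\uj^\lambda)'|$ on $[\tilde R'/2,R]$ by $C\|\uj^\lambda\|_{L_r([0,\tilde R'])}$. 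Since $\theta$-derivatives only bring factors $j\leq J$, and the coefficient vector is controlled as in \eqref{eq:UJ-bounded-L2-2}, this gives
\[
\|u\|_{C^1(\overline{B_O(R)}\setminus B_O(\tilde R'))}\leq C(R,\tilde R,V,K,J)\,\|u\|_{L^2(B_O(\tilde R'))}.
\]
Combined with $\gamma\in C^2$, it follows that $\int|g'|\leq C\|u\|^2_{L^2(\Omega)}$.

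\textbf{Passing from squares to norms.} The step I expect to be the main obstacle is converting the bound on squares into a bound on norms. From $|a^2-b^2|\leq \eta$ one only gets $|a-b|\leq\sqrt\eta$, which would give a rate $N_{\partial\Omega}^{-1/2}$. To keep the rate $1/N_{\partial\Omega}$, I would instead avoid squaring: write $\sqrt{\sum_i u(\gamma(t_i))^2w_i}$ as the $L^2(\partial\Omega)$ norm of a piecewise constant function $\tilde u$ equal to $u(\gamma(t_i))$ on the image of $[t_{i-1},t_i]$, up to the weight discrepancy between $\|\gamma'(t_i)\|(t_i-t_{i-1})$ and the arclength of that piece. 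Then
\[
\bigl|\,\|\tilde u\|-\|u\|\,\bigr|\leq\|\tilde u-u\|_{L^2(\partial\Omega)}\leq C N_{\partial\Omega}^{-1}\|u\|_{C^1}.
\]
The weight discrepancy is $O(N_{\partial\Omega}^{-2})$ per interval, since $\gamma\in C^2$, and contributes at most $C N_{\partial\Omega}^{-1}\|u\|_{C}$. Finally, the $C^1$ bound above converts $\|u\|_{C^1}$ into $C\|u\|_{L^2(\Omega)}$. Adding the two pieces gives the stated estimate, with $h_0$ taken from Theorem~\ref{thm:ujh-uj-convergence}.
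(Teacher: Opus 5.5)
Your proof is correct, and its skeleton matches the paper's. You use the same triangle-inequality split, bound the FEM term via Corollary~\ref{cor:error-UJ-UJh} and Minkowski, and bound the quadrature term via Minkowski against a piecewise constant function. That last step is exactly how the paper avoids the $N_{\partial\Omega}^{-1/2}$ loss you anticipated, so your detour through $g$ and $\int|g'|$ can be dropped.

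The genuine difference is the regularity input. The paper first proves a quadrature bound valid for any trace, $|S_{N_{\partial\Omega}}(u)-\|u\|_{L^2(\partial\Omega)}|\leq C(\Omega)N_{\partial\Omega}^{-1}\|u\|_{H^1(\partial\Omega)}$. It then closes with the trace theorem $H^2(\Omega)\to H^{3/2}(\partial\Omega)$ and the stated conclusion of Proposition~\ref{prop:H2-stability-J}. You instead exploit the finite angular expansion and the pointwise Gr\"onwall bound \eqref{eq:estimates-from-gronwall} from inside that proof, which gives a $C^1$ bound on an annulus containing $\partial\Omega$. Your route avoids fractional trace theory and yields uniform pointwise control. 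The paper's route isolates a cleaner, structure-free quadrature lemma and needs only the statement of the observability estimate.

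Two minor points:
\begin{itemize}
\item Shrinking $\tilde R$ is unnecessary, since the open ball $B_O(\tilde R)\subseteq\Omega$ cannot meet $\partial\Omega$.
\item For the weight discrepancy between $w_i=\|\gamma'(t_i)\|(t_i-t_{i-1})$ and $\tilde w_i=\int_{t_{i-1}}^{t_i}\|\gamma'\|\,\dd t$, compare square roots before applying Minkowski in $\ell^2$. Non-degeneracy of $\gamma$ gives $|\sqrt{w_i}-\sqrt{\tilde w_i}|\leq C(t_i-t_{i-1})^{3/2}$, which contributes $C N_{\partial\Omega}^{-1}\|u\|_{C}$. Adding the $O(N_{\partial\Omega}^{-2})$ discrepancies at the level of squares would reintroduce the $N_{\partial\Omega}^{-1/2}$ loss. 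The paper handles this by placing $\|\gamma'\|^{1/2}$ inside the function and using its Lipschitz continuity.
\end{itemize}
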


\subsection{Monte Carlo integration for $L^2$ norms on $\Omega$}
\label{sec:Monte-Carlo}
Now we approximate the denominator $\|u^{J,\lambda}_{\boldsymbol{\alpha}}\|_{L^2(\Omega)}$ of the quotient $\mathcal{F}^{\lambda}(\boldsymbol{\alpha})$. Inspired by \cite[Section 5]{BetckeTrefethen2005}, we use a Monte-Carlo method and consider the random points $\x_1,\x_2,\ldots, \x_{N_\Omega}$ which are independent and uniformly distributed in $\Omega$. The Monte-Carlo approximation of $\|u^{J,\lambda}_{\boldsymbol{\alpha}}\|_{L^2(\Omega)}$ reads:
\[\mathcal{S}_{N_\Omega}(u^{J,\lambda}_{\boldsymbol{\alpha},h})\coloneqq \sqrt{\frac{|\Omega|}{N_{\Omega}} \sum_{i=1}^{N_\Omega} |u^{J,\lambda}_{\boldsymbol{\alpha},h}(\x_i)|^2},\]
where by $|\Omega|$ we denote the area of the domain $\Omega$.
The following proposition (see Appendix \ref{acaret:proofs-3} for the proof) quantifies the error of this probabilistic approximation, uniformly with respect to $\aaa$. 
\begin{proposition}\label{prop:Monte-Carlo}
Let the parameters $\Omega, R,\tilde R, K$ be as in Proposition \ref{prop:boundary-int}.
There exists a threshold $h_0=h_0(R,\tilde R,V,K,J)>0$ and a constant $C(\Omega,V,K,J)>0$ such that, for every fixed $\lambda \in [1,K]$, $h\in (0,h_0)$ and $\eta>0$, the probability of the event:
\[\mathcal{E}_{N_\Omega,\eta}\coloneqq \text{" For every }\boldsymbol{\alpha}\in \RR^{2J+1}, \left|\mathcal{S}_{N_\Omega} ( u^{J,\lambda}_{\boldsymbol{\alpha},h}) - \|u^{J,\lambda}_{\boldsymbol{\alpha}}\|_{L^2(\Omega)}\right|<(\eta+h) C(\Omega,V,K,J)\, \| u^{J,\lambda}_{\boldsymbol{\alpha}}\|_{L^2(\Omega)}\text{"}\]
is bounded from below as follows:
\[\mathbb{P}(\mathcal{E}_{N_\Omega,\eta})\geq 1-\frac{C(\Omega,V,K,J)}{\eta^2 \,N_\Omega}.\]
Note: if the potential $V$ belongs to $C^2([0,R])$, then the same conclusion can be also obtained for the event:
\[\mathcal{E}_{N_\Omega,\eta}\coloneqq \text{" For every }\boldsymbol{\alpha}\in \RR^{2J+1}, \left|\mathcal{S}_{N_\Omega} (u^{J,\lambda}_{\boldsymbol{\alpha},h}) - \|u^{J,\lambda}_{\boldsymbol{\alpha}}\|_{L^2(\Omega)}\right|<(\eta+h^2) C(\Omega,V,K,J)\, \| u^{J,\lambda}_{\boldsymbol{\alpha}}\|_{L^2(\Omega)}\text{"}.\]
\end{proposition}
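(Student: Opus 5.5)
The plan is to reduce the uniform-in-$\aaa$ statement to a single matrix concentration estimate, using that $\aaa\mapsto u^{J,\lambda}_{\aaa,h}$ is linear with values in a finite-dimensional space. I split the error into two pieces:
\[\bigl|\mathcal{S}_{N_\Omega}(u^{J,\lambda}_{\aaa,h})-\|u^{J,\lambda}_{\aaa}\|_{L^2(\Omega)}\bigr|\leq \bigl|\mathcal{S}_{N_\Omega}(u^{J,\lambda}_{\aaa,h})-\|u^{J,\lambda}_{\aaa,h}\|_{L^2(\Omega)}\bigr|+\|u^{J,\lambda}_{\aaa,h}-u^{J,\lambda}_{\aaa}\|_{L^2(\Omega)}.\]
Take $\tilde R$ with $B_O(\tilde R)\subseteq\Omega$. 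By Corollary \ref{cor:error-UJ-UJh}, the second term is at most $h\,C\|u^{J,\lambda}_{\aaa}\|_{L^2(B_O(\tilde R))}\le h\,C\|u^{J,\lambda}_\aaa\|_{L^2(\Omega)}$, or $h^2C\|u^{J,\lambda}_\aaa\|_{L^2(\Omega)}$ when $V\in C^2$. This gives the deterministic $h$ (resp.\ $h^2$) contribution. It also shows, for $h<h_0$ small, that $\|u^{J,\lambda}_{\aaa,h}\|_{L^2(\Omega)}$ and $\|u^{J,\lambda}_{\aaa}\|_{L^2(\Omega)}$ are comparable within a factor of $2$.

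For the first term, fix $\lambda,h$ and write $u^{J,\lambda}_{\aaa,h}(\x)=\aaa\cdot\Phi(\x)$, where $\Phi(\x)\in\RR^{2J+1}$ collects the discrete basis functions $\ujh^\lambda(r)\cos(j\theta)$ and $\ujh^\lambda(r)\sin(j\theta)$. Set $G\coloneqq\int_\Omega\Phi\Phi^T\,\dd\x$ and $G_N\coloneqq\frac{|\Omega|}{N_\Omega}\sum_i\Phi(\x_i)\Phi(\x_i)^T$. Then $\|u^{J,\lambda}_{\aaa,h}\|_{L^2(\Omega)}^2=\aaa^TG\aaa$ and $\mathcal{S}_{N_\Omega}^2=\aaa^TG_N\aaa$. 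Since $\mathbb{E}\,G_N=G$, it suffices to bound
\[|\aaa^T(G_N-G)\aaa|\le \|G^{-1/2}(G_N-G)G^{-1/2}\|\;\aaa^TG\aaa.\]
$G$ is positive definite because the functions are linearly independent, via angular orthogonality on $B_O(\tilde R)$. We then use $|\sqrt a-\sqrt b|\le|a-b|/\sqrt b$. By Chebyshev's inequality applied to the Frobenius norm,
\[\mathbb{P}\bigl(\|G^{-1/2}(G_N-G)G^{-1/2}\|\ge\eta\bigr)\le \frac{\mathbb{E}\|G^{-1/2}(G_N-G)G^{-1/2}\|_F^2}{\eta^2}\le \frac{|\Omega|^2}{\eta^2N_\Omega}\,\mathbb{E}\,\bigl|G^{-1/2}\Phi(\x_1)\bigr|^4,\]
using the independence of the samples. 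On this event the first term is at most $\eta\,\|u^{J,\lambda}_{\aaa,h}\|_{L^2(\Omega)}\le 2\eta\|u^{J,\lambda}_{\aaa}\|_{L^2(\Omega)}$.

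The main obstacle is making the constant $\mathbb{E}|G^{-1/2}\Phi|^4$ uniform in $\lambda\in[1,K]$ and $h<h_0$, with dependence only on $\Omega,V,K,J$. I would handle it with a pointwise bound:
\[|G^{-1/2}\Phi(\x)|^2=\sup_{\aaa}\frac{|u^{J,\lambda}_{\aaa,h}(\x)|^2}{\|u^{J,\lambda}_{\aaa,h}\|_{L^2(\Omega)}^2}.\]
It therefore suffices to prove $\|u^{J,\lambda}_{\aaa,h}\|_{L^\infty(\Omega)}\le C(\Omega,V,K,J)\|u^{J,\lambda}_{\aaa}\|_{L^2(B_O(\tilde R))}$. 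Away from the origin, on $\Omega\setminus B_O(\tilde R/2)$, this follows from the $C$-bounds in Corollary \ref{cor:error-UJ-UJh} applied with $\tilde R/2$. Near the origin I need an $L^\infty$ bound on each $\ujh^\lambda$. The continuous functions $\uj^\lambda$ are bounded by the $H^2\subset C$ embedding and Proposition \ref{prop:H2-stability-J}, but the FEM functions require more care. I would argue with piecewise-affine inverse-type estimates, or more simply via the one-dimensional embedding of $H_{j,r}$ into bounded functions. For $j\ge1$ the weight $j^2/r$ helps, since it gives $|v(r)|^2\le C\|v\|_{j,r}^2$ by integrating $(v^2)'=2vv'$ against the weights. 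For $j=0$ this step is delicate, because $H_{0,r}$ does not embed in $L^\infty$. There I would use that $\uzh^\lambda$ is piecewise affine, so its maximum on $[0,h]$ is attained at a node, and compare node values with $\uz^\lambda$ through the nodal interpolant estimates of Proposition \ref{prop:fem-interpolator}.

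Combining the two terms and rescaling $\eta$ gives the event $\mathcal{E}_{N_\Omega,\eta}$ with the stated probability bound, in both the $C^1$ and the $C^2$ versions.
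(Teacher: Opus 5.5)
Your argument is correct, and it obtains uniformity in $\aaa$ differently from the paper.

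\textbf{How the two routes differ.} The paper works entrywise with the unnormalised Gram matrix. It applies Chebyshev's inequality to each of the $(2J+1)^2$ Monte Carlo inner products of the functions $\ujh^\lambda(r)\cos(j\theta)$ and $\mathbf{u}_{k,h}^\lambda(r)\sin(k\theta)$. It bounds each variance by products of $L^4(\Omega)$ norms, hence by $H^1$ norms via $H^1\subset L^4$ and Theorem \ref{thm:ujh-uj-convergence}, and takes a union bound. It then turns the error $\eta(\sum|\alpha|)^2\le\eta(2J+1)\|\aaa\|^2$ into a relative error: $\|\uj^\lambda\|_r=1$ gives $\|\aaa\|^2\lesssim\|u^{J,\lambda}_\aaa\|^2_{L^2(B_O(R))}$, and Proposition \ref{prop:H2-stability-J} passes to $L^2(\Omega)$.

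You instead whiten by $G^{-1/2}$. A single Chebyshev bound on $\|G^{-1/2}(G_N-G)G^{-1/2}\|_F$ then gives the relative error in $\|u^{J,\lambda}_{\aaa,h}\|_{L^2(\Omega)}$ directly. Observability enters only through the Christoffel-type quantity $|G^{-1/2}\Phi(\x)|^2$. This is basis-free and shows what governs the constant. On the same event it also gives invertibility of $G_N$, i.e.\ full rank of $\mathcal M_\Omega$, which Section \ref{sec:minimisation-QR} uses. The price is that bounding the fourth moment by a pointwise supremum commits you to an $L^\infty$ estimate the paper never needs.

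\textbf{The step you still need to supply.} That $L^\infty$ bound is the only incomplete part of your sketch. For $j=0$, Proposition \ref{prop:fem-interpolator} controls the nodal values of $\uz^\lambda$, not those of $\uzh^\lambda$. You therefore still need a discrete estimate for $w_h\coloneqq\uzh^\lambda\mp\tilde v_h\in W_{0,h}$, which satisfies $\|w_h\|_{0,r}\le Ch$. The following works:
\begin{itemize}
\item For $r\ge h$, Cauchy--Schwarz with weight $r$ gives $|w_h(r)-w_h(R)|\le\|w_h\|_{0,r}\sqrt{\log(R/h)}$.
\item At the endpoint, $|w_h(R)|\le C(R)\|w_h\|_{0,r}$.
\item On $[0,h]$, $w_h$ is affine, so $|w_h(0)-w_h(h)|=\sqrt2\,\bigl(\int_0^h|w_h'|^2r\,\dd r\bigr)^{1/2}\le\sqrt2\,\|w_h\|_{0,r}$.
\end{itemize}
Hence $\|w_h\|_{L^\infty}\le Ch\bigl(1+\sqrt{\log(R/h)}\bigr)$, which is bounded. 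Since $\|\tilde v_h\|_{L^\infty}\le\|\uz^\lambda\|_{L^\infty}\le C$, you get $\|\uzh^\lambda\|_{L^\infty}\le C$.

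Alternatively, drop the supremum altogether. Let $(\psi_k)$ be the $L^2(\Omega)$-orthonormal basis of the span given by $\psi_k=e_k\cdot G^{-1/2}\Phi$. Then $\mathbb{E}|G^{-1/2}\Phi|^4=\frac{1}{|\Omega|}\int_\Omega\bigl(\sum_k\psi_k^2\bigr)^2\le\frac{2J+1}{|\Omega|}\sum_k\|\psi_k\|^4_{L^4(\Omega)}$. This is bounded uniformly in $\lambda$ and $h<h_0$ by $H^1\subset L^4$, Corollary \ref{cor:error-UJ-UJh}, Proposition \ref{prop:H2-stability-J} and your factor-$2$ comparability, which is essentially the paper's ingredient.
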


\subsection{The quotient $\mathcal{F}^\lambda$ approximated by the ratio of matrix norms}
\label{sec:quotient-approximation}

The numerical integration procedures developed in Sections \ref{sec:numerical-integration-boundary} and \ref{sec:Monte-Carlo} allow us to write the approximation of the quotient $\mathcal{F}^\lambda(\aaa)$ as follows:
\begin{equation}\label{def:Fh}
\mathcal{F}^\lambda_h(\aaa)\coloneqq\frac{S_{N_{\partial \Omega}}(u^{J,\lambda}_{\aaa,h})}{\mathcal{S}_{N_\Omega}(u^{J,\lambda}_{\aaa,h})}=\frac{\|\mathcal{M}_{\partial\Omega}\aaa\|}{\|\mathcal{M}_{\Omega}\aaa\|},\end{equation}
where the $N_{\partial\Omega}\times (2J+1)$ matrix $\mathcal{M}_{\partial\Omega}$ is defined by:
\[\begin{aligned}(
\mathcal{M}_{\partial\Omega})_{i,1}&\coloneqq \uzh^{\lambda}(\|\gamma(t_i)\|)\sqrt{\|\gamma'(t_i)\|(t_i-t_{i-1})}, \quad i\in \overline{1,N_{\partial\Omega}};\\
(\mathcal{M}_{\partial\Omega})_{i,2j}&\coloneqq \ujh^{\lambda}(\|\gamma(t_i)\|)\cos[j\,{\rm arg}(\gamma(t_i))]\sqrt{\|\gamma'(t_i)\|(t_i-t_{i-1})}, \quad i\in \overline{1,N_{\partial\Omega}},\,  j\in \overline{1,J};\\
(\mathcal{M}_{\partial\Omega})_{i,2j+1}&\coloneqq \ujh^{\lambda}(\|\gamma(t_i)\|)\sin[j\,{\rm arg}(\gamma(t_i))]\sqrt{\|\gamma'(t_i)\|(t_i-t_{i-1})}, \quad i\in \overline{1,N_{\partial\Omega}},\,  j\in \overline{1,J},
\end{aligned}\]
where by ${\rm arg}(\x)$ we denote the argument $\theta$ of the non-null vector $\x=(r\cos(\theta),r\sin(\theta))\in \RR^2$.

Similarly, the the $N_{\Omega}\times (2J+1)$ matrix $\mathcal{M}_{\Omega}$ is defined by:
\[\begin{aligned}(
\mathcal{M}_{\Omega})_{i,1}&\coloneqq \sqrt{\frac{|\Omega|}{N_{\Omega}}}\uzh^{\lambda}(\boldsymbol{r_i}), \quad i\in \overline{1,N_{\Omega}};\\
(\mathcal{M}_{\Omega})_{i,2j}&\coloneqq \sqrt{\frac{|\Omega|}{N_{\Omega}}} \ujh^{\lambda}(\boldsymbol{r}_i)\cos(j\boldsymbol\theta_i), \quad i\in \overline{1,N_{\Omega}},\,  j\in \overline{1,J};\\
(\mathcal{M}_{\Omega})_{i,2j+1}&\coloneqq \sqrt{\frac{|\Omega|}{N_{\Omega}}}\ujh^{\lambda}(\boldsymbol{r}_i)\sin(j\boldsymbol\theta_i), \quad i\in \overline{1,N_{\Omega}},\,  j\in \overline{1,J}.
\end{aligned}\]
For simplicity, we will consider that the number of collocation points $N_{\partial\Omega}$ and $N_\Omega$ depends on the discretisation parameter $h$ in order to obtain the following theorem that quantifies the error between the quotients $\mathcal{F}^\lambda_h(\aaa)$ and $\mathcal{F}^\lambda(\aaa)$. 
\begin{theorem}\label{thm:approx-ratio}
Let $\Omega$ be a simply connected $C^2$ bounded domain containing the origin, $K>1$, the potential $V\in C^1([0,R])$ taking values in $[1,\infty)$, and a fixed probability $p\in (0,1)$. Then, for each positive integer $J$, there exist a threshold $h_0=h_0(\Omega,V,K,J)>0$ and a constant $C(\Omega,V,K,J)>0$ such that, for every $h\in (0,h_0)$ there exist some positive integers $N_{\partial\Omega,0}=N_{\partial\Omega,0}(h)$ and $N_{\Omega,0}=N_{\Omega,0}(p,\Omega,V,K,J,h)$ such that, for every $\lambda\in [1,K]$ and every $N_{\partial\Omega}\geq N_{\partial\Omega,0}$, $N_\Omega\geq N_{\Omega,0}$, the probabilistic event
\[\begin{aligned}\mathcal{E}_h\coloneqq \text{" For every }\boldsymbol{\alpha}\in \RR^{2J+1}, &\,\, \mathcal{F}^\lambda_h(\aaa)\leq \frac{\mathcal{F}^\lambda(\aaa)}{1-h\,C(\Omega,V,K,J)} + h\, C(\Omega,V,K,J)\\
\text {and }  &\,\, \mathcal{F}^\lambda(\alpha)\leq [1+h\, C(\Omega,V,K,J)]\mathcal{F}^\lambda_h(\aaa) + h\, C(\Omega,V,K,J)\text{ "}.
\end{aligned}\]
has probability greater than $1-p$. Furthermore, if the potential $V$ belongs to $C^2([0,R])$, then we can replace the event $\mathcal{E}_h$ with:
\[\begin{aligned}\mathcal{E}_h\coloneqq \text{" For every }\boldsymbol{\alpha}\in \RR^{2J+1}, &\,\, \mathcal{F}^\lambda_h(\aaa)\leq \frac{\mathcal{F}^\lambda(\aaa)}{1-h^2\,C(\Omega,V,K,J)} + h\, C(\Omega,V,K,J)\\
\text {and }  &\,\, \mathcal{F}^\lambda(\alpha)\leq [1+h^2\, C(\Omega,V,K,J)]\mathcal{F}^\lambda_h(\aaa) + h\, C(\Omega,V,K,J)\text{ "}.
\end{aligned}\]
\end{theorem}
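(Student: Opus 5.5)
The proof combines Propositions \ref{prop:boundary-int} and \ref{prop:Monte-Carlo}, which control the numerator and the denominator of $\mathcal{F}^\lambda_h(\aaa)$ separately; both quotients are homogeneous in $\aaa$. Write $D(\aaa)\coloneqq\|u^{J,\lambda}_{\aaa}\|_{L^2(\Omega)}$ and $N(\aaa)\coloneqq\|u^{J,\lambda}_{\aaa}\|_{L^2(\partial\Omega)}$, and let $N_h(\aaa)$, $D_h(\aaa)$ denote $S_{N_{\partial\Omega}}(u^{J,\lambda}_{\aaa,h})$ and $\mathcal{S}_{N_\Omega}(u^{J,\lambda}_{\aaa,h})$. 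Note that $\mathcal{F}^\lambda(\aaa)=N/D$ and $\mathcal{F}^\lambda_h(\aaa)=N_h/D_h$. The aim is two relative bounds, valid for all $\aaa$ simultaneously with probability at least $1-p$:
\[
|N_h(\aaa)-N(\aaa)|\leq h\,C\,D(\aaa),\qquad |D_h(\aaa)-D(\aaa)|\leq h\,C\,D(\aaa).
\]
For the numerator, Proposition \ref{prop:boundary-int} gives the error $C[\frac{1}{N_{\partial\Omega}}+h]D(\aaa)$. Choosing $N_{\partial\Omega,0}(h)\geq 1/h$ turns this into $2hC\,D(\aaa)$; this bound is deterministic and uniform in $\aaa$ and $\lambda$.

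For the denominator, apply Proposition \ref{prop:Monte-Carlo} with $\eta\coloneqq h$ (or $\eta\coloneqq h^2$ in the $C^2$ case). The event $\mathcal{E}_{N_\Omega,\eta}$ then yields $|D_h-D|<2hC\,D$ (respectively $2h^2C\,D$) for every $\aaa$. It has probability at least $1-C/(\eta^2N_\Omega)$, which exceeds $1-p$ once $N_{\Omega,0}\geq C/(p\,\eta^2)$. This defines $N_{\Omega,0}(p,\Omega,V,K,J,h)$.

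Next comes the algebra on the good event, with $h_0$ chosen so that $hC<1/2$. From the denominator bound, $D_h\geq(1-hC)D$, which also ensures $D_h>0$. Then
\[
\mathcal{F}^\lambda_h=\frac{N_h}{D_h}\leq\frac{N+hC\,D}{(1-hC)D}=\frac{\mathcal{F}^\lambda+hC}{1-hC}\leq\frac{\mathcal{F}^\lambda}{1-hC}+2hC,
\]
and the constants are then renamed. Conversely, $D\leq D_h/(1-hC)\leq(1+2hC)D_h$ and $N\leq N_h+hC\,D$. Therefore
\[
\mathcal{F}^\lambda=\frac{N}{D}\leq\frac{N_h}{D}+hC\leq\frac{N_h}{D_h}\cdot\frac{D_h}{D}+hC\leq(1+hC)\mathcal{F}^\lambda_h+hC,
\]
using $D_h/D\leq 1+hC$. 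In the $C^2$ case only the denominator error improves to $h^2$. This explains the factors $1\pm h^2C$ in the modified event, while the additive term stays $hC$ because it is governed by the $O(h)$ boundary error.

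The main subtlety is that the required bounds are relative to $D(\aaa)$ and must hold uniformly over all $\aaa$. Both are already built into Propositions \ref{prop:boundary-int} and \ref{prop:Monte-Carlo}: they rest on the observability estimate of Proposition \ref{prop:H2-stability-J}, and the probabilistic event is stated for all $\aaa$ at once. So the only care needed is in choosing $\eta$ in terms of $h$ and $N_{\Omega,0}$ in terms of $p$ and $h$. The constant $C$ in Proposition \ref{prop:Monte-Carlo} does not depend on $\lambda$, so the same choices work for every $\lambda\in[1,K]$.
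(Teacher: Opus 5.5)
Your proposal is correct and matches the paper's proof. The paper likewise takes $N_{\partial\Omega}>1/h$ in Proposition \ref{prop:boundary-int}, $\eta=h$ (or $\eta=h^2$ when $V\in C^2([0,R])$) in Proposition \ref{prop:Monte-Carlo}, and $N_\Omega$ large enough that $C/(\eta^2N_\Omega)<p$, then appeals to ``simple arithmetic manipulations,'' which your bounds $|N_h-N|\leq hC\,D$, $|D_h-D|\leq hC\,D$ and the resulting quotient estimates spell out correctly. One cosmetic fix: take $N_{\Omega,0}>C/(p\,\eta^2)$ strictly, so that the probability is strictly greater than $1-p$ as the statement requires.
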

\begin{proof}
In Proposition \ref{prop:boundary-int} we take $N_{\partial\Omega}>\frac{1}{h}$ and in Proposition \ref{prop:Monte-Carlo} we take $\eta\coloneqq h^2$ or $\eta\coloneqq h$, depending on whether $V$ belongs to $C^2([0,R])$ or only to $C^1([0,R])$. Then, we choose $N_\Omega$ such that the term   $\frac{C(\Omega,R,\tilde R,V,K,J)}{\eta^2 \,N_\Omega}$ is smaller than $p$. The conclusion follows from Propositions \ref{prop:boundary-int} and \ref{prop:Monte-Carlo} via simple arithmetic manipulations.
\end{proof}

\section{Minimisation of the quotient $\mathcal{F}^\lambda_h$ and numerical simulations}
\label{sec:numerical}
This section is devoted to the practical computation of approximations to the eigenpairs $(\lambda^*,u^*)$ of \eqref{EVP-radial}, based on the algorithm described in Section \ref{sec:description-of-method}. The approach, inspired by \cite{BetckeTrefethen2005}, consists in identifying values of $\lambda$ for which there exists a vector $\aaa \in \RR^{2J+1}$ such that the quotient $\mathcal{F}^\lambda_h(\aaa)$, defined in \eqref{def:Fh}, is sufficiently small.
Whenever such a vector is found, Theorems \ref{thm:main} and \ref{thm:approx-ratio} ensure that $\lambda$ is close to an eigenvalue $\lambda^*$, and that the corresponding function $u^{J,\lambda}_{\aaa,h}$ provides an approximation of an associated eigenfunction $u^*$.

\subsection{Approximating eigenpairs via minimisation}\label{sec:minimisation-QR}
We briefly describe the technique employed in \cite{BetckeTrefethen2005} to minimise the quotient $\mathcal{F}^\lambda_h(\aaa)=\frac{\|\mathcal{M}_{\partial\Omega}\aaa\|}{\|\mathcal{M}_{\Omega}\aaa\|}$ with respect to $\aaa\in \RR^{2J+1}\setminus\{0\}$. First, a QR decomposition is performed for the matrix:
$$\left[\begin{array}{c}\mathcal{M}_{\partial\Omega}\\ \mathcal{M}_{\Omega}\end{array}\right]=\left[\begin{array}{c}\mathcal{Q}_{\partial\Omega}\\ \mathcal{Q}_{\Omega}\end{array}\right]\mathcal{R},$$
where the  dimensions of the matrices $\mathcal{Q}_{\partial\Omega}$, $\mathcal{Q}_{\Omega}$ and $\mathcal{R}$ are $N_{\partial\Omega}\times (2J+1)$, $N_{\Omega}\times (2J+1)$ and $(2J+1)\times (2J+1)$, respectively. We note that $\mathcal{R}$ is a quadratic matrix because we always choose the numerical parameters such that $N_{\partial\Omega},N_{\Omega}>2J+1$. Moreover, Proposition \ref{prop:Monte-Carlo} implies that $\mathcal{R}$ is invertible, because the matrix $\mathcal{M}_\Omega$ has full rank.

The next step consists of a Singular Value Decomposition (SVD) performed on the matrix $\mathcal{Q}_{\partial\Omega}$, which provides the smallest singular value $\sigma^\lambda_1$, together with the associated singular vector $\boldsymbol \beta_1^\lambda$. Following the reasoning in \cite[Section 8]{BetckeTrefethen2005}, $\mathcal{F}^\lambda_h$ attains its minimum for the vector: \[\aaa_{min}^\lambda\coloneqq\mathcal{R}^{-1}\boldsymbol{\beta}_1^\lambda\]
and the minimal value is equal to:
\[\mathcal{F}^\lambda_h(\aaa_{min}^\lambda)=\frac{\sigma_1^\lambda}{\sqrt{1-(\sigma_1^\lambda)^2}}.\]

If this quantity is sufficiently small (equivalently, if $\sigma_1^\lambda$ is below a prescribed tolerance $\eps$) then $\lambda$ is retained as an approximate eigenvalue, and the function $u^{J,\lambda}_{\aaa_{min}^\lambda,h}$ is taken as an approximation of a corresponding eigenfunction. We refer to Section \ref{sec:numerical-eigenvalues} for a numerical illustration of this procedure on representative examples.

\subsection{Approximating eigenspaces of dimension higher that one}\label{sec:multiple-eigenfunctions}
The algorithm is also capable of identifying all linearly independent eigenfunctions associated with a given eigenvalue. More precisely, if the first $n$ (smallest) singular values $\sigma_1^\lambda, \sigma_2^\lambda, \ldots, \sigma_n^\lambda$ of the matrix $\mathcal{Q}_{\partial\Omega}$ are below a prescribed threshold $\varepsilon$, then denoting by $\boldsymbol{\beta}_k^\lambda$ a right-singular vector of $\mathcal{Q}_{\partial\Omega}$ corresponding to $\sigma_k^\lambda$ and 
\[\aaa_{k}^\lambda\coloneqq\mathcal{R}^{-1}\boldsymbol{\beta}_k^\lambda, \] 
the functions $u^{J,\lambda}_{\aaa_k^\lambda,h}$
provide an approximation of a basis of the eigenspace of~$\eqref{EVP-radial}$ corresponding to an eigenvalue $\lambda^*$ close to $\lambda$. Before presenting numerical examples illustrating the approximation of multiple eigenfunctions in Section~\ref{sec:numerical-eigenfunctions}, we explain below why this procedure is justified.

For simplicity, we consider only two singular values $\sigma_1^\lambda$ and $\sigma_2^\lambda$ which are smaller than $\eps$. Theorems \ref{thm:main} and \ref{thm:approx-ratio} imply that both functions $u^{J,\lambda}_{\aaa_k^\lambda,h}$, $k=\overline{1,2}$ are close to some eigenfunctions $u^*_1$ and $u^*_2$ of \eqref{EVP-j-W}, which will prove that are linearly independent. Indeed, since the right singular vectors $\boldsymbol\beta_1^\lambda$ and $\boldsymbol\beta_2^\lambda$ are orthonormal, then for any real numbers $\omega_1$ and $\omega_2$, 
\begin{equation}\label{eq:beta-orthogonals}\|\omega_1\boldsymbol{\beta}_1^\lambda+\omega_2 \boldsymbol{\beta}_2^\lambda\|^2=\omega_1^2+\omega_2^2.\end{equation}
As a result, Proposition \ref{prop:Monte-Carlo} implies that:
\begin{equation}\label{eq:approx-linear-combination}
\|\omega_1\,u^*_1+\omega_2\, u^*_2\|^2\simeq
\left\|\omega_1\, u^{J,\lambda}_{\aaa_1^\lambda,h}+\omega_2\, u^{J,\lambda}_{\aaa_2^\lambda,h}\right\|_{L^2(\Omega)}^2\simeq \left\|\mathcal{M}_\Omega[\omega_1\aaa_1^\lambda+\omega_2\aaa_2^\lambda]\right\|^2=\left\|\mathcal{Q}_\Omega[\omega_1\boldsymbol\beta_1^\lambda+\omega_2\boldsymbol\beta_2^\lambda]\right\|^2.
\end{equation}
The matrix $\left[\begin{array}{c}\mathcal{Q}_{\partial\Omega}\\ \mathcal{Q}_{\Omega}\end{array}\right]$ is orthonormal, so \eqref{eq:beta-orthogonals} and \eqref{eq:approx-linear-combination} lead to:
\[\|\omega_1\,u^*_1+\omega_2\, u^*_2\|^2\simeq  \omega_1^2+\omega_2^2-\left\|\mathcal{Q}_{\partial\Omega}[\omega_1\boldsymbol\beta_1^\lambda+\omega_2\boldsymbol\beta_2^\lambda]\right\|^2\geq (1-2\eps^2)(\omega_1^2+\omega_2^2),\]
where the last inequality follows because $\boldsymbol{\beta}_1^\lambda$ and $\boldsymbol{\beta}_2^\lambda$ are singular vectors of $\mathcal{Q}_{\partial\Omega}$ corresponding to singular values less than $\eps$. Therefore, the eigenfunctions $u^*_1$ and $u^*_2$ are linearly independent; in fact, \eqref{eq:approx-linear-combination} implies that they are almost orthonormal.

\subsection{Numerical approximation of eigenvalues}\label{sec:numerical-eigenvalues}
In this section, we consider two examples of domain $\Omega$ and potential $V$. The first one is an ellipsis with an analytic potential:
\begin{equation}\label{ex:ellipsis}
    \Omega_E\coloneqq\left\{(x,y)\in \RR^2: \frac{x^2}{4}+y^2<1 \right\}\subset B_O(2.1);\quad\quad V_E(r)\coloneqq\frac{2}{r^2+1}+1.
\end{equation}
and the second one is a star-shaped domain with a potential which is only $C^1$:
\begin{equation}\label{ex:plasture}
\begin{aligned}
\Omega_S\coloneqq&\left\{(r\cos\theta, r\sin\theta):\theta\in [0,2\pi), 0<r<3+\frac{\cos(4\theta)}{2}\right\}\subset B_O(3.6); \\ V_S(r)\coloneqq&\begin{cases}
1+r, & r\in [0,1];\\
1+r+(r-1)^2,& r>1.
\end{cases}
\end{aligned}
\end{equation}

Figure \ref{fig:bigGraph} contains, for each of the two examples, the graph of the minimum quotient value $\mathcal{F}^\lambda_h(\aaa_{min}^\lambda)$ obtained in Section \ref{sec:minimisation-QR}, as a function of $\lambda$, choosing the numerical parameters as follows:
\[J=450;\quad N_h=1000;\quad N_{\partial\Omega}=2000;\quad  N_{\Omega}=1000; \quad \mu=2\times10^{-2} \text{ (discretisation step for }\lambda\text{)}.\]
In order to reduce the instability far away from the eigenvalues, we employed a regularisation technique inspired from \cite[Section 4]{Betcke2008}. More precisely, we performed the minimisation of $\mathcal{F}^\lambda_h$ only for vectors $\aaa$ restricted to the orthogonal complement of the subspace generated by the right-singular vectors of the matrix $\mathcal{R}$ corresponding to singular values smaller than $10^{-8}$. We note that, as in \cite{Betcke2008}, the positions of the local minima in Figure \ref{fig:bigGraph} do not change significantly compared to the situation when the graph is generated without regularisation.

\begin{figure}[htbp]
  \centering
  \begin{subfigure}[t]{0.48\textwidth}
    \centering
    \includegraphics[width=\linewidth]{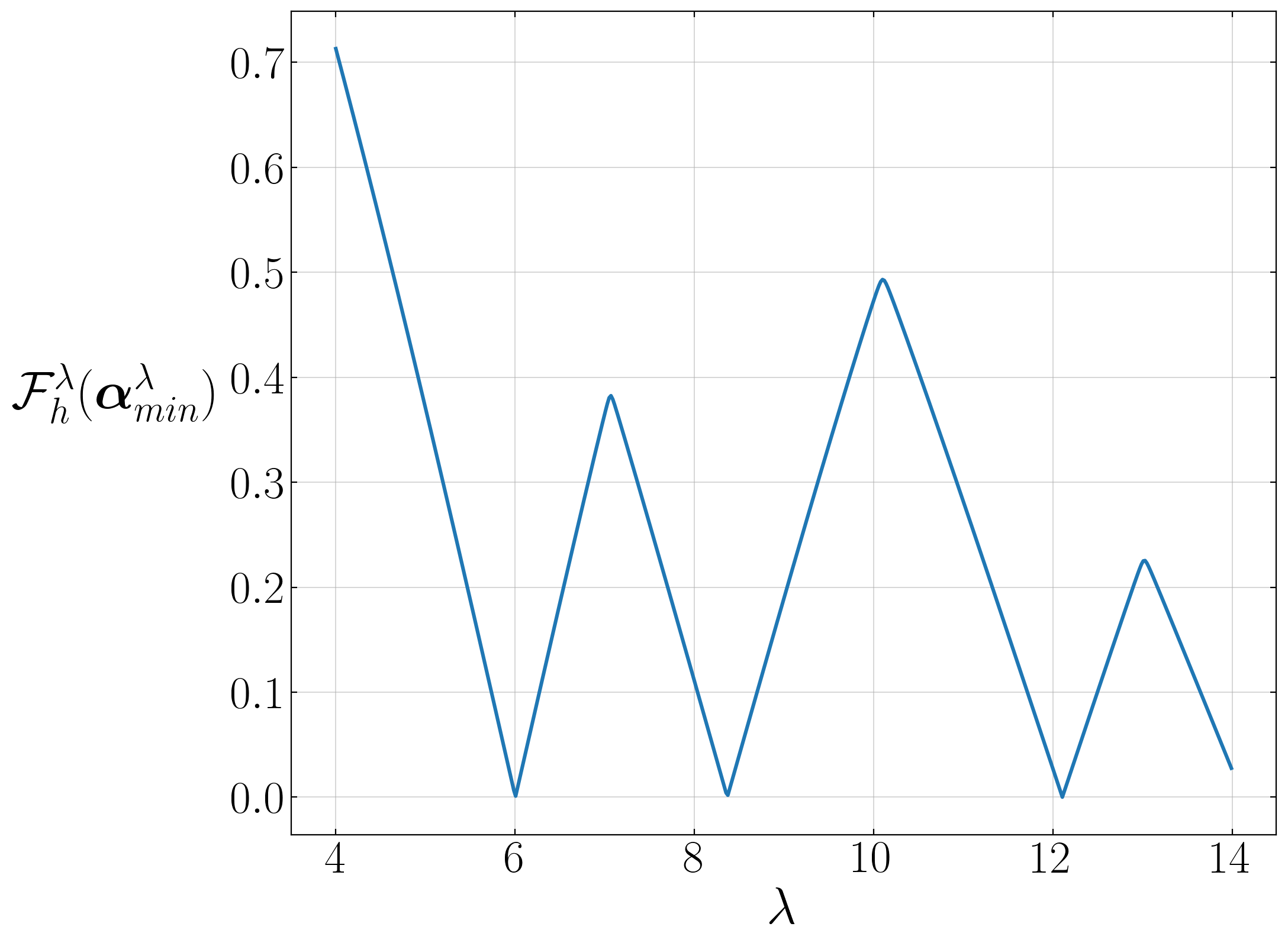}
    \caption{}
    \label{fig:EbigGraph}
  \end{subfigure}\hfill
  \begin{subfigure}[t]{0.48\textwidth}
    \centering
    \includegraphics[width=\linewidth]{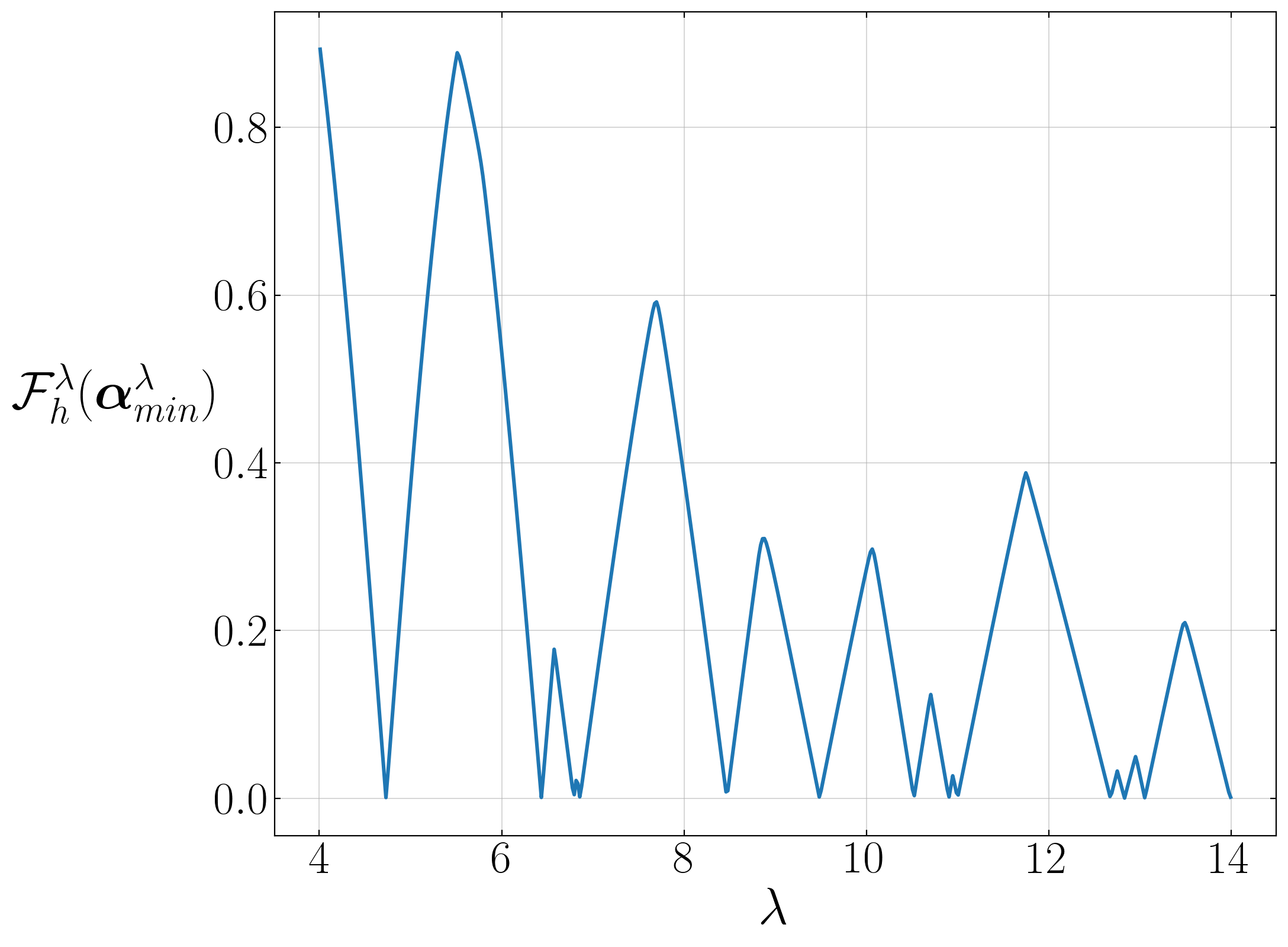}
\caption{}
    \label{fig:PlastureC1bigGraph}
  \end{subfigure}
  \caption{The function $\mathcal{F}^\lambda_h(\alpha_{min}^\lambda)$ in terms of the test value $\lambda$, for the ellipse $\Omega_E$ (A) and the star-shaped domain $\Omega_S$ (B).}
  \label{fig:bigGraph}
\end{figure}

Starting from those local minima in Figure~\ref{fig:bigGraph}, we enhance the accuracy of the eigenvalue approximation by progressively increasing the parameters $J$, $N_h$, $N_{\partial\Omega}$, and $N_{\Omega}$, while simultaneously refining the discretization of $\lambda$ (i.e., decreasing the parameter $\mu$). This iterative procedure is used to identify a new local minimum of the resulting objective function. For instance, Figure \ref{fig:smallGraph} depicts the plot of $\mathcal{F}^\lambda_h(\alpha_{min}^\lambda)$ near a local minimum for the parameters:
\begin{equation*}
J=1200;\quad N_h=3000;\quad N_{\partial\Omega}=6000;\quad  N_{\Omega}=5000; \quad \mu=5\times10^{-7}.
\end{equation*}
At this level of resolution, the minimisation is carried out without the aforementioned regularisation, as its influence is negligible in a neighbourhood of the eigenvalues; see \cite[Section 4]{Betcke2008}. In addition to simplifying the numerical computations, omitting the regularisation prevents the loss of solutions that may lie below the regularisation threshold.

\begin{figure}[htbp]
  \centering
  \begin{subfigure}[t]{0.48\textwidth}
    \centering
    \includegraphics[width=\linewidth]{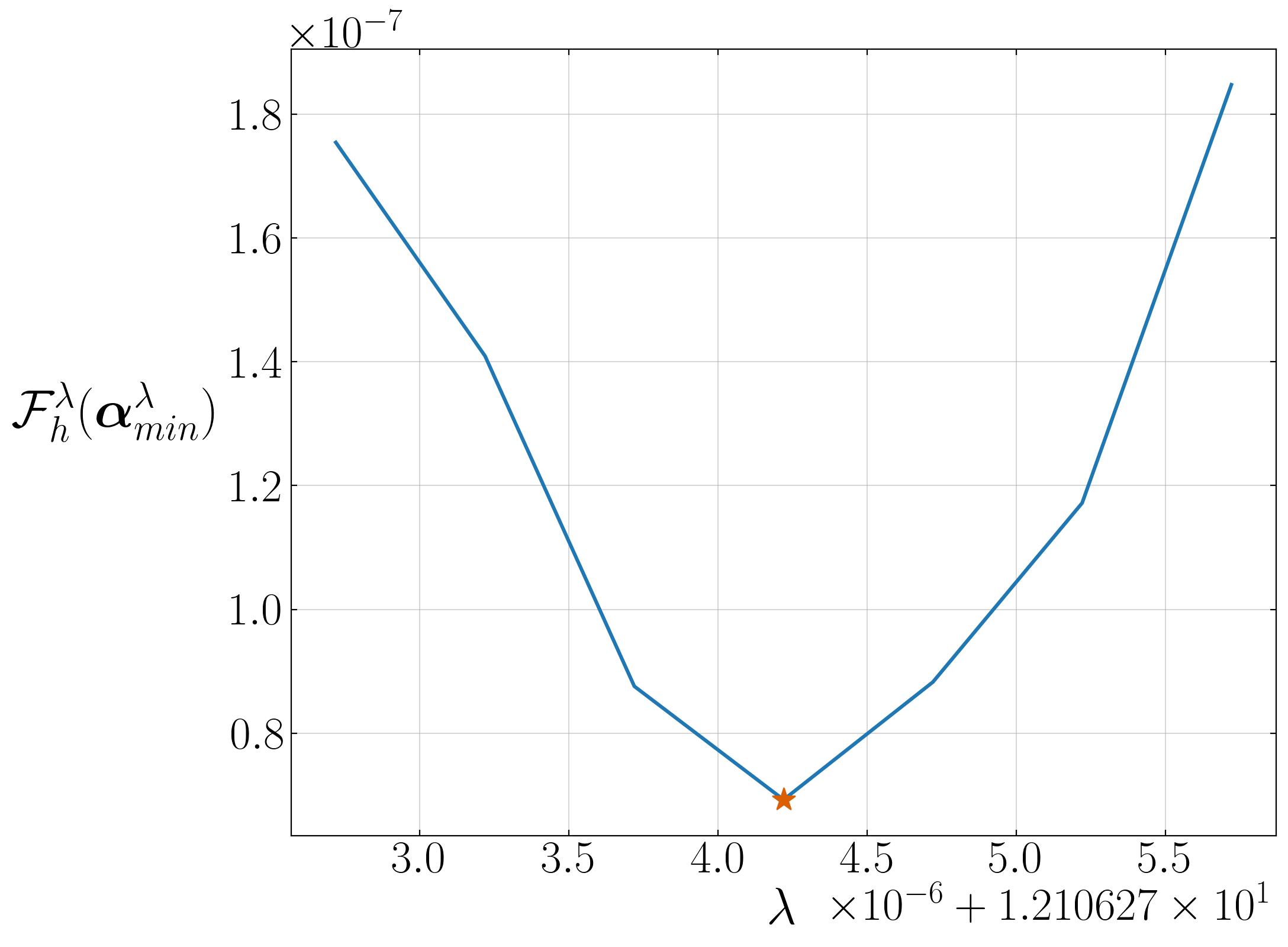}
    \caption{}
    \label{fig:plot}
  \end{subfigure}\hfill
  \begin{subfigure}[t]{0.48\textwidth}
    \centering
    \includegraphics[width=\linewidth]{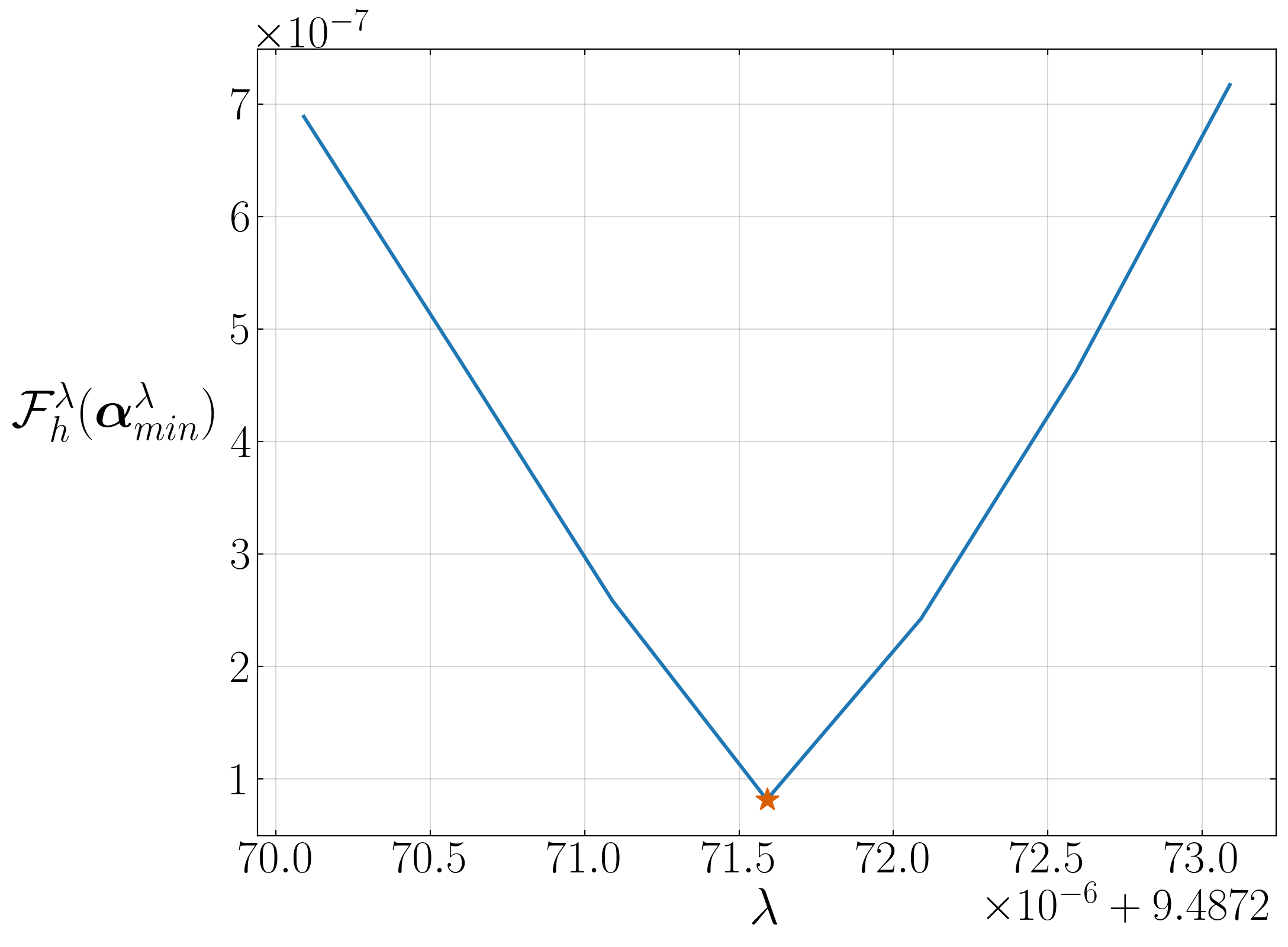}
    \caption{}
    \label{fig:PlastureC1smallGraph}
  \end{subfigure}
  \caption{Detailed view in the neighbourhood of a local minimum of the function $\mathcal{F}^\lambda_h(\alpha_{min}^\lambda)$ in terms of the test value $\lambda$, for the ellipse $\Omega_E$ (A) and the star-shaped domain $\Omega_S$ (B).}
  \label{fig:smallGraph}
\end{figure}

We compare our results with those obtained using the classical Finite Element approach. The locations of the local minima in Figure \ref{fig:bigGraph} closely match the eigenvalues computed with the \texttt{EigenValue} routine from the FreeFem++ library \cite{FreeFem}. A more detailed comparison between our MPS-based method and the Finite Element Method in the computation of the seventh eigenvalue of the problem \eqref{EVP-radial} in the case \eqref{ex:plasture} can be found in Tables \ref{table:MPS} and \ref{table:FEM}. We emphasize that our method is significantly more efficient in terms of memory usage: further mesh refinement for the Finite Element Method was not possible on a machine with 16 GB of RAM, whereas our approach achieves higher accuracy without increasing memory requirements.

\begin{table}[htbp]
\renewcommand{\arraystretch}{1.5}
\normalsize
\begin{center}
\begin{tabular}{|c|c|c|c|c|c|c|c|}
\hline  $J$ & $N_h$ & $N_{\partial\Omega}$ & $N_{\Omega}$ & $\lambda$ & $\mathcal{F}^\lambda_h(\aaa_{min}^\lambda)\leq $  & Memory usage (MB) \\
\hline
 $800$ & $1500$ & $3000$ & $2500$ & $9.48727879 \pm 10^{-8}$ & $2.075\times 10^{-7}$ & $700$ \\
\hline
 $1000$ & $2500$ & $4500$ & $4000$ & $9.48727264 \pm 10^{-8}$ & $9.495\times 10^{-8}$ & $1000$ \\
 \hline
  $1200$ & $3000$ & $6000$ & $5000$ & $9.48727158 \pm 10^{-8}$ & $8.370\times 10^{-8}$ & $1500$\\
 \hline
\end{tabular}
\caption{Upper bound for the local minimum of $\mathcal{F}_h^\lambda(\aaa_{\min}^\lambda)$ corresponding to the seventh eigenvalue for the star-shaped domain $\Omega_S$, together with the local minimizer $\lambda$. The error tolerance of $10^{-8}$ for $\lambda$ corresponds to the smallest grid spacing $\mu$ employed during the optimisation process. The results are obtained for varying values of $J$ (angular Fourier series truncation parameter), $N_h$ (number of one-dimensional FEM nodes), and $N_{\partial\Omega}$ and $N_{\Omega}$ (numbers of collocation points on the boundary and in the interior of the domain, respectively).}\label{table:MPS}
\end{center}
\end{table}

\begin{table}[htbp]
\renewcommand{\arraystretch}{1.5}
\normalsize
\begin{center}
\begin{tabular}{|c|c|c|}
\hline  $N_{\Omega}$ & $\lambda$  & Memory usage (MB) \\
\hline %1200 on boundary
 $149496$ &  $9.487256256$ & $4750$ \\
\hline %1400 on boundary
 $205003$ & $9.487259679$ & $7400$\\
 \hline %1500 on boundary
  $234293$ & $9.487260903$ & $8900$\\
 \hline
\end{tabular}
\caption{Values of the classical Finite Element approximation $\lambda$ of the seventh eigenvalue for the domain $\Omega_S$, together with the number of nodes of the triangulation and the memory usage.}\label{table:FEM}
\end{center}
\end{table}

\subsection{Numerical approximation of eigenfunctions}\label{sec:numerical-eigenfunctions} We now present numerical plots of the eigenfunctions of \eqref{EVP-radial} computed using the proposed method. The parameters used in all computations are:
\[J=1200;\quad N_h=3000;\quad N_{\partial\Omega}=6000;\quad  N_{\Omega}=5000.\]
Figure \ref{fig:ElipsaFunctie} displays the approximation of the third eigenfunction of \eqref{EVP-radial} for the elliptic domain considered in \eqref{ex:ellipsis}. An upper bound for the local minimum of the discrete quotient $\mathcal{F}_h^\lambda(\aaa_{min}^\lambda)$ and an estimation of the local minimizer $\lambda$ -- which, in turn approximates the third eigenvalue for the domain $\Omega_E$  -- are reported below:
\[
\lambda=12.10627421\pm 10^{-8}; \quad \mathcal{F}^\lambda_h(\aaa_{min}^\lambda)\leq 7.056\times 10^{-8}.
\]

\begin{figure}[htbp]
  \centering
    \includegraphics[width=0.5\linewidth]{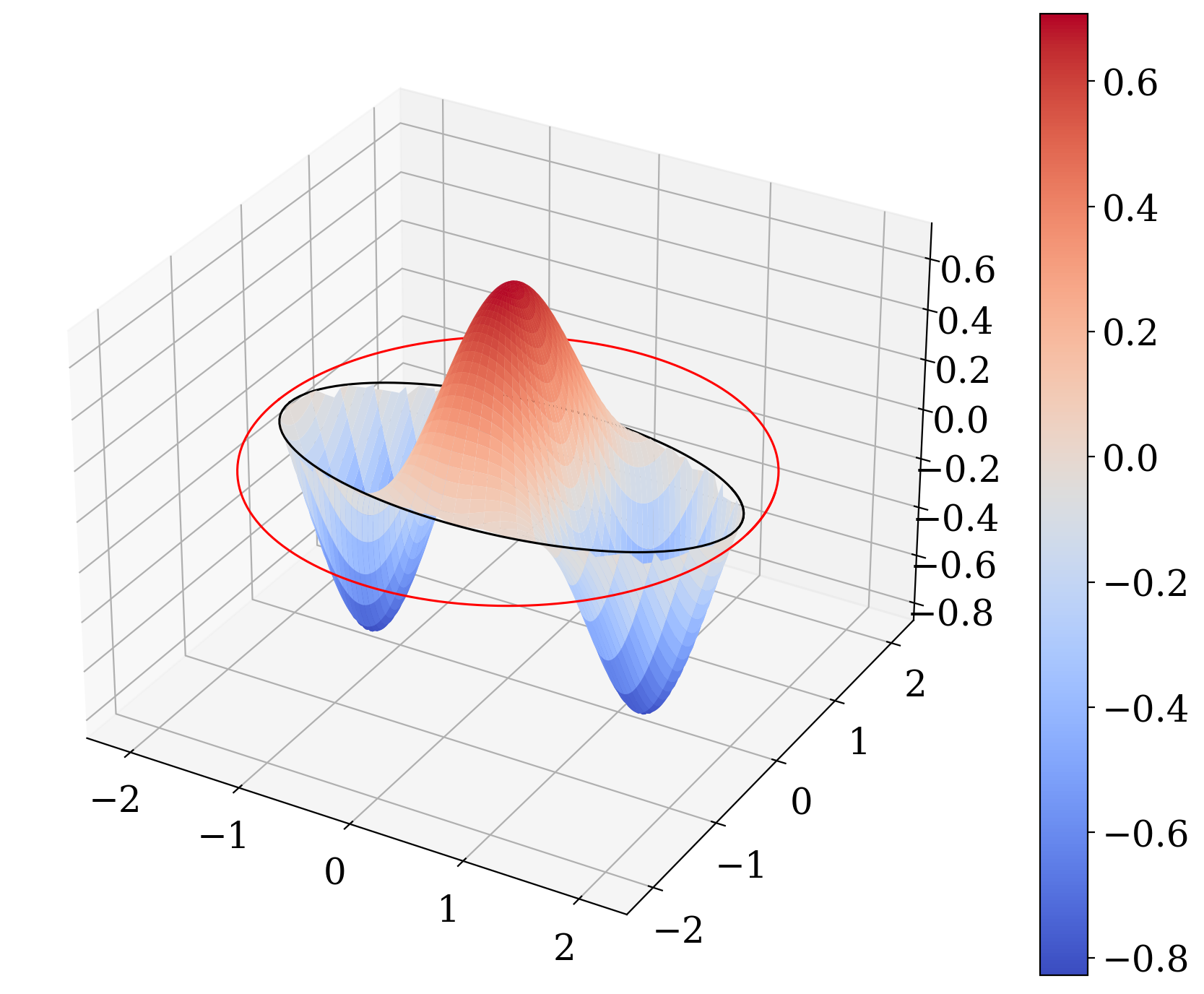}
  \caption{Approximation of the eigenfunction corresponding to the third eigenvalue in the case of the elliptic domain $\Omega_E$.}
  \label{fig:ElipsaFunctie}
\end{figure}

We next consider the star-shaped domain introduced in \eqref{ex:plasture}. In this case, near the seventh eigenvalue $\lambda^*$ which is approximated in Table \ref{table:MPS}, the three smallest singular values of the matrix $\mathcal{Q}_{\partial\Omega}$ are:
\[\sigma^\lambda_1=8.370\times 10^{-8},\quad \sigma^\lambda_2= 8.518\times 10^{-8},\quad \sigma^\lambda_3=4.009\times 10^{-2}.\]
This separation indicates that the eigenvalue $\lambda^*$ has multiplicity two. Figure \ref{fig:PlastureTurturi} illustrates numerical approximations of two orthonormal eigenfunctions spanning the corresponding eigenspace.

\begin{figure}[htbp]
  \centering
  \begin{subfigure}[t]{0.46\textwidth}
    \centering
    \includegraphics[width=\linewidth]{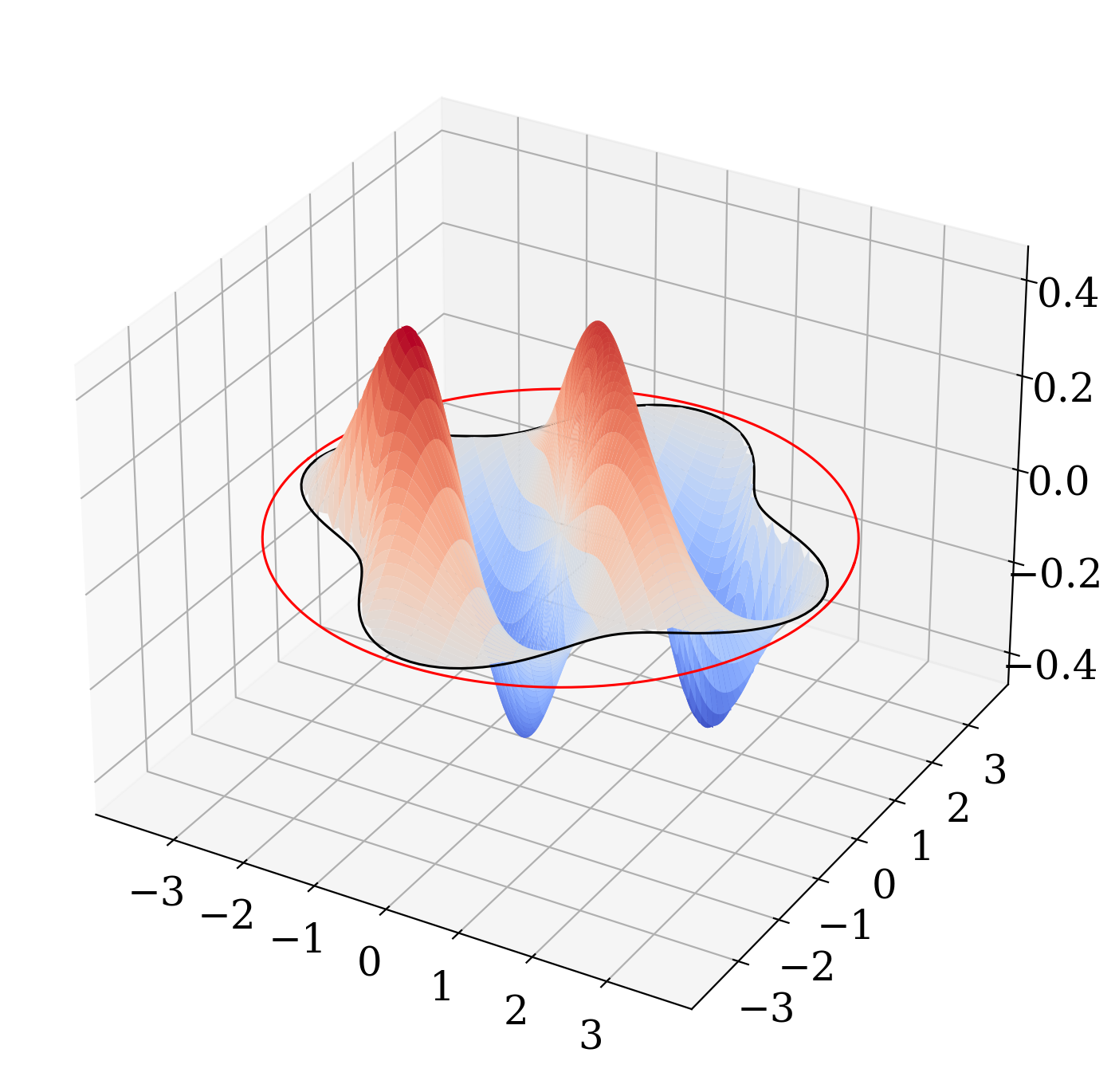}
    \label{fig:PlastureTurturi1}
  \end{subfigure}\hfill
  \begin{subfigure}[t]{0.54\textwidth}
    \centering
    \includegraphics[width=\linewidth]{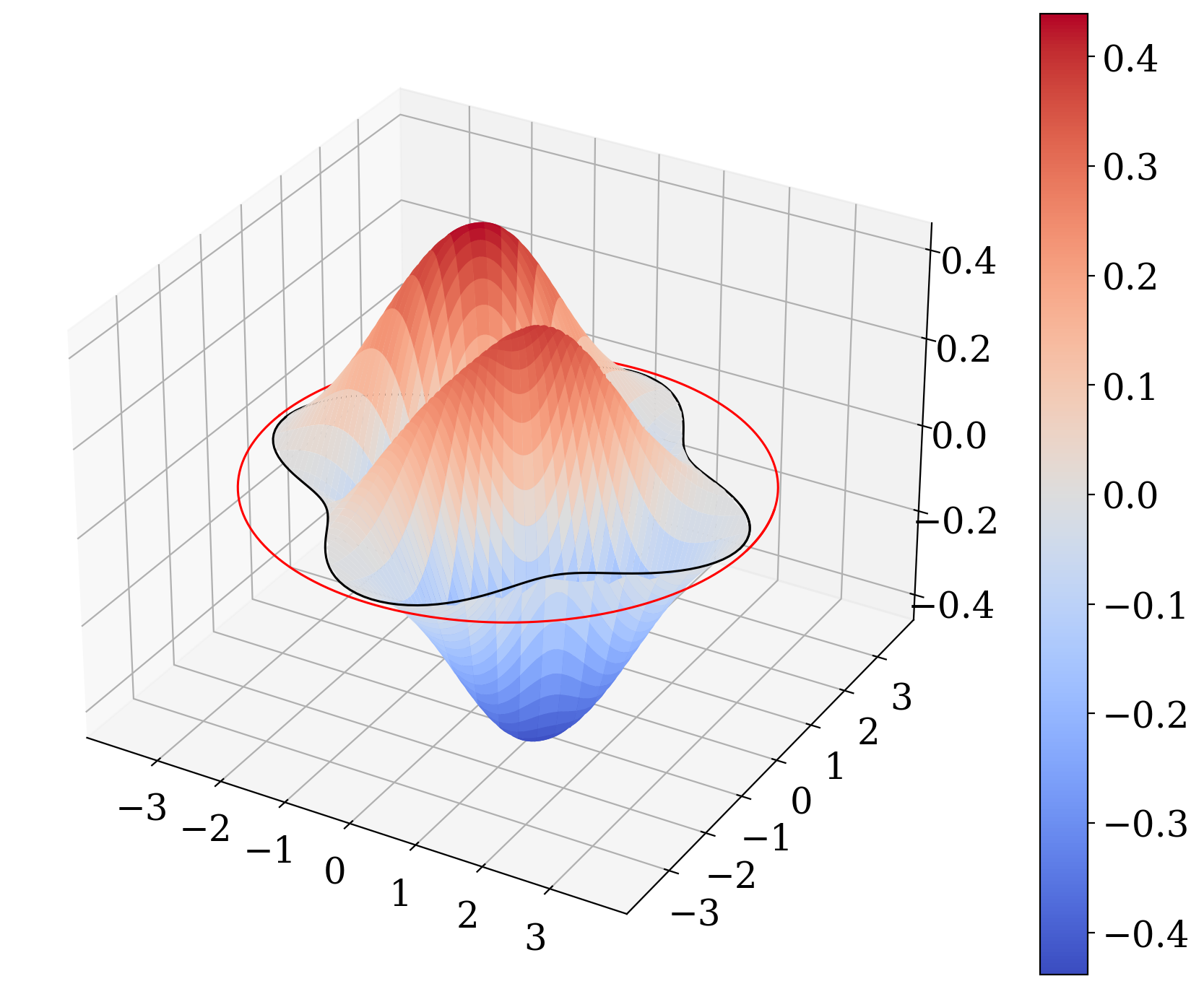}
    \label{fig:PlastureTurturi2}
  \end{subfigure}
  \caption{The approximations of two orthonormal eigenfunctions corresponding to the seventh eigenvalue in the case of the star-shaped domain $\Omega_S$.}
  \label{fig:PlastureTurturi}
\end{figure}

\section{Conclusion and further research directions}\label{sec:conclusion}
In this work, we have developed a numerical method for approximating eigenpairs of the Schrödinger operator $-\Delta + V$ with a radial potential. Compared with the classical Finite Element approach, the proposed method achieves high accuracy while remaining significantly more memory-efficient. 

The method combines techniques from theory of Partial and Ordinary Differential Equations and does not rely on tools from Complex Analysis. As a result, it applies to general $C^1$ radial potentials, not only to analytic ones. Moreover, the approach is able to detect entire eigenspaces associated with eigenvalues of higher multiplicity.\\

Several directions for further research naturally arise from this work:
\begin{itemize}
\item \textbf{Extension to three dimensions and Neumann eigenpairs.}
  We expect that the method can be extended to higher-dimensional problems, without substantial modifications of the underlying reasoning, by replacing the circular basis functions $\cos(j\theta)$ and $\sin(j\theta)$ with the relevant spherical harmonics.
  
  We also expect that our method can be adapted to approximate the Neumann solutions of the Helmholtz equation with potential.  However, the details of the proofs and the choice of parameters must be carefully adapted to this settings.
  
\item \textbf{Optimal parameter selection.}
  A systematic empirical and theoretical investigation of the optimal choice and interplay of the parameters $\mu$, $J$, $h$, $N_{\partial\Omega}$, and $N_{\Omega}$ would improve both the efficiency and robustness of the method. A key step would be establishing a quantitative Runge property in $H^1$ norms, which would allow the derivation of explicit error bounds in Proposition~$\ref{prop:runge}$.

\item \textbf{Extension to non-radial potentials.}
  Extending the approach to general potentials remains challenging. Even for simple non-radial potentials, such as $V(x)=x$, the polar decomposition leads to an infinite coupled system of ordinary differential equations rather than independent equations for each basis functions.

\item \textbf{Analysis of the regularisation procedure.}
  A deeper study of the regularisation introduced in Section \ref{sec:numerical-eigenvalues} is needed, including a rigorous proof of its effectiveness and the identification of an optimal regularisation threshold.

\end{itemize}

\section*{Acknowledgement}
The author would like to thank Dr. Mihai Bucataru from the University of Bucharest for the fruitful discussions about the subject of this paper.

\section*{Funding}
The author was partially supported for this work by the Romanian Government, through Planul Naţional de Redresare şi Rezilienţă PNRR-III-C9-2023-I8, 
CF 149/31.07.2023 \emph{Conformal Aspects of Geometry and Dynamics}.

\section*{References}
\printbibliography[heading=none]

\appendix

\section{Additional properties of functions in the space $H_{j,r}([0,R])$}

 The following proposition describing the behaviour near the origin of functions in the $H_{j,r}([0,R])$ spaces is used to construct the Finite Element basis for solving the equation \eqref{EVP-j-W}.

 \begin{proposition}\label{prop:Hjr-is-zero-in-zero}
     Let $v\in H_{j,r}([0,R])$, with $j>0$. Then, $v$ has a continuous representative on $[0,R]$ with $v(0)=0$.
 \end{proposition}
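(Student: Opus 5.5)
The plan is to work directly on $(0,R]$ with the weight $r$. For $v\in H_{j,r}([0,R])$ we know that $\int_0^R |v'|^2 r\,\dd r<\infty$ and $j^2\int_0^R |v|^2 r^{-1}\,\dd r<\infty$. Since $v$ is weakly differentiable with $v'\in L^2_{loc}((0,R])$, it is locally absolutely continuous on $(0,R]$. So the only issue is the behaviour as $r\to 0^+$. The natural tool is the function $\phi(r)\coloneqq v(r)^2$, which is locally absolutely continuous on $(0,R]$ and satisfies $\phi'=2vv'$.

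The first step is to show that $\phi'\in L^1(0,R)$. By Cauchy--Schwarz with the splitting $vv'=(v r^{-1/2})(v' r^{1/2})$,
\[
\int_0^R |v v'|\,\dd r\leq \left(\int_0^R \frac{|v|^2}{r}\,\dd r\right)^{1/2}\left(\int_0^R |v'|^2 r\,\dd r\right)^{1/2}\leq \frac{1}{j}\,\|v\|_{j,r}^2<\infty .
\]
Writing $\phi(r)=\phi(R)-\int_r^R\phi'(s)\,\dd s$, it follows that $\lim_{r\to0^+}\phi(r)=:\ell$ exists and is finite. Hence $|v|$ extends continuously to $r=0$, with $|v(0^+)|=\sqrt{\ell}$.

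The second step is to show that $\ell=0$, and this is where the Hardy-type term is used. If $\ell>0$, then $|v(r)|^2\geq \ell/2$ for all small $r$. This would give $\int_0^\delta |v|^2 r^{-1}\,\dd r=\infty$, contradicting $v\in H_{j,r}$ (here $j>0$ is essential). Therefore $v(r)\to 0$ as $r\to 0^+$, and setting $v(0)=0$ produces a continuous representative on $[0,R]$.

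I expect no serious obstacle. The only delicate point is justifying that $v$ and $v^2$ are locally absolutely continuous away from $0$. This is standard, since on each $[\eta,R]$ the weight $r$ is bounded below, so $v\in H^1(\eta,R)$, and we may invoke \cite[Theorem 8.2]{brezis}. One should also note that the limit argument in the first step does not require the sign of $v$ to be fixed: $\ell=0$ directly forces $v\to0$.
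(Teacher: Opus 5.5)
Your proof is correct and follows essentially the same route as the paper. Both arguments get an integrable derivative for $v^2$ from the Cauchy--Schwarz splitting $vv'=(vr^{-1/2})(v'r^{1/2})$, deduce that $|v|^2$ has a finite limit at $0$, and then use finiteness of $\int_0^R |v|^2 r^{-1}\,\dd r$ (with $j>0$) to force that limit to be zero. The difference is only cosmetic: the paper packages the first step as $|v|^2\in W^{1,1}(0,R)$ and invokes \cite[Theorem 8.2]{brezis}, whereas you integrate $\phi'$ directly on $[r,R]$. Your Cauchy--Schwarz bound also correctly carries the square roots that the paper's displayed estimate omits.
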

 \begin{proof}
    The fact that $v$ has a continuous representative on $(0,R]$ is a consequence of \cite[Theorem 8.2]{brezis} applied on each compact subinterval $\mathcal{I}$ of $(0,R]$, since $v$ is in $H^1(\mathcal{I})$.
    
    To prove the continuity in the origin, we first show that $|v|^2\in W^{1,1}(0,R)$, and then, by \cite[Theorem 8.2]{brezis}, $|v|^2$ will have a continuous representative on $[0,R]$. Indeed, since $v\in H_{j,r}([0,R])$ with $j>0$, the following quantities are finite:
    \[I_1\coloneqq \int_0^R |v'(r)|^2\, r\,\dd r\quad \text{and}
    \quad I_2\coloneqq \int_0^R |v(r)|^2\, \frac{1}{r}\,\dd r.\]
    The fact that $I_2<\infty$ immediately leads to $|v|^2\in L^1([0,R])$. Furthermore, H\"older's inequality implies that 
    \[\int_0^R |v'(r)v(r)|\dd r\leq I_1\cdot I_2<\infty.\]
    Therefore, since \cite[Corollary 8.10]{brezis} applied on each compact subinterval of $(0,R)$ implies that $2v'\,v$ is the weak derivative of $|w|^2$, we obtain that $|v|^2\in W^{1,1}(0,R)$, so $|v|^2$ has a continuous representative on $[0,R]$.

    Next, we assume by contradiction that $|v(0)|\neq 0$. Since $|v|^2$ is continuous, then $|v|$ will be bounded away from zero on a neighbourhood of the origin, which contradicts $I_2<\infty$. So, we obtain $v(0)=0$, which, together with $|v|^2\in C([0,R])$ and $v\in C((0,R])$, implies that $v$ is continuous (in the sense of a representative) on $[0,R]$.
 \end{proof}

 \begin{remark} The conclusion of Proposition \ref{prop:Hjr-is-zero-in-zero} is not true in the case $j=0$. A counterexample is the function $r\to \log\left(1+|\log(r)|\right)$ which belongs to $H_{0,r}([0,R])$, but it is unbounded near $r=0$.
 \end{remark}

The density result in the next lemma will be useful in some integration by parts arguments:
\begin{lemma}\label{lem:density-in Hjr}
For every $R>0$ and $j\geq 0$, the space of $C_c^\infty((0,R])$ functions is dense in $H_{j,r}([0,R])$. 
\end{lemma}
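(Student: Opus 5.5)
The plan is to transfer the statement to the ball $B_O(R)$ via Proposition~\ref{prop:spaces-equivalence}, using the standard density of smooth functions there. Fix $v\in H_{j,r}([0,R])$. By Proposition~\ref{prop:spaces-equivalence}~\ref{item:h1-second}, the function $U(\x)\coloneqq v(r)\cos(j\theta)$ belongs to $H^1(B_O(R))$ and satisfies $\|U\|_{H^1(B_O(R))}=\sqrt{\pi}\|v\|_{j,r}$; for $j=0$ the constant is $\sqrt{2\pi}$. Since $B_O(R)$ is a Lipschitz domain, \cite[Corollary 9.8]{brezis} yields $\Phi_n\in C_c^\infty(\RR^2)$ with $\Phi_n|_{B_O(R)}\to U$ in $H^1(B_O(R))$.

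Next we project back onto the $j$-th angular mode. Set $\phi_n\coloneqq (\Phi_n)_j^c$ as in \eqref{eq:def-u0-ur-vr}. Then $v$ is recovered as $U_j^c$, and linearity combined with the bound in Proposition~\ref{prop:spaces-equivalence}~\ref{item:h1-first} gives
\[\|\phi_n-v\|_{j,r}\le C\,\|\Phi_n-U\|_{H^1(B_O(R))}\to 0.\]
Each $\phi_n$ is smooth on $(0,R]$, because we integrate a smooth function over $\theta$. However, $\phi_n$ need not vanish near $r=0$; it is only smooth up to $r=0$, with $\phi_n(r)=O(r^j)$ for $j\ge1$.

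The main obstacle is to remove a neighbourhood of the origin with a cutoff. Let $\chi_\delta(r)$ equal $0$ on $[0,\delta^2]$, equal $1$ on $[\delta,R]$, and be a smoothing of $\log(r/\delta^2)/\log(1/\delta)$ in between. This is the standard logarithmic cutoff showing that a point has zero $H^1$-capacity in two dimensions. We then need $\|\chi_\delta\phi_n-\phi_n\|_{j,r}\to0$ as $\delta\to0$ with $n$ fixed.

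The terms $\int|1-\chi_\delta|^2(|\phi_n'|^2r+j^2|\phi_n|^2/r)\,\dd r$ tend to zero by dominated convergence, since $\phi_n\in H_{j,r}$. The cross term $\int|\chi_\delta'|^2|\phi_n|^2r\,\dd r$ is bounded by
\[\|\phi_n\|_{L^\infty}^2\int_{\delta^2}^{\delta}\frac{\dd r}{r\log^2(1/\delta)}=\frac{\|\phi_n\|_{L^\infty}^2}{\log(1/\delta)}\to0.\]
For $j\ge1$ a plain linear cutoff already works, since $|\phi_n|\le Cr$, but the logarithmic one handles $j=0$ as well.

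A diagonal argument then finishes the proof: the functions $\chi_\delta\phi_n\in C_c^\infty((0,R])$ approximate $v$ in $\|\cdot\|_{j,r}$. This gives density in $H_{j,r}([0,R])$; note that the supports may touch $r=R$, as allowed.
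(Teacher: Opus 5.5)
Your proof is correct and is essentially the paper's argument: the paper also lifts $v$ to $v(r)\cos(j\theta)\in H^1(B_O(R))$, approximates it there, and projects back onto the $j$-th mode using Proposition~\ref{prop:spaces-equivalence}\ref{item:h1-first}. The only difference is the order of steps. The paper performs the logarithmic cutoff on the ball \emph{before} projecting (Corollary~\ref{cor:density-h1}, via Lemma~\ref{lem:dense-in-h01}), so $\varphi_j^c\in C_c^\infty((0,R])$ comes for free; you project first and then cut off in the radial variable with weight $r$, which is the same computation since the cutoff is radial, and using a smooth log-type cutoff such as $\eta\bigl(\log(r/\delta^2)/\log(1/\delta)\bigr)$ also spares you the extra smoothing step in the paper's Lemma~\ref{lem:dense-in-h01}.
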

\begin{proof}
For every $v\in H_{j,r}([0,R])$, Proposition \ref{prop:spaces-equivalence} \ref{item:h1-second} implies that $v(r)\cos(j\theta)\in H^1(B_O(R))$. Therefore, for every $\eps>0$, Corollary \ref{cor:density-h1} provides us with a function $\varphi\in C_c^\infty\left(\overline{B_O(R)}\setminus\{O\}\right)$ such that 
\[\|\varphi-v(r)\cos(j\theta)\|_{H^1(B_O(R))}\leq \eps.\]
If $\varphi_j^c$ is the Fourier coefficient of $\varphi$ corresponding to $\cos(j\theta)$ as defined in \eqref{eq:def-u0-ur-vr}, then $\varphi_j^c\in C_c^\infty((0,R])$ and Proposition \ref{prop:spaces-equivalence} \ref{item:h1-first} implies that:
\[\|\varphi_j^c-v\|_{H_{j,r}([0,R])}\leq \frac{2\sqrt{2}\,\eps}{\sqrt{\pi}}. \qedhere\]
\end{proof}

The following proposition contains estimates for the first and second derivatives of functions $v\in H_{j,r}([0,R])$ that satisfy $v(r)\cos(j\theta)\in H^2(B_O(R))$, and is used to obtain error bounds between the linear interpolator $\tilde v_h$ and the function $v$ in Proposition \ref{prop:fem-interpolator}.
\begin{proposition}\label{prop:h2-estimates}
Let $j$ be a non-negative integer and the function $v\in H_{j,r}([0,R])$ such that $v(r)\cos(j\theta)\in H^2(B_O(R))$. Then, the following estimate holds true:
\begin{equation}\label{eq:h2-estimate-1}
\|v''\|_r\leq \frac{\sqrt{14}}{\sqrt{\pi}} \left\|D^2[v(r)\cos(j\theta)]\right\|_{L^2(B_O(R))}.\
\end{equation}
Moreover, if j=0, then $v$ has a continuous representative on $[0,R]$ and
\begin{equation}\label{eq:h2-estimate-2}
\int_0^R \frac{|v'(r)|^2}{r}\dd r \leq \frac{14}{\pi}\left\|D^2[v(r)\cos(j\theta)]\right\|_{L^2(B_O(R))}^2.
\end{equation}
On the other hand, if $j\geq 1$:
\begin{equation}\label{eq:h2-estimate-3}
j^2 \int_0^R \left[\frac{v'(r)}{r}-\frac{v(r)}{r^2}\right]^2 r\, \dd r\leq \frac{6}{\pi} \left\|D^2[v(r)\cos(j\theta)]\right\|_{L^2(B_O(R))}^2.
\end{equation}
\end{proposition}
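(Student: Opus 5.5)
The plan is to compute the second derivatives of $w(\x)\coloneqq v(r)\cos(j\theta)$ in polar form and then integrate over the angle. We may first assume $v$ is smooth on $(0,R]$. Indeed, by Proposition \ref{prop:wcos-inH2}, $v\in H^2(\mathcal I)$ on every compact $\mathcal I\subset(0,R]$, and the density argument behind Corollary \ref{cor:density-h1} together with Lemma \ref{lem:density-in Hjr} should extend all three inequalities to the general case by lower semicontinuity. For such $v$ the Hessian of $w$ in the orthonormal frame $(e_r,e_\theta)$ has entries
\[
w_{rr}=v''\cos(j\theta),\qquad w_{r\theta}^{\rm frame}=-j\Bigl(\tfrac{v'}{r}-\tfrac{v}{r^2}\Bigr)\sin(j\theta),\qquad w_{\theta\theta}^{\rm frame}=\Bigl(\tfrac{v'}{r}-\tfrac{j^2 v}{r^2}\Bigr)\cos(j\theta).
\]
The Frobenius norm $|D^2w|^2$ is invariant under rotation of the frame, so
\[
|D^2w|^2=|w_{rr}|^2+2|w_{r\theta}^{\rm frame}|^2+|w_{\theta\theta}^{\rm frame}|^2 .
\]
Integrating in $\theta$ gives $\int_0^{2\pi}\cos^2=\int_0^{2\pi}\sin^2=\pi$ for $j\ge1$, and the factor $2\pi$ when $j=0$. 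Integrating in $r\,\dd r$ then yields
\[
\|D^2w\|_{L^2(B_O(R))}^2=\pi\Bigl[\|v''\|_r^2+2j^2\int_0^R\Bigl(\tfrac{v'}{r}-\tfrac{v}{r^2}\Bigr)^2r\,\dd r+\int_0^R\Bigl(\tfrac{v'}{r}-\tfrac{j^2v}{r^2}\Bigr)^2r\,\dd r\Bigr]
\]
for $j\ge1$. For $j=0$ the same identity holds with $\pi$ replaced by $2\pi$ and with the middle term absent.

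Since all three terms on the right are nonnegative, \eqref{eq:h2-estimate-3} follows at once; in fact we get the constant $\tfrac{1}{2\pi}$, which is better than $\tfrac{6}{\pi}$. Inequality \eqref{eq:h2-estimate-1} also follows at once, with constant $1$ in place of $\sqrt{14}$, and with $\tfrac{1}{2\pi}$ in place of $\tfrac{1}{\pi}$ when $j=0$. When $j=0$ the last term is exactly $\int_0^R |v'|^2 r^{-1}\,\dd r$, so \eqref{eq:h2-estimate-2} follows too, with constant $\tfrac{1}{2\pi}\le\tfrac{14}{\pi}$. For the continuity of $v$ at $0$ when $j=0$, observe that $v(r)=w(r,0)$, and $w\in H^2(B_O(R))\subset C(\overline{B_O(R)})$ by the Sobolev embedding in dimension two. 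Alternatively, by Cauchy--Schwarz,
\[
\int_0^R|v'|\,\dd r\le\Bigl(\int_0^R\tfrac{|v'|^2}{r}\,\dd r\Bigr)^{1/2}\Bigl(\int_0^R r\,\dd r\Bigr)^{1/2}<\infty ,
\]
so $v'\in L^1(0,R)$ and $v$ extends continuously to $r=0$.

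The main obstacle is justifying the polar identity when $v$ is only known to be $H^2$ away from the origin, since one has to make sure no mass of $D^2w$ concentrates at $O$. The first thing I would try is to apply the identity on annuli $B_O(R)\setminus B_O(\rho)$, where it holds a.e. in $\theta$ by Proposition \ref{prop:wcos-inH2}, and then let $\rho\to0$ by monotone convergence. The point $O$ has measure zero and $D^2w\in L^2(B_O(R))$, so the left side converges to $\|D^2w\|^2_{L^2(B_O(R))}$. Each term on the right is nonnegative and increases to its integral over $(0,R)$. This avoids density arguments altogether, and it is likely the cleanest route.
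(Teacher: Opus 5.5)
Your proof is correct, and it takes a genuinely different route from the paper.

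The paper never identifies the Hessian of $v(r)\cos(j\theta)$ pointwise. It tests $\partial_{xx}[v(r)\cos(j\theta)]$ against $w(r)\cos(k\theta)$ with $w\in C_c^\infty(0,R)$ and $k\in\{j-2,j,j+2\}$, then integrates by parts. By duality this bounds in $L_r([0,R])$ the three combinations $v''-\frac{2j+1}{r}v'+\frac{j^2+2j}{r^2}v$, $v''+\frac{1}{r}v'-\frac{j^2}{r^2}v$ and $v''+\frac{2j-1}{r}v'+\frac{j^2-2j}{r^2}v$. These are recombined with $(a\pm b)^2\leq 2(a^2+b^2)$, which is where the lossy constants $14$ and $6$ come from. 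What that route buys is that it uses only the single component $\partial_{xx}$ and only test functions supported away from $O$. So neither the origin nor a pointwise polar chain rule for a non-smooth $v$ ever has to be addressed.

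Your route uses rotation invariance of the Frobenius norm in the frame $(e_r,e_\theta)$ to get an exact Parseval-type identity for $\|D^2[v(r)\cos(j\theta)]\|_{L^2(B_O(R))}^2$. All three estimates follow from it at once, with the sharper constants $\frac{1}{\pi}$ and $\frac{1}{2\pi}$. The annulus exhaustion is what makes this rigorous: on $\{\rho<\|\x\|<R\}$ one has $v\in H^2$ by Proposition \ref{prop:wcos-inH2}, so the polar formulas hold a.e., and monotone convergence passes to $\rho\to 0$. Your continuity argument for $j=0$, namely $v'\in L^1(0,R)$ by Cauchy--Schwarz, is also more direct than the paper's argument through $|v|^2$.

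One thing to drop: the opening reduction to smooth $v$ via Corollary \ref{cor:density-h1} and lower semicontinuity is not justified as stated. That corollary only approximates in $H^1$, which gives no control of $D^2$. The annulus argument makes this step unnecessary anyway.
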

\begin{proof} First of all, Propositions \ref{prop:spaces-equivalence} and \ref{prop:wcos-inH2} imply that the weak derivatives $v''$ and $v'$ make sense on $(0,R)$. To prove the inequalities, we consider a test function $w\in C_c^\infty(0,R)$ and notice that, by direct computation, for every integer $k\in \mathbb{Z}$,
\[\begin{aligned}\partial_{xx} [w(r)\cos(k\theta)]=& \frac{1}{4}\left[\frac{[r\, w(r)]''}{r}+\frac{2k-3}{r}\,w'(r)+\frac{k^2-2k}{r^2} \, w(r)\right]\cos((k-2)\theta)\\
&+ \frac{1}{2}\left[\frac{[r\,w(r)]''}{r}-\frac{1}{r}\, w'(r)-\frac{k^2}{r^2} \, w(r)\right]\cos(k\theta)\\
&+\frac{1}{4}\left[\frac{[r\, w(r)]''}{r}-\frac{2k+3}{r}\, w'(r)+\frac{k^2+2k}{r^2}\, w(r)\right]\cos((k+2)\theta)\\
\end{aligned}\]
    Testing $v(r)\cos(j\theta)$ against $\partial_{xx}[w(r)\cos(k\theta)]$ and integrating by parts, the formula above leads to:
    \[\begin{aligned}\int_{B_O(R)}& \partial_{xx}[v(r)\cos(j\theta))]w(r)\cos(k\theta)\dd\x = \\
    =&\frac{1}{4}\int_0^R v(r)\left[\frac{[r\,w(r)]''}{r}+\frac{2k-3}{r}\,w'(r)+\frac{k^2-2k}{r^2} \,w(r)\right]r\, \dd r \int_0^{2\pi}\cos((k-2)\theta)\cos(j\theta)\dd\theta\\
    &+\frac{1}{2}\int_0^R v(r)\left[\frac{[r\,w(r)]''}{r}-\frac{1}{r}\,w'(r)-\frac{k^2}{r^2} \,w(r)\right] r\, \dd r \int_0^{2\pi}\cos(k\theta)\cos(j\theta)\dd\theta\\
    &+ \frac{1}{4}\int_0^R v(r) \left[\frac{[r\,w(r)]''}{r}-\frac{2k+3}{r}\,w'(r)+\frac{k^2+2k}{r^2} \,w(r)\right] r\, \dd r \int_0^{2\pi}\cos((k+2)\theta)\cos(j\theta)\dd\theta.
    \end{aligned}\]
    An integration by parts in the variable $r$ implies
    \[\begin{aligned}\int_{B_O(R)}& \partial_{xx}[v(r)\cos(j\theta))]w(r)\cos(k\theta)\dd\x = \\
    =&\frac{1}{4}\int_0^R \left[v''(r)-\frac{2k-3}{r}\,v'(r)+\frac{k^2-2k}{r^2} \,v(r)\right]w(r)r\, \dd r \int_0^{2\pi}\cos((k-2)\theta)\cos(j\theta)\dd\theta\\
    & + \frac{1}{2} \int_0^R \left[v''(r)+\frac{1}{r}\,v'(r)-\frac{k^2}{r^2} \,v(r)\right]w(r)r\, \dd r \int_0^{2\pi}\cos(k\theta)\cos(j\theta)\dd\theta\\
        & + \frac{1}{4} \int_0^R \left[v''(r)+\frac{2k+3}{r}\,v'(r)+\frac{k^2+2k}{r^2} \,v(r)\right]w(r)r\, \dd r \int_0^{2\pi}\cos((k+2)\theta)\cos(j\theta)\dd\theta.
    \end{aligned}\]
    Choosing $k\in\{j-2,j,j+2\}$, we obtain by the Cauchy-Schwarz inequality and by duality that:
    \begin{align}
    \label{eq:h2-estimate-I1}
        \int_0^R \left[v''(r)-\frac{2j+1}{r}\,v'(r)+\frac{j^2+2j}{r^2} \,v(r)\right]^2r\, \dd r &\leq \frac{16}{\pi} \left\|D^2[v(r)\cos(j\theta)]\right\|_{L^2(B_O(R))}^2\\
        \label{eq:h2-estimate-I2}
        \int_0^R \left[v''(r)+\frac{1}{r}\,v'(r)-\frac{j^2}{r^2}\, v(r)\right]^2r\, \dd r &\leq \frac{4}{\pi} \left\|D^2[v(r)\cos(j\theta)]\right\|_{L^2(B_O(R))}^2\\
        \label{eq:h2-estimate-I3}
        \int_0^R \left[v''(r)+\frac{2j-1}{r}\, v'(r)+\frac{j^2-2j}{r^2} v(r)\right]^2r\, \dd r &\leq \frac{32}{\pi} \left\|D^2[v(r)\cos(j\theta)]\right\|_{L^2(B_O(R))}^2
    \end{align}
    We note that if $j\neq 2$, then \eqref{eq:h2-estimate-I3} also takes place with the coefficient $\frac{16}{\pi}$, instead of $\frac{32}{\pi}$.

    Next, we use the inequality $(a\pm b)^2 \leq 2 (a^2+b^2)$ and obtain from \eqref{eq:h2-estimate-I1} and \eqref{eq:h2-estimate-I3} that:
    \begin{equation}\label{eq:h2-estimate-I4}
\int_0^R \left[v''(r)-\frac{1}{r}\, v'(r)+\frac{j^2}{r^2}\, v(r)\right]^2r\, \dd r \leq \frac{24}{\pi} \left\|D^2[v(r)\cos(j\theta)]\right\|_{L^2(B_O(R))}^2       
    \end{equation} 
    and 
    \[j^2 \int_0^R \left[\frac{v'(r)}{r}-\frac{v(r)}{r^2} \right]^2r\, \dd r \leq \frac{6}{\pi} \left\|D^2[v(r)\cos(j\theta)]\right\|_{L^2(B_O(R))}^2, \]
    which is exactly \eqref{eq:h2-estimate-3}. Similarly, \eqref{eq:h2-estimate-I2} and \eqref{eq:h2-estimate-I4} lead to \eqref{eq:h2-estimate-1} and \eqref{eq:h2-estimate-2}.
    
    In order to prove that, for $j=0$, $v$ has a continuous representative on $[0,R]$, we will show that the continuous representative on $(0,R]$ provided by Proposition \ref{prop:wcos-inH2} can be extended continuously in the point $0$. Indeed, since $v\in H_{j,r}([0,R])$ and satisfies \eqref{eq:h2-estimate-2}, the Cauchy-Schwarz inequality implies that, for every $r_1,r_2\in (0,R)$, with $r_1<r_2$,
    \[\begin{aligned}\left||v(r_1)|^2-|v(r_2)|^2\right|\leq 2\int_{r_1}^{r_2}|v'(r)v(r)|\dd r&\leq \int_{r_1}^{r_2}|v(r)|^2 r\,\dd r \int_{r_1}^{r_2} \frac{|v'(r)|^2}{r}  \dd r \\
    &\leq \int_{0}^{R}|v(r)|^2 r\,\dd r \int_{0}^{R} \frac{|v'(r)|^2}{r}  \dd r <+\infty.
    \end{aligned}\]
    As a result, for every sequence $(\tau_n)_{n\geq 1}$ of positive numbers converging to zero, the sequence $|v(\tau_n)|^2$ is Cauchy, which in turn implies that $\ell\coloneqq \lim_{\tau\to 0} |v(\tau)|^2$ exists and it is finite.
    
    If $\ell=0$, this implies that $\lim_{\tau\to 0} v(\tau)=0$, which finishes the proof in this case. On the other hand, for $\ell\neq 0$, we assume by contradiction that $\lim_{\tau\to 0} v(\tau)$ does not exist. This implies that there exist two sequences $(\tau_n^1)_{n\geq 1}$ and $(\tau_n^2)_{n\geq 1}$ such that:
    \[\lim_{n\to \infty} v(\tau^1_n)=\sqrt{|\ell|} \quad \text{and}\quad \lim_{n\to \infty} v(\tau^2_n)=-\sqrt{|\ell|}.\]
    However, since $v$ is continuous on $(0,R]$, we can construct a sequence $(\tau^3_n)_{n\geq 1}$ such that $v(\tau^3_n)=0$ and, for every $n\geq 1$, $\tau^3_n$ lies between $\tau^1_n$ and $\tau^2_n$. Therefore,
    \[\lim_{n\to \infty} |v(\tau^3_n)|^2=0,\]
    which contradicts the fact that $\ell\neq 0$. Therefore, the limit $\lim_{\tau\to 0} v(\tau)$ exists and it is finite, which finishes the proof of the lemma.
\end{proof}

\section{Proof of results in Section \ref{sec:no-eigenpair-missed}}\label{acaret:proofs}
This section on the appendix contains proofs that were deferred in order to maintain the flow of the paper.
\subsection*{Proof of Proposition \ref{prop:uJ approximates-u}}
    Since $u\in H^2(B_O(R))\subset L^2(B_O(R))$, the function $\theta\to u(r,\theta)$ belongs to $L^2([0,2\pi])$, for almost every $r\in (0,R)$. Therefore, a similar reasoning to \cite[Chapter IV]{SteinWeiss2016} implies that, for a.e. $r\in (0,R)$, the series $(u^J(r,\cdot))_{J\geq 1}$ converges in $L^2([0,2\pi])$ to $u(r,\cdot)$. The error term is equal to the tail of the series, i.e.
    \begin{equation}\label{eq:err-u-uJ}
        u-u^J = \sum_{j>J}\left[u_j^c(r) \cos(j\theta)+ u_j^c(r)\sin(j\theta) \right],
    \end{equation}
    so the $L^2(B_O(R))$ truncation error is equal to:
    \begin{equation}\label{eq:err-u-uJ-L2}
    \|u-u^J\|_{L^2(B_O(R))}^2 = \pi \sum_{j>J}\int_0^R \left[|u_j^c(r)|^2+ |u_j^s(r)|^2\right]r\,\dd r.
    \end{equation}

    If $u$ were smooth, the next step in evaluating the rate of decay of \eqref{eq:err-u-uJ-L2} would be to integrate by parts in \eqref{eq:def-u0-ur-vr} with respect to $\theta$. However, we only know that $u$ is weakly differentiable, so we test $u_j^c$ against an arbitrary $C_c^\infty((0,R))$ function and then integrate by parts on $B_O(R)$ to obtain that, indeed, for almost every $r\in (0,R)$,
    \begin{equation}\label{eq:uj-1-over-j}
        u_j^c(r)= -\frac{1}{j\, \pi}\int_0^{2\pi} \partial_\theta u(r,\theta) \sin(j\theta) \dd \theta,
    \end{equation}
    where by $\partial_r$ and $\partial_\theta$ we understand, as usual, the following combinations of weak derivatives:
    \begin{equation}\label{eq:radial-derivatives-as-cartesian}
    \left\{\begin{aligned}
        \partial_r  &= \cos(\theta) \partial_x + \sin(\theta) \partial_y \\
        \partial_\theta  &= -r \sin(\theta) \partial_x + r\cos(\theta) \partial_y 
    \end{aligned}\right.\quad \text{ for } \x=(x,y)=(r\cos(\theta),r\sin(\theta))\neq (0,0).
    \end{equation}

    Reiterating the reasoning above, we obtain that:
     \begin{equation}\label{eq:uj-1-over-j2}
     u_j^c(r)= -\frac{1}{j^2\, \pi}\int_0^{2\pi} \partial_{\theta\theta} u (r,\theta)\cos(j\theta) \dd \theta,
    \end{equation}
     where by $\partial_{\theta\theta}$ we mean that the operator $\partial_\theta$ is applied twice. Therefore, \eqref{eq:radial-derivatives-as-cartesian} implies that for almost every $r\in (0,R)$, 
     \[|u_j^c(r)|\leq C(R) \frac{1}{j^2 \pi} \int_0^{2\pi} \left[\|D^2 u(r,\theta)\| + \|\nabla u(r,\theta)\|\right] \dd \theta\]
     and the same inequality is valid also for $u_j^s$. Using the Cauchy-Schwarz inequality and integrating on $[0,R]$ in the inequality above, we obtain by \eqref{eq:err-u-uJ-L2} that:
     \[\|u^J-u\|_{L^2(B_O(R))}^2\leq C(R) \|u\|^2_{H^2(B_O(R))} \sum_{j\geq J} \frac{1}{j^4},\]
     which leads to the $L^2$-norm error estimate by classical properties of the generalised harmonic series.

     To obtain the $H^1$-norm error estimate, we compute the weak gradient of $u_j^c(r)\cos(j\theta)$. We note that by \cite[Remark 6.1.18]{Heinonen2015sobolev}, $H^1(B_O(R)\setminus\{O\})=H^1(B_O(R))$, so it is enough to test against $C_c^\infty(B_O(R)\setminus\{O\})$ functions to compute the weak gradient. In this way, direct computations lead to:
     \begin{align*}
        \partial_x [u_j^c(r) \cos(j\theta)]&= \cos(\theta) \cos(j\theta) \frac{1}{\pi}\int_0^{2\pi} \left[\partial_x u(r,\sigma) \cos(\sigma) + \partial_y u (r,\sigma) \sin(\sigma)\right] \cos(j\sigma) \dd \sigma \\
        & \quad \notag
        - \frac{j}{r} \sin(\theta) \sin(j\theta) \frac{1}{\pi}\int_0^{2\pi} u(r,\sigma)\cos(j\sigma) \dd \sigma\\
         &= \notag \frac{1}{2\pi}\left[\cos((j-1)\theta)+\cos(j\theta)\right]\int_0^{2\pi} \left[\partial_x u(r,\sigma) \cos(\sigma) + \partial_y u(r,\sigma) \sin(\sigma)\right] \cos(j\sigma) \dd \sigma \\
        & \quad \notag
        - \frac{j}{2\pi r}\left[\cos((j-1)\theta)-\cos(j\theta)\right]\int_0^{2\pi} u(r,\sigma)\cos(j\sigma) \dd \sigma\\
         & \notag = -\frac{1}{2\pi j}\left[\cos((j-1)\theta)+\cos(j\theta)\right]\int_0^{2\pi} \partial_{\sigma}\left[\partial_x u(r,\sigma) \cos(\sigma) + \partial_y u (r,\sigma) \sin(\sigma)\right] \sin(j\sigma) \dd \sigma \\
        & \quad \notag
        + \frac{1}{2 \pi rj} \left[\cos((j-1)\theta)-\cos(j\theta)\right]\int_0^{2\pi} \partial_{\sigma\sigma} u(r,\sigma)\cos(j\sigma) \dd \sigma,
     \end{align*}
     where the last inequality is obtained by the same procedure as \eqref{eq:uj-1-over-j} and \eqref{eq:uj-1-over-j2}. Using the expression of the spherical derivative $\partial_\theta$ in \eqref{eq:radial-derivatives-as-cartesian} we can cancel out the $\frac{1}{r}$ in the last term above. Furthermore, the orthogonality of the Fourier basis functions $(\cos(j\theta))_{j\in \mathbb{Z}}$ implies that:
    \[\left\|\sum_{j>J}\partial_x [u_j^c(r) \cos(j\theta)] \right\|_{L^2(B_O(R))}^2\leq C(R) \|u\|_{H^2(B_O(R))}^2 \sum_{j>J} \frac{1}{j^2}\leq \frac{C(R)}{J} \|u\|_{H^2(B_O(R))}^2.
    \]
    The same inequality is obtained for $\partial_y [u_j^c(r)\cos(j\theta)]$, $\partial_x [u_j^s(r)\sin(j\theta)]$ and $\partial_y[u_j^s(r)\sin(j\theta)]$. Therefore, the series \eqref{eq:err-u-uJ} converges in $H^1(B_O(R))$ with a sum bounded from above by \linebreak $\frac{C(R)}{\sqrt{J}} \|u\|_{H^2(B_O(R))}$. This leads to the desired $H^1$-norm error estimate.\qed

\subsection*{Sketch of proof for Proposition \ref{prop:spaces-equivalence}}
Statements \ref{item:l2-first} and \ref{item:l2-second} follow immediately from the coarea formula.

The implication \ref{item:h1-first} follows from \eqref{eq:uj-1-over-j} and \eqref{eq:radial-derivatives-as-cartesian}, since one can prove that the weak derivative of $u_j^c$ is:
\[(u_j^c)'(r)=\frac{1}{\pi}\int_0^{2\pi} \partial_r u(r,\theta) \cos(j\theta) \dd \theta.\]
Indeed, by testing $u_j^c(r)$ against $w\in C_c^\infty(0,R)$ and integrating by parts on $B_O(R)$, we obtain:
\begin{equation}\label{eq:uj-weak-derivative}
    \begin{aligned}\int_0^R u_j^c(r)w'(r) \dd r &=\frac{1}{\pi}\int_0^R \int_0^{2\pi} u(r,\theta) \partial_r [w(r)\cos(j\theta)]r\,\dd \theta\,\dd r\\
&=\frac{1}{\pi}\int_{B_O(R)}u(r,\theta) \left\{\partial_x [w(r)\cos(j\theta)]\cos(\theta)+\partial_y[w(r)\cos(j\theta)]\sin(\theta)\right\}\frac{1}{r}\,\dd \x\\
&=-\frac{1}{\pi}\int_{B_O(R)}w(r)\cos(j\theta)\left\{\partial_x \left[ u(r,\theta) \cos(\theta)\frac{1}{r}\right]+\partial_y \left[u(r,\theta) \sin(\theta)\frac{1}{r}\right]\right\}r\,\dd \x\\
&=-\frac{1}{\pi} \int_0^R w(r) \int_0^{2\pi} \partial_r  u (r,\theta) \cos(j\theta) \dd \theta\, \dd r,
\end{aligned}.
\end{equation}\
where the last inequality is obtained by \eqref{eq:radial-derivatives-as-cartesian} and the coarea formula. 

The proof of the statement \ref{item:h1-second} follows a similar procedure, through which we obtain that the weak partial derivatives of $v(r)\cos(j\theta)$ have the following expressions:
\begin{equation}\label{w-cos-j-theta-weak-derivatives}
\begin{aligned}
    \partial_x [v(r)\cos(j\theta)]&=v'(r) \cos(j\theta)\cos(\theta)+\frac{j}{r}v(r)\sin(j\theta)\sin(\theta);\\
    \partial_y [v(r)\cos(j\theta)]&=v'(r) \cos(j\theta)\sin(\theta)-\frac{j}{r}v(r)\sin(j\theta)\cos(\theta).
\end{aligned}
\end{equation}
These expressions can be obtained using the definition of weak derivatives, once we note that \cite[Remark 6.1.18]{Heinonen2015sobolev} implies that the spaces $H^1(B_O(R))$ and $H^1(B_O(R)\setminus \{O\})$ coincide, so it is enough to test against $C_c^\infty(B_O(R)\setminus\{O\})$ functions in the definition of the weak partial derivatives on $B_O(R)$.

A similar result contained in Lemma \ref{lem:dense-in-h01}, namely the fact that $C_c^\infty(B_O(R)\setminus\{O\})$ functions are dense in $H_0^1(B_O(R))$, can be used to prove statement \ref{item:h01-first}. Indeed, if the function $u\in H_0^1(B_O(R))$ is approximated by the sequence $(\varphi^n)_{n\geq 1}$ of $C_c^\infty(B_O(R)\setminus\{O\})$ functions in the $\|\cdot\|_{H^1(B_O(R))}$ norm, then, by the inequality \ref{item:h1-first}, $u_j^c$ is approximated in the $\|\cdot\|_{j,r}$ norm by the sequence $((\varphi^n)_j^c)_{n\geq 1}$ defined as:
\[(\varphi^n)_j^c(r)\coloneqq\frac{1}{\pi} \int_0^{2\pi} \varphi^n(r,\theta)\cos(j\theta)\dd\theta.\]

The proof of \ref{item:h01-second} follows a similar procedure and the statements \ref{item:weak-first} and \ref{item:weak-second} follow immediately from the following identities:

\[\begin{aligned}
    \int_{B_O(R)}\nabla[v(r) \cos(j\theta)]\cdot \nabla  u(\x)\, \dd \x &= a_j\left(v, \int_0^{2\pi}  u(\cdot,\theta) \cos(j\theta) \dd\theta \right);\\
    \int_{B_O(R)} V(\|\x\|) v(r) \cos(j\theta) u(\x)\, \dd\x &=\int_0^R V(r) v(r)\int_0^{2\pi}  u(r,\theta) \cos(j\theta) \dd\theta\, r\,\dd r;\\
    \int_{B_O(R)}\,v(r) \cos(j\theta)  u(\x)\, \dd\x&= \left(v,\int_0^{2\pi} u (\cdot,\theta) \cos(j\theta) \dd\theta\right)_r\,,
    \end{aligned}\]
which are valid for any $v\in H_{j,r}([0,R])$ and $u\in H^1(B_O(R))$. The identities above can be proved by approximation with compactly supported functions, integration by parts and formulas \eqref{eq:radial-derivatives-as-cartesian}, in the spirit of \eqref{eq:uj-weak-derivative}. \qed

 \subsection*{Proof of Proposition \ref{prop:wcos-inH2}} Proposition \ref{prop:spaces-equivalence} \ref{item:h1-second} implies that $v\in H_{j,r}([0,R])$. In order to check the definition of the second weak derivative, we test $v$ against $w''$, where $w\in C_c^\infty(0,R)$:
 \begin{equation}\label{eq:weak-second-derivative-w}\begin{aligned}
 \int_0^R v(r) w''(r)\dd r &= \frac{1}{\pi}\int_0^R \int_0^{2\pi}v(r)\cos(j\theta) \partial_{rr}[w(r)\cos(j\theta)]\dd\theta\, \dd r \\
 & = \frac{1}{\pi} \int_0^R  \int_0^{2\pi}v(r)\cos(j\theta) \frac{1}{r}\partial_{rr}[w(r)\cos(j\theta)]\dd\theta\, r\,\dd r.
 \end{aligned}
 \end{equation}
Direct calculations involving the formula of the Laplace operator in polar coordinates lead to:
\[\frac{1}{r}\partial_{rr} [w(r)\cos(j\theta)]=\Delta \left[\frac{1}{r}w(r) \cos(j\theta)\right] +\frac{1}{r^2} w'(r) \cos(j\theta) + \frac{j^2-1}{r^3} w(r) \cos(j\theta).
 \]
 We insert this formula into \eqref{eq:weak-second-derivative-w} and then perform an integration by parts on $B_O(R)$ to obtain:
\[ \begin{aligned} & \int_0^R v(r) w''(r)\dd r \\
&= \frac{1}{\pi}\int_0^R \int_0^{2\pi}\Delta\left[v(r)\cos(j\theta)\right]\frac{1}{r} w(r)\cos(j\theta)\dd\theta\, r\,\dd r + \int_0^R v(r)\frac{1}{r}w'(r)\dd r + (j^2-1)\int_0^R v(r) \frac{1}{r^2}w(r) \dd r. \\
\end{aligned}
\]
Eventually, an integration by parts in the second integral of the right-hand side above leads to the conclusion. \qed

\subsection{Proof of Proposition \ref{prop:operator-outside-of-spectrum}} The symmetry and coercivity of $b$, together with the fact that $H$ is compactly embedded in $L_r([0,R])$, implies the existence of a a solution operator $B^{-1}:L_r([0,R])\to L_r([0,R])$ which is self-adjoint and compact and its range is contained in $H$. More precisely, for every $g\in L_r([0,R])$,
\begin{equation}\label{eq:def-B-1}
    b(B^{-1}g, w)=(g,w)_r, \quad \forall w\in H.
\end{equation}
Furthermore, the inequality \eqref{eq:b-coercive} implies that the operator norm $\|B^{-1}\|_{L_r([0,R])\to H}$ is less than or equal to $M$. 

Next, \eqref{eq:b-coercive} and \eqref{eq:generic-elliptic-lambda*} imply that $\ker(B^{-1})=H^\perp$, i.e. the orthogonal space to $H$ with respect to the $L_r([0,R])$ scalar product. Moreover, $B^{-1}$ is self-adjoint and compact on $L_r([0,R])$, so \cite[Theorem 6.11]{brezis} implies that there exists an orthonormal Hilbert basis $(\xi_n^*)_{n=1}^{{\rm dim}(H)}$  of the closure $\bar H$ of $H$ with respect to the $L_r([0,R])$ scalar product, such that, for every $n\in \overline{1,{\rm dim}(H)}$, $B^{-1}\xi_n^* =\frac{1}{\lambda_n^*} \xi_n^*$ and $(\lambda_n^*)_{n=1}^{{\rm dim}(H)}$ is a non-decreasing sequence of eigenvalues of $b$.
    
    Next, since $(\xi_n^*)_{n=1}^{{\rm dim}(H)}$ is an orthonormal Hilbert basis of $\bar H$ and $v\in H$, we can write:
    \begin{equation}\label{eq:norm-w-in-basis}
    \|v\|_r^2=\sum_{n=1}^{{\rm dim}(H)} (v,\xi_n^*)_r^2
    \end{equation}
    Since $v$ satisfies \eqref{eq:generic-elliptic-lambda*}, and $\xi_n^*\in H$  is an eigenfunction of $b$, we obtain:
    \[\lambda(v,\xi_n^*)_r= b(v,\xi_n^*)-(f,\xi_n^*)_r=\lambda_n^*(v,\xi_n^*)_r-(f,\xi_n^*)_r,\]
    which further implies that:
    \[(\lambda_n^*-\lambda)(v,\xi_n^*)_r=(f,\xi_n^*)_r.\]
    Inserting this into \eqref{eq:norm-w-in-basis} leads to the first inequality in the conclusion.

    To obtain the second inequality, we use that the norm of the operator $B^{-1}:L_r([0,R])\to H$ is at most $M$ and $v=B^{-1}(f+\lambda v)$. \qed

\section{Proofs of results in Section \ref{sec:FEM}}
\label{acaret:proofs-2}
\subsection*{Proof of Proposition \ref{prop:fem-interpolator}}
Proposition \ref{prop:wcos-inH2} implies that $v$ has a continuous representative on every compact subinterval of $(0,R]$. This fact, combined with Proposition \ref{prop:Hjr-is-zero-in-zero} in the case $j\geq 1$ and Proposition \ref{prop:h2-estimates} for $j=0$ implies that the values
\[v(r^h_0),v(r^h_1),v(r^h_2),\ldots,v(r^h_{N_h})\]
are well-defined. Therefore, we can define the linear interpolator $\tilde v_h \in W_{j,h}$:
\[\tilde v_h \coloneqq \sum_{i=0}^{N_h} v(r^h_i)\phi_i\]
and the error function $\tilde e_h\coloneqq v-\tilde v_h\in H_{j,r}([0,R])$. We note that $\tilde e_h(r^h_i)=0$, for every $i=\overline{0,N_h}$.
The fact that $\tilde v_h \in W_{j,h}^0$ if $v\in H_{j,r}^0([0,R])$ is guaranteed by Proposition \ref{prop:spaces-equivalence}.

In order to prove the $L_r([0,R])$ and $H_{j,r}([0,R])$ norm estimates for $\tilde e_h$, we write:
\begin{align}\label{eq:interpolation-lr-sum}\|\tilde e_h\|_{r}^2&=\sum_{i=0}^{N_h-1} \int_{r^h_{i}}^{r^h_{i+1}} |\tilde e_h(r)|^2\,r\, \dd r;\\
\label{eq:interpolation-hjr-sum}
a_j(\tilde e_h,\tilde e_h)&=\sum_{i=0}^{N_h-1} \int_{r^h_{i}}^{r^h_{i+1}} |(\tilde e_h)'(r)|^2\,r\, \dd r+j^2 \sum_{i=0}^{N_h-1} \int_{r^h_{i}}^{r^h_{i+1}} |\tilde e_h(r)|^2\frac{1}{r}\, \dd r.
\end{align}
Our aim is to bound every term above by one of the integrals in Proposition \ref{prop:h2-estimates}. We split our analysis in three cases:
\begin{enumerate}[label={\Roman*.}]
    \item \label{item:case1} $i\neq 0$;
    \item \label{item:case2} $i=0, j\geq 1$;
    \item \label{item:case3} $i=j=0$.
\end{enumerate}

\emph{Case \ref{item:case1}} We estimate the terms in \eqref{eq:interpolation-lr-sum} and \eqref{eq:interpolation-hjr-sum} corresponding to $i\neq 0$. Since, by Proposition \ref{prop:wcos-inH2}, $v\in H^2([r^h_i,r^h_{i+1}])$ and, by construction $\tilde v_h$ is an affine function on the interval $[r^h_i,r^h_{i+1}]$, we obtain that $\tilde e_h\in H^2([r^h_i,r^h_{i+1}])$. As a result, $\tilde e_h$ has a $C^1([r^h_i,r^h_{i+1}])$ representative and, since $\tilde e_h(r^h_i)=\tilde e_h(r^h_{i+1})=0$, there exists a point $\tilde r_i$ such that $(\tilde e_h)'(\tilde r_i)=0$. Therefore, we can write:
\begin{equation}\label{eq:interpolation-deriv-formula-outside-0}
(\tilde e_h)'(r)=\int_{\tilde r_i}^{r} (\tilde e_h)''(\tau)\dd\tau=\int_{\tilde r_i}^{r} v''(\tau)\dd\tau,\quad \forall r\in [r^h_i,r^h_{i+1}], \end{equation}
where the last equality is true, because $\tilde v_h$ is affine on $[r^h_i,r^h_{i+1}]$ and so it has null second derivative on this interval.
Using the Cauchy-Schwarz inequality in \eqref{eq:interpolation-deriv-formula-outside-0}, we obtain:
\begin{equation}\label{eq:interpolation-deriv-ineq-outside-0}
|(\tilde e_h)'(r)|^2\leq h \int_{r^h_i}^{r^h_{i+1}} |v''(\tau)|^2 \dd\tau
\end{equation}
It follows that:
\begin{equation}\label{eq:interpolation-hrj-first-part-outside-0}
\begin{aligned}\int_{r^h_i}^{r^h_{i+1}} |(\tilde e_h)'(r)|^2\, r\,\dd r\leq  \frac{r^h_{i+1}}{r^h_i} h^2\int_{r^h_i}^{r^h_{i+1}} |v''(\tau)|^2\tau\,\dd\tau&= \,\frac{i+1}{i}  h^2\int_{r^h_i}^{r^h_{i+1}} |v''(\tau)|^2\tau\,\dd\tau\\
&\leq 2 h^2 \int_{r^h_i}^{r^h_{i+1}} |v''(\tau)|^2\tau\,\dd\tau.
\end{aligned}
\end{equation}
The inequality \eqref{eq:interpolation-deriv-ineq-outside-0}, together with the fact that $\tilde e_h(r^h_i)=0$, implies that, for $r\in [r^h_i,r^h_{i+1}]$,
\begin{equation}\label{eq:interpolation-error-formula-outside-0}
    |\tilde e_h(r)|^2\leq \left|\int_{r^h_i}^r (\tilde e_h)'(\tau)\dd\tau\right|^2 \leq h^3 \int_{r^h_i}^{r^h_{i+1}} |v''(\tau)|^2\,\dd\tau.
\end{equation}
Similarly to \eqref{eq:interpolation-hrj-first-part-outside-0}, we obtain:
\begin{equation}\label{eq:interpolation-lr-outside-0}
    \int_{r^h_i}^{r^h_{i+1}} |\tilde e_h(r)|^2\, r\, \dd r\leq 2 h^4 \int_{r^h_i}^{r^h_{i+1}} |v''(\tau)|^2\tau\,\dd\tau.
\end{equation}

In order to estimate the terms $\int_{r^h_i}^{r^h_{i+1}} |\tilde e_h(r)|^2\,r \,\dd r$, we note that, by definition:
\[\tilde v_h(r)=\frac{r^h_{i+1}-r}{h}v(r^h_i)+ \frac{r-r^h_i}{h}v(r^h_{i+1}), \quad \forall r\in [r^h_i,r^h_{i+1}]\]
and, therefore,
\[\begin{aligned}\tilde e_h(r)&=v(r)-\frac{r^h_{i+1}-r}{h} r^h_i \frac{v(r^h_i)}{r^h_i}- \frac{r-r^h_i}{h}r^h_{i+1}\frac{v(r^h_{i+1})}{r^h_{i+1}}\\
&=\frac{r^h_{i+1}-r}{h} r^h_i\left[\frac{v(r)}{r}-\frac{v(r^h_i)}{r^h_i}\right]-\frac{r-r^h_i}{h}r^h_{i+1}\left[\frac{v(r^h_{i+1})}{r^h_{i+1}}-\frac{v(r)}{r}\right].
\end{aligned}
\]
This inequality implies that, for every $r\in [r^h_k,r^h_{k+1}]$,
\[|\tilde e_h(r)|\leq r^h_i \int_{r^h_i}^{r}\left|\left(\frac{v(\tau)}{\tau}\right)'\right|\dd\tau+ r^h_{i+1} \int_r^{r^h_{i+1}}\left|\left(\frac{v(\tau)}{\tau}\right)'\right|\dd\tau.
\]
Next, the Cauchy-Schwarz inequality leads to
\[\begin{aligned}\frac{|\tilde e_h(r)|^2}{r}&\leq 2\frac{|r^h_i|^2}{r}h\int_{r^h_i}^{r}\left|\left(\frac{v(\tau)}{\tau}\right)'\right|^2\dd\tau+2\frac{|r^h_{i+1}|^2}{r}h\int_r^{r^h_{i+1}}\left|\left(\frac{v(\tau)}{\tau}\right)'\right|^2\dd\tau\\
&\leq 2h\int_{r^h_i}^{r}\left|\left(\frac{v(\tau)}{\tau}\right)'\right|^2\tau\,\dd\tau+2\frac{|r^h_{i+1}|^2}{|r^h_i|^2}h\int_r^{r^h_{i+1}}\left|\left(\frac{v(\tau)}{\tau}\right)'\right|^2\tau \,\dd\tau\\
&= 2h\int_{r^h_i}^{r}\left[\frac{v'(\tau)}{\tau}-\frac{v(\tau)}{\tau^2}\right]^2\tau\,\dd\tau+2\frac{(i+1)^2}{i^2}h\int_r^{r^h_{i+1}}\left[\frac{v'(\tau)}{\tau}-\frac{v(\tau)}{\tau^2}\right]^2\tau \,\dd\tau \\
&\leq 8h\int_{r^h_i}^{r^h_{i+1}}\left[\frac{v'(\tau)}{\tau}-\frac{v(\tau)}{\tau^2}\right]^2\tau\,\dd\tau.
\end{aligned}\]
Integrating this inequality on $[r^h_i,r^h_{i+1}]$, we obtain:
\begin{equation}\label{eq:interpolation-hrj-second-part-outside-0}
    \int_{r^h_i}^{r^h_{i+1}} |\tilde e_h(r)|^2\frac{1}{r}\, \dd r \leq 8h^2\int_{r^h_i}^{r^h_{i+1}}\left[\frac{v'(\tau)}{\tau}-\frac{v(\tau)}{\tau^2}\right]^2\tau\,\dd\tau
\end{equation}
which concludes the estimation of the terms in \eqref{eq:interpolation-lr-sum} and \eqref{eq:interpolation-hjr-sum} corresponding to $i\geq 1$.

\emph{Case \ref{item:case2}} For the terms of \eqref{eq:interpolation-lr-sum} and \eqref{eq:interpolation-hjr-sum} in which $i=0$ (i.e., the integrals on $[r^h_0,r^h_1]=[0,h]$), we distinguish two cases, depending on whether $j$ is null or non-zero. If $j\geq 1$, for $r\in [r^h_0,r^h_1]=[0,h]$ we have $\tilde v_h(r)=\frac{r}{h}v(h)$, so:
\[\tilde e_h(r)=v(r)-\frac{r}{h}v(h)\quad \text{ and, thus, }\quad (\tilde e_h)'(r)=v'(r)-\frac{1}{h}v(h).\]
As a result, for $r\in [0,h]$,
\[\frac{\tilde e_h(r)}{\sqrt{r}}= \sqrt{r} \left(\frac{v(r)}{r}- \frac{v(h)}{h}\right)= -\sqrt{r}\int_r^h \left[\frac{v(\tau)}{\tau}\right]'\dd\tau = -\sqrt{r}\int_r^h \left[\frac{v'(\tau)}{\tau}-\frac{v(\tau)}{\tau^2}\right]\dd\tau. \]
Furthermore, the Cauchy-Schwarz inequality implies that:
\[\frac{|\tilde e_h(r)|^2}{r} \leq  (h-r) r\int_r^h \left[\frac{v'(\tau)}{\tau}-\frac{v(\tau)}{\tau^2}\right]^2\dd\tau\leq h \int_0^h\left[\frac{v'(\tau)}{\tau}-\frac{v(\tau)}{\tau^2}\right]^2\tau\,\dd\tau.\]
Integrating this inequality on $[0,h]$, one obtains:
\begin{equation}\label{eq:interpolation-hjr-second-term-near0-jgeq1}
    \int_0^h \frac{|\tilde e_h(r)|^2}{r}\dd r\leq h^2 \int_0^h\left[\frac{v'(\tau)}{\tau}-\frac{v(\tau)}{\tau^2}\right]^2\tau\,\dd\tau.
\end{equation}
which implies immediately that:
\begin{equation}\label{eq:interpolation-lr-near0-jgeq1}
    \int_0^h |\tilde e_h(r)|^2\, r\,\dd r\leq h^4 \int_0^h\left[\frac{v'(\tau)}{\tau}-\frac{v(\tau)}{\tau^2}\right]^2\tau\,\dd\tau.
\end{equation}
In order to estimate the $L_r([0,h])$ norm of the derivative of $\tilde e_h$, we note that:
\[(\tilde e_h)'(r)\sqrt{r}-\frac{\tilde e_h(r)}{\sqrt{r}}=v'(r)\sqrt{r}-\frac{v(r)}{\sqrt{r}}= r\left[\frac{v'(r)}{r}-\frac{v(r)}{r^2}\right]\sqrt{r}, \]
which implies that:
\[\int_0^h\left[ (\tilde e_h)'(r)\sqrt{r}-\frac{\tilde e_h(r)}{\sqrt{r}}\right]^2\dd r\leq h^2 \int_0^h \left[\frac{v'(r)}{r}-\frac{v(r)}{r^2}\right]^2r\, \dd r. \]
This estimate, together with the inequality \eqref{eq:interpolation-hjr-second-term-near0-jgeq1}, implies that:
\begin{equation}\label{eq:interpolation-hjr-first-term-near0-jgeq1}
    \int_0^h |(\tilde e_h)'(r)|^2 r\dd r\leq 4 h^2 \int_0^h\left[\frac{v'(\tau)}{\tau}-\frac{v(\tau)}{\tau^2}\right]^2\tau\,\dd\tau.
\end{equation}
This finishes the proof of the estimates on $[0,h]$, in the case $j\geq 1$. 

\emph{Case \ref{item:case3}} In the situation when $j=0$ and $r\in [r^h_0,r^h_1]=[0,h]$, we have:
\[\tilde e_h(r)=v(r)-\frac{h-r}{h}v(0)-\frac{r}{h} v(h)\quad \text{and, thus,}\quad (\tilde e_h)'(r)=v'(r)-\frac{1}{h}[v(h)-v(0)].\]

Therefore, we can write:
\[\tilde e_h(r)=\frac{h-r}{h}[v(r)-v(0)]+\frac{r}{h}[v(r)-v(h)]=\frac{h-r}{h}\int_0^r v'(\tau)\dd\tau-\frac{r}{h}\int_r^h v'(\tau)\dd\tau.\]
It follows by the Cauchy-Schwarz inequality that:
\[|\tilde e_h(r)|^2\leq \left|\int_0^h |v'(\tau)|\dd \tau\right|^2\leq \int_0^h \tau \, \dd \tau\int_0^h\frac{|v'(\tau)|^2}{\tau}\dd\tau= \frac{h^2}{2}\int_0^h\frac{|v'(\tau)|^2}{\tau}\dd\tau,\]
which leads to:
\begin{equation}
\label{eq:interpolation-lr-near-0-j0}
\int_0^h |\tilde e_h(r)|^2\, r\,\dd r\leq\frac{h^4}{4} \int_0^h\frac{|v'(\tau)|^2}{\tau}\dd\tau.
\end{equation}
We are left to analyse the $L_r([0,h])$ norm of the derivative of the error. For this purpose, we write:
\[\begin{aligned}
(\tilde e_h)'(r)=v'(r)- \int_0^h v'(\tau)\dd \tau &= v'(r)\frac{r}{h}-\frac{1}{h}\int_0^r v'(\tau)\dd\tau+v'(r)\frac{h-r}{h}-\frac{1}{h}\int_r^h v'(\tau)\dd\tau\\
&= \frac{v'(r)}{r}\frac{r^2}{h}-\frac{\sqrt{r}}{h}\int_0^r \frac{v'(\tau)}{\sqrt{r}}\dd\tau+\frac{1}{h}\int_r^h [v'(r)-v'(\tau)]\dd\tau\\
&=\frac{v'(r)}{r}\frac{r^2}{h}-\frac{\sqrt{r}}{h}\int_0^r \frac{v'(\tau)}{\sqrt{r}}\dd\tau-\frac{1}{h}\int_r^h\int_r^\tau v''(\sigma)\dd\sigma\,\dd\tau,
\end{aligned}
\]
which implies that:
\[|(\tilde e_h)'(r)\sqrt{r}|\leq h\frac{|v'(r)|}{\sqrt{r}}+\int_0^h \frac{|v'(\tau)|}{\sqrt{\tau}}\dd\tau+\frac{1}{h}\int_r^h \int_r^\tau |v''(\sigma)|\sqrt{\sigma}\,\dd \sigma\,\dd\tau.\]
The Cauchy-Schwarz inequality leads to:
\[|(\tilde e_h)'(r)|^2\,r\leq 3h^2\frac{|v'(r)|^2}{r}+3h\int_0^h \frac{|v'(\tau)|^2}{\tau}\dd\tau+3h \int_0^h |v''(\sigma)|^2\sigma\,\dd \sigma,\]
which, by integration on $[0,h]$ implies that:
\begin{equation}\label{eq:interpolation-hjr-near-0-j0}
    \int_0^h|(\tilde e_h)'(r)|^2\,r\leq 6h^2\int_0^h\frac{|v'(r)|^2}{r}\dd r+3h^2 \int_0^h |v''(\sigma)|^2\sigma\,\dd \sigma.
\end{equation}
The conclusion follows by inserting \eqref{eq:interpolation-hrj-first-part-outside-0}-\eqref{eq:interpolation-hjr-near-0-j0} into \eqref{eq:interpolation-lr-sum} and \eqref{eq:interpolation-hjr-sum} and then using Proposition \ref{prop:h2-estimates}.\qed

\subsection*{Proof of Lemma \ref{lem:projection-error}}
    We will prove the inequalities for the Dirichlet projection -- i.e. statement \ref{item:projection-D} --  and then we will outline the differences in reasoning for the Neumann projection \ref{item:projection-N} and the Dirichlet projection of $v-v(R)\phi_N$ in statement \ref{item:projection-D-modified}.

    Indeed, for $v\in H_{j,r}^0([0,R])$, elementary properties of the projection operator onto a closed subspace of a Hilbert space lead to:
    \begin{equation}\label{eq:projectior-minimizes}
    a_{j,V_h}(v-\Pi_h^0 v,v-\Pi_h^0 v)\leq a_{j,V_h}(v-w^h,v-w^h), \quad \forall w^h\in W_{j,h}^0.\end{equation}
    In particular, if $v(r)\cos(j\theta)\in H^2(B_O(R))$, we denote by
    \[e_h\coloneqq v-\Pi_h^0 v\in H_{j,r}^0([0,R])\]
    the projection error and we test \eqref{eq:projectior-minimizes} against the function $\tilde{v}_h$ provided by Proposition \ref{prop:fem-interpolator} to obtain:
    \begin{equation}\label{eq:projection-error-ajVh}
    a_{j,V_h}(e_h,e_h)\leq a_{j,V_h}(v-\tilde v_h,v-\tilde v_h)\leq h^2C(V_h) \|v(r)\cos(j\theta)\|_{H^2(B_O(R))}^2,
    \end{equation}
    where $C(V_h)>0$ is a constant that only depends on the suppremum norm of the discretised potential  $V_h$. Since, by \eqref{eq:error-V-Vh-C1}, $V+\frac{1}{2}\geq V_h\geq \frac{1}{2}$, it follows that:
    \begin{equation}\label{eq:projection-error-Hjr}\|v-\Pi_h^0 v\|_{j,r}\leq h\, C(V)\|v(r)\cos(j\theta)\|_{H^2(B_O(R))}.
    \end{equation}

    In order to obtain the $h^2$ estimate for the $L_r([0,R])$ norm of the projection error $e_h$, we apply a technique similar to Nitsche's trick in \cite[Theorem 4.9]{larson}: with the notation in Section \ref{sec:weaks-solutions-basis-functions}, let 
    \[\psi \coloneqq A_{j,V_h}^{-1}\, e_h,\]
    which means that $\psi\in H_{j,r}^0([0,R])$ satisfies:
    \begin{equation}\label{eq:nitsche-weak}
    a_{j,V_h}(\psi,w)=(e_h,w)_r, \quad \forall w\in H_{j,r}^0([0,R]).\end{equation}
    Similar to Proposition \ref{prop:spaces-equivalence} \ref{item:weak-second}, one has that the function $\psi(r)\cos(j\theta)$ satisfies in the weak sense the following equation with homogeneous Dirichlet boundary conditions:
    \[-\Delta[\psi(r)\cos(j\theta)]+V_h(\|\x\|)\psi(r)\cos(j\theta) = e_h(r)\cos(j\theta), \quad \x=(r\cos(\theta),r\sin(\theta))\in B_O(R).\]
    The regularity theory for elliptic equations with Dirichlet boundary conditions \cite[Theorem 8.12]{GilbargTrudinger2001} implies that there exists a constant $C(R,V)>0$ such that:
    \begin{equation}\label{eq:projection-regularity-D}
    \|\psi(r)\cos(j\theta)\|_{H^2(B_O(R))}\leq C(R,V) \|e_h(r)\cos(j\theta)\|_{L^2(B_O(R))}= C(R,V)\|e_h\|_r,
    \end{equation}
    where the last equality is due to Proposition \ref{prop:spaces-equivalence} \ref{item:l2-second}. Therefore, the same reasoning by which we obtained \eqref{eq:projection-error-ajVh} leads to:
    \begin{equation}\label{eq:projection-error-psi}
    a_{j,V_h}(\psi-\Pi_h^0 \psi,\psi-\Pi_h^0 \psi)\leq h^2 C(R,V)\|e_h\|_r^2.
    \end{equation}
    Next, testing \eqref{eq:nitsche-weak} against $e_h$ and using the definition of the projection operator $\Pi_h^0$ together with the Cauchy-Schwarz inequality, we obtain:
    \[\begin{aligned}\|e_h\|_r^2=a_{j,V_h}(\psi,e_h)=a_{j,V_h}(\psi-\Pi_h^0 \psi,e_h)&\leq \sqrt{a_{j,V_h}(\psi-\Pi_h^0 \psi,\psi-\Pi_h^0 \psi)\,a_{j,V_h}(e_h,e_h)}\\
    &\leq h^2C(V,R)\, \|e_h\|_r \,\|v(r)\cos(j\theta)\|,
    \end{aligned}\]
    where the last inequality follows by \eqref{eq:projection-error-ajVh} and \eqref{eq:projection-error-psi}. This leads to the  conclusion of the statement \ref{item:projection-D}.

    The proof of the estimates concerning the Neumann projection -- i.e. statement \ref{item:projection-N} -- are obtained similarly, with the following modifications: for the error function $e_h\coloneqq v-\Pi_hv$, with the notation in Section \ref{sec:stability-lambda} we define $\psi\coloneqq B_{j,V_h}^{-1} e_h \in H_{j,r}([0,R])$, which is equivalent to:
    \begin{equation}\label{eq:nitsche-weak-N}
    a_{j,V_h}(\psi,w)=(e_h,w)_r, \quad \forall w\in H_{j,r}([0,R]).\end{equation}
    As in the proof of Proposition \ref{prop:spaces-equivalence} \ref{item:weak-second}, the function $\psi(r)\cos(j\theta)$ satisfies the weak formulation of the homogeneous Neumann problem associated to the equation:
    \[-\Delta[\psi(r)\cos(j\theta)]+V_h(\|\x\|)\psi(r)\cos(j\theta) = e_h(r)\cos(j\theta), \quad \x=(r\cos(\theta),r\sin(\theta))\in B_O(R).\]
   Testing the weak formulation against the function $\psi(r)\cos(j\theta)$ itself and taking into account that $V_h\geq \frac 1 2$, we obtain that:
    \[\|\psi(r)\cos(j\theta)\|_{H^1(B_O(R))}\leq C(R,V) \|e_h\cos(j\theta)\|_{L^2(B_O(R))}\]
    Next, in order to obtain the estimate \eqref{eq:projection-regularity-D}, we combine the $H^1$ estimate above with the regularity theory for elliptic Neumann problems \cite[Theorem 4, p.~217]{mikhailov}. The rest of the proof of statement \ref{item:projection-N} is unchanged.

    To prove the estimate concerning the projection of $v-v(R)\phi_{N_h}$, that is statement \ref{item:projection-D-modified}, we denote the projection error \[e_h\coloneqq [v-v(R)\phi_{N_h}]-\Pi_h^0 [v-v(R)\phi_{N_h}]\] 
    and test the inequality \eqref{eq:projectior-minimizes} against $w^h\coloneqq \sum_{i=0}^{N_h-1} v(r^h_i)\phi_i\in W_{j,h}^0$ to obtain:
    \[\begin{aligned}
    a_{j,V_h} (e_h,e_h)&\leq a_{j,V_h}\left(v-v(R)\phi_{N_h} - \sum_{i=0}^{N_h-1} v(r^h_i)\phi_i\,,\,v-v(R)\phi_{N_h} - \sum_{i=0}^{N_h-1} v(r^h_i)\phi_i\right)\\
    &=a_{j,V_h}(v-\tilde v_h,v-\tilde v_h)\leq h^2C(V) \|v(r)\cos(j\theta)\|_{H^2(B_O(R))}^2,
    \end{aligned}\]
    by Proposition \ref{prop:fem-interpolator}. The rest of the proof of the statement \ref{item:projection-D-modified} uses the same reasoning (Nitsche's trick) as the proof of the statement \ref{item:projection-D}. \qed

\subsection*{Proof of Proposition \ref{prop:approximation-of-spectra}}
    This proof is inspired from \cite[Sections 7-8]{boffi2010}.
    We will outline the proof in the Dirichlet case, since the proof of the estimates for Neumann eigenvalues works exactly the same.

    First, Proposition \ref{prop:spaces-equivalence} implies that for every Dirichlet eigenpair $(\lambda^j_n,\xi^j_n)$ of $a_{j,V}$, the pair $(\lambda^j_n, \xi^j_n(r)\cos(j\theta))$ is a Dirichlet eigenpair of \eqref{eq:PDE-lambda} on $B_O(R)$. Therefore, the number $Q_{j,K}$ is smaller than or equal to $\tilde Q_K$, which is defined as the number of Dirichlet eigenvalues of \eqref{eq:PDE-lambda} on $B_O(R)$ that are less than or equal to $K$.
    
    Next, as in Lemma \ref{lem:projection-error}, we take $h_0< \frac{1}{2\|V'\|_{L^\infty([0,R])}}$ and further decrease this parameter $h_0$ such that, for every $h\in (0,h_0)$, the dimension of $W_{j,h}^0$ is greater than $\tilde Q_K$. Therefore, for every $n\in \overline{1,Q_{j,K}}$, the min-max characterisation for both continuous and discrete eigenvalues \cite[Proposition 7.2]{boffi2010} holds true:
    \begin{align}\label{eq:minmax-continuous}
    \lambda^j_n&=\min_{\substack{E\text{ subspace of }H_{j,r}^0([0,R])\\{\rm dim}(E)=n }}\quad\max_{w\in E\setminus\{0\}}\frac{a_{j,V}(w,w)}{\|w\|_r^2};\\
    \label{eq:minmax-discrete}
    \lambda^j_{n,h}&=\min_{\substack{E^h\text{ subspace of }W_{j,h}^0\\{\rm dim}(E^h)=n }}\quad\max_{w^h\in E^h\setminus\{0\}}\frac{a_{j,V_h}(w^h,w^h)}{\|w^h\|_r^2}.
    \end{align}
    An immediate consequence of these characterisations is that, for every $n\in \overline{1,Q_{j,K}}$,
    \[\lambda^j_n\leq \lambda^{j}_{n,h}.\]
    Furthermore, the proof of \cite[Proposition 7.2]{boffi2010} implies that the minimum in \eqref{eq:minmax-continuous} is attained for $E=E_n^j$, which is the span of the eigenvectors corresponding to the first $n$ eigenvalues of $a_{j,V}$ on $H_{j,r}^0([0,R])$. In order to prove an upper bound for the difference $\lambda^j_{n,h}-\lambda^j_n$, we aim to use $E^h=E^j_{n,h}\coloneqq \Pi_h^0 E^j_n$ as a test subspace in \eqref{eq:minmax-discrete}. However, we need to make sure that ${\rm dim}(E^j_{n,h})=n$, which is equivalent to $\Pi_h^0$ being injective on $E^j_n$.

    Indeed, let $(\xi^j_m)_{m\geq 1}$ an orthonormal Hilbert basis of $L_r([0,R])$ where $\xi^j_m$ is an eigenvector associated to $\lambda^j_m$. Then, Lemma \ref{lem:projection-error} and a regularity result similar to \eqref{eq:projection-regularity-D} imply that:
    \[\|\xi^j_m-\Pi_h^0\xi^j_m\|_r\leq h^2 C(R,V,K) \|\xi^j_m\|_r=h^2 C(R,V,K)\]
    Next, for every $w\in E^j_n$, we can write:
    \[w=\sum_{m=1}^n (w,\xi^j_m)_r\, \xi^j_m,\]
    so, the inequality between the arithmetic mean and the quadratic mean implies that:
    \[\left\|w-\Pi_h^0 w\right\|_r^2\leq n \sum_{m=1}^n (w,\xi^j_m)_r^2 \, \left\|\xi_m-\Pi_h^0 \xi^j_m\right\|_r^2 \leq h^4\, \tilde Q_K\, C(R,V,K)\sum_{m=1}^n (w,\xi^j_m)_r^2 = h^4\, C(R,V,K) \|w\|_r^2,\]from which we get:
    \[\|w-\Pi_h^0 w\|_r\leq h^2(R,V,K)\|w\|_r.\]
    As a result, for $h_0$ smaller than $\frac{1}{2 C(R,V,K)}$, the triangle inequality leads to:
    \begin{equation}\label{eq:projection-injective}
    \|\Pi_h^0 w\|_r\geq \|w\|_r-\|w-\Pi_h^0 w\|_r\geq [1-h^2\, C(R,V,K)]\|w\|_r\geq \frac{1}{2}\|w\|_r,\end{equation}
    which implies that $\Pi_h^0:E^j_n\to E^j_{n,h}$ is injective.

    Therefore, the estimate \eqref{eq:minmax-discrete} implies that:
    \begin{equation}\label{eq:lambdajih-upper-bound-1}
    \lambda^j_{n,h}\leq \max_{w\in E^j_n\setminus\{0\}} \frac{a_{j,V_h}(\Pi_h^0 w, \Pi_h^0 w)}{\|\Pi_h^0w\|_r^2}.\end{equation}
    Next, de definition of the projection operator $\Pi_h^0$, together with the Cauchy-Schwarz inequality leads to:
    \[a_{j,V_h}(\Pi_h^0 w , \Pi_h^0 w)=a_{j,V_h}(w , \Pi_h^0 w)\leq \sqrt{a_{j,V_h}(w , w)a_{j,V_h}(\Pi_h^0 w , \Pi_h^0 w)},\]
    from which we deduce that:
    \[a_{j,V_h}(\Pi_h^0 w, \Pi_h^0 w)\leq a_{j,V_h}(w,w)=a_{j,V}(w,w)+([V_h-V]\, w,w)_r.\]
    Combining this with the estimates \eqref{eq:projection-injective} and \eqref{eq:lambdajih-upper-bound-1}, we arrive at:
    \[
    \begin{aligned}
\lambda^j_{n,h}&\leq \max_{w\in E^j_n\setminus\{0\}} \frac{a_{j,V}(w,w)+\|V-V_h\|_{L^\infty([0,R])}\|w\|_r^2}{\|w\|_r^2}\frac{\|w\|_r^2}{\|\Pi_h^0 w\|_r^2}\\
&\leq \left(\frac{1}{1-h^2\, C(R,V,K)}\right)^2 \left[\|V-V_h\|_{L^\infty([0,R])}+\max_{w\in E^j_n\setminus\{0\}} \frac{a_{j,V}(w,w)}{\|w\|_r^2}\right]\\
&\leq \left(\frac{1}{1-h^2\, C(R,V,K)}\right)^2 \lambda^j_n + 4 \|V-V_h\|_{L^\infty([0,R])},
\end{aligned}    
    \]
    where the last line was obtained by recalling that the proof of \cite[Proposition 7.2]{boffi2010} implies that the minimum $\lambda^j_n$ is attained in \eqref{eq:minmax-continuous} for $E=E^j_n$. The conclusion follows by \eqref{eq:error-V-Vh-C1}-\eqref{eq:error-V-Vh-C2}. \qed

\section{Proofs of results in Section \ref{sec:numerical-integration}}
\label{acaret:proofs-3}
\subsection*{Proof of Proposition \ref{prop:boundary-int}}
Corollary \ref{cor:error-UJ-UJh} and Minkowski's inequality imply that:
\[\left|S_{N_{\partial\Omega}}(u^{J,\lambda}_{\boldsymbol{\alpha},h})-S_{N_{\partial\Omega}}(u^{J,\lambda}_{\boldsymbol{\alpha}})\right|\leq h\, C(\Omega,V,K,J) \|u^{J,\lambda}_{\boldsymbol{\alpha}}\|_{L^2(\Omega)}\]
Therefore, to obtain the conclusion, it suffices to prove the estimate:
\begin{equation}\label{eq:border-int-claim}
\left|S_{N_{\partial\Omega}}(u^{J,\lambda}_{\boldsymbol{\alpha}})-\|u^{J,\lambda}_{\boldsymbol{\alpha}}\|_{L^2(\partial\Omega)}\right|\leq \frac{C(\Omega,V,K,J)}{N_{\partial\Omega}}\,\|u^{J,\lambda}_{\boldsymbol{\alpha}}\|_{L^2(\Omega)}.
\end{equation}
Indeed, we extend $\gamma$ by periodicity on the real line and this allows us to write, for a function $u\in H^1(\partial\Omega)$:
\[S_{N_{\partial\Omega}}(u)=\sqrt{\sum_{i=1}^{N_{\partial \Omega}} \int_{t_{i-1}}^{t_i} |u(\gamma(t_i))|^2\|\gamma'(t_i)\|\dd t}\]
and, on the other hand,
\[\|u\|_{L^2(\partial\Omega)}=\sqrt{\sum_{i=1}^{N_{\partial \Omega}} \int_{t_{i-1}}^{t_i}|u(\gamma(t))|^2\|\gamma'(t)\|\dd t }.\]
Therefore, Minkovski's inequality implies that:
\begin{equation}\label{eq:border-int-minkovski}
\left|S_{N_{\partial\Omega}}(u)-\|u\|_{L^2(\partial\Omega)}\right|\leq \sqrt{\sum_{i=1}^{N_{\partial \Omega}} \int_{t_{i-1}}^{t_i}\left||u(\gamma(t_i))|\|\gamma'(t_i)\|^\frac12-|u(\gamma(t))|\|\gamma'(t)\|^\frac12 \right|^2\dd t}.\end{equation}
Next, since $\gamma$ is a non-degenerate parametrisation of $\Omega$, the triangle inequality implies that:
\[\begin{aligned}\left||u(\gamma(t_i))|\|\gamma'(t_i)\|^\frac12-|u(\gamma(t))|\|\gamma'(t)\|^\frac12 \right| &\leq C(\Omega)\left[|u(\gamma(t_i))-u(\gamma(t))|+|u(\gamma(t))|(t_i-t_{i-1})\right]\\
& \leq C(\Omega)\left[\int_{t}^{t_i}\left|\frac{d}{ds}u(\gamma(s))\right|\dd s+\frac{1}{N_{\partial\Omega}}|u(\gamma(t))|\right]\\
& \leq C(\Omega)\left[\int_{t_{i-1}}^{t_i}\left|\frac{d}{ds}u(\gamma(s))\right|\dd s+\frac{1}{N_{\partial\Omega}}|u(\gamma(t))|\right].
\end{aligned}\]
The Cauchy-Schwarz inequality and the fact that $\gamma$ is non-degenerate imply that:
\[\begin{aligned}
\left||u(\gamma(t_i))|\|\gamma'(t_i)\|^\frac12-|u(\gamma(t))|\|\gamma'(t)\|^\frac12 \right|^2 \leq C(\Omega)\left[ |t_{i}-t_{i-1}|\int_{t_{i-1}}^{t_i}\left|\frac{d}{ds}u(\gamma(s))\right|^2\dd s +\frac{1}{N_{\partial\Omega}^2}|u(\gamma(t))|^2\right] \phantom{.}\\
\leq C(\Omega)\left[\frac{1}{N_{\partial\Omega}}\int_{t_{i-1}}^{t_i}\Bigg|\underbrace{\frac{d}{ds}u(\gamma(s))\frac{1}{\|\gamma'(s)\|}}_{\text{weak derivative of $u$ on }\partial \Omega}\Bigg|^2\|\gamma'(s)\|\dd s +\frac{1}{N_{\partial\Omega}^2}|u(\gamma(t))|^2\|\gamma'(t)\|\right].
\end{aligned}\]
Inserting this into \eqref{eq:border-int-minkovski}, we obtain that:
\begin{equation}\label{eq:border-int-error-H1}
\left|S_{N_{\partial\Omega}}(u)-\|u\|_{L^2(\partial\Omega)}\right|\leq \frac{C(\Omega)}{{N_{\partial\Omega}}}\|u\|_{H^1(\partial\Omega)}.\end{equation}
Next, the theory of traces (see \cite[p.~316]{brezis}) and Proposition \ref{prop:H2-stability-J} implies that:
\[\|u^{J,\lambda}_{\boldsymbol{\alpha}}\|_{H^1(\partial\Omega)}\leq \|u^{J,\lambda}_{\boldsymbol{\alpha}}\|_{H^\frac32(\partial\Omega)}\leq C(\Omega) \|u^{J,\lambda}_{\boldsymbol{\alpha}}\|_{H^2(\Omega)}\leq C(\Omega,V,K,J) \|u^{J,\lambda}_{\boldsymbol{\alpha}}\|_{L^2(B_O(\tilde R))}\]
Eventually, using \eqref{eq:border-int-error-H1} for $u=u^{J,\lambda}_{\boldsymbol{\alpha}}$, we obtain \eqref{eq:border-int-claim}, which leads to the conclusion.\qed

\subsection*{Proof of Proposition \ref{prop:Monte-Carlo}}
    For every integers $j,k\in \overline{0,J}$, we define the Monte-Carlo approximation of the $L^2(\Omega)$ scalar product of $\ujh^{\lambda}(r)\cos(j\theta)$ and $\mathbf{u}_{k,h}^{\lambda}(r)\cos(k\theta)$:
    \[
    \mathcal{S}_{N_\Omega} \left(\ujh^{\lambda}(r)\cos(j\theta),\mathbf{u}_{k,h}^{\lambda}(r)\cos(k\theta)\right)\coloneqq \sum_{i=1}^N \ujh^{\lambda}(\boldsymbol{r}_i)\cos(j\boldsymbol{\theta}_i)\,\mathbf{u}_{k,h}^{\lambda}(\boldsymbol{r}_i)\cos(k\boldsymbol{\theta}_i),
    \]
    where $\x_i=(\boldsymbol{r}_i\cos(\boldsymbol{\theta}_i),\boldsymbol{r}_i\sin(\boldsymbol{\theta}_i))$, $i=\overline{1,N_\Omega}$. Then we define the event
    \[
    \mathcal{E}_{j,k,N_\Omega,\eta} \coloneqq \text{"}\left|\mathcal{S}_{N_\Omega} \left(\ujh^{\lambda}(r)\cos(j\theta),\mathbf{u}_{k,h}^{\lambda}(r)\cos(k\theta)\right) - \left(\ujh^{\lambda}(r)\cos(j\theta),\mathbf{u}_{k,h}^{\lambda}(r)\cos(k\theta)\right)_{L^2(\Omega)}\right|<\eta\text{ "}\]
    Since $\x_i$ are i.i.d. uniformly in $\Omega$, it means that:
    \[\mathbb{E}\left[\ujh^{\lambda}(\boldsymbol{r}_1)\cos(j\boldsymbol{\theta}_1)\,\mathbf{u}_{k,h}^{\lambda}(\boldsymbol{r}_1)\cos(k\boldsymbol{\theta}_1)\right]=\frac{1}{|\Omega|}\left(\ujh^{\lambda}(r)\cos(j\theta),\mathbf{u}_{k,h}^{\lambda}(r)\cos(k\theta)\right)_{L^2(\Omega)}\]

    The Law of Large Numbers \cite[Theorem 2.1]{ross2019probability} implies that:
    \begin{equation}\label{eq:MC-lln}
    \mathbb{P}( \mathcal{E}_{j,k,N_\Omega,\eta})\geq 1-\frac{|\Omega|^2\,{\rm Var}\left(\ujh^{\lambda}(\boldsymbol{r}_1)\cos(j\boldsymbol{\theta}_1)\,\mathbf{u}_{k,h}^{\lambda}(\boldsymbol{r}_1)\cos(k\boldsymbol{\theta}_1)\right)}{\eta^2\, N_\Omega}.\end{equation}
    Then, we apply the Cauchy-Schwarz inequality to obtain:
    \[\begin{aligned}{\rm Var}\left(\ujh^{\lambda}(\boldsymbol{r}_1)\cos(j\boldsymbol{\theta}_1)\,\mathbf{u}_{k,h}^{\lambda}(\boldsymbol{r}_1)\cos(k\boldsymbol{\theta}_1)\right)&\leq \mathbb{E}\left[\left|\ujh^{\lambda}(\boldsymbol{r}_1)\cos(j\boldsymbol{\theta}_1)\,\mathbf{u}_{k,h}^{\lambda}(\boldsymbol{r}_1)\cos(k\boldsymbol{\theta}_1)\right|^2\right]\\
    &=\frac{1}{|\Omega|}\left\|\ujh^{\lambda}(r)\cos(j\theta)\,\mathbf{u}_{k,h}^{\lambda}(r)\cos(k\theta)\right\|_{L^2(\Omega)}^2\\
    &\leq \frac{1}{|\Omega|}\|\ujh^{\lambda}(r)\cos(j\theta)\|_{L^4(\Omega)}^2\|\mathbf{u}_{k,h}^{\lambda}(r)\cos(k\theta)\|_{L^4(\Omega)}^2
    \end{aligned}\]
    The Sobolev Embedding inequality \cite[Theorem 4.12]{adams2003sobolev}, Theorem \ref{thm:ujh-uj-convergence} and Proposition \ref{prop:H2-stability-J} further imply that:
    \[\begin{aligned}\|\ujh^{\lambda}(r)\cos(j\theta)\|_{L^4(\Omega)}
    \leq\|\ujh^{\lambda}(r)\cos(j\theta)\|_{L^4(B_O(R))}&\leq C(R)\|\ujh^{\lambda}(r)\cos(j\theta)\|_{H^1(B_O(R))}\\
    &\leq C(R,\tilde R,V,K,J) \|\uj^{\lambda}(r)\cos(j\theta)\|_{L^2(B_O(R))}\\
    &= C(R,\tilde R,V,K,J),
    \end{aligned}\]
    where the last equality follows since $\|\uj^{\lambda}\|=1$. 
    Therefore, we obtain that:
    \[{\rm Var}\left(\ujh^{\lambda}(\boldsymbol{r}_1)\cos(j\boldsymbol{\theta}_1)\,\mathbf{u}_{k,h}^{\lambda}(\boldsymbol{r}_1)\cos(k\boldsymbol{\theta}_1)\right)\leq C(\Omega,V,K,J),\]
    which by \eqref{eq:MC-lln} implies that:
    \begin{equation}\label{eq:MC-probability-1}\mathbb{P}(\mathcal E_{j,k,N_\Omega,\eta})\geq 1-\frac{C(\Omega, V,K,J)}{\eta^2\,N_{\Omega}},\quad \forall j,k\in \overline{0,J}.\end{equation}
    We emphasize that the above estimates remain valid if the cosine function is replaced by the sine function in either one or both of the terms $\cos(j\theta)$ and $\cos(k\theta)$.

    Next, let us denote $\tilde{\mathcal{E}}_{N_\Omega,\eta}$ the intersection of all the events  $\mathcal E_{j,k,N_\Omega,\eta}$, $j,k\in \overline{0,J}$, together with all the equivalent ones with ``$\cos$'' replaced by ``$\sin$''. Then, \eqref{eq:MC-probability-1} leads to:
    \begin{equation}\label{eq:MC-the-great-event}
    \mathbb{P}(\tilde{\mathcal{E}}_{N_\Omega,\eta})\geq 1-(2J+1)^2\frac{C(\Omega,V,K,J)}{\eta^2\,N_{\Omega}}.
    \end{equation}
    We will show that, for a suitable choice of constants, the event $\tilde{\mathcal{E}}_{N_\Omega,\eta}$ implies $\mathcal{E}_{N_\Omega,\eta}$. Indeed, \eqref{eq:defUJh} leads to:
     \[\begin{aligned}\mathcal{S}_{N_\Omega} (u^{J,\lambda}_{\boldsymbol{\alpha},h})^2 &- \| u^{J,\lambda}_{\boldsymbol{\alpha},h}\|_{L^2(\Omega)}^2\\
     =&\sum_{j,k=0}^{J}\alpha_j^c\alpha_k^c\left[\sum_{i=1}^{N_\Omega} \ujh^{\lambda}(\boldsymbol{r}_i)\cos(j\boldsymbol{\theta}_i)\,\mathbf{u}_{k,h}^{\lambda}(\boldsymbol{r}_i)\cos(k\boldsymbol{\theta}_i) -\left(\ujh^{\lambda}(r)\cos(j\theta),\mathbf{u}_{k,h}^{\lambda}(r)\cos(k\theta)\right)_{L^2(\Omega)}\right]
     \\&+\sum_{j,k=1}^{J}\alpha_j^s\alpha_k^s\left[\sum_{i=1}^{N_\Omega} \ujh^{\lambda}(\boldsymbol{r}_i)\sin(j\boldsymbol{\theta}_i)\,\mathbf{u}_{k,h}^{\lambda}(\boldsymbol{r}_i)\sin(k\boldsymbol{\theta}_i) -\left(\ujh^{\lambda}(r)\sin(j\theta),\mathbf{u}_{k,h}^{\lambda}(r)\sin(k\theta)\right)_{L^2(\Omega)}\right]\\
     &+2\sum_{\substack{j=\overline{0,J}\\k=\overline{1,J}}}\alpha_j^c\alpha_k^s\left[\sum_{i=1}^{N_\Omega} \ujh^{\lambda}(\boldsymbol{r}_i)\cos(j\boldsymbol{\theta}_i)\,\mathbf{u}_{k,h}^{\lambda}(\boldsymbol{r}_i)\sin(k\boldsymbol{\theta}_i) -\left(\ujh^{\lambda}(r)\cos(j\theta),\mathbf{u}_{k,h}^{\lambda}(r)\sin(k\theta)\right)_{L^2(\Omega)}\right].
     \end{aligned}
     \]
     If we assume that $\tilde{\mathcal{E}}_{N_\Omega,\eta}$ happens, then the absolute value  of all the terms in square brackets above is smaller than $\eta$. As a result, the inequality between the arithmetic mean and the quadratic mean leads to:
     \[\left|\mathcal{S}_{N_\Omega} ( u^{J,\lambda}_{\boldsymbol{\alpha},h})^2 - \|u^{J,\lambda}_{\boldsymbol{\alpha},h}\|_{L^2(\Omega)}^2\right|\leq \eta \left(|\alpha_0^c|+\sum_{j=1}^J(|\alpha_j^c|+|\alpha_j^s|)\right)^2\leq \eta\, (2J+1)\,\|u^{J,\lambda}_{\boldsymbol{\alpha}}\|_{L^2(B_O(R))}^2,\]
     which, by Proposition \ref{prop:H2-stability-J} implies that:
     \[\left|\mathcal{S}_{N_\Omega} (u^{J,\lambda}_{\boldsymbol{\alpha},h})^2 - \| u^{J,\lambda}_{\boldsymbol{\alpha},h}\|_{L^2(\Omega)}^2\right|\leq\eta\, C(R,\tilde R,V,K,J)\,\|u^{J,\lambda}_{\boldsymbol{\alpha}}\|_{L^2(\Omega)}^2 \]
     Moreover, Corollary \ref{cor:error-UJ-UJh} leads to:
     \[\left|\|u^{J,\lambda}_{\boldsymbol{\alpha},h}\|_{L^2(\Omega)}^2-\|u^{J,\lambda}_{\boldsymbol{\alpha}}\|_{L^2(\Omega)}^2\right|\leq h C(R,\tilde R,V,K,J)\|u^{J,\lambda}_{\boldsymbol{\alpha}}\|_{L^2(\Omega)}^2.\]
     Eventually, the last two inequalities imply that:
     \[\left|\mathcal{S}_{N_\Omega} (u^{J,\lambda}_{\boldsymbol{\alpha},h}) - \|u^{J,\lambda}_{\boldsymbol{\alpha},h}\|_{L^2(\Omega)}\right|=
    \frac{\left|\mathcal{S}_{N_\Omega} (u^{J,\lambda}_{\boldsymbol{\alpha},h})^2 - \| u^{J,\lambda}_{\boldsymbol{\alpha},h}\|_{L^2(\Omega)}^2\right|}{\mathcal{S}_{N_\Omega} (u^{J,\lambda}_{\boldsymbol{\alpha},h}) + \|u^{J,\lambda}_{\boldsymbol{\alpha},h}\|_{L^2(\Omega)}}\leq (\eta+h)\,C(R,\tilde R,V,K,J)\| u^{J,\lambda}_{\boldsymbol{\alpha}}\|_{L^2(\Omega)},\]
    which, together with \eqref{eq:MC-the-great-event}, leads to the conclusion. \qed

\section{Technical lemmata}

\begin{lemma}
\label{lem:dense-in-h01} Let $R>0$ and $u\in H_0^1(B_O(R))$. Then for every $\eps>0$, there exists a function $\varphi^\eps\in C_c^{\infty}(B_O(R)\setminus \{O\})$ such that $\|\varphi^\eps-u\|_{H^1(B_O(R))}\leq \eps$.
\begin{proof}[Sketch of proof]
The proof is based on a cut-off idea found in \cite{CazacuFlynnLam}.
Since $u\in H_0^1(B_O(R))$, for every $\eps>0$, there exists $\tilde{\varphi}^\eps\in C_c^\infty(B_O(R))$ such that $\|\tilde \varphi^\eps-u\|_{H^1(B_O(R))}\leq \frac \eps 3$. For a parameter $\delta\in(0,1)$ which we will fix later, let $\varphi^{\eps,\delta}\in H_0^1(B_O(R)\setminus B_O(\delta^2))$,
\[\varphi^{\eps,\delta}(\x)\coloneqq  \tilde \varphi^\eps(\x)\, \psi_\delta(\|\x\|),\]
where $\psi_\delta:[0,R]\to \RR$,
\[\psi_\delta(r)\coloneqq\begin{cases}
    0, &r<\delta^2;\\
    \frac{\log r-2\log \delta}{-\log \delta}, & r\in [\delta^2,\delta];\\
    1, &r>\delta.
\end{cases}\]

With this notation, 
\[\begin{aligned}\| \varphi^{\eps,\delta}-\tilde \varphi^\eps\|_{H^1(B_O(R))}^2&\leq \int_{B_O(R)} |\tilde \varphi^\eps(\x)|^2 [1-\psi_\delta(\|\x\|)]^2\dd \x+ 2 \int_{B_O(R)} |\nabla\tilde \varphi^\eps(\x)|^2 [1-\psi_\delta(\|\x\|)]^2\dd \x\\
&\quad + 2 \int_{B_O(R)} |\tilde \varphi^\eps(\x)|^2 [\nabla \psi_\delta(\|\x\|)]^2\dd \x.
\end{aligned}\]
The first two terms converge to zero as $\delta\to 0$ by dominated convergence. For the last term, we use the definition of $\psi_\delta$ and the fact that $\tilde \varphi^\eps \in C_c^\infty(B_O(R))$ to write:
\[\int_{B_O(R)} |\tilde \varphi^\eps(\x)|^2 [\nabla \psi_\delta(\|\x\|)]^2\dd \x\leq \|\tilde \varphi^\eps\|_{L^\infty(B_O(R))}\int_{\delta^2}^\delta \left|\frac{1}{r\log\delta}\right|^2\,r\, \dd r= \frac{\|\tilde \varphi^\eps\|_{L^\infty(B_O(R))}}{-\log \delta}\xrightarrow{\delta\to 0}0.\]
Therefore, we can choose $\delta\coloneqq\delta_\eps$ such that \[\|\varphi^{\eps,\delta_\eps}-\tilde \varphi^\eps\|_{H^1(B_O(R))}\leq \frac \eps 3.\]
Eventually, since $\varphi^{\eps,\delta_\eps}\in H_0^1(B_O(R)\setminus B_O(\delta_\eps^2))$, there exists a function $\varphi^\eps\in C_c^{\infty}(B_O(R)\setminus \{O\})$ such that 
    \[\|\varphi^\eps-\varphi^{\eps,\delta_\eps}\|_{H^1(B_O(R))}\leq \frac \eps 3\]
    and, therefore, 
    \[\|\varphi^\eps-u\|_{H^1(B_O(R))}\leq \eps,\]
    which finishes the proof.
\end{proof}
\end{lemma}
The following corollary can be easily obtained from the proof of Lemma \ref{lem:dense-in-h01}, considering $\tilde \varphi^\eps\in C^\infty\left(\overline{B_O(R)}\right)$, where $\overline{B_O(R)}$ is the closure of the ball of radius $R$. The existence of such an $\tilde \varphi^\eps$ is provided by \cite[Corollary 9.8]{brezis}.
\begin{corollary}\label{cor:density-h1}
    Let $R>0$ and $u\in H^1(B_O(R))$. Then for every $\eps>0$, there exists a function $\varphi^\eps\in C_c^{\infty}\left(\overline{B_O(R)}\setminus \{O\}\right)$ such that $\|\varphi^\eps-u\|_{H^1(B_O(R))}\leq \eps$.
\end{corollary}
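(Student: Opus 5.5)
We argue exactly as in the proof of Lemma~\ref{lem:dense-in-h01}, replacing the compactly supported approximant by one that is smooth up to the boundary. Fix $u\in H^1(B_O(R))$ and $\eps>0$. Since $B_O(R)$ is a smooth bounded domain, \cite[Corollary 9.8]{brezis} provides $\tilde\varphi^\eps\in C^\infty\left(\overline{B_O(R)}\right)$ with $\|\tilde\varphi^\eps-u\|_{H^1(B_O(R))}\leq\frac\eps3$. In particular, $\tilde\varphi^\eps$ and $\nabla\tilde\varphi^\eps$ are bounded on the closed ball. This is the only property of $\tilde\varphi^\eps$ that the cut-off estimate needs, and here it replaces the compact support used in the lemma.

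Next, we remove a neighbourhood of the origin with the logarithmic cut-off $\psi_\delta$ from the proof of Lemma~\ref{lem:dense-in-h01}, setting $\varphi^{\eps,\delta}\coloneqq\tilde\varphi^\eps\,\psi_\delta(\|\x\|)$. We then split $\|\varphi^{\eps,\delta}-\tilde\varphi^\eps\|_{H^1(B_O(R))}^2$ into the same three terms as before.
\begin{itemize}
\item The two terms containing $(1-\psi_\delta)^2$ vanish as $\delta\to0$ by dominated convergence, since $1-\psi_\delta$ is supported in $B_O(\delta)$ and is bounded by $1$.
\item The gradient term is bounded by
\[
\|\tilde\varphi^\eps\|_{L^\infty(B_O(R))}^2\cdot 2\pi\int_{\delta^2}^{\delta}\frac{\dd r}{r\log^2\delta}=\frac{2\pi\,\|\tilde\varphi^\eps\|_{L^\infty(B_O(R))}^2}{-\log\delta}\xrightarrow{\delta\to0}0 .
\]
\end{itemize}
This uses only the sup bound of $\tilde\varphi^\eps$. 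Choosing $\delta=\delta_\eps$ small enough gives $\|\varphi^{\eps,\delta_\eps}-\tilde\varphi^\eps\|_{H^1(B_O(R))}\leq\frac\eps3$.

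The function $\varphi^{\eps,\delta_\eps}$ vanishes on $B_O(\delta_\eps^2)$ and is smooth away from the two circles $r=\delta_\eps^2$ and $r=\delta_\eps$, where it is only Lipschitz. To obtain an element of $C_c^\infty\left(\overline{B_O(R)}\setminus\{O\}\right)$, we replace $\psi_\delta$ by a one-dimensional mollification $\psi_\delta*\rho_\tau$ in the variable $r$, with $\tau<\delta^2/2$. The mollified cut-off is smooth, equals $0$ for $r<\delta^2/2$, equals $1$ near $r=R$, and converges to $\psi_\delta$ in $H^1$ of the annulus as $\tau\to0$. Its product with the smooth function $\tilde\varphi^\eps$ is therefore smooth on $\overline{B_O(R)}$, vanishes near $O$, and lies within $\frac\eps3$ of $\varphi^{\eps,\delta_\eps}$ for small $\tau$. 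The triangle inequality then concludes the proof.

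The main obstacle is the gradient term near the origin, where the argument relies on the vanishing logarithmic capacity of a point in two dimensions. The boundedness of $\tilde\varphi^\eps$ on $\overline{B_O(R)}$ is what makes this term tractable, and it is exactly what \cite[Corollary 9.8]{brezis} supplies.
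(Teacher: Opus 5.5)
Your proposal is correct and follows the paper's route: the paper likewise takes $\tilde\varphi^\eps\in C^\infty\left(\overline{B_O(R)}\right)$ from \cite[Corollary 9.8]{brezis} and reruns the logarithmic cut-off argument of Lemma~\ref{lem:dense-in-h01}. Your explicit mollification of $\psi_\delta$ in the last step is a useful addition. The lemma's closing appeal to $H_0^1$ of the annulus does not carry over verbatim, because the approximant need not vanish on $\partial B_O(R)$, and the paper leaves this point implicit.
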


\begin{lemma}\label{lem:fractional-norms}
    Let $R>0$ and $w\in H_{j,r}([0,R])\cap H^2(\mathcal{I})$ for every  compact subinterval $\mathcal{I}$ of $(0,R]$. Then, for every integer $j\geq 0$ and every $r>0$, 
    \begin{equation}\label{eq:fractional-spaces}
    [w(r)\cos(j\theta)]\Big\vert_{\partial B_O(r)}\in H^\frac{3}{2}(\partial B_O(r))\quad\text{ and }\quad\partial_\nu [w(r)\cos(j\theta)]\Big\vert_{\partial B_O(r)}\in H^\frac{1}{2}(\partial B_O(r)), \end{equation}
    where by $\partial_\nu$ we understand the exterior normal derivative with respect to $\partial B_O(r)$. Moreover, there exist two universal constants $0<c<C$ such that:
    \begin{equation}\label{eq:fractional-norm-H32} c |w(r)|\left(\sqrt{r}+\frac{j\sqrt{j}}{r}\right) \|w(r)\cos(j\theta)\|_{H^\frac{3}{2}(\partial B_O(r))}\leq C  |w(r)|\left(\sqrt{r}+\frac{j\sqrt{j}}{r}\right)  \end{equation}
    and 
    \begin{equation} \label{eq:fractional-norm-H12}
    c |w'(r)| \left(\sqrt{r}+ \sqrt{j}\right)\leq \|\partial_\nu w(r)\cos(j\theta)\|_{H^\frac{3}{2}(\partial B_O(r))}\leq C |w'(r)| \left(\sqrt{r}+ \sqrt{j}\right),\end{equation}
    where $w(r)$ and $w'(r)$ are understood in the sense of the $C^1$ representative of a $H^2(\mathcal{I})$ function.

    The statements \eqref{eq:fractional-spaces}-\eqref{eq:fractional-norm-H12} hold true if we replace $\cos(j\theta)$ with $\sin(j\theta)$, with $j>1$.
    
    \emph{Note:} In this paper, we follow \cite[p.~314]{brezis} for the definition of fractional Sobolev spaces.
\end{lemma}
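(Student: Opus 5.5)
The plan is to reduce everything to an explicit computation for the single function $\cos(j\theta)$ on a circle, because the trace and the normal derivative of $w(\rho)\cos(j\theta)$ on $\partial B_O(r)$ factor as constants times $\cos(j\theta)$.

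\emph{Step 1 (identification of trace and normal derivative).} Fix $r\in(0,R]$ and a compact interval $\mathcal I\subset(0,R]$ containing $r$ in its interior; if $r=R$, take $\mathcal I=[r-\delta,R]$ and use one-sided traces. Since $w\in H^2(\mathcal I)$, the function $w(\rho)\cos(j\theta)$ belongs to $H^2$ of the annulus $\{\rho\in\mathcal I\}$. This follows from the explicit weak derivatives \eqref{w-cos-j-theta-weak-derivatives}, differentiated once more, because $\rho$ stays away from $0$. Its trace on $\partial B_O(r)$ is then $w(r)\cos(j\theta)$. Its exterior normal derivative is $\partial_\rho[w(\rho)\cos(j\theta)]|_{\rho=r}=w'(r)\cos(j\theta)$, where $w,w'$ are the values of the $C^1$ representative. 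This gives \eqref{eq:fractional-spaces}, since $\cos(j\theta)$ is smooth on the circle. It also reduces \eqref{eq:fractional-norm-H32}--\eqref{eq:fractional-norm-H12} to two-sided bounds
\[
\|\cos(j\theta)\|_{H^{\frac32}(\partial B_O(r))}\simeq \sqrt r+\tfrac{j\sqrt j}{r},\qquad \|\cos(j\theta)\|_{H^{\frac12}(\partial B_O(r))}\simeq \sqrt r+\sqrt j,
\]
with universal constants. (The norm in \eqref{eq:fractional-norm-H12} should read $H^{\frac12}$, and the inequality in \eqref{eq:fractional-norm-H32} should be read as a two-sided bound.)

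\emph{Step 2 (Fourier characterisation with uniform constants).} Parametrise $\partial B_O(r)$ by arc length $s=r\theta\in[0,2\pi r)$. For $f=\sum_k c_k e^{ik s/r}$ we aim to show
\[
\|f\|_{H^\sigma(\partial B_O(r))}^2\simeq 2\pi r\sum_k \left(1+\tfrac{k^2}{r^2}\right)^{\sigma}|c_k|^2 ,\qquad \sigma\in\{\tfrac12,\tfrac32\},
\]
with constants independent of $r$. The argument goes as follows.
\begin{enumerate}
\item For $\sigma=\tfrac12$, write the Gagliardo seminorm on the circle, $\iint |f(s)-f(t)|^2/|s-t|_{\mathrm{circ}}^2\,\dd s\,\dd t$. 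It is scale invariant in dimension one and equals a universal multiple of $\sum_k|k|\,|c_k|^2$ (the standard computation on the unit circle). Adding the $L^2$ part $2\pi r\sum_k|c_k|^2$ gives $2\pi r\sum_k(1+|k|/r)|c_k|^2$, which is comparable to the weighted sum above.
\item For $\sigma=\tfrac32$, use Brezis's definition: $f\in H^1$ with tangential derivative in $H^{1/2}$. The derivative multiplies $c_k$ by $ik/r$, so the same formula follows.
\end{enumerate}
If the definition in \cite[p.~314]{brezis} goes through local charts instead, one first has to check that it is equivalent to this intrinsic one, with constants that do not depend on $r$.

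\emph{Step 3 (evaluation).} For $\cos(j\theta)$ only $k=\pm j$ occur, with $|c_{\pm j}|=\tfrac12$ (for $j=0$, $c_0=1$). This gives $\|\cos(j\theta)\|_{H^\sigma}^2\simeq r(1+j/r)^{2\sigma}$. Using $(1+x)^{a}\simeq 1+x^{a}$ for $x\ge0$, with universal constants, we get $\sqrt r\,(1+j/r)^{3/2}\simeq \sqrt r+j^{3/2}/r$ and $\sqrt r\,(1+j/r)^{1/2}\simeq \sqrt r+\sqrt j$. These are exactly the claimed bounds. The case $\sin(j\theta)$ is identical.

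\emph{Main obstacle.} The key difficulty is Step 2: ensuring that the fractional norms on circles of varying radius $r$ are equivalent to the weighted Fourier norms with constants truly independent of $r$. If a chart-based definition is intended, the constants could depend on $r$. I would first handle this by rescaling to the unit circle and tracking exactly how each piece of the norm scales: the $L^2$ part, the $H^1$ part and the Gagliardo seminorm each scale by a definite power of $r$.
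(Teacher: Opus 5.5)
Your proposal is correct and follows essentially the paper's route. The trace and normal derivative factor as $w(r)\cos(j\theta)$ and $w'(r)\cos(j\theta)$, so everything reduces to the norms of $\cos(j\theta)$ on $\partial B_O(r)$; these come from the scale-invariant Gagliardo seminorm and, for $H^{\frac32}$, from the arc-length derivative $-\frac{j}{r}\sin(j\theta)$. The only difference is that the paper evaluates the double integral for this single mode directly, bounding it above and below by multiples of $j\int_0^{\pi j/2}\sin^2 z/z^2\,\dd z$, rather than going through a general Fourier characterisation.

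Your corrections of the statement are right: the norm in \eqref{eq:fractional-norm-H12} should be $H^{\frac12}$, and \eqref{eq:fractional-norm-H32} is missing a $\leq$. The obstacle you anticipate in Step 2 does not arise, because the paper uses the intrinsic definition (arc-length measure with chordal distance), whose seminorm is exactly invariant under the rescaling $\theta\mapsto r\theta$.
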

\begin{proof}
Since $w$ and $w'$ have continuous representatives on $(0,R]$, then $[w(r)\cos(j\theta)]\Big\vert_{\partial B_O(r)}$ and $\partial_\nu [w(r)\cos(j\theta)]\Big\vert_{\partial B_O(r)}$ can be understood in the classical sense. Furthermore, the normal derivative on $\partial B_O(r)$ coincides with the radial derivative, thus:
\[\partial_\nu [w(r)\cos(j\theta)]\Big\vert_{\partial B_O(r)}=w'(r)\cos(j\theta).\]
Therefore, \eqref{eq:fractional-spaces} follows since $\cos(j\theta)$ is a smooth function on $\partial B_O(r)$. Moreover,
\[ \|w(r)\cos(j\theta)\|_{H^\frac{3}{2}(\partial B_O(r))}=|w(r)|\,\|\cos(j\theta)\|_{H^\frac{3}{2}(\partial B_O(r))}\]and
\[\|\partial_\nu w(r)\cos(j\theta)\|_{H^\frac{1}{2}(\partial B_O(r))}=|w'(r)|\,\|\cos(j\theta)\|_{H^\frac{1}{2}(\partial B_O(r))}.\]

It remains to estimate the fractional Sobolev norms of $\cos(j\theta)$ on $\partial B_O(r)$.  First, for $j=0$, the inequalities \eqref{eq:fractional-norm-H32} and \eqref{eq:fractional-norm-H12} are trivially satisfied.
For $j\geq 1$, we consider the parametrisation of the boundary of $B_O(r)$, $\gamma:[0,2\pi]\to \partial B_O(r)$,
\[\gamma(\theta)=(r\cos(\theta),r\sin(\theta)).\]
Following the definition of the fractional Sobolev norm in \cite[p.~314]{brezis}, we write:
\[\|\cos(j\theta)\|_{H^\frac{1}{2}(\partial B_O(r))}^2=\|\cos(j\theta)\|_{L^2(\partial B_O(r))}^2+\|\cos(j\theta)\|_{\dot H^{\frac{1}{2}}(\partial B_O(r))}^2,\]
where
\[\|\cos(j\theta)\|_{L^2(\partial B_O(r))}^2=r \int_0^{2\pi} |\cos(j\theta)|^2\dd \theta=\pi r\]
and \[\|\cos(j\theta)\|_{\dot H^{\frac{1}{2}}(\partial B_O(r))}^2=
 r^2\int_0^{2\pi}\int_0^{2\pi} \frac{|\cos(j\theta)-\cos(j\sigma)|^2}{|\gamma(\theta)-\gamma(\sigma)|^2} \,\dd \theta\, \dd\sigma.\]
 By direct computation, $|\gamma(\theta)-\gamma(\sigma)|^2=2r^2[1-\cos(\theta-\sigma)]$, so the change of variables $\tau=\theta-\sigma$ and the sum to product formula  lead to:
 \[\begin{aligned}
     \|\cos(j\theta)\|_{\dot H^{\frac{1}{2}}(\partial B_O(r))}^2 & =  \frac{1}{4}\int_0^{2\pi}\int_0^{2\pi} \frac{|\cos(j(\sigma+\tau))-\cos(j\sigma)|^2}{1-\cos(\tau)} \,\dd \tau\, \dd\sigma\\
     & =  \int_0^{2\pi}\int_{0}^{2\pi} \frac{\left|\sin\left(\frac{j\tau}{2}\right)\right|^2}{1-\cos(\tau)}\left|\sin\left(j\sigma+\frac{j\tau}{2}\right)\right|^2 \,\dd \tau\, \dd\sigma \\
     &=\int_0^{2\pi} \frac{\left|\sin\left(\frac{j\tau}{2}\right)\right|^2}{1-\cos(\tau)}  \int_0^{2\pi} \left|\sin\left(j\sigma+\frac{j\tau}{2}\right)\right|^2 \dd\sigma \dd\tau.
 \end{aligned}\]
Using the $2\pi$-periodicity of the function $\sigma \to \sin(j\sigma)$,we obtain: 
 \[\begin{aligned}
     \|\cos(j\theta)\|_{\dot H^{\frac{1}{2}}(\partial B_O(r))}^2 & =  
    \pi\int_0^{2\pi}  \frac{\left|\sin\left(\frac{j\tau}{2}\right)\right|^2}{1-\cos(\tau)} \dd\tau=2\pi \int_0^{\pi}  \frac{\left|\sin\left(\frac{j\tau}{2}\right)\right|^2}{1-\cos(\tau)} \dd\tau,
 \end{aligned}\]
where the last equality was obtained via the change of variables $\tau\leftarrow 2\pi-\tau$. Next, Since $\frac{1-\cos(\tau)}{\tau^2}\in \left[\frac{2}{\pi^2},\frac{1}{2}\right]$, for every $\tau\in [0,\pi]$, we obtain:
 \begin{equation*}
 4\pi \int_0^\pi\frac{\left|\sin\left(\frac{j\tau}{2}\right)\right|^2}{\tau^2}   \dd \tau\leq  \|\cos(j\theta)\|_{\dot H^{\frac{1}{2}}(\partial B_O(r))}^2 \leq \pi^3 \int_0^\pi\frac{\left|\sin\left(\frac{j\tau}{2}\right)\right|^2}{\tau^2}   \dd \tau.\end{equation*}
Next, the change of variables $z=\frac{j\tau}{2}$ leads to:
 \begin{equation}\label{eq:H12-norm-between-integrals-preliminary}
 2\pi j \int_0^{\frac{\pi j}{2}}\frac{\left|\sin\left(z\right)\right|^2}{z^2}   \dd z\leq  \|\cos(j\theta)\|_{\dot H^{\frac{1}{2}}(\partial B_O(r))}^2 \leq \frac{\pi^3 j}{2} \int_0^{\frac{\pi j}{2}}\frac{\left|\sin\left(z\right)\right|^2}{z^2}   \dd z  \dd z,\end{equation}
Consequently, we obtain:
  \begin{equation}\label{eq:H12-norm-between-integrals}
 0< 2\pi j \int_0^{\frac \pi 2}\frac{\left|\sin\left(z\right)\right|^2}{z^2}   \dd z \leq  \|\cos(j\theta)\|_{\dot H^{\frac{1}{2}}(\partial B_O(r))}^2  \leq \frac{\pi^3j}{2}\int_0^{\infty}\frac{\left|\sin\left(z\right)\right|^2}{z^2}   \dd z<+\infty,\end{equation}
 which concludes the proof of \eqref{eq:fractional-norm-H12}.
 
 In order to prove \eqref{eq:fractional-norm-H32}, we take into account that the derivative of the function $\cos(j\theta)$ with respect to the arc length $|\gamma'|=r$ on $\partial B_O(r)$ equals to $-\frac{j}{r}\sin(j\theta)$ (refer also to the formula of the Riemannian gradient in coordinates, e.g. \cite[Section 3.3]{grigoryan2009heat}). Therefore, by the definition of the fractional Sobolev spaces in \cite[p.~314]{brezis},  
 \[\begin{aligned}\|\cos(j\theta)\|_{H^\frac{3}{2}(\partial B_O(r))}^2&=\|\cos(j\theta)\|_{L^2(\partial B_O(r))}^2+\frac{j^2}{r^2}\left\|\sin(j\theta)\right\|_{L^2(\partial B_O(r))}^2+\frac{j^2}{r^2}\left\|\sin(j\theta)\right\|_{\dot H^{\frac{1}{2}}(\partial B_O(r))}^2\\
 &= \pi r + \frac{j^2\pi}{r} + \frac{j^2}{r^2} \left\|\sin(j\theta)\right\|_{\dot H^{\frac{1}{2}}(\partial B_O(r))}^2.
 \end{aligned}\]
 The conclusion follows, since the change of variables $\theta \leftarrow \frac{\pi}{2j}-\theta$ implies  that $\left\|\sin(j\theta)\right\|_{\dot H^{\frac{1}{2}}(\partial B_O(r))}^2=\left\|\cos(j\theta)\right\|_{\dot H^{\frac{1}{2}}(\partial B_O(r))}^2$, which satisfies \eqref{eq:H12-norm-between-integrals}.
\end{proof}

\end{document}